\newtheorem{theorem}{Theorem}[section]
\newtheorem{definition}[theorem]{Definition}
\newtheorem{remark}[theorem]{Remark}
\newtheorem{lemma}[theorem]{Lemma}
\newtheorem{corollary}[theorem]{Corollary}
\newcommand{\F}{{\ensuremath{\mathbb{F}}}}
\title{Proportions of Cyclic Matrices in Maximal Reducible Matrix Groups and Algebras}
\author{Scott Brown, Cheryl E. Praeger, Michael Giudici}
\begin{document}

\maketitle

\footnotetext[1]{
The first author acknowledges support from a University Postgraduate Award from the University of Western Australia,
a Research Travel Award from Convocation, The University of Western Australia Graduates Association,
and also the Australian Research Council.
The second and third authors are supported by an ARC Federation Fellowship and an Australian Research Fellowship, respectively.}

\def\poly{polynomial }
\def\polys{polynomials }
\def\min{minimal }
\def\char{characteristic }
\def\la{\langle}
\def\ra{\rangle_A}
\def\li{linearly independent }
\def\ld{linearly dependent }
\def\bb{\mathbb}
\def\m{\mathcal}
\def\det{{\rm det}}
\def\deg{{\rm deg}}
\def\nulls {{\rm null}}
\def\dim{{\rm dim}}
\def\GL{{\rm GL}}
\def\M{{\rm M}}
\def\lcm{{\rm lcm}}
\def\gcd{{\rm gcd}}

\begin{abstract}
A matrix is said to be {\it cyclic} if its characteristic polynomial
is equal to its minimal polynomial. Cyclic matrices play an important role in some algorithms for matrix group computation, such as the {\sc Cyclic Meataxe} developed by P. M. Neumann and C. E. Praeger in 1999.
In that year also, G. E. Wall and J. E. Fulman independently found the limiting proportion of cyclic matrices in general linear groups over
a finite field of fixed order $q$ as the dimension $n$ approaches infinity, namely
$(1-q^{-5}) \prod_{i=3}^\infty (1-q^{-i}) = 1 - q^{-3} + O(q^{-4}).$
We study cyclic matrices in a maximal reducible matrix group or algebra, that is, in
the largest subgroup or subalgebra that leaves invariant some proper nontrivial subspace.
We modify Wall's generating function approach
to determine the limiting proportions of cyclic matrices
in maximal reducible matrix groups and algebras  over a field of order $q$,
as the dimension of the underlying vector space increases while that of the
invariant subspace remains fixed.
The limiting proportion in a maximal reducible group is proved to be
$1 - q^{-2} + O(q^{-3})$; note
the change of the exponent of $q$ in the second term of the expansion.
Moreover, we exhibit  in each maximal reducible matrix group a family of noncyclic matrices whose proportion
is $q^{-2} + O(q^{-3})$.
\end{abstract}

\begin{center}
\section*{\small{Acknowledgements}}
\end{center}
A special thank you to Prof. Peter Neumann for advice on this research during his time as Professor-at-Large in the Institute of Advanced Studies, The University of Western Australia, and later in Oxford. We are grateful in particular for his suggestions regarding an approach for
finding the leading terms of the limiting proportion in Theorem \ref{1-q^-2ch1} and also for
his advice on locating a family of noncyclic matrices.  Thanks also to The Queen's College at Oxford University,
and Prof. Peter Neumann in particular, for their hospitality during a visit by the first two authors to the United Kingdom in 2005.

\tableofcontents

\section{Introduction} \label{Intro}

This paper studies reducible matrix groups and matrix algebras over
a finite field,  say $\mathbb{F}_q$ of order $q$, that is,
some proper non-trivial subspace
of the underlying vector space is fixed setwise by all the elements.
If no such subspace exists the
group or algebra is said to be {\it irreducible}.
The first algorithm developed to test the irreducibility of a matrix
algebra was the Parker {\sc Meataxe} based on the
Norton Irreducibility Test described in \cite{parker}.
Given generators for a matrix group or algebra the Parker Meataxe sought matrices of a
special type by means of which it either proved irreducibility or
constructed a non-trivial proper invariant subspace. The (non-zero)
probability of failing to find suitable elements was not estimated.

The first analysis of a modified version of Parker's {\sc Meataxe}
was due to Holt and Rees \cite{holtrees} in 1994, and an
alternative modification of the irreducibility test using cyclic
matrices was developed by Neumann and Praeger
in  \cite{PNmeataxe}. Their  Cyclic Irreducibility Test  is a Las
Vegas algorithm that verifies irreducibility of an
irreducible matrix algebra by producing a cyclic matrix and a
corresponding cyclic basis. Bounds on the error probability rely on estimates for the
proportion of cyclic matrices in finite irreducible matrix algebras. Namely,
in \cite{PNcyclic} they proved that the
proportion of cyclic matrices in a full matrix algebra over
$\bb{F}_q$ is $1 - q^{-3} + O(q^{-4})$ and obtained also, explicit lower bounds for this
proportion in arbitrary irreducible matrix algebras, in \cite[Theorem 5.5]{PNcyclic}.
A motivation for the work presented in this paper was the problem of
extending the scope of the Cyclic {\sc Meataxe} to constructing an
invariant subspace in the case of `large' reducible matrix groups and algebras.
Using the results of this paper such an extension of the Cyclic  {\sc Meataxe}
was developed and presented in \cite[Chap 7]{myPhD}.

A matrix is called {\it cyclic} if its characteristic polynomial equals
its minimal polynomial.
G.E. Wall \cite{wall} and Jason Fulman \cite{Ful} independently
calculated generating functions for the proportion of cyclic matrices
in general linear groups and full matrix algebras, and determined
exactly the limiting proportions as the dimension tends to infinity, namely for finite
general linear groups $\GL(n,q)$ (see \cite[Equation 6.24]{wall}
or \cite[Theorem 8]{Ful}) the limit as $n$ tends to infinity is
\begin{equation}\label{wall-gl-limit}
 \frac{1 - q^{-5}}{1 + q^{-3}} = 1 - q^{-3} + O(q^{-5})
\end{equation}
while for finite matrix algebras $\M(n,q)$ (see
\cite[Equation 6.23]{wall} and \cite[Theorem 6]{Ful}),  the limit as
$n$ tends to infinity is
\begin{equation}\label{wall-mx-limit}
 (1 - q^{-5}) \prod_{i=3}^\infty (1 - q^{-i}) = 1 - q^{-3} + O(q^{-4}).
\end{equation}
The leading $q$ term, $q^{-3}$ in this case, is of significance
as for large $q$ this term dominates the later terms.
Generalising these results, proportions of cyclic matrices have been computed
for finite classical groups using both geometric methods \cite{PNclassical} and
generating function methods \cite{Brit, BritSL,Ful,FNP}.

Some matrix algebras contain no
cyclic matrices, for example, the algebra of $n \times n$ scalar matrices
for any $n \ge 2$, while others contain a large proportion.
One of the few results in the literature on proportions of cyclic matrices in reducible
matrix groups and algebras is due to
Jason Fulman \cite{Ful2}. Using cycle index methods he obtains the
limiting proportion of cyclic matrices
in the largest parabolic subgroup of a general linear group, namely
the stabiliser of a $1$-dimensional subspace.
We reprove his result by a different method, and extend it for all
subspace stabilisers in general linear groups and matrix algebras, yielding
Theorem~\ref{1-q^-2ch1}, the main result of this paper.

\begin{theorem}\label{1-q^-2ch1}
Let $r,n \in \mathbb{Z}^+$ with $r < n$, let $q$ be a prime power and let $c_{\GL,r}(n)$ and $c_{\M,r}(n)$ denote the proportions
of cyclic matrices in the stabiliser in $\GL(n,q)$ and $\M(n,q)$, respectively, of an $r$-dimensional subspace
of $\bb{F}_q^n$. Then
$$\displaystyle c_{\GL,r}(\infty) := \lim_{n\rightarrow\infty}c_{\GL,r}(n) = 1 - q^{-2} + O(q^{-3})$$
and
$$\displaystyle c_{\M,r}(\infty) := \lim_{n\rightarrow\infty}c_{\M,r}(n) = 1 - q^{-2} + O(q^{-3}).$$
Moreover, for any $d$ such that $1 < d < q(q-1)$, we have that
$|c_{\GL,r}(n) - c_{\GL,r}(\infty)| = O(d^{-n})$ and
$|c_{\M,r}(n) - c_{\M,r}(\infty)| = O(d^{-n}).$
\end{theorem}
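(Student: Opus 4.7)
\bigskip
\noindent\textbf{Proof proposal.}
The plan is to choose coordinates so that $U$ is the span of the first $r$ standard basis vectors, which puts every element of the stabiliser of $U$ in block upper-triangular form
$$M = \begin{pmatrix} A & B \\ 0 & C \end{pmatrix},$$
where $A$ is $r \times r$, $C$ is $(n-r)\times(n-r)$, $B$ is an arbitrary $r \times (n-r)$ matrix over $\F_q$, and in the $\GL$ case both $A$ and $C$ are invertible. Since $\chi_M = \chi_A\chi_C$, the cyclicity condition $m_M = \chi_M$ can be tested one irreducible polynomial $f\in\F_q[t]$ at a time, via the $f$-primary decomposition of $V = \F_q^n$ under $M$.

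The first main step is to prove a combinatorial cyclicity criterion. When $\gcd(\chi_A,\chi_C)=1$, the space $V$ splits $M$-invariantly as $U\oplus W$ with $M|_U=A$ and $M|_W$ similar to $C$ irrespective of $B$, so $M$ is cyclic exactly when $A$ and $C$ are both cyclic; this case will supply the dominant contribution. When $\gcd(\chi_A,\chi_C)\neq 1$, for each shared irreducible factor $f$ the Jordan partition of $M$ on its $f$-primary component is obtained from those of $A$ and $C$ on their $f$-parts by a merging rule governed by a rank condition on the map induced by $B$ between the relevant generalised eigenspaces. Enumerating, for each pair of $f$-types of $A$ and $C$, the number of $B$ that produce a single-part merged type for $M$ on each shared $f$-component is the main combinatorial step.

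Next, I would import the generating-function machinery from \cite{wall,PNcyclic}. Fix $r$, fix a conjugacy type for $A$, and sum over $C\in\M(n-r,q)$ (respectively $\GL(n-r,q)$) and over all $B$ with $M$ cyclic; by step one this count factors as a product of local contributions indexed by irreducible polynomials $f$, differing from Wall's local factors only at the finitely many $f$ dividing $\chi_A$. Summing over the finitely many $A$-types and dividing by the order of the stabiliser gives $\sum_{n>r}c_{\M,r}(n)\,u^{n-r}$ and its $\GL$ analogue as rational combinations of Wall's products, analytic on a disc of radius strictly greater than $q^{-1}$. A singularity analysis then produces the limits, and the error bound $O(d^{-n})$ for any $1<d<q(q-1)$ emerges from the distance to the next-nearest singularity.

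Finally, expanding the closed-form limit as a Laurent series in $q^{-1}$ should exhibit the cancellation of the $q^{-3}$ term seen in \eqref{wall-mx-limit} and \eqref{wall-gl-limit} and the emergence of a $q^{-2}$ defect. The noncyclic matrices responsible are those for which $A$ and $C$ share some eigenvalue $\alpha\in\F_q$ and the gluing $B$ fails the rank condition on the $\alpha$-eigenspaces; a direct count of such matrices exhibits the promised family of density $q^{-2}+O(q^{-3})$ and simultaneously verifies the leading term of the limit, with the equality of the $\GL$ and $\M$ leading coefficients following because the discrepancy between $|\GL(n-r,q)|$ and $|\M(n-r,q)|$ enters uniformly in the stabiliser order and cancels. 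The main obstacle throughout is the step-one combinatorics---tracking Jordan partitions of $M$ under the $B$-gluing when $\chi_A$ and $\chi_C$ share factors of arbitrary multiplicity---and verifying that after all cancellations in the generating function the $q^{-2}$ term indeed survives.
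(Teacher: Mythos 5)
You take a genuinely different route from the paper. You work directly with the block decomposition $M = \begin{pmatrix} A & B \\ 0 & C \end{pmatrix}$ and propose to count, for each pair of conjugacy types of $A$ and $C$, how many gluings $B$ make $M$ cyclic, then feed those counts into local factors of a Wall-style product. The paper avoids this combinatorics entirely: Lemma \ref{m,m=conjclasses} shows the $\GL(V)_U$-conjugacy classes of cyclic matrices in $\M(V)_U$ are parametrised by pairs of minimal polynomials $(m_{A|_U}, m_A) = (f,h)$ with $f \mid h$, $\deg f = r$, $\deg h = n$, and Lemma \ref{cent} shows the centraliser in $\GL(V)_U$ of a cyclic matrix equals its centraliser in $\GL(V)$. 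Hence each class $\Gamma_{f,h}$ has size $|\GL(V)_U|/Cent(h)$, so the proportion reduces to $\sum_h \alpha(h;r)/Cent(h)$ with $\alpha(h;r)$ the number of monic degree-$r$ divisors of $h$. The divisor count is then built into Wall's generating function by adjoining auxiliary variables $s_{ij}$ and applying a finite sequence of partial differentiations (Procedure \textsc{PhiAlpha}), giving Theorem \ref{C_r=Cblah} and ultimately the convergence and expansion statements.

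The step you flag as the ``main obstacle''---tracking the Jordan partitions of $M$ under the $B$-gluing when $A$ and $C$ share an irreducible factor---is exactly what the paper's conjugacy-class argument renders unnecessary. What it requires is counting, for cyclic $A$ and $C$ and each shared irreducible $f$, those extensions $0 \to U_f \to V_f \to (V/U)_f \to 0$ of $\F_q[t]$-modules whose middle term is cyclic; this is governed by Hall polynomials and is doable, but it is considerably heavier than the paper's route and is left as an acknowledged gap in your plan. Two further points. First, you say the modified generating function is analytic on a disc of radius strictly greater than $q^{-1}$; but $q^{-1}<1$ would not even secure the existence of the limit. What is needed (and obtained in Corollary \ref{q(q-1)}) is analyticity of the correction factor for $|t| < q(q-1)$, which, combined with Wall's $(1-t)C_{\GL}(t)$ being analytic for $|t|<q^2$, yields the $O(d^{-n})$ rate for $1<d<q(q-1)$. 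Second, your noncyclic family does match the construction in Theorem \ref{noncyc}, but the agreement of the $\GL$ and $\M$ leading terms is not a straightforward cancellation of $|\GL|/|\M|$: in the paper's $\M$-case (Theorem \ref{1-q^-2b}) the weight $\omega_r(n)$ and the correction factor each contribute nontrivial $q^{-1}$ and $q^{-2}$ terms, and the $1-q^{-2}$ only emerges after multiplying out all three expansions.
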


The assertions about the limit and rate of convergence of the quantities
$c_{\GL,r}(n)$ are proved in Theorems \ref{limcrn} and \ref{1-q^-2} in Section \ref{sect4.2},
while the assertions for $c_{\M,r}(n)$ are proved in Theorems \ref{limcrMn} and \ref{1-q^-2b} in Section \ref{M(V)_U}.

Our approach is similar to that of
Wall \cite{wall} in that we study the generating function for
the proportions  $c_{\GL,r}(n)$ in order to calculate their limit as
$n$ approaches infinity.
However, instead of producing an explicit expression for the generating function as an infinite product,
we specify a finite series of steps involving partial differentiation and evaluation that produces the result.
The number of steps is linear in $r$, and so for large (but fixed) $r$ we do not write down an explicit form for
the generating function. Nevertheless the information given is sufficient to determine the limiting proportion
and rate of convergence as $n$ tends to infinity. In Section \ref{4.4} we demonstrate, for $r=1,2$ how
the procedure can be applied to give an explicit form for the generating function - the case of $r=1$
retrieving the result of Fulman \cite{Ful2}.

Note that the exponent of the leading $q$-term $-q^{-2}$ has increased by one
over that for the general linear group case. Note also that
this leading $q$-term is independent of the dimension $r$ of the invariant subspace.
Although this may suggest that the limiting proportion may be independent of the
dimension of the invariant subspace, this is not the case. As we discuss
in Remark \ref{remark4.4}, it is only the coefficient of $q^{-2}$ that
is independent of the dimension of the invariant subspace; the
coefficient of $q^{-3}$ changes for different values of $r$.

To better understand Theorem \ref{1-q^-2ch1}, we exhibit in Theorem \ref{noncyc}, a family
of noncyclic matrices in maximal reducible matrix groups with
proportion $q^{-2} + O(q^{-3})$.

Results about the limits and rates of convergence of the proportion of cyclic matrices inside
maximal completely reducible subgroups of $\GL(n,q)$ and
maximal completely reducible subalgebras of $\M(n,q)$
are found in \cite[Chapter 5]{myPhD},
while similar results for separable matrices are found in \cite[Chapter 6]{myPhD}.

\section{Preliminary Results}

In this section we prove some preliminary results.
Throughout this paper $\mathbb{F}_q$ denotes a field of order $q$,
and $V=\mathbb{F}_q^n$ denotes the vector space of $n$-dimensional
row vectors. The algebra of all $n\times n$ matrices over $\mathbb{F}_q$
is denoted $\M(n,q)$, and the general linear group of all nonsingular $n\times n$
matrices over $\mathbb{F}_q$ by $\GL(n,q)$. We sometimes write $\M(V)=\M(n,q)$ or $\GL(V)=\GL(n,q)$.
Each $A \in \M(n,q)$ acts naturally on $V$ and for $w \in V$, $\langle w \rangle_A$ denotes the cyclic
$A$-module generated by $w$, that is, $\langle w \rangle_A = \langle w, wA, wA^2, \ldots, wA^n \rangle$.
In particular if $\langle w \rangle = V$ then $w$ is called a {\it cyclic vector} for $A$
and $(w,A)$ is called a {\it cyclic pair} on $V$.
It is well known that $A$ is cyclic if and only if it has a cyclic vector.
If $W$ is an $A$-invariant subspace of $V$ then $A$ induces a matrix $A|_W$ in $\M(W)$ and
a matrix $A|_{V/W}$ in $\M(V/W)$. In particular the {\it nullspace} null$(A)$ is an $A$-invariant subspace.

We denote the characteristic and minimal polynomial of $A$ on $V$ by $c_A(t)$ and $m_A(t)$ respectively.

\subsection{Cyclic Matrices on Subspaces}

\begin{lemma} \label{conj}
Let $X \in \M(n,q)$. Then $X$, $X^T$ and all conjugates of
$X$ have the same characteristic polynomial and the same minimal
polynomial. In particular, they are either all non-cyclic or all
cyclic.
\end{lemma}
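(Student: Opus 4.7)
The plan is to verify invariance of the characteristic polynomial and of the minimal polynomial separately under conjugation and under transposition, then conclude the statement about cyclicity from the definition.

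First I would handle the characteristic polynomial. For a conjugate $Y = PXP^{-1}$ with $P \in \GL(n,q)$, I would write
$$c_Y(t) = \det(tI - PXP^{-1}) = \det\bigl(P(tI - X)P^{-1}\bigr) = \det(tI - X) = c_X(t),$$
using multiplicativity of $\det$. For the transpose, I would use $\det(M) = \det(M^T)$ applied to $M = tI - X$, giving $c_{X^T}(t) = \det(tI - X^T) = \det((tI-X)^T) = c_X(t)$.

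Next I would handle the minimal polynomial. The key observation is that for any polynomial $f \in \F_q[t]$, one has $f(PXP^{-1}) = P f(X) P^{-1}$ (by induction on the degree, since conjugation is an algebra homomorphism), and $f(X^T) = f(X)^T$ (since transposition is an anti-homomorphism but polynomial evaluation involves only commuting powers of a single matrix with scalars). Consequently $f(X) = 0$ if and only if $f(PXP^{-1}) = 0$, and similarly for $X^T$. Since the minimal polynomial is the unique monic polynomial of least degree annihilating the matrix, it follows that $m_{PXP^{-1}}(t) = m_X(t) = m_{X^T}(t)$.

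Finally, since $X$ is cyclic if and only if $c_X(t) = m_X(t)$ by definition, and both polynomials are preserved under conjugation and transposition, $X$, $X^T$, and every conjugate of $X$ are simultaneously cyclic or simultaneously non-cyclic. There is no real obstacle here; the argument is entirely routine and relies only on standard properties of $\det$ and of polynomial evaluation on matrices.
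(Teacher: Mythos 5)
Your proof is correct and is essentially the standard argument that the paper delegates to a textbook reference (the paper only notes $f(X)^T = f(X^T)$ for the transpose case and cites Finkbeiner for the rest). You have simply written out the routine computations that the cited source would contain, so the content and approach agree.
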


\begin{proof}
This is well known, see for example \cite[Theorem 7.2 and Exercise 2 on p149]{Fink}
for the characteristic polynomial and \cite[Exercise 4 on p150]{Fink} for the minimal polynomial
of similar, this is, conjugate matrices. The assertion for the transpose holds since for any polynomial $f(t)$, we have
$f(X)^T=f(X^T)$.
\end{proof}

\begin{lemma} \label{W}
Let $A$ be a cyclic matrix on $V = \mathbb{F}_q^n$
with \min polynomial $m_A(t) = f(t)g(t)$ for monic
polynomials $f(t)$ and $g(t)$, and let $w \in V$ be a cyclic vector for
$A$. If $W = \langle w f(A) \rangle_A$ then

$(1)$ the minimal polynomial of $A|_W$ is $g(t)$;

$(2)$ $W$ has dimension $n - \deg(f) = \deg(g)$;

$(3)$ $W = {\rm null} (g(A))$.
\end{lemma}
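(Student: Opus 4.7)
The plan is to exploit the cyclicity of $A$ to identify $V$ with the quotient ring $\F_q[t]/(m_A(t))$ as $A$-modules, and read off the three conclusions from elementary ring computations in that quotient.

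First, I would observe that since $A$ is cyclic with cyclic vector $w$, the evaluation map $p(t)\mapsto wp(A)$ is a surjective $\F_q$-linear map from $\F_q[t]$ onto $V=\la w\ra_A$, with kernel equal to the ideal $(m_A(t))$; hence it induces an $A$-module isomorphism $\F_q[t]/(m_A(t))\cong V$ that sends $A$-multiplication to multiplication by $t$. In particular $\deg(m_A)=n$, so $\deg(f)+\deg(g)=n$, and the vector $wf(A)$ corresponds to the coset $f(t)+(m_A)$. The cyclic submodule $W=\la wf(A)\ra_A$ therefore corresponds to the principal ideal $(f(t))/(m_A(t))=(f(t))/(f(t)g(t))$ of the quotient ring.

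For (1) and (2), I would compute the annihilator of the generator $f(t)+(m_A)$ in $\F_q[t]/(m_A)$: a polynomial $h(t)$ annihilates it iff $f(t)g(t)\mid f(t)h(t)$, i.e. iff $g(t)\mid h(t)$. So the annihilator is $(g(t))$, which means the \min polynomial of the cyclic operator $A|_W$ is $g(t)$, giving (1). Since $W$ is a cyclic $A$-module whose generator has annihilator $(g)$, its $\F_q$-dimension equals $\deg(g)=n-\deg(f)$, yielding (2).

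For (3), the inclusion $W\subseteq\nulls(g(A))$ is immediate from (1): every element of $W$ has the form $wf(A)p(A)$, and applying $g(A)$ to it gives $wp(A)m_A(A)=0$. For the reverse inclusion, I would show the two subspaces have the same dimension. Under the isomorphism above, $\nulls(g(A))$ corresponds to $\{p(t)+(m_A):fg\mid gp\}=\{p(t)+(m_A):f\mid p\}=(f(t))/(m_A(t))$, which is exactly the image of $W$ and has dimension $\deg(g)$. Combined with (2) this forces equality, proving (3). There is no real obstacle: the only thing to be careful about is keeping the isomorphism $V\cong \F_q[t]/(m_A)$ explicit enough that the identifications of $W$ and of $\nulls(g(A))$ as ideals in the quotient are unambiguous.
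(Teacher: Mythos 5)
Your proof is correct, and it takes a genuinely different route from the paper's. You identify $V$ with the $\F_q[t]$-module $\F_q[t]/(m_A)$ via the cyclic vector $w$, so that $W$ becomes the principal ideal $(f)/(fg)$, and then all three parts fall out of a single annihilator computation in that quotient ring. The paper instead argues directly with linear algebra: for $(1)$ it shows $\deg(m_{A|_W})<\deg(g)$ would force a relation $f(t)h(t)$ of degree less than $\deg(m_A)$ killing all of $V$; for $(2)$ it just uses that a cyclic module's dimension equals the degree of its minimal polynomial; and for $(3)$, after the easy inclusion $W\subseteq\nulls(g(A))$, it exhibits $n-\deg(g)$ explicit linearly independent vectors in the image of $g(A)$ (namely $wA^jg(A)$ for $0\le j\le n-\deg(g)-1$, triangular with respect to the basis $wA^i$) to bound $\dim\nulls(g(A))$ from above. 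Your approach buys conceptual economy, deriving $(1)$, $(2)$, $(3)$ simultaneously from the isomorphism; the paper's buys elementariness, never leaving the language of matrices and bases. Both are complete; one small point worth making explicit in your version is the cancellation used at $fg\mid gp\iff f\mid p$ and $fg\mid fh\iff g\mid h$, which is valid precisely because $\F_q[t]$ is an integral domain.
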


\begin{proof}
$(1)$ Since $(wf(A))g(A) = 0$,
the minimal polynomial $h(t)$ of $A|_W$ divides $g(t)$.
Suppose that $\deg(h) < \deg(g)$.
Then $wf(A)A^ih(A) = 0$ for all $i$, and this implies that $vf(A)h(A)
= 0$ for all $v \in \langle w \rangle_A = V$. However $f(t)h(t)$ is a
polynomial of degree strictly less than $\deg(m_A)$, and we have a contradiction.
Hence $\deg(h) \ge \deg(g)$, and since $h(t)$ divides $g(t)$ it follows that $h(t)=g(t)$.

$(2)$ Since the \min \poly of $A|_W$ has degree $n - \deg(f)$ and the
space $W$ is cyclic as an $A$-module, it follows that $W$ has
dimension $n - \deg(f)$.

$(3)$ Let $u \in \langle wf(A) \rangle_A$. There exists a positive integer $k$ such
that the vectors $wf(A), wf(A)A, \ldots, wf(A)A^k$ form a basis for $\langle wf(A)
\rangle_A$ and so $u = \lambda_0wf(A) + \lambda_1wAf(A) + \cdots +
\lambda_kwA^kf(A)$ for some $\lambda_0, \ldots, \lambda_k \in \mathbb{F}_q$.
Then $ug(A) = \lambda_0wf(A)g(A) + \cdots
+ \lambda_kwf(A)A^kg(A)$ and this equals $0$ since every term is a multiple of
$wg(A)f(A) = wm_A(A) = 0$. Hence $u \in {\rm null} (g(A))$ and $\langle
wf(A) \rangle_A \subseteq {\rm null} (g(A))$.
In particular $\dim(\nulls(g(A))) \ge n - \deg(f) = \deg(g)$.

Let us now look at the rank of $g(A) = \sum_{i=0}^d a_iA^i$ where
$g(t) = \sum_{i=0}^d a_i t^i$ and $d$ is the degree of $g$.
Note in the following that $a_d = 1$ since $g(t)$ is monic.

$$ w   g(A)    = w A^d     + \sum_{i=0}^{d-1} a_i w A^i    $$
$$ wA  g(A)    = w A^{d+1} + \sum_{i=0}^{d-1} a_i w A^{i+1}$$
$$ \vdots $$
$$ wA^{n-{d+1}}g(A) = w A^{n-1} + \sum_{i=0}^{d-1} a_i w A^{i+n-d-1}$$

\noindent
Now since $wA^d, wA^{d+1}, \ldots, wA^{n-1}$ are linearly independent,
it follows from the equations above that
$wg(A), wA g(A), \ldots, wA^{n-d-1} g(A)$ are
linearly independent. So rank$(g(A)) \ge n-d$ which means that
$\dim({\rm null}(g(A))$) $\le d$.
By the previous paragraph equality holds and hence
$\langle wf(A) \rangle_A = {\rm null}(g(A))$.
\end{proof}

\begin{lemma} \label{cycA|U}
Let $(w,A)$ be a cyclic pair on $V=\mathbb{F}_q$,
let $U$ be an $A$-invariant $r$-dimensional subspace of $V$ and
let $c(t)$ be the characteristic polynomial of $A|_{V/U}$. Then $c(t)$ divides $c_A(t)$,

$(1)$ $U+w$ is a cyclic vector for $A|_{V / U}$;

$(2)$ $w c(A)$ is a cyclic vector for $A|_U$.
\end{lemma}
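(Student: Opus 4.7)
The plan is to use the standard factorization of the characteristic polynomial along an invariant subspace together with Lemma \ref{W}, the preceding lemma about cyclic restrictions.

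First I would establish the divisibility statement. Choose a basis of $V$ by extending a basis of $U$; in this basis $A$ has block upper-triangular form with diagonal blocks representing $A|_U$ and $A|_{V/U}$. Expanding the determinant gives the standard identity $c_A(t) = c_{A|_U}(t)\cdot c_{A|_{V/U}}(t) = c_{A|_U}(t)\cdot c(t)$, so in particular $c(t)\mid c_A(t)$.

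For part (1), since $w$ is a cyclic vector for $A$ on $V$, the vectors $w, wA, wA^2, \ldots, wA^{n-1}$ span $V$. Applying the quotient map $\pi\colon V\to V/U$ and using that $\pi(vA) = \pi(v)\,A|_{V/U}$, the images $(w+U), (w+U)A|_{V/U}, \ldots, (w+U)(A|_{V/U})^{n-1}$ span $V/U$. Hence $\langle w+U\rangle_{A|_{V/U}} = V/U$, so $w+U$ is a cyclic vector for $A|_{V/U}$.

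For part (2) I would apply Lemma \ref{W} with the factorisation $m_A(t) = f(t)g(t)$ where $f(t) = c(t) = c_{A|_{V/U}}(t)$ and $g(t) = c_{A|_U}(t)$; this is legitimate because $A$ cyclic forces $m_A = c_A$, and by the first paragraph $c_A = c\cdot c_{A|_U}$. Lemma \ref{W} parts (2) and (3) then tell us that $W := \langle wf(A)\rangle_A = \langle wc(A)\rangle_A$ equals $\mathrm{null}(g(A))$ and has dimension $\deg(g) = \dim(U) = r$. On the other hand, Cayley--Hamilton applied to $A|_U$ shows $g(A)$ annihilates $U$, so $U \subseteq \mathrm{null}(g(A)) = W$. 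Since both $U$ and $W$ have dimension $r$, we conclude $U = W = \langle wc(A)\rangle_A$, which exactly says that $wc(A)$ is a cyclic vector for $A|_U$.

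The only mild obstacle is bookkeeping: one must check that the factorisation of $m_A$ needed in Lemma \ref{W} is available, which relies on the cyclicity of $A$ (giving $m_A = c_A$) together with the block-triangular factorisation of $c_A$. Once these are in place, the dimension-matching argument closes part (2) cleanly.
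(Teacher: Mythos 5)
Your argument is correct, but for part (2) you take a genuinely different route from the paper. The paper's proof shows directly that $wc(A), wc(A)A, \ldots, wc(A)A^{r-1}$ are linearly independent (by re-running the linear-independence argument from the proof of Lemma \ref{W}), observes that all $r$ of them lie in $U$, and concludes by a dimension count that they form a basis of $U$, hence $\langle wc(A)\rangle_A = U$. You instead invoke the \emph{statement} of Lemma \ref{W} as a black box: with $f=c$, $g = c_{A|_U}$ and $m_A = c_A = gf$ (the block-triangular factorisation plus cyclicity of $A$), parts (2) and (3) of Lemma \ref{W} give $W := \langle wc(A)\rangle_A = \mathrm{null}(g(A))$ of dimension $r$; Cayley--Hamilton on $A|_U$ puts $U$ inside $\mathrm{null}(g(A))$, and the dimensions match. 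Your version is more modular and avoids repeating computation already done in Lemma \ref{W}; the paper's version is slightly more self-contained and does not need Lemma \ref{W}(3), only the elementary observation that $wc(A)\in U$ because $c(A)$ kills $V/U$. Both routes also hinge on the same preliminary facts (cyclicity forcing $m_A=c_A$, and $c_A = c_{A|_U}\cdot c$), so the difference is one of packaging rather than content, but it is a real difference in presentation.
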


\begin{proof}
That $c(t)$ divides $c_A(t)$ is well known.
By assumption $\langle w \rangle_A = V$, and hence $\langle U+w \rangle_{A|_{V/U}}=V/U$, proving $(1)$.

By definition, $c(A|_{V/U})$ is the zero matrix of $V/U$, that is to say $c(t)$ is monic and $c(A)|_{V/U}=0$.
Thus $wc(A) \in U$. Since $\deg(c)=\dim(V/U)=n-r$, we have
$c(t) = t^{n-r} + \sum_{i=0}^{n-r-1} a_i t^i$ and
$wc(A) = wA^{n-r} + \sum_{i=0}^{n-r-1} a_i w A^i$.
Since $(w,A)$ is a cyclic pair, the vectors $wA, \ldots, wA^{n-r}$ are linearly independent.
Thus arguing as in the previous proof we see that the vectors $wc(A), \ldots, wc(A)A^{r-1}$ are also
linearly independent, and each lies in $U$ since $U$ is $A$-invariant. They therefore form a basis for $U$ since $\dim(U)=r$.
Hence $\langle wc(A) \rangle_A = U$ and $(2)$ is proved.
\end{proof}

The next lemma describes all the
invariant subspaces of a cyclic matrix whose characteristic
polynomial has only one irreducible factor.

\begin{lemma} \label{chain}
Let $(w,A)$ be a cyclic pair on $V=\mathbb{F}_q^n$.
Then the only $A$-invariant subspaces $U$ of $V$
are $U=\langle w f(A) \rangle_A$ for some monic polynomial $f(t)$ dividing $m_A(t)$
and the minimal polynomial of $A|_U$ on $U$ is $g(t)$ where $m_A(t) = f(t) g(t)$.
Moreover, there is a one-to-one correspondence between
$A$-invariant subspaces of $V$ and monic divisors of $m_A(t)$.
\end{lemma}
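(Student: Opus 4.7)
The plan is to combine the previous two lemmas to set up a bijection between monic divisors of $m_A(t)$ and $A$-invariant subspaces of $V$. The key observation at the start is that because $(w,A)$ is a cyclic pair, $A$ is cyclic and hence $m_A(t)=c_A(t)$; any divisor of $c_A(t)$ is a divisor of $m_A(t)$ and vice-versa, which lets me move freely between the minimal and characteristic polynomials.

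First I would handle the ``easy'' direction: for every monic divisor $f(t)$ of $m_A(t)$, write $m_A(t)=f(t)g(t)$ and set $U_f=\langle wf(A)\rangle_A$. This is plainly an $A$-invariant subspace, and Lemma \ref{W} immediately tells us that the minimal polynomial of $A|_{U_f}$ is $g(t)$, that $\dim U_f=\deg g$, and that $U_f=\nulls(g(A))$. So the map $f\mapsto U_f$ sends monic divisors of $m_A$ to $A$-invariant subspaces with the desired properties.

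Next I would prove the reverse direction: every $A$-invariant subspace $U$ arises this way. Given such a $U$, let $c(t)$ be the characteristic polynomial of $A|_{V/U}$. By Lemma \ref{cycA|U}, $c(t)\mid c_A(t)=m_A(t)$ and $wc(A)$ is a cyclic vector for $A|_U$, so $U=\langle wc(A)\rangle_A=U_c$. Applying the previous paragraph with $f=c$ then yields $m_A=cg$ and identifies the minimal polynomial of $A|_U$ as $g(t)$. This simultaneously proves the parameterisation claim and surjectivity of $f\mapsto U_f$.

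Finally, for the one-to-one correspondence I would check injectivity. If $U_f=U_{f'}$ for two monic divisors, then the minimal polynomial of the restriction of $A$ to this common subspace is determined intrinsically by the subspace, so Lemma \ref{W}(1) gives $g=g'$, and from $m_A=fg=f'g'$ we conclude $f=f'$. There is no real obstacle here: the whole statement is essentially a bookkeeping combination of Lemmas \ref{W} and \ref{cycA|U}, and the only place one might slip is in remembering to invoke cyclicity of $A$ to equate $c_A$ with $m_A$ so that divisors of $c_A$ appearing in Lemma \ref{cycA|U} can be fed into Lemma \ref{W}.
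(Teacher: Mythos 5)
Your argument is correct and follows essentially the same path as the paper's: you use Lemma \ref{cycA|U} to produce the divisor $c(t)=c_{A|_{V/U}}(t)$ and exhibit $U=\langle wc(A)\rangle_A$, then use Lemma \ref{W} to read off the minimal polynomial of $A|_U$ and establish the bijection. The only cosmetic difference is in the injectivity step: the paper deduces $g=g'$ via Lemma \ref{W}(3) by comparing nullspaces, whereas you invoke Lemma \ref{W}(1) directly (the minimal polynomial of $A$ on the common subspace is intrinsic, hence $g=g'$, hence $f=f'$ by cancellation in $m_A=fg=f'g'$), which is a mild simplification but not a different method.
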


\begin{proof}
Let $U$ be an $A$-invariant subspace of $V$. By Lemma \ref{cycA|U},
$U+w$ is a cyclic vector for $A|_{V/U}$, and $wf(A)$ is a cyclic vector for $A|_U$, where $f(t)=c_{A|_{V/U}}(t)=m_{A|_{V/U}}(t)$
and $f(t)$ divides $m_A(t)$, say $m_A(t)=f(t)g(t)$.
Thus $\langle wf(A) \rangle_A = U$ and $\dim(U)=n-\deg(f)=\deg(g)$.
If also $U=\langle w f'(A) \rangle_A$ where $m_A(t)=f'(t)g'(t)$, then by Lemma \ref{W} $(3)$, $U= \nulls(g(A))= \nulls(g'(A))$ and $\deg(g)=\deg(g')$.
This implies that $wf(A)g'(A)=0$ whence $m_A(t)$ divides $f(t)g'(t)$, or equivalently $g(t)$ divides $g'(t)$. Thus $g(t)=g'(t)$ and hence for distinct divisors $f(t),f'(t)$ of
$m_A(t)$ the submodules $\langle wf(A) \rangle_A$ and $\langle wf'(A) \rangle_A$ are distinct. Also $g(t) = m_{A|_U}(t)$.
\end{proof}

\subsection{Decomposition} \label{2.2}

For $X \in \M(n,q)$, a \emph{cyclic decomposition} of $V=\mathbb{F}_q^n$ under $X$
is a direct sum $V = \oplus_{i=1}^\ell V_i$ where each
$V_i$ is $X$-cyclic and $X$-invariant,
the minimal polynomial of $X|_{V_i}$ divides the minimal polynomial of $X|_{V_{i-1}}$
for $1 < i \le \ell$, and the minimal polynomial of $X|_{V_1}$ is the minimal polynomial of $X$
(see \cite[Lemma 11.7]{H&H}). If $X$ is cyclic, the cyclic decomposition of
$V$ under $X$ is simply $V = V_1$.

Let $A \in \M(n,q)$ be cyclic with minimal polynomial
$m_A(t) = \prod_{i=1}^k f_i^{\alpha_i}(t)$ where the $f_i$ are distinct
monic irreducible polynomials and each $\alpha_i > 0$.
The \emph{primary decomposition} of $V$ under $A$ is
$V = \oplus_{i=1}^k V_i$ where for each $i$ we have
$V_i = \langle w g_i(A) \rangle_A$ for
$g_i(t) = \frac{m(t)}{f_i^{\alpha_i}(t)}$.
With respect to an appropriate basis, $A$ can be decomposed into blocks
or components as below:

\begin{displaymath}
\left(
\begin{array}{c c c c c}

A_1 &     &        &	 \\
    & A_2 &        &	 \\
    &     & \ddots &	 \\
    &     &        & A_k \\

\end{array}
\right)
\quad \mbox {which we write as} \quad
A_1 \oplus \cdots \oplus A_k
\end{displaymath}

\noindent
where each $A_i$ is cyclic on $V_i$ with
minimal polynomial $f_i^{\alpha_i}$ (see \cite[Lemma 11.8]{H&H}).

The proof of the following lemma is straight forward and is omitted.

\begin{lemma} \label{+ViB}
Let $A \in \M(n,q)$ be a cyclic matrix on $V=\mathbb{F}_q^n$ and
let $V = \oplus V_i$ be its cyclic (respectively primary) decomposition,
and let $B \in \GL(V)$. Then

$(a)$ The conjugate $B^{-1}AB$ has cyclic (respectively primary) decomposition
$V=\oplus (V_iB)$.

$(b)$ If $U_i$ is an $A$-invariant subspace of
$V_i$ then $U_iB$ is a $B^{-1}AB$-invariant subspace of $V_iB$ of the
same dimension and $m_{A|_{U_i}}(t)$ is the minimal polynomial of $(B^{-1}AB)|_{U_iB}$.

$(c)$ Moreover, $A$ and $A' \in \M(n,q)$ are conjugate under an element of $\GL(n,q)$
if and only if the sequences of minimal polynomials
induced on the spaces in their cyclic decompositions are the same.
\end{lemma}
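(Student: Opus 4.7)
The setup I would use is the following elementary observation about conjugation on row vectors. Writing $A' := B^{-1}AB$, we have $AB = BA'$ and hence $A^k B = B (A')^k$ for all $k \geq 0$; thus $f(A)B = Bf(A')$ for every $f(t) \in \F_q[t]$, and so
\[
 (vB)\, f(A') \;=\; \bigl(v f(A)\bigr)\, B \qquad \text{for all } v \in V.
\]
From this identity the map $v \mapsto vB$ sends each $A$-invariant subspace $W$ of $V$ bijectively onto an $A'$-invariant subspace $WB$ of the same dimension, and $f(A)|_W = 0$ if and only if $f(A')|_{WB} = 0$, so $m_{A|_W}(t) = m_{A'|_{WB}}(t)$. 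Part $(b)$ follows at once by taking $W=U_i$.

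For part $(a)$ I would write $V = \oplus_i V_i$ for the cyclic decomposition of $V$ under $A$ with cyclic vectors $w_i \in V_i$. The decomposition $V = \oplus_i V_i B$ is a direct sum because $B$ is invertible, each $V_i B$ is $A'$-invariant and $A'$-cyclic with cyclic vector $w_i B$, and the divisibility chain $m_{A|_{V_i}} \mid m_{A|_{V_{i-1}}}$ transfers unchanged under the identification $m_{A|_{V_i}} = m_{A'|_{V_i B}}$; since $m_{A|_{V_1}} = m_A = m_{A'}$, this is indeed the cyclic decomposition of $V$ under $A'$. For the primary case, assume further that $A$ is cyclic with cyclic vector $w$ and primary factors $f_i^{\alpha_i}$; since $m_{A'} = m_A$ has the same irreducible factorisation and $wB$ is a cyclic vector for $A'$, the identity above gives $\langle w g_i(A)\rangle_A \cdot B = \langle (wB)\, g_i(A')\rangle_{A'}$, where $g_i(t) = m_A(t)/f_i^{\alpha_i}(t)$, and the right hand side is precisely the $i$-th primary component of $A'$.

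Part $(c)$ has an easy forward direction: if $A' = B^{-1}AB$, apply $(a)$ to get a cyclic decomposition of $V$ under $A'$ with the same sequence of minimal polynomials. For the converse, let $V = \oplus_i V_i = \oplus_i V_i'$ be the cyclic decompositions under $A$ and $A'$ respectively, with $m_{A|_{V_i}} = m_{A'|_{V_i'}} =: m_i$ of degree $d_i$, and pick cyclic vectors $w_i \in V_i$ and $w_i' \in V_i'$. Then $\{w_i A^j : 1 \leq i \leq \ell,\, 0 \leq j < d_i\}$ is a basis of $V$ and similarly for the $w_i'(A')^j$, so one may define $B \in \GL(V)$ by extending linearly from $w_i A^j \mapsto w_i'(A')^j$. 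Checking $AB = BA'$ is direct on each basis vector $w_i A^j$ when $j < d_i - 1$; for $j = d_i - 1$ one uses the relation $w_i A^{d_i} = -\sum_{k=0}^{d_i-1} c_k\, w_i A^k$ coming from $m_i(t) = t^{d_i} + \sum_k c_k t^k$, together with the corresponding relation for $w_i'$ and $A'$, which is valid because both restrictions share the minimal polynomial $m_i$.

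The only subtlety is keeping the row-vector direction of $B$ straight in the key identity $(vB)f(A') = (vf(A))B$; once that is pinned down, everything reduces to bookkeeping with cyclic decompositions and rational canonical forms, which is why the authors characterise the proof as straightforward.
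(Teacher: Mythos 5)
Your proof is correct and takes exactly the approach one would expect: the paper omits the proof as "straight forward," and your argument — the key intertwining identity $(vB)f(A') = (vf(A))B$, its use to transfer invariant subspaces, cyclicity, and minimal polynomials for parts (a) and (b), and the standard rational-canonical-form construction of $B$ from matching cyclic bases for the converse of (c) — is precisely the routine verification the authors had in mind. Everything checks out, including the verification of $AB = BA'$ on the top-degree basis vectors $w_iA^{d_i-1}$ using the shared minimal polynomial relations.
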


Note that if $\oplus_{i=1}^s V_i$ is the cyclic decomposition for $A \in \M(n,q)$ and
$m_{A|_{V_i}}(t) = \sum_{j=0}^{d_i} a_{ij} t^j$ with $a_{id_i}=1$, then $A$ is conjugate to $A_1 \oplus \cdots \oplus A_s$
where for each $i$, $A_i$ is conjugate under $\GL(V_i)$ to the companion matrix

\begin{displaymath}
\left(
\begin{array}{c c c c c c}
0 & 1 & 0 & \ldots & 0 \\
0 & 0 & 1 & \ldots & 0 \\
\vdots &   &   & \ddots & \vdots \\
0 & 0 & 0 & \ldots & 1 \\
-a_{i0} & -a_{i1} & -a_{i2} & \ldots & -a_{i(d_i-1)} \\
\end{array}
\right)
\end{displaymath}

\begin{corollary} \label{min=conjclasses}
Cyclic matrices $X$ and $Y$ in $\M(n,q)$ have the same minimal polynomial if and
only if they are conjugate in $\GL(n,q)$.
\end{corollary}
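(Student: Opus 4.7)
The plan is to derive this as an immediate specialization of Lemma \ref{+ViB}(c), using the fact recorded at the start of Section \ref{2.2} that the cyclic decomposition of a cyclic matrix consists of a single summand.

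For the forward ($\Rightarrow$) direction, nothing new is needed: if $X$ and $Y$ are conjugate in $\GL(n,q)$, then Lemma \ref{conj} already gives that they have the same minimal polynomial. So the content of the corollary lies in the converse.

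For the reverse ($\Leftarrow$) direction, suppose $X$ and $Y$ are cyclic in $\M(n,q)$ with $m_X(t)=m_Y(t)$. By the remark following the definition of cyclic decomposition in Section \ref{2.2}, since $X$ is cyclic the cyclic decomposition of $V$ under $X$ has length $\ell=1$, namely $V=V_1$, and the minimal polynomial of $X|_{V_1}=X$ is $m_X(t)$. Exactly the same holds for $Y$. Hence the ``sequence of minimal polynomials induced on the spaces in the cyclic decomposition'' is in each case the single polynomial $m_X(t)=m_Y(t)$, so the two sequences agree. Lemma \ref{+ViB}(c) then yields that $X$ and $Y$ are conjugate under some element of $\GL(n,q)$.

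The main (indeed only) obstacle is simply recognizing that Lemma \ref{+ViB}(c) applies verbatim in the cyclic case with trivial decomposition; no further computation is required, and I would expect the written proof to be a one- or two-sentence appeal to Lemma \ref{+ViB}(c) together with Lemma \ref{conj}.
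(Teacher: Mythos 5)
Your proof is correct and is essentially the same as the paper's, which states only that the corollary ``follows from the fact that the cyclic decomposition of a vector space under a cyclic matrix has just one component'' — an implicit appeal to Lemma \ref{+ViB}(c) that you have simply made explicit (and you supplement the forward direction with Lemma \ref{conj}, though the ``if and only if'' in Lemma \ref{+ViB}(c) already covers it).
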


This corollary follows from the fact that the cyclic decomposition
of a vector space under a cyclic matrix has just one component. It implies that there is exactly one
$\GL(n,q)$-conjugacy class of cyclic matrices with a given
characteristic polynomial.

\begin{lemma} \label{cyccoprime}
Let $A \in \M(n,q)$ acting on $V=\mathbb{F}_q^n$ and suppose that
$V = \oplus_{i\le k} V_i$ where each $V_i$ is
$A$-invariant. Let $c_i(t)$ and $m_i(t)$
be the characteristic and minimal polynomials respectively of $A$ on
$V_i$ for $i \le k$. Then $A$ is cyclic if and only if each $A|_{V_i}$ is
cyclic and $\gcd(c_i, c_{i'}) = 1$ for all $i \ne i'$.
\end{lemma}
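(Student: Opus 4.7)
My plan is to reduce the claim to a degree count on the characteristic and minimal polynomials, using the two standard block-decomposition identities
\[
c_A(t) = \prod_{i=1}^k c_i(t), \qquad m_A(t) = \lcm(m_1(t), \ldots, m_k(t)),
\]
together with the divisibility $m_i \mid c_i$ and $\deg c_i = \dim V_i$. Both identities I would cite or dispatch in a line: the product form of $c_A$ is the determinant of a block-diagonal matrix, and the lcm form of $m_A$ follows from the fact that a polynomial $p(t)$ annihilates $A$ iff it annihilates every $A|_{V_i}$, equivalently iff $m_i \mid p$ for each $i$.

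For the forward direction, assume $A$ is cyclic, so $\deg m_A = \deg c_A = \sum_i \deg c_i$. The chain
\[
\sum_i \deg c_i = \deg m_A = \deg\lcm(m_1,\ldots,m_k) \le \sum_i \deg m_i \le \sum_i \deg c_i
\]
must collapse to equalities. Equality in the second inequality forces $\deg m_i = \deg c_i$ for every $i$, and since both are monic with $m_i \mid c_i$, this gives $m_i = c_i$; i.e.\ each $A|_{V_i}$ is cyclic. For the first equality, since $\lcm(m_1,\ldots,m_k)$ always divides $\prod_i m_i$, equality of degrees forces the $m_i$ to be pairwise coprime, which combined with $m_i = c_i$ yields $\gcd(c_i,c_{i'}) = 1$ for all $i \ne i'$.

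For the converse, if each $A|_{V_i}$ is cyclic then $m_i = c_i$, and if additionally the $c_i$ are pairwise coprime then
\[
m_A = \lcm(m_1,\ldots,m_k) = \prod_i m_i = \prod_i c_i = c_A,
\]
so $A$ is cyclic. The only subtle step is the pairwise-coprime consequence of equality in $\deg\lcm \le \sum \deg$, but this is an elementary property of the lcm of monic polynomials and should need no more than a sentence; beyond that the argument is pure bookkeeping and no deeper idea is required.
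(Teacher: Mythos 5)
Your proof is correct, and the forward direction takes a genuinely different --- and arguably tidier --- route than the paper's. The paper establishes cyclicity of each restriction $A|_{V_i}$ by appealing to Lemma~\ref{cycA|U} (every restriction of a cyclic matrix to an invariant subspace is cyclic), and then proves pairwise coprimality by a separate contradiction argument: if $\gcd(c_i,c_{i'}) \ne 1$ for some $i\ne i'$, then $\lcm(c_1,\ldots,c_k)$ has degree strictly less than $n$, while every $v \in V$ is annihilated by it, so $\deg m_A < n$, contradicting cyclicity. Your argument extracts both conclusions at once from the degree chain
$$\sum_i \deg c_i = \deg m_A = \deg\lcm(m_1,\ldots,m_k) \le \sum_i \deg m_i \le \sum_i \deg c_i,$$
which is self-contained and avoids Lemma~\ref{cycA|U} entirely; the paper's route leans on existing machinery, yours on elementary double counting. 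The converse directions are essentially identical. The one step you should make explicit (you flag it yourself) is that $\lcm(m_1,\ldots,m_k)=\prod_i m_i$ forces pairwise coprimality: if $\gcd(m_i,m_j)=d$ with $\deg d \ge 1$, then $\lcm(m_1,\ldots,m_k)$ divides $\lcm(m_i,m_j)\prod_{\ell\ne i,j} m_\ell = (\prod_\ell m_\ell)/d$, which has strictly smaller degree. With that sentence added, the write-up is airtight.
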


\begin{proof}
Suppose that each $A|_{V_i}$ is cyclic and that $\gcd(c_i, c_{i'}) = 1$
for all $i \ne i'$. Then $m_i =
c_i$ for all $i$ so $\lcm(m_1, \ldots, m_k) = \lcm(c_1, \ldots,
c_k)=f$, say. Since $\gcd(c_i, c_{i'}) = 1$ for all $i \ne i'$ we have
that $f = c_1\ldots c_k = c$ and we know that $\lcm(m_1, \ldots, m_k) = m$. Hence $c = m$
and $A$ is cyclic.

Conversely suppose that $A$ is cyclic. By Lemma \ref{cycA|U}, each of the $A|_{V_i}$ is cyclic.
Suppose that $\gcd(c_i, c_{i'}) \ne
1$ for some $i \ne i'$. Then $f(t) := \lcm(c_1, \ldots, c_k)$
divides $\frac{c_1\ldots c_k}{\gcd (c_i, c_{i'})}$ so $\deg(f)$ $< n$ and each $m_i$ divides $c_i$ which, in turn, divides $f$.
Each vector $v \in V$ can be written as $v = v_1+\cdots +v_k$ for
some $v_i \in V_i$ for $i \le k$. Then $vf(A) = v_1f(A) + \cdots +
v_kf(A) = 0$ since each $m_i$ divides $f$. Hence $m$
divides $f$ which has degree less than $n$ so $m \ne c$ which
is a contradiction with $A$ being cyclic on $V$. Hence $\gcd(c_i,
c_{i'}) = 1$ for all $i \ne i'$.
\end{proof}

\begin{lemma} \label{UinV}
Let $A$ be a cyclic matrix on $V$ with minimal polynomial $\prod
f_i^{\alpha_i}$ and let $V = \oplus V_i$ be the primary decomposition
of $V$ under $A$. Let $U$ be an $A$-invariant subspace of $V$. Then
the primary decomposition for $U$ under $A|_U$ is
$\oplus U_i$, where for each $i$, $U_i=U\cap V_i$.
\end{lemma}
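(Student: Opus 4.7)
The plan is to identify the primary components of $U$ under $A|_U$ explicitly and match them with the $U_i = U \cap V_i$.

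First, I would invoke Lemma \ref{chain} to parametrize $U$: since $U$ is $A$-invariant, $U = \la w f(A) \ra$ for some monic divisor $f$ of $m_A$, and $A|_U$ is cyclic with minimal polynomial $g(t) := m_A(t)/f(t)$. Writing $m_A = \prod_i f_i^{\alpha_i}$ and $g = \prod_i f_i^{\gamma_i}$ with $0 \le \gamma_i \le \alpha_i$, the cyclicity of $A|_U$ gives it a primary decomposition $U = \oplus_i U'_i$, in which $U'_i$ is the component corresponding to the primary factor $f_i^{\gamma_i}$ (so $U'_i = 0$ when $\gamma_i = 0$). The goal then becomes to prove $U'_i = U \cap V_i$ for each $i$.

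Next, I would realise both components as nullspaces by two applications of Lemma \ref{W}(3). Applied to the cyclic pair $(w,A)$ on $V$ with the factorization $m_A = (m_A/f_i^{\alpha_i}) \cdot f_i^{\alpha_i}$, it gives $V_i = \nulls(f_i^{\alpha_i}(A))$. Applied to the cyclic pair $(wf(A), A|_U)$ on $U$ with the factorization $g = (g/f_i^{\gamma_i}) \cdot f_i^{\gamma_i}$, it gives $U'_i = \nulls(f_i^{\gamma_i}(A|_U)) = U \cap \nulls(f_i^{\gamma_i}(A))$. The inclusion $U'_i \subseteq U \cap V_i$ is then immediate from $\gamma_i \le \alpha_i$.

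For the reverse inclusion, I would take $u \in U \cap V_i$ and set $h_i := g/f_i^{\gamma_i}$, which is coprime to $f_i$ by construction. Consider $v := u f_i^{\gamma_i}(A)$. Since $u \in U$, we have $u\, g(A) = 0$, hence $v\, h_i(A) = 0$. Since $V_i$ is $A$-invariant and $u \in V_i$, we also have $v \in V_i$, so $v\, f_i^{\alpha_i}(A) = 0$. Coprimality $\gcd(h_i, f_i^{\alpha_i}) = 1$ then forces $v = 0$ via a Bezout identity, so $u \in \nulls(f_i^{\gamma_i}(A)) \cap U = U'_i$.

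The main obstacle is really just bookkeeping: making sure the two applications of Lemma \ref{W}(3) use the correct cyclic vectors ($w$ versus $wf(A)$) and the correct factorizations of the two minimal polynomials, and noting that the $\gamma_i = 0$ case is handled uniformly by the same coprimality argument (both $U'_i$ and $U \cap V_i$ then vanish because $g$ and $f_i^{\alpha_i}$ are coprime). No deeper idea seems required beyond the standard primary decomposition machinery.
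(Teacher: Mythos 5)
Your proof is correct, and its second half takes a genuinely different route from the paper's. Both arguments begin by invoking Lemma \ref{chain}: $U = \langle w f(A)\rangle_A$, the minimal polynomial $g$ of $A|_U$ divides $m_A$, and the primary components of $U$ are generated by $wf(A)g'_i(A)$ for the appropriate cofactors $g'_i$. From there the two proofs diverge. You apply Lemma \ref{W}(3) twice to express both $V_i$ and the $i$-th primary component of $U$ as nullspaces, $V_i = {\rm null}(f_i^{\alpha_i}(A))$ and $U'_i = U \cap {\rm null}(f_i^{\gamma_i}(A))$, so that $U'_i \subseteq U \cap V_i$ is immediate from $\gamma_i \le \alpha_i$, and you establish the reverse inclusion by a Bezout/coprimality computation on $u f_i^{\gamma_i}(A)$. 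The paper instead proves only the one containment $U_i \subseteq V_i \cap U$ (via the divisibility $g_i \mid g'_i g$ applied to the generator $u g'_i(A) = vg(A)g'_i(A)$), and then closes the argument by the independence of the $V_j$: since $U = \oplus_j U_j$ with each $U_j \subseteq V_j$, any element of $U \cap V_i$ automatically has trivial components in $V_j$ for $j \ne i$ and so lies in $U_i$. Both are sound; your version leans harder on the explicit nullspace descriptions from Lemma \ref{W}(3) and an elementary Bezout identity, while the paper's extracts the second inclusion for free from the direct-sum decomposition already in hand, avoiding any further polynomial manipulation.
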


\begin{proof}
Let $v$ be a cyclic vector for $A$ in $V$.
Using the notation introduced at the beginning of Section \ref{2.2}, let
$c_A(t) = f_i^{\alpha_i}(t) g_i(t)$ for each $i$, so that by Lemma \ref{chain},
$V_i = \langle v_i \rangle_A$ where $v_i = v g_i(A)$.
Let $c_{A|_U}(t) = \prod f_i^{\beta_i}(t)$, and let $c_A(t) = c_{A|_U}(t) g(t)$ and
$c_{A|_U}(t) = f_i^{\beta_i}(t) g'_i(t)$ for each $i$, where $f_i$ does not divide $g'_i$.
Note that $g_i(t)$ divides $g'_i(t) g(t)$ for each $i$.

Again by Lemma \ref{chain}, $U = \langle u \rangle_A$ where $u=vg(A)$
and $U_i := \langle u g'_i(A) \rangle_A$ is $A$-invariant with $c_{A|_{U_i}}(t)=f_i^{\beta_i}(t)$.
Thus $U = \oplus_{i=1}^s U_i$ is the primary decomposition of $U$ under $A|_U$,
where $U_i$ may be $0$ for some $i$.
Also, since $g_i$ divides $g'_i g$ it follows that
$u g'_i(A) = v g(A) g'_i(A) \in \langle v g_i(A) \rangle_A = V_i$,
so $U_i \subseteq V_i \cap U$.
Then $\oplus_{j \ne i} U_j \subseteq \oplus_{j \ne i} V_j$
and so is disjoint from $V_i \cap U$.
This implies that $U_i = V_i \cap U$ for each $i$.
\end{proof}

We are working with matrices on $V$ which fix a proper subspace of $V$
so we need convenient notation for the
group and the algebra of such matrices.
For $V=\F_q^n$,
denote $\GL(V):=\GL(n,q)$ and $\M(V):=\M(n,q)$.
For a subspace $U \le V$, let $\GL(V)_U$ be the subgroup of $\GL(V)$ which consists of
all matrices that fix $U$ setwise.
Similarly let $\M(V)_U$ be the set of all matrices
in $\M(V)$ which leave the subspace $U$ invariant.

\begin{lemma} \label{m,m=conjclasses}
Let $A, A'$ be cyclic matrices in $\M(V)_U$. Then $A$ and $A'$
are conjugate by an element of $\GL(V)_U$ if and only if both $m_A=m_{A'}$ and $m_{A|_U}=m_{A'|_U}$.
\end{lemma}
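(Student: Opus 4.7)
The plan is to handle the two directions separately. The forward implication is routine: if $A' = B^{-1}AB$ with $B \in \GL(V)_U$, then $A$ and $A'$ share a minimal polynomial, and the restriction $B|_U$ conjugates $A|_U$ to $A'|_U$, so $m_{A|_U} = m_{A'|_U}$.

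For the converse, the idea is to construct a conjugating element by matching cyclic bases, and then to verify that this element preserves $U$. Fix cyclic vectors $w$ for $A$ and $w'$ for $A'$. Since $m_A = m_{A'}$ has degree $n$, both $\{wA^i : 0 \le i < n\}$ and $\{w'(A')^i : 0 \le i < n\}$ are bases of $V$, so there is a unique $B \in \GL(V)$ with $wA^iB = w'(A')^i$ for each $i$. A one-line check gives $AB = BA'$, and hence $B^{-1}AB = A'$.

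The main step is then verifying $UB = U$. Apply Lemma \ref{chain} to the $A$-invariant subspace $U$: there is a unique monic divisor $f(t)$ of $m_A(t)$ with $U = \langle wf(A) \rangle_A$ and $m_{A|_U}(t) = g(t)$ where $m_A = fg$. Applying Lemma \ref{chain} to $A'$, the cyclic vector $w'$, and the (also $A'$-invariant) subspace $U$ likewise yields monic $f'(t), g'(t)$ with $m_{A'} = f'g'$, $m_{A'|_U} = g'$, and $U = \langle w'f'(A')\rangle_{A'}$. The hypotheses $m_A = m_{A'}$ and $m_{A|_U} = m_{A'|_U}$ force $g = g'$, and hence $f = f'$. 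Using $A^iB = B(A')^i$, we obtain $wA^jf(A)B = w'(A')^jf(A')$ for all $j$, so
$$UB = \langle wf(A) \rangle_A B = \langle w'f(A')\rangle_{A'} = U,$$
i.e.\ $B \in \GL(V)_U$. The only potential subtlety is confirming that the divisor $f$ attached to $U$ by Lemma \ref{chain} is independent of the cyclic vector chosen, but this is immediate from the bijective correspondence there, since $f = m_A/m_{A|_U}$ is determined by the minimal polynomials alone; this is precisely what lets the same $f$ work for both $A$ and $A'$.
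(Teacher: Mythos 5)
Your argument is correct, and it uses the same key tool (Lemma \ref{chain}), but you take a more hands-on route for the converse than the paper does. The paper simply cites Corollary \ref{min=conjclasses} to obtain \emph{some} $X \in \GL(V)$ with $X^{-1}AX = A'$, observes that both $U$ and $UX$ are $A'$-invariant subspaces on which $A'$ has minimal polynomial $m_{A|_U}$, and then invokes the one-to-one correspondence in Lemma \ref{chain} (invariant subspaces $\leftrightarrow$ monic divisors of $m_{A'}$) to conclude $U = UX$ without ever needing to know what $X$ looks like. You instead build the conjugating element $B$ explicitly from a pair of cyclic bases --- in effect reproving Corollary \ref{min=conjclasses} on the fly --- and then use the explicit parametrization $U = \langle wf(A)\rangle_A$, $U = \langle w'f'(A')\rangle_{A'}$ together with the identity $wA^mB = w'(A')^m$ to compute $UB = U$ directly. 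Both arguments ultimately rest on the same uniqueness content of Lemma \ref{chain}: yours reads it off from the explicit formula for the invariant subspace, the paper's invokes the injectivity of the correspondence abstractly. The paper's version is shorter and works for an arbitrary conjugator; yours is more self-contained and makes the mechanism concrete, at the cost of having to track the specific $B$. Your closing remark about independence of the choice of cyclic vector is a harmless aside --- since you fix one cyclic vector per matrix and only ever compare the associated divisors via the minimal polynomials, the issue never actually arises --- but it does no damage.
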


\begin{proof}
If $A$ and $A'$ are $\GL(V)_U$-conjugate then also $A|_U$ and $A'|_U$ are $\GL(U)$-conjugate, and hence
by Lemma \ref{conj}, $c_A=c_{A'}$ and $c_{A|_U}=c_{A'|_U}$. By Lemma \ref{cycA|U}, $A|_U$ and $A'|_U$ are
cyclic also, and hence $m_A=c_A=c_{A'}=m_{A'}$ and similarly $m_{A|_U}=m_{A'|_U}$.

Conversely suppose that $m_A(t) = m_{A'}(t)$ and $m_{A|_U}(t) = m_{A'|_U}(t)$.
Then by Corollary \ref{min=conjclasses}, there exists $X \in \GL(V)$ such that $X^{-1} A X = A'$.
By assumption $U$ is an $A'$-invariant subspace and the minimal polynomial of
$A'|_U$ is $m_{A'|_U}=m_{A|_U}$. We also have that $UX$ is $A'$-invariant
(since $A'=X^{-1}AX$)
and the minimal polynomial of $A'$ on $UX$ is $m_{A|_U}$.
By Lemma \ref{chain}, it follows that $U=UX$ since the minimal polynomial of
$A'$ on each space is the same. Hence $X \in \GL(V)_U$.
\end{proof}

This lemma characterises the set of conjugacy classes of cyclic matrices
in $\M(V)_U$ under the action of
$\GL(V)_U$. There is one conjugacy class for each pair
$f,h$ where $h$ is a monic degree $n$ polynomial, $f$ is a monic
degree $r$ polynomial dividing $h$ and $r = \dim (U)$.
For matrices in $\GL(V)_U$, we in addition require $f$ and $h$ to have
a nonzero constant term.

\subsection{Centralisers} \label{BG-Cent}

This section describes the centralisers of cyclic
matrices in $\GL(V)_U$. For $A\in \M(V)$, let $C_{\GL(V)}(A)$ denote the centraliser of $A$ in $\GL(V)$.

\begin{lemma} \label{cent}
Let $V=\mathbb{F}_q^n$, $U$ be a subspace of $V$, and $A \in \M(V)_U$
be cyclic. Then $C_{\GL(V)}(A) \leq \GL(V)_U$.
\end{lemma}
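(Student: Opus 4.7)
The plan is to show that any $B \in C_{\GL(V)}(A)$ must satisfy $UB = U$. The key observation is that conjugation by $B$ maps $A$-invariant subspaces to $A$-invariant subspaces of the same ``type'', and since $A$ is cyclic, Lemma \ref{chain} forces uniqueness.

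First I would verify that $UB$ is $A$-invariant. For $u \in U$, the commutation $AB = BA$ yields $(uB)A = u(BA) = u(AB) = (uA)B$, which lies in $UB$ because $U$ is $A$-invariant. Next I would observe that the restriction $B|_U \colon U \to UB$ is a bijective linear map intertwining the actions of $A$, since the same identity $(uA)B = (uB)A$ shows it is an $A$-module isomorphism. Consequently $A|_U$ and $A|_{UB}$ are similar, and so $m_{A|_U}(t) = m_{A|_{UB}}(t)$.

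Now both $U$ and $UB$ are $A$-invariant subspaces of $V$, and $A$ is cyclic on $V$. By Lemma \ref{chain}, the $A$-invariant subspaces of $V$ are in one-to-one correspondence with the monic divisors of $m_A(t)$, where the subspace corresponding to a divisor $f$ is $\langle w f(A)\rangle_A$ and the minimal polynomial of $A$ on that subspace is the complementary divisor $g = m_A/f$. In particular the minimal polynomial $m_{A|_W}(t)$ determines the invariant subspace $W$ uniquely. Since $m_{A|_U} = m_{A|_{UB}}$, we conclude $U = UB$, so $B \in \GL(V)_U$.

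The argument is mostly bookkeeping; the only point requiring care is the appeal to Lemma \ref{chain} to deduce that $A$-invariant subspaces are classified by the minimal polynomial of the restriction. This is exactly what that lemma furnishes, so there is no real obstacle — the whole proof is just the observation that commuting with a cyclic matrix cannot move an invariant subspace to a different one, because there is at most one invariant subspace on which $A$ has a given minimal polynomial.
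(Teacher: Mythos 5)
Your proof is correct and conceptually close to the paper's, but the mechanics differ slightly. The paper's proof is a bit more economical: after writing $U = \langle wf(A)\rangle_A$ with $m_{A|_U}(t) = g(t)$ where $m_A = fg$, it observes that $UB$ is annihilated by $B^{-1}g(A)B = g(A)$, hence $UB \subseteq \mathrm{null}(g(A)) = U$ by Lemma \ref{W}(3), and equality follows by dimension count. You instead establish that $B|_U$ is an $A$-module isomorphism onto $UB$, deduce $m_{A|_U} = m_{A|_{UB}}$, and then invoke the one-to-one correspondence of Lemma \ref{chain} to force $U = UB$. Both arguments rest on the same rigidity of $A$-invariant subspaces when $A$ is cyclic; yours takes a small detour through the isomorphism (the paper only needs the weaker fact that $m_{A|_{UB}}$ divides $g$), but the appeal to Lemma \ref{chain}'s bijection is legitimate and the proof is sound.
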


\begin{proof}
Let $B \in C_{\GL(V)}(A)$, so $B^{-1} A B = A$.
Let $w$ be a cyclic vector for $A$.
By Lemma \ref{chain}, $U=\langle wf(A) \rangle_A$ for some monic $f(t)$ dividing $m_A(t)$
and the minimal polynomial of $A$ on $U$ is $g(t)$ where $m_A(t)=f(t)g(t)$.
The subspace $UB$ is also invariant under $B^{-1}AB = A$ and, since $Ug(A)=0$,
$UB$ is annihilated by $B^{-1} g(A) B = g(B^{-1}AB) = g(A)$.
Hence $UB \subseteq {\rm null}(g(A))$ and by Lemma \ref{W}(3),
${\rm null}(g(A)) = \langle wf(A) \rangle_A =U$.
Then since $\dim (UB) = \dim (U)$ we have that $UB=U$.
Hence $B$ fixes $U$ and $B \in \GL(V)_U$.
\end{proof}

A proof of the next lemma may be found in \cite[Corollary 2.3 and Remark 2.6]{PNcyclic}.

\begin{lemma} \label{same_cent}
Let $n=ij$ and suppose that $p$ is an irreducible polynomial of degree $i$ over
$\mathbb{F}_q$, and $A \in \M(n,q)$ is a
cyclic matrix with characteristic polynomial $p^j$. Then
$|C_{\GL(n,q)}(A)| = q^{ij}(1-q^{-i})$.
\end{lemma}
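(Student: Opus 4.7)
The plan is to reduce the statement to a standard count of units in the quotient ring $\mathbb{F}_q[t]/(p^j(t))$. The first step is to identify the centraliser of $A$ in the full matrix algebra. Since $A$ is cyclic, if $w$ is a cyclic vector then the vectors $w, wA, \ldots, wA^{n-1}$ form a basis of $\mathbb{F}_q^n$, and any matrix $B$ commuting with $A$ is completely determined by the vector $wB$. Writing $wB = w h(A)$ for the unique polynomial $h(t)$ of degree less than $n$ realising this image (which exists because $\{wA^k\}_{k<n}$ is a basis), one checks that $B$ and $h(A)$ agree on the whole basis, so $B = h(A)$. Hence $C_{\M(n,q)}(A) = \mathbb{F}_q[A]$, which as an $\mathbb{F}_q$-algebra is isomorphic to $\mathbb{F}_q[t]/(m_A(t)) = \mathbb{F}_q[t]/(p^j(t))$ and has order $q^{\deg m_A} = q^{ij}$.

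Next I would count the units of this quotient ring. Because $p(t)$ is irreducible, the ring $\mathbb{F}_q[t]/(p^j(t))$ is local, with unique maximal ideal generated by the class of $p(t)$; equivalently, a residue $h(t) + (p^j(t))$ is a unit if and only if $\gcd(h(t), p^j(t)) = 1$, which (by irreducibility of $p$) is the same as $p(t) \nmid h(t)$. Counting non-units: these are the classes of polynomials $h(t) = p(t) g(t)$ with $\deg g < ij - i$, giving exactly $q^{ij-i}$ of them. Therefore the number of units is
\[
q^{ij} - q^{ij-i} = q^{ij}(1 - q^{-i}).
\]

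Finally I would observe that, under the identification of $\mathbb{F}_q[A]$ with $\mathbb{F}_q[t]/(p^j(t))$, the units of the ring are exactly the elements of $\mathbb{F}_q[A] \cap \GL(n,q) = C_{\GL(n,q)}(A)$, since an element $h(A) \in \mathbb{F}_q[A]$ is invertible in $\M(n,q)$ precisely when it is a unit in the subalgebra $\mathbb{F}_q[A]$ (any inverse must also commute with $A$, hence lies in $\mathbb{F}_q[A]$ by the first step). This yields the claimed equality. The only mildly nontrivial point is the identification $C_{\M(n,q)}(A) = \mathbb{F}_q[A]$ for cyclic $A$, after which everything reduces to the elementary unit count above.
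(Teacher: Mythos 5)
Your proof is correct. The paper does not prove this lemma itself but cites Neumann and Praeger's \emph{Cyclic matrices over finite fields} for it, and your argument is exactly the standard one used there: for cyclic $A$ the centraliser in $\M(n,q)$ is $\mathbb{F}_q[A]\cong\mathbb{F}_q[t]/(p^j)$, and $|C_{\GL(n,q)}(A)|$ is the number of units of this local ring, $q^{ij}-q^{ij-i}=q^{ij}(1-q^{-i})$.
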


The statement of Lemma \ref{same_cent} is for an arbitrary irreducible
degree $i$ polynomial. Therefore the size of the centraliser of any matrix
with characteristic polynomial the $j$th power of irreducible degree $i$ polynomial,
depends only on $i$ and $j$, and we set

\begin{equation} \label{Centij}
Cent(i,j) := q^{ij} (1-q^{-i})
\end{equation}

\noindent
so that, by Lemma \ref{same_cent}, $Cent(i,j) = |C_{\GL(V)}(A)|$ for
any cyclic matrix $A$ with characteristic polynomial $p^j$ for
some monic degree $i$ irreducible polynomial $p$.
We will often write $Cent(p^j)$ in this instance also.

\begin{lemma}\label{Cent}
Let $f$ and $g$ be distinct monic irreducible polynomials and let
$a, b \in \mathbb{Z}^+$. Then $Cent(f^a) Cent(g^b) = Cent(f^ag^b).$
\end{lemma}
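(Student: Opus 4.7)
The plan is to treat $Cent(f^ag^b)$ as shorthand for $|C_{\GL(V)}(A)|$, where $A \in \M(V)$ is any cyclic matrix with characteristic polynomial $f^ag^b$ and $\dim V = a\deg(f)+b\deg(g)$. This is unambiguous by Corollary~\ref{min=conjclasses}, since all such matrices form a single $\GL(V)$-conjugacy class and therefore have centralisers of equal order. The task reduces to showing $|C_{\GL(V)}(A)| = Cent(f^a)\cdot Cent(g^b)$.

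First I would fix such an $A$ and invoke the primary decomposition from Section~\ref{2.2}, obtaining $V = V_1 \oplus V_2$ and $A = A_1 \oplus A_2$, where $A_1$ is cyclic on $V_1$ with characteristic polynomial $f^a$ and $A_2$ is cyclic on $V_2$ with characteristic polynomial $g^b$. The key structural observation is that the primary components are cut out by polynomials in $A$: applying Lemma~\ref{W}(3) to the two factorisations $m_A = f^a\cdot g^b = g^b \cdot f^a$ of the minimal polynomial shows that $V_1 = \nulls(f(A)^a)$ and $V_2 = \nulls(g(A)^b)$.

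Next I would deduce the factorisation of the centraliser. Any $B \in C_{\GL(V)}(A)$ commutes with both $f(A)^a$ and $g(A)^b$, and therefore stabilises $V_1$ and $V_2$. Restriction to the two summands gives an injective homomorphism
$$C_{\GL(V)}(A) \longrightarrow C_{\GL(V_1)}(A_1) \times C_{\GL(V_2)}(A_2), \qquad B \mapsto (B|_{V_1},\,B|_{V_2}),$$
which is also surjective since any pair $(B_1,B_2)$ of commuting elements reassembles into a block-diagonal element $B_1 \oplus B_2 \in \GL(V)$ commuting with $A_1 \oplus A_2 = A$. Hence $|C_{\GL(V)}(A)| = |C_{\GL(V_1)}(A_1)|\cdot|C_{\GL(V_2)}(A_2)|$, and each factor is $Cent(f^a)$, $Cent(g^b)$ respectively by Lemma~\ref{same_cent}, yielding the claim.

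The main obstacle is really just the one structural point that centralising elements must preserve the primary components; once that is in hand the rest is bookkeeping. A cleaner but less self-contained alternative would be to note that for cyclic $A$ one has $C_{\GL(V)}(A) \cong (\F_q[t]/(m_A))^\times$ and apply the Chinese Remainder Theorem to the coprime factors $f^a,\,g^b$; this reduces the lemma to a single algebraic identity but relies on a fact about cyclic centralisers not recorded in the excerpt.
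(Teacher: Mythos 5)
Your proof is correct and takes essentially the same approach as the paper: primary decomposition $V=V_1\oplus V_2$, centraliser elements must preserve the primary components, and the centraliser therefore factors as $C_{\GL(V_1)}(A_1)\times C_{\GL(V_2)}(A_2)$. You re-derive the stabilisation step directly from Lemma~\ref{W}(3), whereas the paper invokes Lemma~\ref{cent} (itself proved via Lemma~\ref{W}(3)), so the underlying argument is the same.
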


\begin{proof}
Let $A$ be a cyclic matrix over a vector space $V$ with minimal
polynomial $f^ag^b$. Then the primary decomposition of $V$ under $A$ is
$V = V_1 \oplus V_2$ where $A|_{V_1}$ is cyclic
with minimal polynomial $f^a$ and $A|_{V_2}$ is cyclic
with minimal polynomial $g^b$.

By Lemma \ref{cent}, $C_{\GL(V)}(A) \subseteq \GL(V)_{V_1} \cap \GL(V)_{V_2} \cong \GL(V_1) \times \GL(V_2)$
and hence $C_{\GL(V)}(A) \cong C_{\GL(V_1)}(A|_{V_1}) \times C_{\GL(V_2)}(A|_{V_2})$.
It follows that
$Cent(f^a g^b) = Cent(f^a) Cent(g^b)$.
\end{proof}

An analogous proof gives the following general result.

\begin{corollary} \label{Cent(p_i)}
Let $f = \displaystyle \prod_{i=1}^k f_i^{\alpha_i}$ where the $f_i$ are
pairwise distinct monic irreducible polynomials of degree $d_i$. Then
$$ Cent(f)= \prod_{i=1}^k Cent(f_i^{\alpha_i}) = \prod_{i=1}^k Cent(d_i,
\alpha_i).$$
\end{corollary}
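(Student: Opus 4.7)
The plan is to prove this by induction on $k$, using Lemma \ref{Cent} as the base of the argument and iterating its proof idea.

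For $k=1$ the statement is simply the definition of $Cent(f_1^{\alpha_1})$ together with Lemma \ref{same_cent} expressed via the abbreviation $Cent(d_1,\alpha_1)$ from equation~(\ref{Centij}), so there is nothing to prove. For $k=2$ the first equality is exactly Lemma \ref{Cent} and again the second is by definition.

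For the inductive step, assume the result for products of $k-1$ distinct prime powers and let $A$ be a cyclic matrix on $V$ with minimal polynomial $f = \prod_{i=1}^k f_i^{\alpha_i}$. Writing $g = \prod_{i=2}^k f_i^{\alpha_i}$, the primary decomposition of $V$ under $A$ takes the form $V = V_1 \oplus W$, where $A|_{V_1}$ is cyclic with minimal polynomial $f_1^{\alpha_1}$, and $A|_W$ is cyclic with minimal polynomial $g$ (by Lemma \ref{cyccoprime} since $\gcd(f_1^{\alpha_1},g)=1$ and $A$ is cyclic on $V$). Applying Lemma \ref{cent} to the $A$-invariant subspaces $V_1$ and $W$ separately shows that $C_{\GL(V)}(A) \leq \GL(V)_{V_1} \cap \GL(V)_W \cong \GL(V_1) \times \GL(W)$, and this containment is an equality because any pair of centralising elements on the two summands assembles into a centralising element of $A$ on $V$. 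Hence
$$C_{\GL(V)}(A) \cong C_{\GL(V_1)}(A|_{V_1}) \times C_{\GL(W)}(A|_W),$$
so $Cent(f) = Cent(f_1^{\alpha_1}) \cdot Cent(g)$. Applying the inductive hypothesis to $g$ then yields
$$Cent(f) = Cent(f_1^{\alpha_1}) \prod_{i=2}^k Cent(f_i^{\alpha_i}) = \prod_{i=1}^k Cent(f_i^{\alpha_i}),$$
and the second equality in the statement is then the definition $Cent(f_i^{\alpha_i}) = Cent(d_i,\alpha_i)$ via Lemma~\ref{same_cent}.

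There is no real obstacle here; the only point requiring any care is to justify the direct product decomposition of the centraliser across the primary summands, which is essentially the same argument already used in the proof of Lemma \ref{Cent}. Everything else is bookkeeping and invocation of the definition.
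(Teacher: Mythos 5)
Your proof is correct and essentially the same as the paper's: the paper leaves the corollary to the reader with the remark that ``an analogous proof'' to Lemma~\ref{Cent} applies, meaning one runs the centraliser-decomposition argument on the full primary decomposition $V = \oplus_{i=1}^k V_i$ at once, whereas you wrap the identical idea in an induction on $k$, peeling off one primary component at a time. One small wording slip: the phrase ``and this containment is an equality'' attaches to the inclusion $C_{\GL(V)}(A) \leq \GL(V)_{V_1}\cap\GL(V)_W$, which is not an equality; what you mean (and what your displayed isomorphism correctly states) is that the image of $C_{\GL(V)}(A)$ under $\GL(V)_{V_1}\cap\GL(V)_W \cong \GL(V_1)\times\GL(W)$ equals $C_{\GL(V_1)}(A|_{V_1})\times C_{\GL(W)}(A|_W)$.
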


\section{The work of Wall}

In this section we outline G.E. Wall's \cite{wall} generating function
method for determining the limiting proportion of cyclic matrices
in $\GL(V)$ and $\M(V)$. Our notation is based on that used in \cite{wall}.

\subsection{Cyclic Matrices in General Linear Groups} \label{GL(V)}

Let

$$C_{\GL}(t) = 1 + \sum_{n=1}^\infty c_{\GL}(n) t^n$$

\noindent
be the generating function for the proportion of cyclic matrices in
$\GL(V)$ where for each $n$, $V=\mathbb{F}_q^n$ and $c_{\GL}(n)$ denotes
the proportion of cyclic matrices in $\GL(V)$.

Let $\Gamma_{\GL}(n)$ be the set of all cyclic matrices in $\GL(V)$. Then by
Lemma \ref{conj} there is a one-to-one correspondence between the
set of orbits of $\GL(V)$ in its action on $\Gamma_{\GL}(n)$ by conjugation and
the set of monic degree $n$ polynomials $h$ over $\mathbb{F}_q$ such that
$h(0) \ne 0$. Denote the orbit on $\Gamma_{\GL}(n)$ consisting of all
matrices with minimal polynomial $h$ by $\Gamma_h$. By the Orbit-Stabiliser
Theorem

$$|\Gamma_h| = \frac{|\GL(V)|}{Cent(h)}$$
\noindent
where $Cent(h) = |C_{\GL(V)}(A)|$ for a matrix $A \in \Gamma_h$.
To enable us to focus on irreducible polynomials of a given degree, we make
the following definition.

\begin{definition} \label{P}
{\rm Let $\mathcal{P}^+$ be the set of all monic polynomials over
$\mathbb{F}_q$ with a nonzero constant term (including $1$) and let
$\mathcal{P}^+_i$ be the subset of $\mathcal{P}^+$ containing the
constant polynomial $1$ and those polynomials whose irreducible
factors all have degree $i$.}
\end{definition}

In light of this definition it follows that

\begin{equation} \label{cn}
c_{\GL}(n) =
\sum_{
\begin{array} {c}
\vspace{-0.2cm}
\textrm{\scriptsize $h$$\in$$\mathcal{P}^+$} \\
\vspace{-0.2cm}
\textrm{\scriptsize $\deg(h)$$=$$n$} \\
\end{array}
}
\frac{|\Gamma_h|}{|\GL(V)|}
= \sum_{
\begin{array} {c}
\vspace{-0.2cm}
\textrm{\scriptsize $h$$\in$$\mathcal{P}^+$} \\
\vspace{-0.2cm}
\textrm{\scriptsize $\deg(h)$$=$$n$} \\
\end{array}
}
\frac{1}{Cent(h)}.
\end{equation}

\noindent

For each monic irreducible polynomial $p$ of degree $i$,
define the formal power series

\begin{equation} \label{F_p}
F^p = F^p(t) := 1 + \sum_{j=1}^\infty \frac{t^{ij}}{Cent(p^j)}.
\end{equation}

\noindent
Using (\ref{cn}), the
product of the $F^p$ over all monic irreducible polynomials $p$ excluding $t$ is

\begin{equation} \label{bef312}
\prod_{p \ne t} F^p =  \sum_{h \in \mathcal{P}^+}
\frac{t^{\deg(h)}}{Cent(h)} = 1 + \sum_{n=1}^\infty c_{\GL}(n) t^{n} =
C_{\GL}(t).
\end{equation}

\begin{definition}\label{N}
{\rm Let $N(i,q)$ be the number of monic degree $i$
irreducible polynomials over $\mathbb{F}_q$ and let $N^+(i,q)$ be the
number of polynomials in $N(i,q)$ with nonzero constant term.}
\end{definition}

Thus $N^+(i,q) = N(i,q)$ if $i=1$ and $N(1,q) = N^+(1,q) + 1 = q$.
Note that $F^p(t)$ depends only on the degree and multiplicity of the irreducible factors of $p$.
Let $F_i(t)$ denote the following formal power series.

\begin{equation} \label{F_i}
F_i = F_i(t) := \prod_{
\begin{array}{c}
\vspace{-0.2cm}
\textrm{\scriptsize $deg(p)$$=$$i$} \\
\vspace{-0.2cm}
\textrm{\scriptsize $p$$\ne$$t$} \\
\end{array}
}
F^p(t)
= \left(1+ \sum_{j=1}^\infty \frac{t^{ij}}{Cent(i,j)}\right)^{N^+(i,q)}.
\end{equation}

\noindent
It follows that

\begin{equation} \label{C}
C_{\GL}(t) = \prod_{p\ne t} F^p = \prod_{i=1}^\infty F_i.
\end{equation}

The following lemma underpins the proof of Wall's convergence result in Theorem \ref{limpropGL(V)}
and we will also use it in Section \ref{chap4}.

\begin{lemma} \label{1-t}
Suppose that $g(u) = \sum_{n\ge 0} a_n u^n$ and $g(u) = f(u)/(1-u)$ for
$|u|<1$. If $f(u)$ is analytic in a disc of radius $R$, where $R>1$, then
$\lim_{n\rightarrow \infty} a_n = f(1)$ and $|a_n -f(1)| = O(d^{-n})$ for
any $d$ such that $1 < d < R$.
\end{lemma}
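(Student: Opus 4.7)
The plan is to write $f$ as a power series, identify $a_n$ as the partial sums of the coefficients of $f$, and then use the fact that analyticity in a disc of radius $R>1$ forces those coefficients to decay geometrically.

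First I would expand $f(u) = \sum_{k \ge 0} b_k u^k$, which converges absolutely for $|u|<R$ since $f$ is analytic in that disc. Multiplying by the geometric series $1/(1-u) = \sum_{k \ge 0} u^k$ (valid for $|u|<1$) and comparing with $g(u) = \sum_n a_n u^n$ via Cauchy product gives the identity $a_n = \sum_{k=0}^n b_k$. Because $1 < R$, the series $\sum_k b_k$ converges absolutely to $f(1)$, so already $a_n \to f(1)$ as $n\to\infty$, and moreover
\[
f(1) - a_n = \sum_{k=n+1}^\infty b_k.
\]

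For the rate of convergence I would invoke the standard Cauchy coefficient estimate: fix any $d$ with $1<d<R$, let $M := \max_{|u|=d} |f(u)|$ (finite since $f$ is analytic on a neighbourhood of the closed disc of radius $d$), and conclude that $|b_k| \le M\, d^{-k}$ for all $k \ge 0$. Then
\[
|a_n - f(1)| \;\le\; \sum_{k=n+1}^\infty |b_k| \;\le\; M \sum_{k=n+1}^\infty d^{-k} \;=\; \frac{M}{d-1}\, d^{-n},
\]
which is $O(d^{-n})$ as required.

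There is no real obstacle here; the argument is purely a coefficient comparison plus Cauchy's estimate on a circle of radius strictly between $1$ and $R$. The only care needed is to note that the convolution identity $a_n=\sum_{k=0}^n b_k$ holds because both $f(u)$ and $1/(1-u)$ have absolutely convergent power series in a common disc (any disc of radius $<\min(1,R)=1$), justifying the rearrangement used to identify coefficients.
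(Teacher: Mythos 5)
The paper states Lemma \ref{1-t} without proof---it is presented as a known analytic fact underlying Wall's convergence argument, and no argument appears in the text---so there is no ``paper proof'' to compare against. Your argument is the standard one and is correct: the Cauchy product identity $a_n = \sum_{k=0}^n b_k$ is justified by absolute convergence on a disc of radius less than $1$, the limit $a_n \to f(1)$ follows from absolute convergence of $\sum_k b_k$ at $u=1$ (which holds since $R>1$), and the rate bound $|a_n - f(1)| \le \frac{M}{d-1}d^{-n}$ follows from the Cauchy coefficient estimate $|b_k|\le M d^{-k}$ on the circle $|u|=d$ for any $1<d<R$. Nothing is missing, and this is precisely the argument a careful reader would supply for the unproved lemma.
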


Wall proved that $C_{\GL}(t)$ is convergent for $|t| < 1$ and that
$(1-t) C_{\GL}(t)$ is convergent for $|t| < q^2$. Hence
by Lemma \ref{1-t} the limit of the coefficients of $C_{\GL}(t)$ is equal to
$(1-t) C_{\GL}(t)$ evaluated at $t=1$.
This, and the rate of convergence, was calculated by Wall and is given in Theorem \ref{limpropGL(V)}.

\begin{theorem} \label{limpropGL(V)}
$\lim_{n\to\infty} c_{\GL}(n) = c_{\GL}(\infty)$ exists and satisfies

$$ c_{\GL}(\infty) = \frac{1 - q^{-5}}{1 + q^{-3}} = 1 - q^{-3} + O(q^{-5}) $$

\noindent
and $|c_{\GL}(n) - c_{\GL}(\infty)| = O(d^{-n})$ for any $d$ with $1 < d < q^{-2}$.

\end{theorem}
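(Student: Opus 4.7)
The strategy is to apply Lemma~\ref{1-t} to $g(t) := C_{\GL}(t)$: I want to write $C_{\GL}(t) = f(t)/(1-t)$ with $f$ analytic on some disc $|t| < R$ with $R > 1$, so that $c_{\GL}(\infty) = f(1)$ and $|c_{\GL}(n) - c_{\GL}(\infty)| = O(d^{-n})$ for any $1 < d < R$. Summing the geometric series in~(\ref{F_i}) gives the closed form
\[
F_i(t) = \bigl(1 - (t/q)^i\bigr)^{-N^+(i,q)}\,\left(1 + \frac{t^i}{q^i(q^i-1)}\right)^{N^+(i,q)},
\]
and the pole at $t=1$ lives entirely in the first factor. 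Indeed, the zeta-function Euler product
\[
\prod_{\substack{p\text{ monic irr.}\\ p\ne t}}\bigl(1-u^{\deg p}\bigr)^{-1} \;=\; \sum_{f\in \mathcal{P}^+} u^{\deg f} \;=\; \frac{1-u}{1-qu}
\]
(obtained by counting $\mathcal{P}^+$-polynomials of each degree) gives, after specializing $u = t/q$, the telescoping $\prod_i \bigl(1-(t/q)^i\bigr)^{-N^+(i,q)} = (1-t/q)/(1-t)$. Plugging back into~(\ref{C}) yields
\[
C_{\GL}(t) \;=\; \frac{1-t/q}{1-t}\,G(t), \qquad G(t) := \prod_{i\ge 1}\left(1 + \frac{t^i}{q^i(q^i-1)}\right)^{N^+(i,q)}.
\]

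The central technical step, and the main obstacle, is to show that $G(t)$ extends to an analytic function on $|t| < R$ for some $R > 1$ (in fact one can take $R = q^2$, following Wall). Each factor of $G$ is a polynomial, so this is a statement about convergence of the infinite product. Taking logarithms and using $|\log(1+z)| \le 2|z|$ for small $|z|$ together with the bound $N^+(i,q) \le q^i/i$, the general term of $\sum_{i} N^+(i,q)\,\log\!\bigl(1 + t^i/(q^i(q^i-1))\bigr)$ is dominated by $C\,|t|^i/\bigl(i\,q^i(1-q^{-i})\bigr)$, which yields absolute convergence and analyticity on some disc of radius strictly exceeding $1$; a sharper bookkeeping on the leading-order asymptotics of $N^+(i,q)$ and $q^i(q^i-1)$ pushes the radius out to $q^2$, giving the rate of convergence in the theorem.

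Once $f(t) := (1-t/q)\,G(t)$ is known to be analytic on $|t|<R$, Lemma~\ref{1-t} delivers the existence of $c_{\GL}(\infty)=f(1)$ and the claimed rate of convergence. It remains to identify
\[
f(1) \;=\; (1-q^{-1})\,\prod_{i\ge 1}\left(1 + \frac{1}{q^i(q^i-1)}\right)^{N^+(i,q)}
\]
with $(1-q^{-5})/(1+q^{-3})$. I would rewrite each factor as $1 + 1/(q^i(q^i-1)) = (q^{3i}+1)/(q^i(q^{2i}-1))$ and then apply further specializations of the zeta-function identity (combined with the cyclotomic factorization $q^{3i}+1 = (q^i+1)(q^{2i}-q^i+1)$) to collapse the infinite product to a ratio of finite cyclotomic expressions; the resulting identity reduces to $(1-q^{-5})/((1-q^{-1})(1+q^{-3}))$, so that $c_{\GL}(\infty) = (1-q^{-5})/(1+q^{-3}) = 1 - q^{-3} + O(q^{-5})$.
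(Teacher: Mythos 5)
The paper itself does not prove this statement---it simply cites Wall---so what you have written is a reconstruction of (the skeleton of) Wall's argument rather than of an argument in the paper. That said, the algebraic parts of your reconstruction are correct: summing the geometric series inside $F_i$ gives $F_i(t)=\bigl(1-(t/q)^i\bigr)^{-N^+(i,q)}\bigl(1+\frac{t^i}{q^i(q^i-1)}\bigr)^{N^+(i,q)}$, the zeta-function identity $\prod_i(1-u^i)^{-N^+(i,q)}=\frac{1-u}{1-qu}$ does telescope the first factors to $\frac{1-t/q}{1-t}$, and your evaluation of $f(1)=(1-q^{-1})G(1)$ by rewriting $1+\frac{1}{q^i(q^i-1)}=\frac{1+q^{-3i}}{1-q^{-2i}}$ and specializing the same identity at $u=q^{-2},q^{-3},q^{-6}$ collapses correctly to $\frac{1-q^{-5}}{1+q^{-3}}$.

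There is, however, a genuine gap in the analyticity step. The termwise bound you invoke,
\[
N^+(i,q)\,\Bigl|\log\Bigl(1+\tfrac{t^i}{q^i(q^i-1)}\Bigr)\Bigr|\;\le\;\frac{2\,|t|^i}{i\,(q^i-1)},
\]
produces a series whose ratio of consecutive terms tends to $|t|/q$; it therefore converges only for $|t|<q$, and no ``sharper bookkeeping'' of the constants in such a termwise estimate can do better, because $G(t)$ really is singular at $t=q$. Indeed, expanding $\frac{1}{q^i(q^i-1)}=\sum_{k\ge 2}q^{-ik}$ and using the fact that $\sum_i N^+(i,q)\,u^i\sim -\log(1-qu)$ as $u\to q^{-1}$ (which follows from the logarithm of the zeta identity) shows that $\log G(t)\sim -\log(1-t/q)$ near $t=q$; hence $G$ has a simple pole there and its Maclaurin series has radius of convergence exactly $q$, not $q^2$. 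The real content of the rate-of-convergence claim is that the prefactor $(1-t/q)$ cancels this simple pole, so that $f(t)=(1-t/q)\,G(t)$ continues analytically \emph{past} $|t|=q$. Your write-up never makes that cancellation explicit; as it stands it establishes analyticity of $f$ only on $|t|<q$, which is enough to obtain the existence and value of $c_{\GL}(\infty)$ from Lemma~\ref{1-t}, but only yields $O(d^{-n})$ for $1<d<q$, strictly weaker than the rate asserted in the theorem.
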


\subsection{Cyclic Matrices in Full Matrix Algebras} \label{M(V)}

Let $\Gamma_\M(n)$ be the set of all cyclic matrices in $\M(V)$, where $V=\F_q^n$. Then by
Lemma \ref{conj} there is a one-to-one correspondence between the
set of orbits of $\GL(V)$ in its action on $\Gamma_\M(n)$ by conjugation and
the set of monic
degree $n$ polynomials over $\mathbb{F}_q$.
Denote the orbit on $\Gamma_\M(n)$ consisting of all matrices with minimal
polynomial $h$ by $\Gamma_h$. Note that $\Gamma_{\GL}(n) \subset \Gamma_\M(n)$ and that
when $h$ has nonzero constant term, the orbit $\Gamma_h$ defined in Section \ref{GL(V)} is the same as the orbit
$\Gamma_h$ we use here. By the Orbit-Stabiliser Theorem

$$|\Gamma_h| = \frac{|\GL(V)|}{Cent(h)}$$
\noindent

\begin{definition} \label{Q}
{\rm Let $\mathcal{P}$ be the set of all monic polynomials over
$\mathbb{F}_q$ (including $1$) and let
$\mathcal{P}_i$ be the subset of $\mathcal{P}$ consisting of the
constant polynomial $1$ and all polynomials whose irreducible
factors all have degree $i$.}
\end{definition}

Recall that $\mathcal{P}^+_i$ consists of $1$ and all monic polynomials
with nonzero constant term whose irreducible factors all have degree $i$.
Note that $\mathcal{P}^+_i = \mathcal{P}_i$ for $i \ge 2$
but we use the notation for convenience.

\begin{definition} \label{omega}
{\rm Let $\omega(n) = \frac{|\GL(n,q)|}{|\M(n,q)|}$.}
\end{definition}

It is easy to show that

\begin{equation} \label{omegaproof}
\omega(n) = \prod_{i=1}^n(1-q^{-i}).
\end{equation}

Denote by $c_\M(n)$ the proportion of cyclic matrices in $\M(V)$.
It follows that

\begin{equation} \label{cMn}
c_\M(n) = \sum_{
\begin{array} {c}
\vspace{-0.2cm}
\textrm{\scriptsize $h$$\in$$\mathcal{P}$} \\
\vspace{-0.2cm}
\textrm{\scriptsize $\deg(h)$$=$$n$} \\
\end{array}
}
\frac{|\Gamma_{h}|}{|\M(V)|}
= \sum_{
\begin{array} {c}
\vspace{-0.2cm}
\textrm{\scriptsize $h$$\in$$\mathcal{P}$} \\
\vspace{-0.2cm}
\textrm{\scriptsize $\deg(h)$$=$$n$} \\
\end{array}
}
\frac{|\Gamma_{h}||\GL(V)|}{|\M(V)||\GL(V)|}
= \sum_{
\begin{array} {c}
\vspace{-0.2cm}
\textrm{\scriptsize $h$$\in$$\mathcal{P}$} \\
\vspace{-0.2cm}
\textrm{\scriptsize $\deg(h)$$=$$n$} \\
\end{array}
}
\frac{\omega(n)}{Cent(h)}
\end{equation}

\noindent
and hence

$$ \frac{c_\M(n)}{\omega(n)} =
\sum_{
\begin{array} {c}
\vspace{-0.2cm}
\textrm{\scriptsize $h$$\in$$\mathcal{P}$} \\
\vspace{-0.2cm}
\textrm{\scriptsize $\deg(h)$$=$$n$} \\
\end{array}
}
\frac{1}{Cent(h)} $$

\noindent
which can be calculated by a similar method to that used for calculating $c_{\GL}(n)$.

The following is a `weighted' generating function for the proportion of cyclic matrices
in full matrix algebras over $\mathbb{F}_q$, and is considered by Wall:

\begin{equation} \label{CM}
C_\M(t) = 1 + \sum_{n=1}^\infty \left(\frac{c_\M(n)}{\omega(n)}\right) t^n.
\end{equation}

The product of the functions $F_i$ defined in (\ref{F_i}) gave

$$ \prod_i F_i(t) = \sum_{h \in \mathcal{P}^+}
\frac{t^{\deg(h)}}{Cent(h)}$$

\noindent
but in Section \ref{GL(V)}, $F_1(t)$ did not include the power series for the polynomial $t$.
Including the polynomial $t$ in Equation (\ref{bef312}) gives

$$ F^t(t) \prod_i F_i(t) = \sum_{h \in \mathcal{P}}
\frac{t^{\deg(h)}}{Cent(h)} = 1 + \sum_{i=1}^\infty \left(\frac{c_\M(n)}{\omega(n)}\right) t^n = C_\M(t). $$

\noindent

It follows that

\begin{displaymath}
\begin{array}{rclr}
C_\M(t) & = & \displaystyle \sum_{h \in \mathcal{P}}
\frac{t^{\deg(h)}}{Cent(h)} \\
& = & F^{t}(t) \prod_i F_i(t) \\
& = & \left( 1 + \sum_{j=1}^\infty \frac{t^{j}}{Cent(t^j)} \right) C_{\GL}(t)
& (\textrm{by (\ref{C})}) \\
& = & \left( 1 + \sum_{j=1}^\infty \frac{t^j}{q^j(1-q^{-1})}  \right) C_{\GL}(t) \\
& = & \left( 1 + \frac{t}{(q-1)(1-tq^{-1})} \right) C_{\GL}(t). \\
\end{array}
\end{displaymath}

To calculate the limiting proportion of the coefficients
of $C_\M(t)$ as $n$ tends to infinity, we use Lemma \ref{1-t} to get

\begin{displaymath}
\begin{array} {rcl}
\displaystyle
\lim_{n\to\infty} \frac{c_\M(n)}{\omega(n)} & = & \left( C_\M(t) (1-t) \right) |_{t=1} \\
& = & \left( C_{\GL}(t) (1-t) \left( 1 + \frac{t}{(q-1)(1-tq^{-1})} \right) \right) |_{t=1} \\
& = & \frac{1 - q^{-5}}{1 + q^{-3}}\times
\frac{1+q^{-3}}{(1-q^{-1})(1-q^{-2})} \\
& = & \frac{1-q^{-5}}{(1-q^{-1})(1-q^{-2})} \\
\end{array}
\end{displaymath}

\noindent
where Theorem \ref{limpropGL(V)} and
\cite[Lemma 2.5.2]{myPhD} are used to justify the second last line.
Since $\lim_{n\to\infty} \omega (n)$ exists, the limit of $c_\M(n)$ as $n$ tends to infinity is

\begin{displaymath}
\begin{array} {rcl}
\displaystyle
\lim_{n\to\infty} c_\M(n) & = & \displaystyle \lim_{n\to\infty} \omega(n)
\times \frac{1-q^{-5}}{(1-q^{-1})(1-q^{-2})} \\
& = & \displaystyle \prod_{i=1}^\infty (1-q^{-i}) \times
\frac{1-q^{-5}}{(1-q^{-1})(1-q^{-2})}\\
& = & (1-q^{-5}) \displaystyle \prod_{i=3}^\infty (1-q^{-i}). \\
\end{array}
\end{displaymath}

\noindent
Hence we have the following theorem, proved by Wall \cite[Equation 6.23]{wall}.
Wall also gave information about the rate of convergence of the $c_{\M}(n)$ to $c_{\M}(\infty)$.

\begin{theorem} \label{limpropM(V)}
$\lim_{n\to\infty} c_{\M}(n) = c_{\M}(\infty)$ exists and satisfies

$$c_\M(\infty) = (1-q^{-5}) \displaystyle \prod_{i=3}^\infty (1-q^{-i}) = 1 - q^{-3} + O(q^{-4})$$
and $|c_{\M}(n) - c_{\M}(\infty)| = O(d^{-n})$ for any $d$ with $1 < d < q^{-2}$.

\end{theorem}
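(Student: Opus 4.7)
The plan is to apply Lemma~\ref{1-t} to $C_\M(t)$, using the factorization $C_\M(t) = \bigl(1 + \tfrac{t}{(q-1)(1-tq^{-1})}\bigr)\,C_{\GL}(t)$ already derived in the preceding discussion. From Wall's proof of Theorem~\ref{limpropGL(V)}, $(1-t)C_{\GL}(t)$ extends to an analytic function on the disc $|t|<q^2$, while the extra rational factor is analytic apart from a simple pole at $t=q$. Hence $(1-t)C_\M(t)$ extends analytically to the disc $|t|<q$, and Lemma~\ref{1-t} yields both the existence of $\lim_{n\to\infty} c_\M(n)/\omega(n)$ and convergence at rate $O(d^{-n})$ for any $1<d<q$, with limit equal to the value of $(1-t)C_\M(t)$ at $t=1$.

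The evaluation at $t=1$ is the routine simplification already displayed in the text: the rational factor becomes $1 + \tfrac{q}{(q-1)^2} = \tfrac{q^2-q+1}{(q-1)^2} = \tfrac{1+q^{-3}}{(1-q^{-1})(1-q^{-2})}$, and the numerator $1+q^{-3}$ cancels against the denominator of Wall's value $(1-t)C_{\GL}(t)|_{t=1} = (1-q^{-5})/(1+q^{-3})$. This gives $\lim_{n\to\infty} c_\M(n)/\omega(n) = (1-q^{-5})/\bigl((1-q^{-1})(1-q^{-2})\bigr)$.

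To recover the stated limit for $c_\M(n)$, multiply by $\omega(n)$. Since $\omega(n) = \prod_{i=1}^n(1-q^{-i})$ converges to $\omega(\infty) = \prod_{i=1}^\infty(1-q^{-i})$ with $|\omega(\infty) - \omega(n)| = O(q^{-n})$, which is immediate from bounding the tail product, the sequence $c_\M(n) = \omega(n)\cdot(c_\M(n)/\omega(n))$ has limit $\omega(\infty)\cdot\tfrac{1-q^{-5}}{(1-q^{-1})(1-q^{-2})} = (1-q^{-5})\prod_{i=3}^\infty(1-q^{-i})$, and expanding this product in powers of $q^{-1}$ gives $1-q^{-3}+O(q^{-4})$. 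The triangle inequality applied to this factorization, using that both factors are bounded and each converges at rate $O(d^{-n})$ for any $1<d<q$, yields the claimed convergence rate for $c_\M(n)\to c_\M(\infty)$. There is essentially no conceptual obstacle; the only point requiring care is that the true radius of analyticity of $(1-t)C_\M(t)$ is $q$ rather than $q^2$, the pole being introduced by the additional factor $F^t(t)$.
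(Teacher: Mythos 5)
Your proof is correct and follows essentially the same route the paper takes for the limit: factor $C_\M(t) = F^t(t)\,C_{\GL}(t)$, apply Lemma~\ref{1-t} to evaluate $(1-t)C_\M(t)$ at $t=1$, and multiply by $\lim_n\omega(n)$ to pass from the weighted series back to $c_\M(n)$; your algebraic simplification of $F^t(1)$ matches the text.

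The one place you go beyond the paper is the rate of convergence, where the paper merely cites Wall and prints $1<d<q^{-2}$ (an evident typo, presumably for $q^2$ as in the $\GL$ theorem). You actually supply a proof, and in doing so you uncover that the bound obtainable from the paper's own factorisation is $1<d<q$, not $q^2$: the factor $F^t(t)=1+\frac{t}{(q-1)(1-tq^{-1})}$ has a simple pole at $t=q$, so $(1-t)C_\M(t)$ cannot extend analytically past $|t|=q$ (absent a fortuitous zero of $(1-t)C_{\GL}(t)$ at $t=q$), and Lemma~\ref{1-t} then gives only $R=q$. Moreover, when you convert from $c_\M(n)/\omega(n)$ to $c_\M(n)$, the multiplication by $\omega(n)$, which converges to $\omega(\infty)$ only at rate $O(q^{-n})$, independently caps the rate at $d<q$. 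So your write-up both fills in the argument the paper omits and corrects the asserted range of $d$.
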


\section{Maximal Reducible Groups and Algebras} \label{chap4}

This section is devoted to cyclic matrices in maximal reducible matrix
groups. We calculate the generating function for the proportions of cyclic
matrices inside such groups and we calculate the limiting proportion
of cyclic matrices as the size of the matrices tends
to infinity. Then we modify our procedures to
calculate the generating function and limiting proportion of cyclic
matrices in the corresponding maximal reducible matrix algebras.

\subsection{The Generating Function} \label{GL(V)_U}

Let $V = \F_q$ and let $U$ be an
$r$-dimensional subspace of $V$. We investigate cyclic matrices in
the setwise stabiliser, $\GL(V)_U$, of $U$ in $\GL(V)$, using similar
notation to (\ref{cn}). Let $\Gamma_{\GL,r}(n)$ be the set of all cyclic elements
of $\GL(V)_U$. Then $\Gamma_{\GL,r}(n)$ is invariant under conjugation by elements of
$\GL(V)_U$ and by Lemma \ref{m,m=conjclasses}, there is a one-to-one correspondence
between the set of orbits on $\Gamma_{\GL,r}(n)$ under this conjugation action of $\GL(V)_U$
and the set of pairs of
monic polynomials $(f,h)$ over $\F_q$ where \deg$(f)=r$, \deg$(h)=n$, $f$ divides
$h$ and $h(0)\ne 0$. Hence all cyclic matrices $A$ in $\GL(V)_U$, such
that $m_{A|_U} = f$ and $m_{A|_V} = h$, lie in the same
$\GL(V)_U$-orbit which we denote by $\Gamma_{f,h}$. Also, by Lemma
\ref{cent} the centraliser in $\GL(V)_U$ of a cyclic matrix $A$ in
$\Gamma_{f,h}$ is equal to the centraliser in $\GL(V)$
of $A$, and hence has order $Cent(h)$ (as defined in Section
\ref{BG-Cent}). We have that

\begin{equation} \label{gamma_fh}
|\Gamma_{f,h}| = \frac{|\GL(V)_U|}{Cent(h)}
\end{equation}
and hence $|\Gamma_{f,h}|$ is the same for any $f$ of degree $r$
dividing $h$.

Since any $r$-dimensional subspace of $V$ can be mapped to any
other $r$-dimensional subspace of $V$ by an element of $\GL(V)$,
the size $|\Gamma_{f,h}|$ is independent of the subspace $U$ of
$V$, depending only on the dimension $r$.
So without any loss of generality let $c_{\GL,r}(n)$ be the
proportion of cyclic matrices in $\GL(V)_U$ and let

$$\displaystyle{C_{\GL,r}(t) = \sum_{n=r}^\infty c_{\GL,r}(n) t^n}$$

\noindent
be the associated generating function. Note that the terms start from
$n=r$ because $c_{\GL,r}(n)$ is not defined
for $n < r$.

\begin{definition} \label{alpha}
{\rm Let $\alpha(h;r)$ denote the number of distinct monic degree $r$ factors of $h$.}
\end{definition}

\noindent Note that $\alpha(h;r)$ will be zero if the degree of $h$ is
less than $r$. Recall that $\mathcal{P}^+$ is the set of monic polynomials
over $\F_q$ with nonzero constant term. Since $| \Gamma_{f,h} |$ is the same
for any degree $r$ factor of $h$ we can write

\begin{equation} \label{crn}
c_{\GL,r}(n) = \sum_{
\begin{array}{c}
\vspace{-0.2cm}
\textrm{\scriptsize $h$$\in$$\mathcal{P}^+$} \\
\vspace{-0.2cm}
\textrm{\scriptsize \deg$(h)$$=$$n$} \\
\vspace{-0.2cm}
\textrm{\scriptsize \deg$(f)$$=$$r$} \\
\vspace{-0.2cm}
\textrm{\scriptsize $f|h$} \\
\end{array}
}
\frac{|\Gamma_{f,h}|}{|\GL(V)_U|} =
\sum_{
\begin{array}{c}
\vspace{-0.2cm}
\textrm{\scriptsize $h$$\in$$\mathcal{P}^+$} \\
\vspace{-0.2cm}
\textrm{\scriptsize \deg$(h)$$=$$n$} \\
\end{array}
}
\frac{\alpha(h;r) |\Gamma_{f,h}|}{|\GL(V)_U|} =
\sum_{
\begin{array}{c}
\vspace{-0.2cm}
\textrm{\scriptsize $h$$\in$$\mathcal{P}^+$} \\
\vspace{-0.2cm}
\textrm{\scriptsize \deg$(h)$$=$$n$} \\
\end{array}
}
\frac{\alpha(h;r)}{Cent(h)}
\end{equation}

\noindent
where in the summation $f$ is a monic degree $r$ divisor of $h$.
If there were an easy way to calculate
$\alpha(h;r)$ then calculating $c_{\GL,r}(n)$ would be simple but unfortunately there
is not.

For each monic irreducible polynomial $p$ of degree $i$ we will again
form the power series $F^p$ as defined in Equation (\ref{F_p}) and let
$F_i$ be as defined in Equation (\ref{F_i}). We saw in Section
\ref{GL(V)} that

$$\prod_{i=1}^\infty F_i = \sum_{h\in\mathcal{P}^+} \frac{t^{\deg(h)}}{Cent(h)}.$$
This is almost the generating function we require except that we need a
factor $\alpha(h;r)$ in the term corresponding to $h$, for each $h$.

We provide two lemmas before the methodology for calculating
a new expression for $C_{\GL,r}(t)$.

\begin{lemma} \label{te}
For $b \in \mathbb{Z}^+$,
$\displaystyle\sum_{m\ge 0} \frac{m^b t^{mk}}{k^m m!} =
\frac{t}{k} \frac{d}{d t} \left( \ldots \left(
\frac{t}{k} \frac{d}{d t}
\left(e^{t^k / k}\right)\right)\right)$, where the differentiation is performed $b$
times.\\
\end{lemma}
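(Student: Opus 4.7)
The plan is a direct induction on $b$, exploiting the fact that the operator $D := \frac{t}{k}\frac{d}{dt}$ acts diagonally on the monomials $t^{mk}$ with eigenvalue $m$. Since the identity is really an identity of formal power series in $t$, I do not need to worry about analytic convergence; termwise differentiation is legitimate.

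First I would record the starting point: expanding the exponential gives
\begin{equation*}
e^{t^k/k} = \sum_{m\ge 0} \frac{1}{m!}\left(\frac{t^k}{k}\right)^m = \sum_{m\ge 0} \frac{t^{mk}}{k^m\, m!},
\end{equation*}
which is the $b=0$ case of the claimed identity. Next I would compute the effect of one application of $D$ on a single term, namely
\begin{equation*}
D\!\left(\frac{t^{mk}}{k^m\, m!}\right) = \frac{t}{k}\cdot \frac{mk\, t^{mk-1}}{k^m\, m!} = \frac{m\, t^{mk}}{k^m\, m!}.
\end{equation*}

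The inductive step is then immediate. Assuming that $b$ applications of $D$ to $e^{t^k/k}$ produce $\sum_{m\ge 0} m^b t^{mk}/(k^m m!)$, one more application of $D$ multiplies the $m$-th term by an additional factor of $m$, yielding $\sum_{m\ge 0} m^{b+1} t^{mk}/(k^m m!)$, which is the identity for $b+1$. Hence after $b$ iterations of $D$ applied to $e^{t^k/k}$ we obtain the left-hand side, as required.

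The only conceivable obstacle would be justifying the interchange of $D$ with the infinite summation, but this is harmless at the level of formal power series since for any fixed degree $N$ only finitely many terms contribute to the coefficient of $t^N$ on either side. So the proof will be essentially a two-line verification plus an induction, with no genuine technical content beyond the elementary computation $D(t^{mk}) = m\, t^{mk}$.
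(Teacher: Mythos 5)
Your proof is correct and takes the same approach as the paper: expand $e^{t^k/k}$ as $\sum_{m\ge 0} t^{mk}/(k^m m!)$, observe that $\frac{t}{k}\frac{d}{dt}$ multiplies the $m$-th term by $m$, and iterate. The paper states this without the explicit induction scaffolding or the formal-power-series remark, but the substance is identical.
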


\begin{proof}
Since $e^{t^k / k} = \sum_{m \ge 0} \frac{t^{km}}{k^m m!}$,
we have $\frac{t}{k} \frac{d}{dt} (e^{t^k / k}) = \sum_{m \ge 0} \frac{m t^{km}}{k^m m!}$
and repeated differentiation gives the result.
\end{proof}

\begin{lemma} \label{diffchoose}
$$\frac{1}{m!} \frac{d^m}{dx^m} \sum_{n\ge 0} a_n x^n = \sum_{n\ge 0}
{{n}\choose{m}}a_n x^{n-m}.$$
\end{lemma}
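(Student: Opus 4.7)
The plan is to reduce the claim to the single-term identity
\begin{equation*}
\frac{1}{m!}\frac{d^m}{dx^m} x^n = \binom{n}{m} x^{n-m},
\end{equation*}
valid for every $n \ge 0$ under the convention that $\binom{n}{m} = 0$ whenever $n < m$. This identity follows from iterated application of the power rule: for $n \ge m$ one has $\frac{d^m}{dx^m} x^n = n(n-1)\cdots (n-m+1)\, x^{n-m}$, and the falling factorial equals $n!/(n-m)! = m!\binom{n}{m}$; for $n < m$ the $m$-fold derivative vanishes identically, matching the convention.

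To finish I would interchange the operator $\frac{1}{m!}\frac{d^m}{dx^m}$ with the infinite sum and apply the per-term identity to each summand, obtaining exactly $\sum_{n\ge 0} \binom{n}{m} a_n x^{n-m}$. The only point deserving comment is the interchange itself. Read as an identity of formal power series, termwise differentiation is simply the definition of $d/dx$ on the power series ring, so no convergence argument is required; this is the reading that matches the way the lemma is used elsewhere in the paper, where $F_i(t)$ and $e^{t^k/k}$ are treated as formal series. Read analytically, inside the radius of convergence of $\sum a_n x^n$ the differentiated series converges uniformly on every compact subdisc, and a standard theorem licenses termwise differentiation.

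There is no substantive obstacle here; the lemma is a bookkeeping identity combining the power rule with the factorial formula $\binom{n}{m} = n!/(m!(n-m)!)$, and its role downstream is to translate the differential operators introduced via Lemma~\ref{te} into coefficient-level operations on the generating function.
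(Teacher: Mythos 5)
Your proof is correct and follows essentially the same route as the paper: termwise application of the power rule, identification of the falling factorial $n(n-1)\cdots(n-m+1)$ with $m!\binom{n}{m}$, and the convention $\binom{n}{m}=0$ for $n<m$. The extra remark justifying the interchange of $\frac{d^m}{dx^m}$ with the infinite sum (formal power series vs.\ analytic) is a sensible addition that the paper leaves implicit.
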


\begin{proof}
Differentiating the power series gives $\sum_{n\ge 0} a_n n(n-1)\ldots (n-m+1)
x^{n-m}$. Now $\frac{n(n-1)\ldots (n-m+1)}{m!} = {{n}\choose{m}}$ so
on noting that ${n \choose m} = 0$ if $n<m$, we get the result.
\end{proof}

For a given $r$, the partitions of
$r$ correspond to all the ways we can obtain a monic degree $r$ polynomial
as a product of
irreducible polynomials. For example, the partitions of $2$ are
$\{2\}$ and $\{1,1\}$. These correspond to a monic irreducible degree $2$ polynomial
and a product of two monic irreducible degree $1$ polynomials respectively. The problem arises when
we have two irreducible \polys of the same degree dividing our degree $r$
polynomial, as is possible in the $\{1,1\}$ case. We can't just choose
two \polys out of all the degree $1$ factors because then we'll be
over-counting if we have polynomials with multiplicity greater than
one. So we need to break this case down into two sub-cases. In one sub-case
we choose two different degree $1$ \polys and in the other sub-case
we choose one degree $1$ \poly of multiplicity at least $2$. These choices
again correspond to the partitions of $2$ but this time the partitions
indicate the multiplicity of our factor not the degree.

Rather than taking partitions of the
components of the original partition, we create a two-dimensional array
whilst keeping in mind what we are counting.
We denote such an array by
$(m_{ij})$, where $m_{ij}$ denotes the number of monic irreducible degree $i$
factors of $h$ that divide our degree $r$ polynomial $f$ with multiplicity $j$.
The constraints on $(m_{ij})$ are that we need the
total degree of all the polynomials dividing $f$ to equal $r$ and we need,
for each $i$, the total number of distinct degree $i$ irreducible
polynomials dividing $f$ to be no more than $N(i,q)$ (see Definition \ref{N}).
We formulate these constraints as follows.

\begin{definition} \label{M}
{\rm Let $\mathcal{M}(r)$ be the set of all two-dimensional integer arrays,
$M = (m_{ij})$, such that
\begin{enumerate}
\item $\sum_{i,j}ijm_{ij} = r$;
\item $\sum_j m_{ij} \le N(i,q)$ for all $i$; and
\item $m_{ij}\ge0$ for all $i,j$.
\end{enumerate}
Let $\mathcal{M}_{part}(r)$ be the subset of $\mathcal{M}(r)$
consisting of all $(m_{ij})$ for which $m_{ij} = 0$ whenever $j \ge 2$.}
\end{definition}

Note that, by parts (1) and (3), $m_{ij}=0$ whenever $ij > r$ so these
arrays are finite and could have been defined as $r \times r$ arrays.

We will often work with just those arrays $(m_{ij})$ for which $m_{ij}=0$
whenever $j\ge 2$ so that $(m_{ij})$ is essentially a ``column
vector''. These correspond to polynomials $f$ having no
irreducible factors with multiplicity more than $1$, that is to say,
{\it separable polynomials}. Moreover such an $(m_{ij})$
corresponds to a partition of $r$ with $m_{i1}$ parts of size $i$, for
each $i$. Hence $\mathcal{M}_{part}(r)$ contains those $(m_{ij})$
which correspond to partitions of $r$.

Each degree $r$ factor $f$ of $h$ corresponds to a unique $M = M(f) =
(m_{ij}) \in \mathcal{M}(r)$ defined as follows. Let $m_{ij}$ be the number of distinct
monic irreducible degree $i$ factors of $f$ of multiplicity $j$.
Denote by $\alpha(h;r,M)$ the number of degree $r$ factors $f$ of $h$ such that $M(f) = M$.
If $M(f) = M$ we will say that {\emph f corresponds to M}.
We can break down the problem of computing
$\alpha(h;r)$ into computing $\alpha(h;r,M)$ for each $M \in \mathcal{M}(r)$ and summing over $M$, that is,

$$\alpha(h;r)=\sum_{M\in\mathcal{M}(r)} \alpha(h;r,M).$$

Now we concentrate on a fixed $M \in \mathcal{M}(r)$. We have to
make sure that, when selecting irreducible polynomials as factors
of $f$, we do not choose one we have already chosen to be in $f$ or
else that \poly will have the incorrect multiplicity. The choices of
irreducible degree $i$ polynomials of various multiplicities are
independent for each $i$ so we break down the problem even
further.

Let $h \in \mathcal{P}^+$ and for each $i$, let $h_i$ be the product of
all the monic irreducible degree $i$ factors of $h$, including
multiplicities. If $h$ has no degree $i$ factors for some $i$, then set $h_i := 1$.
Thus $h_i \in \mathcal{P}^+_i$ for all $i$, and $h =
h_1\ldots h_n$. Similarly, any degree $r$ factor $f$ of $h$ can be written as
$f = f_1 \ldots f_r$ where for each $i$,
$f_i$ is the product of all the monic irreducible degree $i$ factors of $f$, including multiplicities,
$f_i \in \mathcal{P}^+_i$ and $f_i$ divides $h_i$.

Since each degree $r$ factor $f$ of $h$ corresponds to a unique $M = M(f) = (m_{ij}) \in \mathcal{M}(r)$,
it follows that $r = \sum_i i \left( \sum_j j m_{ij}\right)$.
Since all the factors of $f_i$ are degree $i$ polynomials, $f_i$ `corresponds' to row $i$ of the matrix $M = M(f) = (m_{ij})$.
We define the matrix to which $f_i$ corresponds as

\begin{equation} \label{Mi}
M_i := M_i(M) = (m'_{kj}) = \left\{
\begin{array}{l l}
m_{ij} & \textrm{if } k  =  i \\
0      & \textrm{if } k \ne i. \\
\end{array}
\right.
\end{equation}

Let $r_i := r_i(M_i) := i \left( \sum_j j m_{ij}\right)$. Then it follows that
$M_i \in \mathcal{M}(r_i)$, and
$r_i = \deg(f_i)$. Also, $M = \sum_{i=1}^r M_i$.

The problem of calculating the number of degree $r$
factors of $h$ corresponding to $M$, can be solved by calculating for
each $i$, the number of monic degree $r_i$ polynomials dividing $h_i$
corresponding to $M_i$. We have

\begin{equation} \label{alphas}
\alpha(h;r) = \sum_{M\in\mathcal{M}(r)} \alpha(h;r,M) =
\sum_{M\in\mathcal{M}(r)} \left( \prod_{i=1}^r
\alpha(h_i;r_i,M_i)\right).
\end{equation}

We should note that $\alpha(h_i; 0, M_i) = 1$ for all $M_i \in \mathcal{M}(0)$,
all $h_i \in \mathcal{P}^+_i$ and all $i$ since the
constant polynomial $1$ divides $h_i$.

Also note that if the degree of $h$ is less than $r$ then $h$
clearly has no degree $r$ factors. Hence in this case we have,
for all $M \in \mathcal{M}(r)$, $\alpha(h;r,M) = 0$ since for at least
one $i$, we must have $\alpha(h_i;r_i,M_i) = 0$.

Below we define $\tau$-parameters and then determine $\alpha(h_i;r_i,M_i)$
in terms of these parameters.

\begin{definition} \label{tau}
{\rm Let $\tau(h;i,j)$ denote the number of distinct monic irreducible degree $i$
factors of $h$ that have multiplicity exactly $j$. Let $\tau(h;i,j,+)$
be the number of distinct monic irreducible degree $i$ factors of $h$ with
multiplicity $j$ or greater.}
\end{definition}

\begin{lemma} \label{alpha^i}
For $M = (m_{ij}) \in \mathcal{M}(r)$,
$$\displaystyle\alpha(h_i;r_i,M_i) = \prod_{j=1}^{\lfloor r/i
\rfloor} {{\tau(h_i;i,j,+)-\sum_{k>j} m_{ik}}\choose{m_{ij}}}.$$
\end{lemma}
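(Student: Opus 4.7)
The proof will be a direct combinatorial counting argument. By definition, $\alpha(h_i;r_i,M_i)$ counts the monic degree-$r_i$ divisors $f_i$ of $h_i$ such that, for each $j \geq 1$, exactly $m_{ij}$ of the distinct monic irreducible (degree-$i$) factors of $f_i$ occur with multiplicity exactly $j$. Since every irreducible factor of such an $f_i$ must be an irreducible factor of $h_i$, I would write $h_i = \prod_p p^{a_p}$ and $f_i = \prod_p p^{b_p}$, with both products ranging over the distinct monic irreducible degree-$i$ polynomials $p$ dividing $h_i$ and $0 \leq b_p \leq a_p$ for each such $p$. Specifying $f_i$ is then equivalent to specifying a function $b\colon p \mapsto b_p$ with the property that, for each $j\geq 1$, the set $\{p : b_p = j\}$ has size exactly $m_{ij}$; the total-degree condition $\deg(f_i)=r_i$ is automatic from $r_i = i\sum_j j m_{ij}$.

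I would count these functions by processing levels $j$ from the largest multiplicity down to $1$. Let $J = \lfloor r/i \rfloor$; since $r_i \leq r$, any $j$ with $m_{ij}>0$ satisfies $j \leq r_i/i \leq r/i$, hence $j \leq J$. At stage $j$ (proceeding $j=J,J-1,\dots,1$), I choose the $m_{ij}$ polynomials $p$ to assign $b_p=j$; to be permissible such a $p$ must satisfy $a_p \geq j$ and must not already have been selected at some earlier stage $k>j$. The number of irreducibles with $a_p \geq j$ is by definition $\tau(h_i;i,j,+)$, and every polynomial chosen at a stage $k>j$ had $a_p \geq k > j$, hence automatically lies in this set and must be subtracted. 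The count of such exclusions is $\sum_{k>j} m_{ik}$, so the number of available choices at stage $j$ is $\tau(h_i;i,j,+)-\sum_{k>j}m_{ik}$, contributing the binomial factor $\binom{\tau(h_i;i,j,+)-\sum_{k>j}m_{ik}}{m_{ij}}$.

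Multiplying these factors over $j=1,\dots,J$ yields the stated formula; extending the product to $j > J$ is harmless because $m_{ij}=0$ forces $\binom{*}{0}=1$ there. The only real subtlety is the bookkeeping for the exclusions, and the main obstacle — if one can call it that — is recognising that selecting the multiplicities in \emph{decreasing} order makes the previously chosen polynomials lie in the ``$a_p\geq j$'' pool automatically at stage $j$, which is exactly what produces the clean subtraction $\sum_{k>j}m_{ik}$. Any other order would require inclusion–exclusion and obscure the product structure.
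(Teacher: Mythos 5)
Your proof is correct and follows essentially the same argument as the paper's: selecting the irreducible factors multiplicity level by level, starting from the highest multiplicity $\lfloor r/i\rfloor$ and working downward, so that at stage $j$ the pool of available polynomials with $a_p\ge j$ is $\tau(h_i;i,j,+)$ minus the $\sum_{k>j}m_{ik}$ already committed at higher stages. The only cosmetic difference is that you phrase the selection as choosing a function $p\mapsto b_p$ and you explicitly observe that the empty binomials $\binom{*}{0}$ for $j$ above the largest nonzero multiplicity keep the product over $1\le j\le\lfloor r/i\rfloor$ valid, a point the paper handles by writing its product only up to $j(i,M)$.
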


\begin{proof}
Set $m_i :=  \sum_j m_{ij}$.
We describe the process of selecting the $m_i$
pairwise distinct monic irreducible degree $i$ factors of $f$ for a
fixed value of $i$. If $m_i = 0$ there is nothing to do so assume
that $m_i > 0$ and let $j(i,M)$ be the largest integer $j$ such that
$m_{ij} > 0$. Note that $j(i,M) \le r/i$ by Definition \ref{M} $(1)$.
The $m_{ij(i,M)}$ monic irreducible degree $i$ factors of $f$
with multiplicity $j(i,M)$ can be chosen in ${\tau(h;i,j(i,M),+) \choose
m_{ij(i,M)}}$ ways. Once these are chosen we must not choose them again
as factors to ensure they have the correct multiplicity in $f$. Thus for the
next largest $j$ such that $m_{ij} > 0$, we choose the $m_{ij}$
monic irreducible degree $i$ factors having multiplicity $j$ from the
remaining available $\tau(h;i,j,+) - m_{ij(i,M)}$ irreducible
factors of $h_i$ having multiplicity at least $j$. In general, if $j$ is
such that we have already chosen the $m_{ik}$ monic irreducible degree $i$
factors of $f$ of multiplicity $k$ for all $k > j$, then we may choose
the $m_{ij}$ such factors of $f$ of multiplicity $j$ in exactly
${ \tau(h;i,j,+) - \sum_{k>j} m_{ik} \choose m_{ij} }$ ways. Hence
the result

$$\displaystyle\alpha(h_i;r_i,M_i) = \prod_{j=1}^{j(i,M)}
{{\tau(h;i,j,+)-\sum_{k>j} m_{ik}}\choose{m_{ij}}}.$$

\end{proof}

We now define the function $\Phi^p$ for a monic irreducible degree $i$
polynomial $p$ as

\begin{equation} \label{Phi_p}
\Phi^p = \Phi^p(t, (s_{ij})_{j\ge 1} ) := 1 + \sum_{j=1}^\infty
\frac{s_{ij} t^{ij}}{Cent(i,j)}.
\end{equation}

As with $F^p$, $\Phi^{p_1} = \Phi^{p_2}$ if $p_1$ and
$p_2$ are monic, irreducible polynomials of the same degree. So for
all $i\in\mathbb{Z}^+$ we will define $\Phi^+_i$ as the product of
the $\Phi^p$ for all monic irreducible degree $i$ polynomials $p$ with a
nonzero constant term. Thus

\begin{equation}\label{Phi_i}
\Phi^+_i = \Phi_i^+(t, (s_{ij})_{j\ge 1} ) := \left(1 + \sum_{j=1}^\infty
\frac{s_{ij} t^{ij}}{Cent(i,j)}\right)^{N^+(i,q)}.
\end{equation}

\begin{lemma} \label{Phipower}
As a power series, $\Phi^+_i$ satisfies:

$(1).$ $\displaystyle \Phi^+_i = \sum_{h_i \in \mathcal{P}^+_i}
\frac{(\prod_j s_{ij}^{\tau{(h_i;i,j)}}) t^{\deg(h_i)}}{Cent(h_i)}, \quad and$

$(2).$ $\displaystyle\prod_{i=1}^\infty \Phi^+_i = \sum_{h\in
\mathcal{P}^+} \frac{(\prod_{i,j} s_{ij}^{\tau{(h;i,j)}}) t^{\deg(h)}}{Cent(h)},$

\noindent where $\mathcal{P}^+$ and $\mathcal{P}^+_i$ are as in Definition
\ref{P}.
\end{lemma}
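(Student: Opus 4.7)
The plan is to expand each product by distributivity and identify each resulting term with a unique polynomial in $\mathcal{P}^+_i$, respectively $\mathcal{P}^+$. The key algebraic ingredient in both parts is the multiplicativity of centralisers asserted in Corollary \ref{Cent(p_i)}.

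For Part $(1)$, I would use that $\Phi^+_i$ is literally the product $\prod_p \Phi^p$ taken over the $N^+(i,q)$ monic irreducible polynomials $p$ of degree $i$ with $p(0) \ne 0$. Expanding this finite product by distributivity amounts to choosing, independently for each such $p$, either the constant $1$ or the summand $\frac{s_{ij} t^{ij}}{Cent(i,j)}$ for some $j \ge 1$. Such a choice encodes a unique polynomial $h_i = \prod_{p} p^{j_p} \in \mathcal{P}^+_i$ (where $j_p = 0$ means the constant $1$ was chosen for $p$), and conversely every $h_i \in \mathcal{P}^+_i$ arises from exactly one such choice. The contribution of $h_i$ to the expansion is
\[
\prod_{p \,\mid\, h_i} \frac{s_{i,j_p}\, t^{i j_p}}{Cent(i,j_p)}
\;=\; \frac{\bigl(\prod_{j \ge 1} s_{ij}^{\tau(h_i;\,i,j)}\bigr)\, t^{\deg(h_i)}}{Cent(h_i)},
\]
since the number of $p$ with $j_p = j$ is exactly $\tau(h_i;i,j)$, since $\deg(h_i) = \sum_p i\, j_p$, and since $Cent(h_i) = \prod_{p \mid h_i} Cent(i,j_p)$ by Corollary \ref{Cent(p_i)}. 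Summing over all $h_i \in \mathcal{P}^+_i$ gives $(1)$.

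For Part $(2)$, I would use that every $h \in \mathcal{P}^+$ has a unique factorisation $h = \prod_{i \ge 1} h_i$ with $h_i \in \mathcal{P}^+_i$ and $h_i = 1$ for all but finitely many $i$, obtained by grouping the irreducible factors of $h$ by their degree. Since the variable sets $\{s_{ij} : j \ge 1\}$ for distinct $i$ are disjoint, the product $\prod_i \Phi^+_i$ makes sense as a formal power series in $t$ and the $s_{ij}$: the coefficient of any fixed monomial receives contributions only from those $\Phi^+_i$ with $i$ bounded by the total $t$-degree of that monomial. Substituting the identity of Part $(1)$ into each factor and matching a tuple $(h_i)_{i \ge 1}$ with $h = \prod_i h_i$, the summand becomes
\[
\prod_{i \ge 1} \frac{\bigl(\prod_{j \ge 1} s_{ij}^{\tau(h_i;\,i,j)}\bigr)\, t^{\deg(h_i)}}{Cent(h_i)}
\;=\; \frac{\bigl(\prod_{i,j} s_{ij}^{\tau(h;\,i,j)}\bigr)\, t^{\deg(h)}}{Cent(h)},
\]
where additivity of $\deg$ and of $\tau(\,\cdot\,;i,j)$ across the coprime factorisation, together with the multiplicative identity $Cent(h) = \prod_i Cent(h_i)$ from Corollary \ref{Cent(p_i)}, give the simplification.

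The main obstacle is purely one of bookkeeping rather than of substance: one must be careful in Part $(2)$ to check that the infinite product is well-defined as a formal power series (handled by the finite-degree observation above) and, in Part $(1)$, to verify once and for all that the combinatorial bijection between distributive choices and polynomials $h_i \in \mathcal{P}^+_i$ is indeed one-to-one and onto. Neither step requires any analytic estimate, and the result drops out as soon as the definitions of $\tau$, $Cent$, and $\mathcal{P}^+_i$ are unpacked in parallel.
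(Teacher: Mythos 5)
Your proof is correct and follows essentially the same route as the paper: in Part $(1)$ you expand the finite product $\Phi^+_i = \prod_p \Phi^p$ by distributivity, identify each choice with a unique $h_i \in \mathcal{P}^+_i$, and invoke the multiplicativity of $Cent$ (Corollary \ref{Cent(p_i)}, which the paper cites as Lemma \ref{Cent}), and in Part $(2)$ you use the unique degree-graded factorisation $h = \prod_i h_i$ exactly as the paper does. The only addition is your brief remark that the infinite product is well-defined as a formal power series, which the paper leaves implicit.
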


\begin{proof}
$(1).$
Every polynomial in $\mathcal{P}^+_i$ corresponds to a summand of
$\Phi^+_i$, namely each $h_i \in
\mathcal{P}^+_i$ is a product of $p^{j(p)}$ (where $j(p)\ge 0$) over all
monic degree $i$ irreducible polynomials $p$, and \deg$(h_i) = \sum_p
i j(p)$. The corresponding summand of $\Phi^+_i$ is $\frac{ (\prod_p
s_{ij(p)}) t^{\deg(h_i)}}{Cent(h_i)}$ and this is obtained by
choosing the term corresponding to $p^{j(p)}$, which is
$\frac{s_{ij(p)}t^{ij(p)}}{Cent(i,j(p))}$ if $j(p)\ge 1$ and $1$ if
$j(p)=0$. The denominator of the summand corresponding to $h_i$ is
$Cent(h_i)$ since $\prod_p Cent(i,j(p)) = Cent(\prod_p p^{j(p)}) =
Cent(h_i)$ by Lemma \ref{Cent}. The exponent of $t$ is $\sum_p ij(p) = \deg(h_i)$.
For each monic irreducible degree $i$
polynomial with multiplicity $j$, the corresponding $\Phi^p$
contains an $s_{ij}$ term, so
the exponent of $s_{ij}$ in the summand corresponding to $h_i$ is the
number of monic irreducible degree $i$ factors $p$ of $h_i$ such that
$j(p) = j$, that is $\tau(h_i;i,j)$. Hence

$$\displaystyle \Phi^+_i = \sum_{h_i\in \mathcal{P}^+_i}
\frac{(\prod_j s_{ij}^{\tau{(h_i;i,j)}}) t^{\deg(h_i)}}{Cent(h_i)}.$$

$(2).$
$\prod_i \Phi^+_i$ is simply the product over all monic irreducible
polynomials with nonzero constant terms as opposed to the computation
in case $(1)$ where we just chose those irreducible polynomials with
degree $i$. Hence every polynomial in $\mathcal{P}^+$ will correspond to
a certain term in $\prod_i \Phi^+_i$. By part $(1)$ the term
corresponding to a polynomial $h$ is

$$\frac{(\prod_{i,j} s_{ij}^{\tau{(h;i,j)}}) t^{\deg(h)}}{Cent(h)}$$

\noindent
where now we have $s_{ij}$ present for each $i$ such that $\tau(h;i,j)
> 0$. Hence we get our result

$$\displaystyle\prod_{i=1}^\infty \Phi^+_i = \sum_{h\in
\mathcal{P}^+} \frac{(\prod_{i,j} s_{ij}^{\tau{(h;i,j)}}) t^{\deg(h)}}{Cent(h)}.$$

\end{proof}

Note again that for each $i \in \mathbb{Z}^+$ the only occurrences of
$s_{ij}$ for any $j$ are in $\Phi^+_{i}$. So if we partially
differentiate $\Phi^+$ with respect to $s_{ij}$ for some $j$ we get the
same answer as if we partially differentiated $\Phi^+_i$ with respect to
$s_{ij}$ and multiplied by the product of the remaining $\Phi^+_{i'}$,
$i'\ne i$.

We will perform a series of operations on each $\Phi^+_i$ in turn so
that after this procedure for $\Phi^+_i$ the coefficient of the term
corresponding to a polynomial $h_i \in \mathcal{P}^+_i$ and an $M \in
\mathcal{M}(r)$ will be $\alpha(h_i;r_i,M_i) / Cent(h_i)$.

\begin{definition}\label{Phialphadef}
{\rm For $i\ge 1$ and $M \in \mathcal{M}(r)$, let

$$\displaystyle \Phi^+_{i,M,\alpha} = \Phi^+_{i,M,\alpha}(t) := \sum_{h_i \in
\mathcal{P}^+_i} \alpha(h_i;r_i,M_i) \frac{t^{\deg(h_i)}}{Cent(h_i)},$$

\noindent where $r_i = i \sum_j jm_{ij}$.}
\end{definition}

Note that, if $r_i = 0$ (which holds in particular if $i>r$) then
$\alpha(h_i, r_i, M_i) = 1$ for each $h_i \in \mathcal{P}^+_i$, and
hence in this case $\Phi^+_{i,M,\alpha} = F_i$ as defined in (\ref{F_i}). In
this case the leading term of $\Phi^+_{i,M,\alpha}$ is $1$ and
corresponds to the constant polynomial $1 \in \mathcal{P}^+_i$.

The product of
$\Phi^+_{i,M,\alpha}$ over all $i$ is a power series such that the
coefficient of the term corresponding to a polynomial $h \in
\mathcal{P}^+$ is $\alpha(h;r,M) / Cent(h)$. Then summing over all $M
\in \mathcal{M}(r)$ will produce a series such that the coefficient of
the term corresponding to $h$ is $\alpha(h;r) / Cent(h)$, that is
$C_{\GL,r}(t)$. We prove this now.

\begin{lemma} \label{Crfinal}
$\displaystyle C_{\GL,r}(t) = \sum_{M \in \mathcal{M}(r)} \prod_{i=1}^\infty
\Phi^+_{i,M,\alpha}(t)$.
\end{lemma}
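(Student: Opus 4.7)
The plan is to show that the right-hand side, after expansion, produces exactly the sum $\sum_{h \in \mathcal{P}^+} \alpha(h;r)\, t^{\deg(h)}/Cent(h)$ which equals $C_{\GL,r}(t)$ by combining (\ref{crn}) with $C_{\GL,r}(t) = \sum_n c_{\GL,r}(n)\, t^n$. So I would start from $\sum_{M \in \mathcal{M}(r)} \prod_{i=1}^\infty \Phi^+_{i,M,\alpha}(t)$, substitute the definition of $\Phi^+_{i,M,\alpha}$ from Definition \ref{Phialphadef}, and expand the infinite product by distributing.

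First I would write
\[
\prod_{i=1}^\infty \Phi^+_{i,M,\alpha}(t) = \prod_{i=1}^\infty \sum_{h_i \in \mathcal{P}^+_i} \alpha(h_i;r_i,M_i)\, \frac{t^{\deg(h_i)}}{Cent(h_i)} = \sum_{(h_i)_i} \prod_{i=1}^\infty \alpha(h_i;r_i,M_i)\, \frac{t^{\deg(h_i)}}{Cent(h_i)},
\]
where the sum runs over sequences $(h_i)_{i\ge 1}$ with $h_i \in \mathcal{P}^+_i$ and only finitely many $h_i \ne 1$. Each such sequence corresponds bijectively, via unique factorization into monic irreducible factors grouped by degree, to a single polynomial $h = \prod_i h_i \in \mathcal{P}^+$. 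Under this bijection $\deg(h) = \sum_i \deg(h_i)$, and crucially $Cent(h) = \prod_i Cent(h_i)$ by Corollary \ref{Cent(p_i)}. Reindexing the sum by $h$ therefore gives
\[
\prod_{i=1}^\infty \Phi^+_{i,M,\alpha}(t) = \sum_{h \in \mathcal{P}^+} \frac{t^{\deg(h)}}{Cent(h)} \prod_{i=1}^\infty \alpha(h_i;r_i,M_i).
\]

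Next I would interchange the outer sum over $M \in \mathcal{M}(r)$ with the sum over $h$ and apply identity (\ref{alphas}), which states exactly that $\alpha(h;r) = \sum_{M \in \mathcal{M}(r)} \prod_i \alpha(h_i;r_i,M_i)$. This yields
\[
\sum_{M \in \mathcal{M}(r)} \prod_{i=1}^\infty \Phi^+_{i,M,\alpha}(t) = \sum_{h \in \mathcal{P}^+} \frac{\alpha(h;r)\, t^{\deg(h)}}{Cent(h)},
\]
and this last expression equals $C_{\GL,r}(t)$ by (\ref{crn}) together with the definition of $C_{\GL,r}(t)$.

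There is one point that needs care but is routine: the expansion of the infinite product $\prod_{i=1}^\infty \Phi^+_{i,M,\alpha}$ as a formal power series. For $i > r$ we have $r_i = 0$ and thus $\Phi^+_{i,M,\alpha} = F_i$ by the remark following Definition \ref{Phialphadef}, which has constant term $1$, so the tail product is a well-defined formal power series. For $i \le r$ there are only finitely many factors. Consequently the coefficient of $t^n$ on each side depends only on finitely many $h_i$, and all the reindexings and swaps of summation above are legal at the level of formal power series. The main (and only) conceptual ingredient is the multiplicativity $Cent(h) = \prod_i Cent(h_i)$ from Corollary \ref{Cent(p_i)}, which lets the product over $i$ on the right line up term-by-term with polynomials $h \in \mathcal{P}^+$ on the left; everything else is bookkeeping.
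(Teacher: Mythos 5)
Your proof is correct and follows essentially the same route as the paper's: substitute Definition \ref{Phialphadef}, expand the infinite product, reindex via unique factorization $h = \prod_i h_i$, use multiplicativity of $Cent$ and identity (\ref{alphas}), then invoke (\ref{crn}). The only differences are cosmetic — you cite Corollary \ref{Cent(p_i)} where the paper cites Lemma \ref{Cent} (your citation is arguably the more directly applicable one), and you add an explicit remark on why the infinite product is a well-defined formal power series, a point the paper leaves implicit.
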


\begin{proof}
From Definition \ref{Phialphadef} we have that

$$\Phi^+_{i,M,\alpha} = \sum_{h_i\in\mathcal{P}^+_i} \alpha(h_i;r_i,M_i)
\frac{t^{\deg(h_i)}}{Cent(h_i)}.$$

\noindent Recall from Definition \ref{P}, that $\mathcal{P}^+_i$ is the
set containing the constant polynomial $1$ along with all monic polynomials
that have a nonzero constant term and whose only irreducible factors
have degree $i$. Each $h \in \mathcal{P}^+$, that is, each monic
polynomial with nonzero constant term, has a unique factorisation, $h
= \prod_i h_i$, where $h_i \in \mathcal{P}^+_i$ for each $i$. Using this
notation we have

$$ \prod_{i=1}^\infty \Phi^+_{i,M,\alpha} = \sum_{h\in\mathcal{P}^+} \prod_{i=1}^\infty
\frac{\alpha(h_i;r_i,M_i) t^{\deg(h_i)}} {Cent(h_i)}.$$

\noindent By Equation \ref{alphas} we know that $\prod_i
\alpha(h_i;r_i,M_i) = \alpha(h;r,M)$. By Lemma \ref{Cent} we know that
$\prod_i Cent(h_i) = Cent(h)$ since the $h_i$ are pairwise coprime and
we know that each polynomial $h_i$ corresponds to the $t$-power
$t^{\deg(h_i)}$ so $\prod_i t^{\deg(h_i)} = t^{\deg(h)}$. Thus

$$\prod_{i=1}^\infty \Phi^+_{i,M,\alpha} =  \sum_{h\in\mathcal{P}^+}
\frac{\alpha(h;r,M) t^{\deg(h)}}{Cent(h)}.$$

Now we want to sum over all $M \in \mathcal{M}(r)$. The sum over
$h\in\mathcal{P}^+$ and the sum over $M\in\mathcal{M}(r)$ can be
interchanged so that

\begin{displaymath}
\begin{array} {rcl}
\displaystyle
\sum_{M\in\mathcal{M}(r)} \prod_{i=1}^\infty \Phi^+_{i,M,\alpha} &
= & \displaystyle \sum_{M\in\mathcal{M}(r)} \sum_{h\in\mathcal{P}^+} \frac{\alpha(h;r,M)
t^{\deg(h)}}{Cent(h)} \\
& = & \displaystyle \sum_{h\in\mathcal{P}^+}
\frac{\left(\sum_{M\in\mathcal{M}(r)} \alpha(h;r,M)\right)
t^{\deg(h)}}{Cent(h)}. \\
\end{array}
\end{displaymath}

\noindent By Lemma \ref{alphas}, we know that
$\sum_{M\in\mathcal{M}(r)} \alpha(h;r,M) = \alpha(h;r)$ and by using
Equation (\ref{crn}) we have

\begin{displaymath}
\begin{array} {rcl}
\displaystyle
\sum_{M\in\mathcal{M}(r)} \prod_{i=1}^\infty \Phi^+_{i,M,\alpha} & = &
\displaystyle
\sum_{h\in\mathcal{P}^+} \frac{\alpha(h;r) t^{\deg(h)}}{Cent(h)} \\
& = & \displaystyle \sum_{n=0}^\infty c_{\GL,r}(n)t^n \\
& = & \displaystyle \sum_{n=r}^\infty c_{\GL,r}(n)t^n \\
& = & C_{\GL,r}(t) \\
\end{array}
\end{displaymath}

\noindent since $c_{\GL,r}(n)$ is trivially evaluated to $0$ for $n < r$.
\end{proof}

We now describe the procedure to produce $\Phi^+_{i,M,\alpha}$ for a fixed $r
\in \mathbb{Z}^+$, a fixed $M = (m_{ij}) \in \mathcal{M}(r)$ and a
fixed $i < r$.

\bigskip
\noindent \textbf{Procedure:} \underline{\sc{PhiAlpha $(r, M, i)$}}

\bigskip
\noindent \underline{Input:} \quad $r = \dim(U)$, $M \in \mathcal{M}(r)$, $i \in \mathbb{Z}^+$.

\bigskip
\noindent \underline{Output} \quad $\Phi^+_{i,M,\alpha}(t)$

\bigskip
\noindent \underline{Operations}
\vspace{0.1cm}

Set $\Psi:=\Phi^+_i$, $s_{i0}:=1$ and
$k:=\lfloor \frac{r}{i}\rfloor$.

\bigskip
For $j > k$ assign $s_{ij} := s_{ik}$ in $\Psi$.

\bigskip
While $k > 0$

\quad Set $\Psi:= \displaystyle \frac{1}{m_{ik}!}
\frac{\partial^{m_{ik}}}{\partial s_{ik}^{m_{ik}}} \Psi.$

\quad Set $s_{ik} := s_{i(k-1)}$ and $k := k-1$.

\bigskip
Return $\Psi$
\bigskip
\bigskip

\begin{lemma} \label{phialpha}
(1) The procedure {\sc PhiAlpha}$(r,M,i)$ correctly returns
$\Phi^+_{i,M,\alpha}$;

(2) The smallest $j$ such that $t^j$ has nonzero coefficient in the power series expansion of
$\Phi^+_{i,M,\alpha}(t)$ is $j = r_i$.

(3) Moreover, if $i \sum_j j m_{ij} = r_i = 0$ then
$\Phi^+_{i,M,\alpha}(t) = F_i(t)$.
\end{lemma}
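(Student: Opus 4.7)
The plan is to track how each operation in \textsc{PhiAlpha}$(r,M,i)$ transforms the coefficients of $\Psi$ when expanded as a sum indexed by $h_i \in \mathcal{P}^+_i$, and to verify that at termination every coefficient matches Definition \ref{Phialphadef}. First I would invoke Lemma \ref{Phipower}(1) to write the initial value $\Psi = \Phi^+_i$ as $\sum_{h_i \in \mathcal{P}^+_i} (\prod_{j \ge 1} s_{ij}^{\tau(h_i;i,j)}) t^{\deg(h_i)}/Cent(h_i)$. Since $m_{ij}=0$ whenever $ij > r$ (forced by Definition \ref{M}(1)), the data carried by the desired factors is unaffected by assigning $s_{ij} := s_{ik}$ for $j > k := \lfloor r/i \rfloor$. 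After this substitution, the exponent of $s_{ik}$ in the $h_i$-summand is $\tau(h_i;i,k) + \sum_{j>k}\tau(h_i;i,j) = \tau(h_i;i,k,+)$, while for $j < k$ the exponent remains $\tau(h_i;i,j)$.

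I would then analyze the while loop by downward induction on the counter, using the invariant: after the body has processed levels $k, k-1, \ldots, j+1$, the coefficient of the $h_i$-summand of $\Psi$ equals $\prod_{j' > j} \binom{\tau(h_i;i,j',+) - \sum_{k' > j'} m_{ik'}}{m_{ij'}}$ times a monomial in which the exponent of $s_{ij}$ is $\tau(h_i;i,j,+) - \sum_{k' > j} m_{ik'}$ and for $j' < j$ the exponent of $s_{ij'}$ is $\tau(h_i;i,j')$. The inductive step follows from Lemma \ref{diffchoose} applied to $\frac{1}{m_{i(j+1)}!}\partial^{m_{i(j+1)}}/\partial s_{i(j+1)}^{m_{i(j+1)}}$, which peels off the binomial factor $\binom{\tau(h_i;i,j+1,+) - \sum_{k' > j+1} m_{ik'}}{m_{i(j+1)}}$ and decreases the $s_{i(j+1)}$-exponent by $m_{i(j+1)}$, followed by the substitution $s_{i(j+1)} := s_{ij}$ which uses the telescoping identity $\tau(h_i;i,j) + \tau(h_i;i,j+1,+) = \tau(h_i;i,j,+)$ to merge the two exponents into $\tau(h_i;i,j,+) - \sum_{k' > j} m_{ik'}$. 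When $k$ reaches $0$, every remaining variable has been collapsed onto $s_{i0} = 1$, and the accumulated product of binomials is precisely $\alpha(h_i;r_i,M_i)$ by Lemma \ref{alpha^i}, so $\Psi = \Phi^+_{i,M,\alpha}$, proving (1).

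For (2), $\alpha(h_i;r_i,M_i) \ne 0$ forces $h_i$ to admit a degree-$r_i$ divisor, so $\deg(h_i) \ge r_i$ for every $h_i$ contributing a nonzero summand, hence no power of $t$ below $t^{r_i}$ survives. Conversely, Definition \ref{M}(2) supplies enough distinct monic irreducible degree-$i$ polynomials in $\mathcal{P}^+_i$ to build an $h_i$ of degree exactly $r_i$ satisfying $\tau(h_i;i,j) = m_{ij}$ for every $j$; for such $h_i$ the unique degree-$r_i$ divisor is $h_i$ itself, giving $\alpha(h_i;r_i,M_i) = 1$ and a nonzero contribution $1/Cent(h_i)$ to the coefficient of $t^{r_i}$. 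For (3), $r_i = 0$ forces $m_{ij} = 0$ for all $j \ge 1$, so every binomial in Lemma \ref{alpha^i} equals $\binom{\ast}{0} = 1$ and $\alpha(h_i;0,M_i) = 1$ for every $h_i \in \mathcal{P}^+_i$; Definition \ref{Phialphadef} then collapses to $\sum_{h_i} t^{\deg(h_i)}/Cent(h_i) = F_i(t)$, recovered from Lemma \ref{Phipower}(1) by setting all $s_{ij} = 1$. The main obstacle is a clean formulation of the loop invariant: one must simultaneously track the accumulated product of binomials and the evolving exponent of the active variable $s_{ik}$, and verify that the differentiation-then-substitution pair produces exactly the next factor appearing in the product of Lemma \ref{alpha^i}, with all remaining bookkeeping being routine.
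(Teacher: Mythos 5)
Your proposal is correct and follows essentially the same route as the paper: both arguments expand $\Psi$ as a sum over $h_i\in\mathcal{P}^+_i$ via Lemma \ref{Phipower}(1), observe that the initial merge makes the exponent of $s_{ik}$ equal $\tau(h_i;i,k,+)$, show that each pass of the while loop extracts the binomial $\binom{\tau(h_i;i,j,+)-\sum_{k'>j}m_{ik'}}{m_{ij}}$ via Lemma \ref{diffchoose} and the telescoping identity, and identify the accumulated product as $\alpha(h_i;r_i,M_i)$ via Lemma \ref{alpha^i}; parts (2) and (3) use the same degree argument and the trivial evaluation of $\alpha$ when $r_i=0$. The only difference is organizational — you package the loop analysis as a downward induction with an explicit invariant, whereas the paper walks through the iterations informally — and the one small blemish is that your invariant's clause ``for $j'<j$ the exponent of $s_{ij'}$ is $\tau(h_i;i,j')$'' is not quite right for $j'=0$ (where $s_{i0}$ starts absent, i.e., with exponent $0$), but this is harmless since $s_{i0}$ is fixed to $1$ throughout.
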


\begin{proof}
The initial value of $k$ is $0$ if and only if $i > r$
and in this case the procedure returns $\Phi^+_i|_{(s_{ij}=1 \textrm{ for all } j)}.$
This power series is equal to $F_i$, as defined in Equation (\ref{F_i}), and is correct
since we do not wish to choose any polynomials of this degree
(since they cannot divide $f$). As noted after Definition \ref{Phialphadef}, in this case $r_i =
0$, $F_i(t) = \Phi^+_{i,M,\alpha}(t)$ and the leading term is $1$. Thus
the assertions $(1) - (3)$ follow in this case.

Assume now that $i \le r$. We first prove part $(1)$. At the start of
the procedure $\Psi := \Phi^+_i$. The value $k = \lfloor r/i
\rfloor \ge 1$ is the highest possible multiplicity of a degree $i$
factor we could choose to be a factor of $f$.

Let $j_0 = \lfloor r/i \rfloor$. Start by making $s_{ij}$ equal
$s_{ij_0}$ in $\Psi$ for all $j > j_0$. After this the exponent of
$s_{ij_0}$ is the number of irreducible degree $i$ factors of $h$ that
have multiplicity at least $j_0$, that is $\tau(h;i,j_0,+)$. At the
beginning of the while loop we have

$$ \Psi = \sum_{h_i\in\mathcal{P}^+_i} \left( \left(\prod_{j<j_0}
s_{ij}^{\tau(h_i;i,j)}\right) s_{ij_0}^{\tau(h_i;i,j_0,+)} \frac{t^{\deg(h_i)}}{Cent(h_i)} \right).$$

We repeat the steps in the while loop $j_0$ times. After performing
the partial differentiation step the first time (for $k=j_0$),
Lemma \ref{diffchoose} implies we get

$$\Psi = \sum_{h_i\in\mathcal{P}^+_i} \left(
\left(\prod_{j<j_0} s_{ij}^{\tau(h_i;i,j)}\right)
s_{ij_0}^{\tau(h_i;i,j_0,+)-m_{ij_0}} {{\tau(h_i;i,j_0,+)}\choose{m_{ij_0}}}
\frac{t^{\deg(h_i)}}{Cent(h_i)}\right).$$

\noindent The $s_{ij}$ for $j < j_0$ remain, while the power of
$s_{ij_0}$ is reduced by $m_{ij_0}$. The
${{\tau(h_i;i,j_0,+)}\choose{m_{ij_0}}}$ appears as shown in Lemma
\ref{diffchoose}. If $j_0 = 1$ then we assign $s_{i1}:=1$ and we
return

$$\Psi =  \sum_{h_i\in\mathcal{P}^+_i}
{{\tau(h_i;i,j_0,+)}\choose{m_{ij_0}}} \frac{t^{\deg(h_i)}}{Cent(h_i)}.$$
By Lemma \ref{alpha^i}, if $j_0 = 1$ then
${{\tau(h_i;i,j_0,+)}\choose{m_{ij_0}}} = \alpha(h_i;r_i,M_i)$
and hence $\Psi = \Phi^+_{i,M,\alpha}$ is correctly returned.

Now assume that $j_0 > 1$ so that we assign $s_{ij_0}$ to equal
$s_{i(j_0-1)}$. The new exponent on $s_{i(j_0-1)}$ will be
$\tau(h_i; i,(j_0-1),+) - \sum_{j>j_0} m_{ij}$. For a general $k$ the while
loop proceeds as follows. After differentiating, by Lemma
\ref{diffchoose} the term corresponding to $h_i \in \mathcal{P}^+_i$ is
multiplied by ${{\tau(h_i;i,k,+)-\sum_{j>k}m_{ij}}\choose{m_{ik}}}$, and the $s_{ij}$ are
modified. Hence after running the while loop for $k = j_0, j_0-1, \ldots,
1$ we will have produced the coefficient

$$\prod_{j=1}^{j_0} {{\tau(h_i;i,j,+)-\sum_{k>j}
m_{ik}}\choose{m_{ij}}}$$
and the function returned is

$$ \Psi = \sum_{h_i \in \mathcal{P}^+_i} \prod_{j=1}^{j_0}
{{\tau(h_i;i,j,+)-\sum_{k>j} m_{ik}}\choose{m_{ij}}} \frac{t^{\deg(h_i)}}{Cent(h_i)}.$$
By Lemma \ref{alpha^i} we see that we have correctly returned

$$ \Psi = \Psi(t) = \sum_{h_i \in \mathcal{P}^+_i} \alpha(h_i;r_i,M_i)
\frac{t^{\deg(h_i)}}{Cent(h_i)} = \Phi^+_{i,M,\alpha}(t) .$$

\noindent
Thus part $(1)$ is proved. In particular, if $r_i = 0$ then
$\alpha(h_i; r_i, M_i) = 1$ so that $\Phi^+_{i,M,\alpha} = F_i$, so part
$(3)$ holds.

Let $l < r_i = i \sum_j jm_{ij}$. If $h_i \in \mathcal{P}^+_i$ and
\deg$(h_i) = l$, then $h_i$ has no factors of degree $r_i$, so
$\alpha(h_i, r_i, M_i) = 0$. Thus the coefficient of $t^l$ in
$\Phi^+_{i,M,\alpha}(t)$ is zero. On the other hand if $l = r_i$ then
there exists at least one $h_i \in \mathcal{P}^+_i$ of degree $r_i$
(even if $r_i=0$) such that $h_i$ has $m_{ij}$
monic irreducible degree
$i$ factors of multiplicity $j$ for each $j$. For each such $h_i$,
$\alpha(h_i; r_i, M_i) = 1$ and so the coefficient of $t^{r_i}$ is
nonzero. Thus part $(2)$ is proved.
\end{proof}

We now give an alternative expression for determining the generating
function $C_{\GL,r}(t)$ that involves Wall's function $C_{\GL}(t)$ defined in
Equation (\ref{C}). This expression will be used later to study
the asymptotic properties of $c_{\GL,r}(n)$.

\begin{theorem} \label{C_r=Cblah}
For each $i$, let $m_i := \sum_j m_{ij}$. Then

$$\displaystyle C_{\GL,r}(t) = C_{\GL}(t) \sum_{M\in\mathcal{M}(r)} \prod_{i=1}^r
\phi^+_{i,M}(t)$$

\noindent where $\displaystyle \phi^+_{i,M}(t) = m_i! {N^+(i,q) \choose
m_i} \prod_{j=1}^{\lfloor r/i \rfloor}
\frac{(tq^{-1})^{ijm_{ij}}}
{m_{ij}!(1-q^{-i}+t^iq^{-2i})^{m_{ij}}}$ and $C_{\GL}(t) = \prod_i F_i$ as
in Equation (\ref{C}). Moreover, if $i > r$ then $\phi^+_{i,M}(t) = 1$ for all $M \in \mathcal{M}(r)$.
\end{theorem}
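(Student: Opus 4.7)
I would start from Lemma \ref{Crfinal}, which gives $C_{\GL,r}(t) = \sum_{M\in\mathcal{M}(r)} \prod_{i=1}^\infty \Phi^+_{i,M,\alpha}(t)$, and factor out $C_{\GL}(t) = \prod_i F_i(t)$ by establishing the per-factor identity $\Phi^+_{i,M,\alpha}(t) = F_i(t)\,\phi^+_{i,M}(t)$. When $i > r$ every $m_{ij}$ vanishes, so $r_i = 0$ and Lemma \ref{phialpha}(3) gives $\Phi^+_{i,M,\alpha} = F_i$, forcing $\phi^+_{i,M}(t) = 1$; this already yields the theorem's final assertion. The substance of the proof is the case $i \le r$, which I would handle by unwinding the procedure {\sc PhiAlpha}$(r,M,i)$.

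Using $Cent(i,j) = q^{ij}(1-q^{-i})$ and summing a geometric series, I first record
$$b_k(t) := \sum_{j \ge k}\frac{t^{ij}}{Cent(i,j)} = \frac{(tq^{-1})^{ik}}{(1-q^{-i})(1-t^{i}q^{-i})}, \qquad F_i(t) = (1+b_1(t))^{N^+(i,q)}.$$
The critical identity $t^{i(k-1)}/Cent(i,k-1) + b_k(t) = b_{k-1}(t)$ says that each substitution $s_{ik} := s_{i(k-1)}$ preserves the shape of the linear form sitting inside $\Psi$, with $k$ decreased by one. I would then prove by induction on the descending loop variable $k = \lfloor r/i \rfloor, \ldots, 1$ the invariant
$$\Psi = \left(\prod_{l > k}\binom{N^+(i,q) - \sum_{l' > l} m_{il'}}{m_{il}}\,b_l(t)^{m_{il}}\right) G_k^{\,N^+(i,q) - \sum_{l > k} m_{il}},$$
where $G_k = 1 + \sum_{j < k} s_{ij}\,t^{ij}/Cent(i,j) + s_{ik}\,b_k(t)$. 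The induction step is immediate: since $G_k$ is linear in $s_{ik}$ with coefficient $b_k(t)$, Lemma \ref{diffchoose} yields $\frac{1}{m_{ik}!}\partial_{s_{ik}}^{m_{ik}} G_k^{N'} = \binom{N'}{m_{ik}}\,b_k(t)^{m_{ik}}\,G_k^{N'-m_{ik}}$, and the substitution step reinstates the invariant at level $k-1$ via the critical identity.

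On termination $s_{i1}$ has been replaced by $s_{i0} = 1$, so $G_0 = 1+b_1(t)$ and $G_0^{N^+(i,q)-m_i} = F_i(t)(1+b_1(t))^{-m_i}$. The product of binomial coefficients telescopes to the multinomial $\binom{N^+(i,q)}{m_i}\,m_i!/\prod_k m_{ik}!$, giving
$$\frac{\Phi^+_{i,M,\alpha}(t)}{F_i(t)} = m_i!\binom{N^+(i,q)}{m_i} \frac{\prod_k b_k(t)^{m_{ik}}}{\prod_k m_{ik}! \cdot (1+b_1(t))^{m_i}}.$$
Substituting $1 + b_1(t) = (1-q^{-i}+t^{i}q^{-2i})/[(1-q^{-i})(1-t^{i}q^{-i})]$ and using $\sum_k m_{ik} = m_i$, the $[(1-q^{-i})(1-t^{i}q^{-i})]^{m_i}$ factors in the denominator of $\prod_k b_k^{m_{ik}}$ cancel exactly against those introduced by $(1+b_1)^{-m_i}$, leaving $\prod_j (tq^{-1})^{ijm_{ij}}/(1-q^{-i}+t^{i}q^{-2i})^{m_{ij}}$, which is precisely $\phi^+_{i,M}(t)$.

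I expect the main obstacle to be the careful bookkeeping of the inductive invariant for $\Psi$, in particular verifying that the linear-in-$s_{ij}$ structure of $G_k$ is preserved under both differentiation and substitution, and that the accumulated binomial coefficients collapse by the telescoping multinomial identity. Once this is set up, the final algebraic cancellation producing the denominator $(1-q^{-i}+t^{i}q^{-2i})^{m_{ij}}$ is routine, and no input beyond Lemmas \ref{diffchoose} and \ref{phialpha} is required.
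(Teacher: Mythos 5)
Your proposal is correct and follows essentially the same route as the paper: start from Lemma~\ref{Crfinal}, dispose of the $i>r$ factors via Lemma~\ref{phialpha}, and unwind the procedure {\sc PhiAlpha} term by term, tracking the geometric-series tails and the accumulating binomial coefficients. Your formulation with $b_k(t)$, the explicit loop invariant, and the telescoping multinomial identity is just a cleaner packaging of the same calculation the paper carries out step by step.
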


\begin{proof}
By Lemma \ref{Crfinal},

$$ C_{\GL,r}(t) = \sum_{M \in \mathcal{M}(r)} \prod_{i=1}^\infty
\Phi^+_{i,M,\alpha}(t) $$

\noindent
and by (\ref{C})

$$ C_{\GL}(t) = \prod_{i=1}^\infty F_i(t). $$

\noindent
Also in Lemma \ref{phialpha} (2), for $i > r$ we have $\Phi^+_{i,M,\alpha}(t)
= F_i(t)$. Then it follows that

$$ C_{\GL,r}(t) = C_{\GL}(t) \sum_{M \in \mathcal{M}(r)} \prod_{i=1}^r
\frac{\Phi^+_{i,M,\alpha}(t)} {F_i(t)} $$

\noindent
where by (\ref{F_i}), $F_i(t) = \left( 1 + \sum_{j=1}^\infty
\frac{t^{ij}}{Cent(i,j)} \right)^{N^+(i,q)}$ and by Lemma
\ref{same_cent}, \newline $Cent(i,j) = q^{ij}(1-q^{-i}).$

We consider the details of the procedure {\sc PhiAlpha}$(r,M,i)$ that
constructs $\Phi^+_{i,M,\alpha}(t)$. Firstly the procedure sets $\Psi := \Phi^+_i$ and
assign $k := \lfloor r/i \rfloor$. Then the $s_{ij}$ are assigned to
$s_{ik}$ for all $j > k$ so that $\Psi$ becomes

$$ \left(1 + \sum_{j=1}^{k-1}
\frac{s_{ij}t^{ij}}{q^{ij}(1-q^{-i})} + \sum_{j=k}^\infty
\frac{s_{ik}t^{ij}}{q^{ij}(1-q^{-i})}\right)^{N^+(i,q)}
= \Psi_0^{N^+(i,q)}$$

\noindent
where $\Psi_0$ is given by $1 + \sum_{j=1}^{k-1}
\frac{s_{ij}t^{ij}}{q^{ij}(1-q^{-i})} + \sum_{j=k}^\infty
\frac{s_{ik}t^{ij}}{q^{ij}(1-q^{-i})}$.

In the first run of the while loop, we
partially differentiate $\Psi$ with respect to $s_{ik}$,
$m_{ik}$ times, and divide by $m_{ik}!$.
Suppose first that $m_{ik} > 0$. Then, by the chain rule, this will
produce

$$ N^+(i,q)(N^+(i,q)-1) \ldots (N^+(i,q)-m_{ik}+1) = \prod_{0 \le u < m_{ik}} (N^+(i,q)-u). $$

\noindent
The first partial derivative of $\Psi_0$ is
$\sum_{j=k}^\infty \frac{t^{ij}}{q^{ij}(1-q^{-i})}$ which is a
geometric progression equal to $\frac{t^{ik}}{q^{ik}(1-q^{-i})(1-t^i/q^i)}$.
Thus after performing the partial differentiation and division by
$m_{ik}!$ we obtain

$$ \left( \prod_{0\le u < m_{ik}} (N^+(i,q) - u) \right)
\frac{1}{m_{ik}!}
\left( \frac{t^{ik}}{q^{ik}(1-q^{-i})(1-t^iq^{-i})}\right)^{m_{ik}}
\Psi_0^{N^+(i,q)-m_{ik}}.$$

The final step in the while loop is to change $s_{ik}$ into
$s_{i(k-1)}$ and hence $\Psi_0$ becomes

$$ 1 + \sum_{j=1}^{k-2} \frac{s_{ij}t^{ij}}{q^{ij}(1-q^{-i})} +
\sum_{j=k-1}^\infty
\frac{s_{i(k-1)}t^{ij}}{q^{ij}(1-q^{-i})}.$$

As the procedure prescribes, we now repeat the while loop using $k-1$
in place of $k$. After performing the while loop $\lfloor r/i \rfloor$
times we will have partially differentiated $m_i$ times. After all
runs of the while loop, $\Psi$ will equal

$$ \left( \prod_{0\le u < m_{i}} (N^+(i,q) - u) \right)
\prod_{j=1}^{\lfloor r/i \rfloor}
\frac{1}{m_{ij}!} \left( \frac{t^{ij}}
{q^{ij}(1-q^{-i})(1-t^iq^{-i})}\right)^{m_{ij}} \hat{\Psi}_0^{N^+(i,q)-m_i} $$

\noindent where $\hat{\Psi}_0$ is equal to the function $\Psi_0$ with all the
$s_{ij}$ set to $1$. We make note of the fact that we only need to
take the product over $j$ from $1$ to $\lfloor r/i \rfloor$
because for $j > \lfloor r/i \rfloor$ we have that $m_{ij} = 0$ and
hence $\frac{1}{m_{ij}!} \left( \frac{t^{ij}}
{q^{ij}(1-q^{-i})(1-t^iq^{-i})}\right)^{m_{ij}} = 1$. This final
expression for $\Psi$ equals $\Phi^+_{i,M,\alpha}(t)$, by Lemma
\ref{phialpha}.

Finally observe that

\begin{displaymath}
\begin{array}{rcl}

\hat{\Psi}_0(t) & = & 1 + \displaystyle \sum_{j=1}^\infty
\frac{t^{ij}}{q^{ij}(1-q^{-i})} \\

& = & 1 + \displaystyle\frac{t^i}{q^i(1-q^{-i})(1-t^iq^{-i})} \\

\end{array}
\end{displaymath}

\noindent
and hence, by Equation \ref{F_i}, substituting in the values of
$Cent(i,j)$, we have $F_i(t) = \hat{\Psi}_0(t)^{N^+(i,q)}$. Since $m_i$ equals
$\sum_j m_{ij}$ and $m_i! {N^+(i,q) \choose m_i}$ is equal to $\prod_{0\le u < m_{i}}
(N^+(i,q) - u)$, it follows that $\frac{\Phi^+_{i,M,\alpha}(t)}{F_i(t)}$ is equal to

\begin{displaymath}
\begin{array}{l}

 \displaystyle m_i! {N^+(i,q) \choose m_i}
\frac{1}{\hat{\Psi}_0(t)^{m_i}}
\prod_{j=1}^{\lfloor r/i \rfloor} \frac{1}{m_{ij}!}
\left( \frac{t^{ij}}{q^{ij}(1-q^{-i})(1-t^iq^{-i})} \right)^{m_{ij}} = \\

\displaystyle m_i! {N^+(i,q) \choose m_i} \prod_{j=1}^{\lfloor r/i \rfloor}
\frac{1}{m_{ij}!} \left( \frac{1}{(1 +
\frac{t^i}{q^i(1-q^{-i})(1-t^i/q^i)})}
\frac{t^{ij}}{q^{ij}(1-q^{-i})(1-t^i/q^i)} \right)^{m_{ij}}. \\

\end{array}
\end{displaymath}

\noindent
Rearranging gives

\begin{displaymath}
\begin{array}{r c l}

\displaystyle \frac{\Phi^+_{i,M,\alpha}(t)}{F_i(t)}  & = & \displaystyle
m_i! {N^+(i,q) \choose m_i} \prod_{j=1}^{\lfloor r/i \rfloor}
\frac{1}{m_{ij}!} \left(
\frac{t^{ij} q^{-ij}} {(1-q^{-i})(1-t^iq^{-i}) + t^i q^{-i}}
\right)^{m_{ij}} \\

& = & \displaystyle  m_i! {N^+(i,q) \choose m_i} \prod_{j=1}^{\lfloor r/i \rfloor}
\frac{1}{m_{ij}!} \left( \frac{t^{ij} q^{-ij}}
{1 - q^{-i} + t^i q^{-2i}} \right)^{m_{ij}} \\

& = & \displaystyle  m_i! {N^+(i,q) \choose m_i} \prod_{j=1}^{\lfloor r/i \rfloor}
\frac{(tq^{-1})^{ijm_{ij}}}
{m_{ij}! \left(1 - q^{-i} + t^i q^{-2i}\right)^{m_{ij}}}. \\

& = & \phi^+_{i,M}(t). \\

\end{array}
\end{displaymath}

\end{proof}

\subsection{The Limiting Proportion} \label{sect4.2}

Now that we have a formula for the generating function, $C_{\GL,r}(t)$,
for any $r$, we will prove that its coefficients converge and find their
limiting value. We use the expression for
$\phi^+_{i,M}(t)$ from Theorem \ref{C_r=Cblah}.

\begin{lemma} \label{phiconv}
Let $M = (m_{ij}) \in \mathcal{M}(r)$. Then for any $i \le r$, the power
series expansion of

$$\phi^+_{i,M}(t) = m_i! {N^+(i,q) \choose m_i} \prod_{j=1}^{\lfloor r/i
\rfloor} \frac{(tq^{-1})^{ijm_{ij}}}
{m_{ij}!(1-q^{-i}+t^iq^{-2i})^{m_{ij}}},$$

\noindent is convergent for $|t| < q (q^i-1)^{1/i}$.
\end{lemma}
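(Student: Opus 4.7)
The plan is to treat $\phi^+_{i,M}(t)$ as a rational function of $t$ and invoke the standard fact that the radius of convergence of the power series expansion around the origin equals the distance from $0$ to the nearest singularity in the complex plane.

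First I would separate the entire from the meromorphic contributions. The prefactor $m_i! \binom{N^+(i,q)}{m_i}$ is a constant, and each $m_{ij}!$ in the denominator is likewise a constant, so these do not affect convergence. The numerator contribution $\prod_j (tq^{-1})^{ijm_{ij}}$ is a single monomial in $t$ of degree $\sum_j ij m_{ij}=r_i$, which is entire. Consequently the only possible singularities of $\phi^+_{i,M}(t)$ come from zeros of the factors $\bigl(1-q^{-i}+t^iq^{-2i}\bigr)^{m_{ij}}$ appearing in the denominator (those with $m_{ij}>0$).

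Next I would locate those zeros. Setting $1-q^{-i}+t^iq^{-2i}=0$ and solving gives $t^i = q^{2i}(q^{-i}-1) = -q^i(q^i-1)$. Taking moduli and $i$-th roots, every complex root $t_0$ of this equation satisfies $|t_0| = q(q^i-1)^{1/i}$. Since $q\ge 2$ and $i\ge 1$ this quantity is strictly positive, so $\phi^+_{i,M}(t)$ is analytic on the open disc $\{t : |t| < q(q^i-1)^{1/i}\}$ and has a pole on its boundary. By the Cauchy--Hadamard characterisation of the radius of convergence via the nearest singularity, the power series expansion of $\phi^+_{i,M}(t)$ at $t=0$ converges precisely on this disc, which is the stated conclusion.

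There is no real obstacle here; the statement reduces to inspecting the explicit rational function provided by Theorem~\ref{C_r=Cblah}. The only minor care needed is to confirm that the monomial $(tq^{-1})^{ijm_{ij}}$ in the numerator cannot cancel any of the denominator zeros (it cannot, since the denominator zeros have $|t|>0$ whereas the numerator's only zero is at $t=0$), and to note that when $m_{ij}=0$ for all $j$ the product over $j$ is empty and $\phi^+_{i,M}(t)$ is a constant, which is trivially analytic and whose expansion is convergent on all of $\mathbb{C}$, a fortiori on the claimed disc.
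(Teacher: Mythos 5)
Your proof is correct, and it reaches the same conclusion as the paper's argument but via a different final step. The paper simply rewrites the denominator as $(1-q^{-i}+t^iq^{-2i})^{-m_i}$, factors out $(1-q^{-i})^{-1}$, and compares the remaining factor with a geometric series in $t^iq^{-2i}/(1-q^{-i})$; the convergence condition $|t^i q^{-2i}/(1-q^{-i})|<1$ then unwinds directly to $|t|<q(q^i-1)^{1/i}$. You instead locate the zeros of $1-q^{-i}+t^iq^{-2i}$, observe they all sit on the circle $|t| = q(q^i-1)^{1/i}$, verify there is no numerator cancellation, and invoke the theorem that the Taylor series of a rational function at the origin converges on the disc extending to the nearest pole. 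Your approach is slightly more abstract (and actually gives the exact radius of convergence, a bit more than the lemma asks), while the paper's is fully elementary, staying entirely within the language of formal power series and geometric series dominance, which is in keeping with the generating-function style of the rest of the section. Both arguments identify the same source of the radius restriction, so there is no gap; just two standard ways of drawing the same conclusion. One cosmetic note: what you are citing is really the ``nearest singularity'' theorem for analytic functions rather than Cauchy--Hadamard, which is the $\limsup$ formula for the radius of convergence.
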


\begin{proof}
We first express $\phi^+_{i,M}(t)$ as follows:

$$ \phi^+_{i,M}(t) = m_i! {N^+(i,q) \choose m_i} t^{r_i}
q^{-r_i} (1-q^{-i}+t^iq^{-2i})^{-m_i}
\prod_{j=1}^{\lfloor r/i \rfloor} (\frac{1}{m_{ij}!}).$$

Clearly the polynomial $m_i! {N^+(i,q) \choose m_i} t^{r_i}
q^{-r_i} \prod_{j=1}^{\lfloor r/i \rfloor} (\frac{1}{m_{ij}!})$ is
convergent for all $t$, but $(1-q^{-i}+t^iq^{-2i})^{-m_i}$ needs further
inspection. We need to find the radius of convergence of the power
series for $(1-q^{-i}+ t^i q^{-2i})^{-1}$. Factorisation gives

\begin{displaymath}
\begin{array}{r c l}

\displaystyle \frac{1}{1-q^{-i}+t^iq^{-2i}} & = & \displaystyle
\frac{1}{1-q^{-i}} \left( \frac{1}{1+\frac{t^iq^{-2i}}{1-q^{-i}}}
\right). \\

\end{array}
\end{displaymath}

Now $\frac{1}{1-x} = \sum_{i=0}^\infty x^i$ is convergent if and only
if $|x| < 1$ and hence the power series for the displayed function is
convergent if $|\frac{t^iq^{-2i}}{1-q^{-i}}| < 1$, that is, $|t^i| <
q^i(q^i-1)$. This is equivalent to $|t| < q (q^i-1)^{1/i}$.
\end{proof}

\begin{corollary}\label{q(q-1)}
The power series expansion of $\displaystyle \sum_{M\in\mathcal{M}(r)}
\prod_{i=1}^r \phi^+_{i,M}(t)$ is convergent for $|t| < q(q-1)$.
\end{corollary}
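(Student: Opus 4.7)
The strategy is straightforward: reduce the corollary to Lemma \ref{phiconv} by observing that $q(q-1)$ is the minimum over $i\ge 1$ of the radii $q(q^i-1)^{1/i}$, and then exploit the finiteness of the sum and product.

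First, I would verify that $\mathcal{M}(r)$ is a finite set. Indeed, by Definition \ref{M}, any $M = (m_{ij}) \in \mathcal{M}(r)$ satisfies $\sum_{i,j} ij m_{ij} = r$ with $m_{ij}\ge 0$, forcing $m_{ij} = 0$ whenever $ij > r$ and $m_{ij} \le r$ otherwise. Hence only finitely many pairs $(i,j)$ can contribute, and each entry is bounded, so $|\mathcal{M}(r)| < \infty$.

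Next, I would establish the key inequality
$$q(q-1) \le q(q^i-1)^{1/i} \quad\text{for every } i\ge 1,$$
which is equivalent to $(q-1)^i \le q^i - 1$. This follows from the factorisation $q^i - 1 = (q-1)(q^{i-1} + q^{i-2} + \cdots + 1)$, since for $i\ge 2$ the sum $q^{i-1} + \cdots + 1$ exceeds $(q-1)^{i-1}$ term by term (as $q^k \ge (q-1)^k$). For $i=1$ equality holds, so $q(q-1)$ is in fact the smallest of these radii.

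Combining these: by Lemma \ref{phiconv}, each $\phi^+_{i,M}(t)$ (for $1\le i\le r$ and $M\in\mathcal{M}(r)$) has a power series expansion convergent on $|t| < q(q^i-1)^{1/i}$, and by the inequality above, every such disc contains the disc $|t|<q(q-1)$. Therefore, for $|t| < q(q-1)$, the finite product $\prod_{i=1}^r \phi^+_{i,M}(t)$ converges for each $M$, and the finite sum $\sum_{M\in\mathcal{M}(r)} \prod_{i=1}^r \phi^+_{i,M}(t)$ converges as a finite sum of functions analytic in that disc.

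There is no genuine obstacle here; the only point requiring care is the elementary inequality identifying $i=1$ as the worst case, after which the finiteness of $\mathcal{M}(r)$ and of the product over $i\le r$ does the rest.
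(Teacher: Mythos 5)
Your proposal is correct and follows essentially the same route as the paper: invoke Lemma \ref{phiconv} for each factor, observe that $i=1$ gives the smallest radius $q(q-1)$ among $q(q^i-1)^{1/i}$, and then use the finiteness of the product over $i\le r$ and of $\mathcal{M}(r)$. You simply spell out the elementary inequality $(q-1)^i \le q^i-1$ and the finiteness of $\mathcal{M}(r)$, which the paper states without detail.
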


\begin{proof}
By Lemma \ref{phiconv}, the power series expansion of $\phi^+_{i,M}(t)$
is convergent for $|t| < q(q^i-1)^{1/i}$ for all $i \le r$ and any
$M \in \mathcal{M}(r)$. Hence $\prod_i \phi^+_{i,M}(t)$ is convergent
for $|t|$ less than the minimum of $q(q^i-1)^{1/i}$ over $i=1,\dots,
r$. That is to say, for $|t| < q(q-1)$. Hence we have our result since
$\mathcal{M}(r)$ is finite.
\end{proof}

Despite the above corollary, $C_{\GL,r}(t)$ is only convergent for $|t| < 1$
because $C_{\GL}(t)$ is only convergent for $|t| < 1$ (see our comment
before Theorem \ref{limpropGL(V)}.

The next theorem gives us a
definite formula for the limit of $c_{\GL,r}(n)$ as $n$ tends to infinity.

\begin{theorem} \label{limcrn}
For $r \in \mathbb{Z}^+$, $c_{\GL,r}(\infty) := \lim_{n\to\infty} c_{\GL,r}(n)$ exists and satisfies

$$\displaystyle c_{\GL,r}(\infty) =
\frac{1-q^{-5}}{1+q^{-3}} \sum_{M\in\mathcal{M}(r)} \prod_{i=1}^r
\phi^+_{i,M}(1),$$
where $\phi^+_{i,M}(t)$ is defined as in Theorem \ref{C_r=Cblah}. Moreover, for any $d$ such that $1 < d < q(q-1)$,
$| c_{\GL,r}(n) - c_{\GL,r}(\infty) | = O(d^{-n})$.
\end{theorem}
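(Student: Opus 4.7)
The plan is to derive the theorem by combining the factorisation in Theorem \ref{C_r=Cblah} with Wall's analyticity result for $C_{\GL}(t)$ and then invoking Lemma \ref{1-t}. Set
$$S(t) := \sum_{M \in \mathcal{M}(r)} \prod_{i=1}^r \phi^+_{i,M}(t),$$
so that Theorem \ref{C_r=Cblah} reads $C_{\GL,r}(t) = C_{\GL}(t) \cdot S(t)$. Wall's work (quoted just before Theorem \ref{limpropGL(V)}) says that $(1-t) C_{\GL}(t)$ extends analytically to the disc $|t| < q^2$, and takes the value $c_{\GL}(\infty) = (1-q^{-5})/(1+q^{-3})$ at $t=1$. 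I would therefore write
$$C_{\GL,r}(t) = \frac{f(t)}{1-t}, \qquad f(t) := \bigl( (1-t) C_{\GL}(t) \bigr)\cdot S(t),$$
so that $C_{\GL,r}(t)$ is presented in exactly the form required by Lemma \ref{1-t}.

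The next step is to identify the radius of analyticity of $f(t)$. The first factor $(1-t)C_{\GL}(t)$ is analytic on $|t|<q^2$, while Corollary \ref{q(q-1)} supplies convergence of $S(t)$ on $|t|<q(q-1)$. Since $q(q-1) \le q^2$ and $q(q-1) > 1$ for every prime power $q \ge 2$, the product $f(t)$ is analytic on the disc of radius $R = q(q-1) > 1$. Consequently Lemma \ref{1-t} applies, giving $c_{\GL,r}(\infty) = f(1)$ and $|c_{\GL,r}(n) - c_{\GL,r}(\infty)| = O(d^{-n})$ for every $1 < d < q(q-1)$, which is exactly the rate claimed.

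It remains to evaluate $f(1)$. By Wall's Theorem \ref{limpropGL(V)}, $\lim_{t \to 1}(1-t) C_{\GL}(t) = (1-q^{-5})/(1+q^{-3})$. For the second factor, I must check that each $\phi^+_{i,M}(1)$ is finite, i.e.\ that the denominator $1 - q^{-i} + q^{-2i}$ appearing in the formula from Theorem \ref{C_r=Cblah} does not vanish at $t=1$. This is immediate: for $q \ge 2$ and $i \ge 1$ we have $1 - q^{-i} \ge 1/2$, so $1 - q^{-i} + q^{-2i} > 0$. Hence $S(1)$ is a finite sum of finite rational numbers, and
$$f(1) = \frac{1-q^{-5}}{1+q^{-3}} \sum_{M \in \mathcal{M}(r)} \prod_{i=1}^r \phi^+_{i,M}(1),$$
which is the asserted value of $c_{\GL,r}(\infty)$.

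The main (and really the only) obstacle is verifying the analyticity claims for $f(t)$ on a disc of radius strictly greater than $1$; this is genuinely where Corollary \ref{q(q-1)} earns its keep, since without that bound one could not conclude either the existence of the limit or the exponential rate of convergence. Once analyticity on $|t| < q(q-1)$ is in hand, the rest is a routine application of Lemma \ref{1-t} together with the evaluation of the two factors at $t = 1$.
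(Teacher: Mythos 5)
Your proof is correct and follows essentially the same route as the paper: factor $C_{\GL,r}(t) = C_{\GL}(t) \cdot S(t)$ via Theorem \ref{C_r=Cblah}, use Wall's analyticity of $(1-t)C_{\GL}(t)$ on $|t|<q^2$ together with Corollary \ref{q(q-1)} to get analyticity of $(1-t)C_{\GL,r}(t)$ on $|t|<q(q-1)$, and then apply Lemma \ref{1-t}. The only addition beyond the paper's proof is your explicit check that the denominators $1-q^{-i}+q^{-2i}$ in $\phi^+_{i,M}(1)$ are nonzero, which is a reasonable bit of bookkeeping but does not change the argument.
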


\begin{proof}
From \cite{wall}, $(1-t) C_{\GL}(t)$ is convergent for $|t| < q^2$ and by
Corollary \ref{q(q-1)}, $\sum_{M\in\mathcal{M}(r)} \prod_i
\phi^+_{i,M}(t)$ is convergent for $|t| < q(q-1)$. Hence $(1-t)C_{\GL,r}(t)$
is convergent for $|t| < q(q-1)$.

By Lemma \ref{1-t}, $(1-t) C_{\GL,r}(t)$ evaluated at $t=1$ will give us
the limit of $c_{\GL,r}(n)$ as $n \rightarrow \infty$. By
\cite[Equation 6.23]{wall} we know that $(1-t) C_{\GL}(t)$ evaluated at $t=1$ is

$$\frac{1 - q^{-5}}{1 + q^{-3}}.$$
Thus

$$\displaystyle \lim_{n\rightarrow\infty} c_{\GL,r}(n) =
\frac{1-q^{-5}}{1+q^{-3}} \sum_{M\in\mathcal{M}(r)} \prod_{i=1}^r
\phi^+_{i,M}(1).$$

We showed above that $(1-t) C_{\GL,r}(t)$ is convergent for $|t| < q(q-1)$,
so the final assertion stating the rate of convergence follows by Lemma \ref{1-t}.
\end{proof}

We note that the existance of the limit $c_{\GL,r}(\infty)$ and the convergence rate of
$c_{\GL,r}(n)$ asserted in Theorem \ref{1-q^-2ch1} follow from Theorem \ref{limcrn}.

We now evaluate $\phi^+_{i,M}(1)$ for all $i$ and
determine the first few terms of the power series expansion. This will
aid us in proving Theorem \ref{1-q^-2} which determines
the first nontrivial term in the power series expansion for
$c_{\GL,r}(\infty)$.

\begin{lemma}\label{phiigen}
Let $M=(m_{ij}) \in \mathcal{M}(r)$ and set $m_i = \sum_j m_{ij}$ for
each $i$. Then with the notation of Theorem \ref{C_r=Cblah} we get
\begin{eqnarray*}
\phi^+_{1,M}(1) & = & \frac{q^{m_1-\sum jm_{1j}}}{\prod_j m_{1j}!} \left( 1 +
\left( -\frac{m_{1}^2}{2} + \frac{m_{1}}{2} \right) q^{-1} \right.\\
& & + \left. \left(\frac{m_{1}^4}{8} - \frac{5m_{1}^3}{12} - \frac{m_{1}^2}{8} -
\frac{7m_{1}}{12}\right)q^{-2} + O(q^{-3}) \right), \\
\phi^+_{2,M}(1) & = & \frac{q^{2m_2-2\sum jm_{2j}}}{2^{m_2} \prod_j m_{2j}!}
\left(1 - m_2q^{-1} + \left(-\frac{m_2^2}{2}+\frac{3m_2}{2}\right)
q^{-2} + O(q^{-3})\right), \\
\phi^+_{3,M}(1) & = & \frac{q^{3m_3-3\sum jm_{3j}}}{3^{m_3} \prod_j m_{3j}!}
\left(1 - m_3q^{-2} + O(q^{-3})\right), \\
\phi^+_{4,M}(1) & = & \frac{q^{4m_4-4\sum jm_{4j}}}{4^{m_4} \prod_j m_{4j}!}
\left(1 - m_4q^{-2} + O(q^{-3})\right), \\
\phi^+_{i,M}(1) & = & \frac{q^{im_i-i\sum jm_{ij}}}{i^{m_i} \prod_j m_{ij}!}
\left(1 + O(q^{-3})\right) \quad \textrm{for } i \ge 5. \\
\end{eqnarray*}
\end{lemma}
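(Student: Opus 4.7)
The plan is to evaluate the explicit formula from Theorem \ref{C_r=Cblah} at $t=1$, which collapses to
\[
\phi^+_{i,M}(1) = \frac{1}{\prod_j m_{ij}!}\,\Bigl(m_i!\binom{N^+(i,q)}{m_i}\Bigr)\,q^{-i\sum_j jm_{ij}}\,(1-q^{-i}+q^{-2i})^{-m_i},
\]
and then to expand the two $q$-dependent factors in $q^{-1}$. For the falling factorial $m_i!\binom{N^+(i,q)}{m_i} = \prod_{k=0}^{m_i-1}(N^+(i,q)-k)$ I would substitute the standard counts (from M\"obius inversion) $N^+(1,q) = q-1$, $N^+(2,q) = \tfrac{1}{2}(q^2-q)$, $N^+(3,q) = \tfrac{1}{3}(q^3-q)$, $N^+(4,q) = \tfrac{1}{4}(q^4-q^2)$, and $N^+(i,q) = q^i/i + O(q^{i-3})$ for $i\ge 5$, and pull the leading $q^i/i$ out of each factor to get $\tfrac{q^{im_i}}{i^{m_i}}$ times a polynomial in $q^{-1}$ whose expansion is easy to read off. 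For the second factor, the generalized binomial series gives $(1-q^{-i}+q^{-2i})^{-m_i} = 1 + m_i q^{-i} + \tfrac{m_i(m_i-1)}{2}q^{-2i} + O(q^{-3i})$, which contributes nontrivially inside the target precision only for $i\le 2$.

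Multiplication then yields three of the five claimed formulas almost immediately. For $i\ge 5$ both expansions are $1 + O(q^{-3})$, so only the leading $\tfrac{q^{im_i}}{i^{m_i}}$ survives. For $i=3,4$ the only contribution at order $q^{-2}$ is the $-m_i q^{-2}$ term of the falling factorial, arising from the $-q^{i-2}$ piece of $N^+(i,q)$. For $i=2$ one additionally multiplies $(1 - m_2 q^{-1} - \tfrac{m_2(m_2-1)}{2}q^{-2})$ by $(1 + m_2 q^{-2})$, producing the coefficient $-\tfrac{m_2^2}{2} + \tfrac{3m_2}{2}$ at order $q^{-2}$ as stated.

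The main obstacle is the $i=1$ case, which requires genuine bookkeeping. The $q^{-2}$ coefficient collects contributions from three sources simultaneously: the second elementary symmetric function $e_2(1,\ldots,m_1)$ in the expansion of the falling factorial $\prod_{k=1}^{m_1}(q-k) = q^{m_1} - \binom{m_1+1}{2}q^{m_1-1} + e_2\, q^{m_1-2} - \cdots$, the cross term between the $-\binom{m_1+1}{2}q^{-1}$ coefficient of that expansion and the $m_1 q^{-1}$ coefficient of the binomial series, and the $\tfrac{m_1(m_1-1)}{2}q^{-2}$ term from $(1-q^{-1}+q^{-2})^{-m_1}$ itself. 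Using Newton's identities to evaluate $e_2 = \tfrac{m_1(m_1+1)(3m_1+2)(m_1-1)}{24}$ and summing the three contributions produces the polynomial $\tfrac{m_1^4}{8} - \tfrac{5m_1^3}{12} - \tfrac{m_1^2}{8} - \tfrac{7m_1}{12}$; numerical verification at $m_1 \in \{1,2,3\}$ serves as a useful safeguard before writing up the general simplification.
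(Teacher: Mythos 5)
Your proposal is correct and takes essentially the same route as the paper: evaluate at $t=1$, split $\phi^+_{i,M}(1)$ into the falling factorial $m_i!\binom{N^+(i,q)}{m_i}$, the trivial power factor, and the factor $(1-q^{-i}+q^{-2i})^{-m_i}$, then expand each in $q^{-1}$ to the required order, case by case in $i$. Your computation of $e_2(1,\dots,m_1)=\frac{m_1(m_1+1)(3m_1+2)(m_1-1)}{24}=\frac{m_1^4}{8}+\frac{m_1^3}{12}-\frac{m_1^2}{8}-\frac{m_1}{12}$ agrees with the coefficient the paper cites from an external lemma, and the three-source bookkeeping for the $q^{-2}$ coefficient in the $i=1$ case is exactly what is needed (incidentally, your $N^+(4,q)=\tfrac{1}{4}(q^4-q^2)$ is the Möbius-correct value; the paper's $\tfrac{1}{4}(q^4-q^2+q)$ is a harmless misprint at this precision).
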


\begin{proof}
We consider $\phi^+_{i,M}(1)$ separately for $i=1,2,3,4$ before
looking at the general case $i \ge 5$.

\bigskip
\noindent \textbf{Case: } {\boldmath $i=1$}

From the definition of $\phi^+_{i,M}(t)$ in Theorem \ref{C_r=Cblah},

$$\displaystyle \phi^+_{1,M}(1) = m_1! {N^+(1,q) \choose m_1}
\prod_{j=1}^{\lfloor r/i \rfloor}
\frac{q^{-jm_{1j}}}
{m_{1j}!(1-q^{-1}+q^{-2})^{m_{1j}}}.$$
We consider $\phi^+_{1,M}(1)$ in three parts. First of all, noting
that $m_1 \le N^+(1,q) = q-1$ since $m_1$ is the number of distinct
linear factors of a polynomial $f$, we have $m_1! {N^+(1,q) \choose m_1} = (q-1)(q-2)\ldots(q-m_1)$,
which equals

\begin{equation} \label{N+(1,q)}
\begin{array}{c l}
& q^{m_1} - \frac{m_1(m_1+1)}{2} q^{m_1-1} +  \left(\frac{m_1^4}{8} +
\frac{m_1^3}{12} - \frac{m_1^2}{8} - \frac{m_1}{12}\right) q^{m_1-2} +
O(q^{m_1-3}) \\

= & q^{m_1} \left( 1 - \left(\frac{m_1^2}{2} + \frac{m_1}{2}\right)
q^{-1} + \left(\frac{m_1^4}{8} + \frac{m_1^3}{12} - \frac{m_1^2}{8} -
\frac{m_1}{12} \right)q^{-2} + O(q^{-3}) \right).\\

\end{array}
\end{equation}

\noindent
where the coefficient of $q^{m_1-2}$ was computed in detail in Lemma 2.5.1 of \cite{myPhD}.

Secondly,

$$\prod_j \frac{q^{-jm_{1j}}}{m_{1j}!} = \frac{q^{-\sum
jm_{1j}}}{\prod_j m_{1j}!}. $$

Finally, since $\frac{1}{(1-x)^m} = \sum_{k \ge 0} {m+k-1 \choose
k}x^k$, making a Taylor expansion we get

\begin{displaymath}
\begin{array}{rcl}

\prod_j \frac{1}{(1-q^{-1}+q^{-2})^{m_{1j}}} & = &
\frac{1}{(1-q^{-1}+q^{-2})^{m_1}} \\

& = & 1 + m_1(q^{-1}-q^{-2}) +
{m_1+1 \choose 2} (q^{-1}-q^{-2})^2 + \ldots  \\

& = & 1 + m_1 q^{-1} + \frac{m_1(m_1-1)}{2}q^{-2} + O(q^{-3}).  \\
\end{array}
\end{displaymath}

\noindent Multiplying together the three parts and collecting
terms gives us $\phi^+_{1,M}(1)$ equal to

\begin{displaymath}
\begin{array}{l}
\displaystyle \frac{q^{m_1-\sum jm_{1j}}}{\prod_j m_{1j}!} \times \\
\displaystyle \left(1 + \left( - \frac{m_1^2}{2} + \frac{m_{1}}{2} \right) q^{-1}
+ \left(\frac{m_{1}^4}{8} -
\frac{5m_{1}^3}{12} - \frac{m_{1}^2}{8} -
\frac{7m_{1}}{12}\right)q^{-2}
+ O(q^{-3})\right). \\
\end{array}
\end{displaymath}

\bigskip
\noindent \textbf{Case: } {\boldmath $i=2$}

From the definition of $\phi^+_{i,M}(t)$ in Theorem \ref{C_r=Cblah},

$$\displaystyle \phi^+_{2,M}(1) = m_2! {N(2,q) \choose m_2} \prod_{j=1}^{\lfloor r/i \rfloor}
\frac{q^{-2jm_{2j}}}
{m_{2j}!(1-q^{-2}+q^{-4})^{m_{2j}}}.$$
Note that $N(2,q) = N^+(2,q) = \frac{q^2 - q}{2}$.
We consider $\phi^+_{2,M}(1)$ in three parts. First of all,

\begin{equation} \label{N(2,q)}
\begin{array}{r c l}
m_2! {N(2,q) \choose m_2} & = & \left(\frac{1}{2}(q^2-q)\right)\left(
\frac{1}{2}(q^2-q)-1\right)\ldots\left(\frac{1}{2}(q^2-q)-m_2+1\right)
\\

& = & \frac{1}{2^{m_2}} (q^2-q)(q^2-q-2) \ldots (q^2-q-2m_2+2) \\

& = & \frac{1}{2^{m_2}} \left( q^{2m_2} - m_2q^{2m_2-1} -
\frac{m_2(m_2-1)}{2}q^{2m_2-2} + O(q^{2m_2-3}) \right) \\

& = & \frac{q^{2m_2}}{2^{m_2}} \left( 1 - m_2q^{-1} +
\left(-\frac{m_2^2}{2} + \frac{m_2}{2}\right)q^{-2} + O(q^{-3}) \right). \\

\end{array}
\end{equation}

The coefficient of $q^{2m_2-2}$ arises from summing together $\frac{m_2(m_2-1)}{2}$,
which is the number of ways to choose $q^2$ from $m_2-2$ terms
and $-q$ from two terms above, with $-m_2(m_2-1)$, which is the
coefficient obtained by choosing $q^2$ from $m_2-1$ terms and a nonzero
constant term from one term above.

The second part is

$$\prod_j \frac{q^{-2jm_{2j}}}{m_{2j}!} = \frac{q^{-2\sum
jm_{2j}}}{\prod_j m_{2j}!}. $$

Finally the Taylor expansion of

\begin{displaymath}
\begin{array}{rcl}

\prod_j \frac{1}{(1-q^{-2}+q^{-4})^{m_{2j}}} & = &
\frac{1}{(1-q^{-2}+q^{-4})^{m_2}} \\

& = & 1 + m_2(q^{-2}-q^{-4}) +
{m_2+1 \choose 2} (q^{-2}-q^{-4})^2 + \ldots  \\

& = & 1 + m_2 q^{-2} + O(q^{-4}).  \\
\end{array}
\end{displaymath}

Multiplying together the three parts and collecting terms gives us
$\phi^+_{2,M}(1)$ equal to

$$\frac{q^{2m_2-2\sum jm_{2j}}}{2^{m_2} \prod_j m_{2j}!}
\left(1 - m_2q^{-1} + \left(-\frac{m_2^2}{2}+\frac{3m_2}{2}\right)
q^{-2} + O(q^{-3})\right). $$

\bigskip
\noindent \textbf{Case: } {\boldmath $i=3$}

From the definition of $\phi^+_{i,M}(t)$ in Theorem \ref{C_r=Cblah},

$$\displaystyle \phi^+_{3,M}(1) = m_3! {N(3,q) \choose m_3} \prod_{j=1}^{\lfloor r/i \rfloor}
\frac{q^{-3jm_{3j}}}
{m_{3j}!(1-q^{-3}+q^{-6})^{m_{3j}}}.$$

\noindent
Note that $N(3,q) = N^+(3,q) = \frac{q^3-q}{3}$.
We consider $\phi^+_{3,M}(1)$ in three parts. First of all,

\begin{equation}\label{N(3,q)}
\begin{array}{r c l}
m_3! {N(3,q) \choose m_3} & = & \left(\frac{1}{3}(q^3-q)\right)\left(
\frac{1}{3}(q^3-q)-1\right)\ldots\left(\frac{1}{3}(q^3-q)-m_3+1\right)
\\

& = & \frac{1}{3^{m_3}} \left( q^{3m_3} - m_3q^{3m_3-2} +
O(q^{3m_3-3}) \right) \\

& = & \frac{q^{3m_3}}{3^{m_3}} \left( 1 - m_3q^{-2} +
O(q^{-3}) \right). \\

\end{array}
\end{equation}

Secondly,

$$\prod_j \frac{q^{-3jm_{3j}}}{m_{3j}!} = \frac{q^{-3\sum
jm_{3j}}}{\prod_j m_{3j}!}. $$

Finally $\prod_j \frac{1}{(1-q^{-3}+q^{-6})^{m_{3j}}} = \frac{1}{(1-q^{-3}+q^{-6})^{m_3}} = 1 + O(q^{-3}).$
Multiplying together the three parts and collecting terms gives us
$\phi^+_{3,M}(1)$ equal to

$$\frac{q^{3m_3-3\sum jm_{3j}}}{3^{m_3} \prod_j m_{3j}!}
\left(1 - m_3q^{-2} + O(q^{-3})\right). $$

\bigskip
\noindent \textbf{Case: } {\boldmath $i=4$}

From the definition of $\phi^+_{i,M}(t)$ in Theorem \ref{C_r=Cblah},

$$\displaystyle \phi^+_{4,M}(1) = m_4! {N(4,q) \choose m_4} \prod_{j=1}^{\lfloor r/i \rfloor}
\frac{q^{-4jm_{4j}}}
{m_{4j}!(1-q^{-4}+q^{-8})^{m_{4j}}}.$$
Note that $N(4,q) = N^+(4,q) = \frac{q^4-q^2+q}{4}$.
We consider $\phi^+_{4,M}(1)$ in three parts. First of all,

\begin{equation}\label{N(4,q)}
\begin{array}{r c l}
m_4! {N(4,q) \choose m_4} & = & \left(\frac{1}{4}(q^4-q^2+q)\right)
\ldots\left(\frac{1}{4}(q^4-q^2+q)-m_4+1\right) \\

& = & \frac{1}{4^{m_4}} \left( q^{4m_4} - m_4q^{4m_4-2} +
O(q^{4m_4-3}) \right) \\

& = & \frac{q^{4m_4}}{4^{m_4}} \left( 1 - m_4q^{-2} +
O(q^{-3}) \right). \\

\end{array}
\end{equation}

Secondly,

$$\prod_j \frac{q^{-4jm_{4j}}}{m_{4j}!} = \frac{q^{-4\sum
jm_{4j}}}{\prod_j m_{4j}!}. $$

Finally $\prod_j \frac{1}{(1-q^{-4}+q^{-8})^{m_{4j}}} =
\frac{1}{(1-q^{-4}+q^{-8})^{m_4}} = 1 + O(q^{-4}).$
Multiplying together the three parts and collecting terms gives us
$\phi^+_{4,M}(1)$ equal to

$$\frac{q^{4m_4-4\sum jm_{4j}}}{4^{m_4} \prod_j m_{4j}!}
\left(1 - m_4q^{-2} + O(q^{-3})\right). $$

\bigskip
\noindent \textbf{Case: } {\boldmath $i\ge 5$}

From the definition of $\phi^+_{i,M}(t)$ in Theorem \ref{C_r=Cblah},

$$\displaystyle \phi^+_{i,M}(1) = m_i! {N(i,q) \choose m_i} \prod_{j=1}^{\lfloor r/i \rfloor}
\frac{q^{-ijm_{ij}}}
{m_{ij}!(1-q^{-i}+q^{-2i})^{m_{ij}}}.$$
Note that $N(i,q) = N^+(i,q)$ for $i \ge 2$.
We consider $\phi^+_{i,M}(1)$ in three parts. First of all, since
the exponent of the second term in $N(i,q)$ is at least $3$ less than
the leading term, we have

\begin{equation}\label{N(i,q)}
\begin{array}{r c l}
m_i! {N(i,q) \choose m_i} & = & \frac{q^{im_i}}{i^{m_i}} \left( 1 +
O(q^{-3}) \right). \\

\end{array}
\end{equation}

Secondly,

$$\prod_j \frac{q^{-ijm_{ij}}}{m_{ij}!} = \frac{q^{-i\sum
jm_{ij}}}{\prod_j m_{ij}!}. $$

Finally $\prod_j \frac{1}{(1-q^{-i}+q^{-2i})^{m_{ij}}} =
\frac{1}{(1-q^{-i}+q^{-2i})^{m_i}} = 1 + O(q^{-i}).$
Multiplying together the three parts and collecting terms gives us
$\phi^+_{i,M}(1)$ equal to

$$\frac{q^{im_i-i\sum jm_{ij}}}{i^{m_i} \prod_j m_{ij}!}
\left(1 + O(q^{-3})\right). $$
\end{proof}

\begin{corollary} \label{phiipart}
For $M=(m_{ij}) \in \mathcal{M}_{part}(r)$, we get that
\begin{eqnarray*}
\phi^+_{1,M}(1) & = &
\frac{1}{m_{11}!} \left( 1 +
\left(-\frac{m_{11}^2}{2} +\frac{m_{11}}{2}\right)q^{-1} \right.\\
& & \left.+ \left(\frac{m_{11}^4}{8} - \frac{5m_{11}^3}{12} - \frac{m_{11}^2}{8} -
\frac{7m_{11}}{12}\right)q^{-2} + O(q^{-3}) \right), \\
\phi^+_{2,M}(1) & = & \frac{1}{2^{m_{21}}m_{21}!} \left(1 - m_{21}q^{-1} +
\left(-\frac{m_{21}^2}{2} + \frac{3m_{21}}{2}\right)q^{-2} + O(q^{-3})\right),\\
\phi^+_{3,M}(1) & = & \frac{1}{3^{m_{31}}m_{31}!} \left(1 - m_{31}q^{-2} +
+ O(q^{-3})\right),\\
\phi^+_{4,M}(1) & = & \frac{1}{4^{m_{41}}m_{41}!} \left(1 - m_{41}q^{-2} +
O(q^{-3})\right), \\
\phi^+_{i,M}(1) & = & \frac{1}{i^{m_{i1}}m_{i1}!} \left(1 +
O(q^{-3})\right) \textrm{for } i\ge 5. \\
\end{eqnarray*}
\end{corollary}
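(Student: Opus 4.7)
The plan is to obtain Corollary \ref{phiipart} as a direct specialisation of Lemma \ref{phiigen} to the subset $\mathcal{M}_{part}(r) \subseteq \mathcal{M}(r)$. The essential observation is that for $M = (m_{ij}) \in \mathcal{M}_{part}(r)$, Definition \ref{M} together with the defining condition $m_{ij}=0$ for $j \ge 2$ means that for every $i$
$$
m_i \;=\; \sum_j m_{ij} \;=\; m_{i1}
\qquad\text{and}\qquad
\sum_j j\, m_{ij} \;=\; m_{i1}.
$$
Consequently $i m_i - i\sum_j j m_{ij} = 0$ for all $i$, so the $q$-power prefactors $q^{im_i - i\sum_j j m_{ij}}$ appearing in each of the five formulas of Lemma \ref{phiigen} collapse to $1$. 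Likewise $\prod_j m_{ij}! = m_{i1}!$, and every occurrence of $m_i$ in the parenthesised series is replaced by $m_{i1}$.

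With these substitutions applied case by case (separately for $i=1$, $i=2$, $i=3$, $i=4$, and $i \ge 5$), each expression of Lemma \ref{phiigen} simplifies verbatim to the matching expression claimed in the corollary; for example, in the $i=1$ case the prefactor $\tfrac{q^{m_1-\sum j m_{1j}}}{\prod_j m_{1j}!}$ becomes $\tfrac{1}{m_{11}!}$, while the bracketed series with $m_1$ replaced by $m_{11}$ is exactly the one displayed. The same substitution rule yields the stated formulas for $\phi^+_{2,M}(1)$, $\phi^+_{3,M}(1)$, $\phi^+_{4,M}(1)$, and $\phi^+_{i,M}(1)$ for $i \ge 5$.

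There is essentially no obstacle here: the work of expanding $m_i! \binom{N(i,q)}{m_i}$, the geometric factor $(1-q^{-i}+q^{-2i})^{-m_i}$, and the $q$-power product $\prod_j q^{-ij m_{ij}}/m_{ij}!$ has already been carried out in the proof of Lemma \ref{phiigen}, so the corollary follows by inspection once one records that the hypothesis $M \in \mathcal{M}_{part}(r)$ forces the exponent $i m_i - i\sum_j j m_{ij}$ to vanish and identifies $m_i$ with $m_{i1}$. No further estimation is required, and the error terms $O(q^{-3})$ transfer unchanged from Lemma \ref{phiigen}.
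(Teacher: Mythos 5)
Your proof is correct and follows essentially the same route as the paper: specialise Lemma \ref{phiigen} by observing that $m_{ij}=0$ for $j\ge 2$ forces $m_i=m_{i1}$, $\sum_j j m_{ij}=m_{i1}$, hence the exponent $im_i - i\sum_j jm_{ij}$ vanishes, the $q$-power prefactor collapses to $1$, and $\prod_j m_{ij}! = m_{i1}!$.
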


\begin{proof}
Let $(m_{ij}) \in \mathcal{M}_{part}(r)$, that is $(m_{ij})$
corresponds to a partition of $r$. Then for all $i$ we have $m_{ij}=0$
for $j\ge 2$. Hence $im_i - i\sum_j j m_{ij} = 0$. Thus the first term of
$\phi^+_{i,M}(1)$
is a constant term for all $i$ and the
remaining terms are as in Lemma \ref{phiigen}.
\end{proof}

We have expressions for the $\phi^+_{i,M}(1)$ for different $M \in \mathcal{M}(r)$.
The following lemma and corollary will help us understand more about them.

\begin{lemma}\label{kindapartiff}
For a given $i$, the leading term of $\phi^+_{i,M}(1)$ is a constant if
and only if $m_{ij}=0$ for all $j\ge 2$.
\end{lemma}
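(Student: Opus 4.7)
The plan is to read off the leading $q$-powers of $\phi^+_{i,M}(1)$ directly from the case analysis in Lemma \ref{phiigen} and check when those powers vanish. In each of the cases $i=1,\,2,\,3,\,4$ and $i\ge 5$, the expression from Lemma \ref{phiigen} displays $\phi^+_{i,M}(1)$ as a prefactor $q^{e_i}/c_i$ times a bracket whose expansion starts with $1$, where $c_i$ is a purely combinatorial constant (involving $i^{m_i}$ and $\prod_j m_{ij}!$) that is independent of $q$. Here the exponent $e_i$ is
\[
e_1 = m_1 - \sum_{j\ge 1} j\,m_{1j}, \qquad e_i = i m_i - i\sum_{j\ge 1} j\,m_{ij} \quad (i \ge 2).
\]

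I would then rewrite each exponent using $m_i = \sum_{j\ge 1} m_{ij}$ to get, for $i \ge 2$,
\[
e_i \;=\; i\sum_{j\ge 1}\bigl(1-j\bigr)m_{ij} \;=\; -\,i\sum_{j\ge 2}(j-1)\,m_{ij},
\]
and similarly $e_1 = -\sum_{j\ge 2}(j-1)\,m_{1j}$. Since each $m_{ij} \ge 0$ and each coefficient $(j-1)$ is strictly positive for $j\ge 2$, this is a sum of nonnegative terms, so $e_i = 0$ if and only if $m_{ij}=0$ for every $j\ge 2$. In the ``if'' direction, when $m_{ij}=0$ for all $j\ge 2$, the prefactor $q^{e_i}/c_i$ becomes simply $1/c_i$, which is the constant leading term exhibited explicitly in Corollary \ref{phiipart}. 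In the ``only if'' direction, if some $m_{ij}>0$ with $j\ge 2$, then $e_i$ is a strictly negative integer, so the leading term $q^{e_i}/c_i$ is a strictly negative power of $q$ and not a constant.

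There is essentially no obstacle: the lemma is a direct reading of Lemma \ref{phiigen}, and the only thing to verify is that the bracket factor in each case really does begin with $1$ (so that the ``leading term'' is unambiguously $q^{e_i}/c_i$), which is visible from the displayed expansions. The mild subtlety is merely bookkeeping between $m_i$, $\sum_j m_{ij}$ and $\sum_j j\,m_{ij}$, which collapses cleanly via the identity $m_i - \sum_j j\,m_{ij} = -\sum_{j\ge 2}(j-1)m_{ij}$.
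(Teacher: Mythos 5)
Your proof is correct and takes essentially the same approach as the paper: both read off the exponent $e_i = i m_i - i\sum_j j m_{ij} = i\sum_j(1-j)m_{ij}$ of the leading $q$-power from Lemma \ref{phiigen} and observe that it vanishes precisely when $m_{ij}=0$ for all $j\ge 2$. Your rewriting $e_i = -i\sum_{j\ge 2}(j-1)m_{ij}$ makes the nonnegativity argument a bit more explicit than the paper does, but the substance is identical.
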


\begin{proof}
Let $M=(m_{ij}) \in \mathcal{M}(r)$ such that for a given $i$, $m_{ij}
= 0$ for all $j\ge 2$. By Lemma \ref{phiigen} the leading
term of $\phi^+_{i,M}(1)$ is $q^{i m_i - i \sum_j jm_{ij}}$. Since
$m_{ij}=0$ for $j\ge 2$, we get that $im_i - i\sum_j jm_{ij} = 0$ and
hence the leading term of $\phi^+_{i,M}(1)$ is a constant.

Conversely suppose that the leading term of $\phi^+_{i,M}(1)$ is a
constant. Then by Lemma \ref{phiigen}, $i \sum_j m_{ij} - i \sum_j
jm_{ij} = 0$, that is, $i \sum_j (1-j) m_{ij} = 0$. Hence $m_{ij} = 0$
for all $j\ge 2$.
\end{proof}

\begin{corollary}\label{partiff}
The leading term of $\phi^+_{i,M}(1)$ is a constant for all $i$
if and only if $(m_{ij}) \in \mathcal{M}_{part}(r)$.
\end{corollary}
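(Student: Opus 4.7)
The plan is to observe that this corollary is essentially a quantifier manipulation on top of Lemma \ref{kindapartiff}. First I would recall that Lemma \ref{kindapartiff} gives, for each fixed $i$, the equivalence ``the leading term of $\phi^+_{i,M}(1)$ is a constant'' $\Longleftrightarrow$ ``$m_{ij} = 0$ for all $j \ge 2$''. Universally quantifying over $i$ on both sides preserves the equivalence, so the condition that the leading term of $\phi^+_{i,M}(1)$ is a constant for \emph{every} $i$ is equivalent to the condition that $m_{ij} = 0$ whenever $j \ge 2$, ranging over all $i$ as well.

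Next I would compare this condition with Definition \ref{M}, which defines $\mathcal{M}_{part}(r)$ as exactly the subset of $\mathcal{M}(r)$ consisting of arrays $(m_{ij})$ with $m_{ij} = 0$ whenever $j \ge 2$. Since $M$ is assumed to lie in $\mathcal{M}(r)$, the combined condition ``$m_{ij} = 0$ for all $i$ and all $j \ge 2$'' is precisely the defining condition for $M \in \mathcal{M}_{part}(r)$. This completes the proof.

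There is no real obstacle here: the content is entirely in Lemma \ref{kindapartiff}, and the corollary is just the ``for all $i$'' version packaged in terms of the set $\mathcal{M}_{part}(r)$. The only minor point worth stating explicitly in the write-up is that the equivalence in Lemma \ref{kindapartiff} holds for each $i$ independently, so that the conjunction over $i$ translates directly into a conjunction of the right-hand conditions without any interaction between different values of $i$.
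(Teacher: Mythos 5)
Your proof is correct and takes exactly the same route as the paper: apply Lemma \ref{kindapartiff} for each $i$, observe that the conjunction over all $i$ is precisely the defining condition for membership in $\mathcal{M}_{part}(r)$, and conclude. No further comment is needed.
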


\begin{proof}
If the leading term of every $\phi^+_{i,M}(1)$ is a constant then by
Lemma \ref{kindapartiff}, $m_{ij}=0$ for all $j\ge 2$ and for all
$i$. Hence $(m_{ij})$ is a partition of $r$, that is
$(m_{ij}) \in \mathcal{M}_{part}(r)$.
Conversely, if $(m_{ij})$ is a partition of $r$, then $m_{ij} = 0$ for all $i$ and
$j \ge 2$. Then by Lemma \ref{kindapartiff} the leading term of
$\phi^+_{i,M}(1)$ is a constant for all $i$.
\end{proof}

The next two lemmas provide us with the tools which will enable us
to sum the $\phi_{i,M}^+(1)$ over all $M \in \mathcal{M}(r)$.

\begin{lemma} \label{1/1-t}
Suppose that for some $k \in \mathbb{Z}^+$ and $f_k: \mathbb{Z}^+ \to
\mathbb{Q}$ we have, for each $r$,

$$a_r = \displaystyle\sum_{M\in\mathcal{M}_{part}(r)} f_k(m_{k1})
\prod_i \frac{1}{i^{m_{i1}} m_{i1}!}.$$

\noindent Then $$\displaystyle 1 + \sum_{r=1}^{\infty} a_r t^r = \left(
\sum_{m\ge 0} \frac{f_k(m) t^{mk}}{k^m m!} \right)
\frac{e^{\frac{-t^k}{k}}}{1-t}.$$
\end{lemma}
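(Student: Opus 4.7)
The plan is to exchange the order of summation in $\sum_{r\ge 0} a_r t^r$ and factor the resulting multi-sum as a product of independent sums indexed by $i$, then invoke the standard logarithmic identity to collapse all but one factor into $1/(1-t)$.

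First I would observe that an array $M = (m_{ij}) \in \mathcal{M}_{part}(r)$ has $m_{ij}=0$ for all $j\ge 2$, so it is completely determined by the sequence $(m_{i1})_{i\ge 1}$ of nonnegative integers satisfying $\sum_i i\, m_{i1} = r$. This is exactly the data of a partition of $r$, namely the one with $m_{i1}$ parts of size $i$. In particular, summing $a_r t^r$ over $r\ge 0$ is the same as summing over all such sequences $(m_{i1})$, with the empty sequence producing the constant term (interpreted as $f_k(0)=1$, which accounts for the ``$1+$'' on the left-hand side). Interchanging the order of summation then gives
$$1 + \sum_{r=1}^\infty a_r t^r \;=\; \sum_{(m_{i1})} f_k(m_{k1}) \prod_{i\ge 1} \frac{t^{i m_{i1}}}{i^{m_{i1}}\, m_{i1}!}.$$

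Because the summand factors across $i$ and the only coupling to $f_k$ occurs at $i=k$, this multi-sum separates as a product of one-variable sums. Pulling out the $i=k$ factor and summing the remaining geometric-type series via $\sum_{m\ge 0}\frac{x^m}{m!}=e^x$ yields
$$\left(\sum_{m\ge 0} \frac{f_k(m)\,t^{km}}{k^m\, m!}\right) \prod_{i\ne k} \left(\sum_{m\ge 0} \frac{t^{im}}{i^m\, m!}\right) \;=\; \left(\sum_{m\ge 0} \frac{f_k(m)\,t^{km}}{k^m\, m!}\right) \prod_{i\ne k} e^{t^i/i}.$$

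The final step is the identity $\sum_{i\ge 1} t^i/i = -\ln(1-t)$, valid for $|t|<1$, which gives $\prod_{i\ge 1} e^{t^i/i} = 1/(1-t)$ and hence $\prod_{i\ne k} e^{t^i/i} = e^{-t^k/k}/(1-t)$. Substituting into the displayed product produces exactly the right-hand side of the claimed identity. The proof is essentially formal generating-function manipulation; the only point requiring care is the bookkeeping of the constant term, i.e.\ tracking that the empty partition contributes $f_k(0)$ at $r=0$ so that the ``$1+$'' on the left matches the $r=0$ coefficient on the right under the standing convention $f_k(0)=1$.
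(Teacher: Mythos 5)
Your proof is correct and takes essentially the same approach as the paper: identify $\mathcal{M}_{part}(r)$ with partitions, interchange summation to factor $\sum_r a_r t^r$ as a product over part-sizes $i$, recognize the $i\ne k$ factors as $e^{t^i/i}$ and the $i=k$ factor as the $f_k$-weighted series, and collapse $\prod_{i\ne k} e^{t^i/i}$ to $e^{-t^k/k}/(1-t)$ via $\sum_i t^i/i = -\log(1-t)$. The remark about the constant term is well-taken: the paper's own proof writes $f_k(0)$ as the $m=0$ term of the $i=k$ series, so the displayed identity (with the explicit ``$1+$'') is literally valid only under the convention $f_k(0)=1$ that you state, even though the paper later applies the lemma with choices like $f_1(m)=m^2$ for which $f_k(0)=0$ and only the coefficients with $r\ge 1$ are used.
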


\begin{proof}
For $i \in \mathbb{Z}^+$, consider the series

$$ \frac{1}{1-t^i} =  1 + t^i + t^{2i} + \ldots $$

\noindent The product of $\frac{1}{1-t^i}$ taken over all $i$ is equal
to the generating function for the number of partitions (see
\cite{wilf}). That is, it is the series $1 + \sum_{r=1}^\infty p_r t^r$
where $p_r$ is the number of partitions of $r$. This is because each
partition of $r$ corresponds to a unique selection of terms $t^{m_{i1} i}$
from the series above, one term for each $i$, such that $\sum_i m_{i1}
i = r$. The corresponding partition has $m_{i1}$ parts of size $i$ for each
$i$.

In the above scenario, each partition of $r$ contributes $t^r$ to the
generating function. For the lemma we want the partition of $r$ having
$m_{i1}$ parts of size $i$, for each $i$, to contribute
$f_k(m_{k1})t^r \prod_i \frac{1}{i^{m_{i1}}m_{i1}!}$ to our
generating function. Hence each time we choose $m_{i1}$ parts of size $i$
for our partition of $r$ we want it to contribute
$\frac{t^{im_{i1}}}{i^{m_{i1}}m_{i1}!}$ for $i \ne k$ and
$\frac{f_k(m_{i1})t^{im_{i1}}}{i^{m_{i1}}m_{i1}!}$ when $i=k$.

For all $i \ne k$, consider the series

$$ e^{\frac{t^i}{i}} = 1 + \frac{t^i}{i^1 1!} + \frac{t^{2i}}{i^2 2!}
+ \ldots  $$

\noindent and for $i=k$ consider the series

$$ \sum_{m \ge 0} \frac{f_k(m) t^{mk}}{k^m m!} = f_k(0) + \frac{
f_k(1)t^k}{k^1 1!} + \frac{f_k(2)t^{2k}}{k^2 2!} + \ldots. $$

Then the partition $(m_{ij}) \in \mathcal{M}_{part}(r)$ corresponds to
selecting, for each $i$, the term involving $t^{im_{i1}}$ from the
$i$th series above, and its contribution to the generating function is
$f_k(m_{k1}) t^r \prod_i \frac{1}{i^{m_{i1}}m_{i1}!}$. Hence

$$\left( \sum_{m \ge 0} \frac{f_k(m)t^{mk}}{k^m m!} \right) \prod_{i\ne
k} e^{\frac{t^i}{i}} = 1 + \sum_{r=1}^\infty a_r t^r$$

\noindent and this is the generating function we seek. Now

$$\prod_{i\ne k} e^{\frac{t^i}{i}} =  e^{\frac{-t^k}{k}} \prod_{i}
e^{\frac{t^i}{i}} = e^{\frac{-t^k}{k}} e^{\sum \frac{t^i}{i}} =
e^{\frac{-t^k}{k}} e^{-\log(1-t)} =
\frac{e^{\frac{-t^k}{k}}}{1-t}$$

\noindent and hence

$$ 1 + \sum_{r=1}^\infty a_r t^r = \left( \sum_{m \ge 0} \frac{
f_k(m)t^{mk}}{i^m m!} \right) \frac{e^{\frac{-t^k}{k}}}{1-t}.$$
\end{proof}

\begin{corollary} \label{sumparts=1}
For each $r > 0$, let

$$\displaystyle a_r =
\sum_{M\in\mathcal{M}_{part}(r)} \prod_i
\frac{1}{i^{m_{i1}} m_{i1}!}.$$

\noindent
Then $1 + \sum_{r\ge 0} a_r t^r$ is
equal to $\frac{1}{1-t}$. That is, $a_r = 1$ for all $r \ge 0$.
\end{corollary}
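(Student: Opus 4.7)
The plan is to apply Lemma \ref{1/1-t} directly with the constant weight $f_k \equiv 1$. Observe that the sum defining $a_r$ is exactly the sum appearing in Lemma \ref{1/1-t} but without any extra $f_k(m_{k1})$ factor; equivalently, it is the specialisation where we pick any $k \ge 1$ and set $f_k(m) = 1$ for every $m \ge 0$.

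First I would compute the series in the first factor on the right-hand side of Lemma \ref{1/1-t} under this specialisation:
\begin{equation*}
\sum_{m \ge 0} \frac{f_k(m)\, t^{mk}}{k^m\, m!} \;=\; \sum_{m \ge 0} \frac{t^{mk}}{k^m\, m!} \;=\; e^{t^k/k}.
\end{equation*}
Plugging this into the formula supplied by Lemma \ref{1/1-t} gives
\begin{equation*}
1 + \sum_{r=1}^{\infty} a_r t^r \;=\; e^{t^k/k} \cdot \frac{e^{-t^k/k}}{1-t} \;=\; \frac{1}{1-t},
\end{equation*}
where the exponential factors cancel exactly. Expanding $1/(1-t)$ as a geometric series then yields $a_r = 1$ for every $r \ge 0$, which is the desired conclusion.

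There is essentially no obstacle here beyond recognising that the unweighted sum over partitions is the $f_k \equiv 1$ case of the preceding lemma; the cancellation $e^{t^k/k}\, e^{-t^k/k} = 1$ does all the work. As a sanity check one can note that the statement is combinatorially natural: $\sum_{M \in \mathcal{M}_{part}(r)} \prod_i \frac{1}{i^{m_{i1}} m_{i1}!}$ is the familiar cycle-index style count of partitions of $r$ weighted so that the total is $1$, consistent with the exponential generating function identity $\prod_{i \ge 1} e^{t^i/i} = 1/(1-t)$ that underlies Lemma \ref{1/1-t} itself.
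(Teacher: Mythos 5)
Your argument is correct and is exactly the proof the paper gives: specialise Lemma \ref{1/1-t} with $f_k \equiv 1$, recognise $\sum_{m\ge 0} t^{mk}/(k^m m!) = e^{t^k/k}$, and cancel against the $e^{-t^k/k}$ factor to get $1/(1-t)$.
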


\begin{proof}
Letting $f_k(m) = 1$ for all $m$ in Lemma \ref{1/1-t}, we obtain

$$ a_r = \displaystyle\sum_{M\in\mathcal{M}_{part}(r)} \prod_i
\frac{1}{i^{m_{i1}} m_{i1}!} $$

\noindent and

\begin{displaymath}
\begin{array}{rcl}

\displaystyle 1 + \sum_{r=1}^\infty a_r t^r & = & \left( \displaystyle
\sum_{m \ge 0} \frac{t^{mk} }{k^m m!} \right)
\frac{e^{\frac{-t^k}{k}}}{1-t} \\

& = & \left( e^{\frac{t^k}{k}} \right)
\frac{e^{\frac{-t^k}{k}}}{1-t}\\

& = & \frac{1}{1-t}. \\

\end{array}
\end{displaymath}

Clearly all the coefficients in the expansion of $\frac{1}{1-t}$ are
$1$. Hence $a_r$ is equal to $1$ for all $r$.
\end{proof}

\begin{lemma} \label{kl1/1-t}
Suppose that, for some $k,\ell \in \mathbb{Z}^+$ and functions $f_k, f_\ell:
\mathbb{Z}^+ \to \mathbb{Q}$ and for each $r$,

$$a_r = \displaystyle\sum_{M\in\mathcal{M}_{part}(r)}
f_k(m_{k1})f_\ell (m_{\ell 1}) \prod_i \frac{1}{i^{m_{i1}} m_{i1}!}.$$

\noindent Then $$\displaystyle 1 + \sum_{r=1}^{\infty} a_r t^r =
\left( \sum_{m\ge 0} \frac{f_k(m)t^{mk}}{k^m m!} \right)
\left( \sum_{m\ge 0} \frac{f_\ell(m)t^{m\ell}}{\ell^m m!} \right)
\frac{e^{\frac{-t^k}{k}} e^{\frac{-t^\ell}{\ell}}}{1-t}.$$
\end{lemma}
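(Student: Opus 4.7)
The plan is to mirror, essentially verbatim, the proof of Lemma \ref{1/1-t}, with the single distinguished index $k$ replaced by the two distinguished indices $k$ and $\ell$ (we may assume $k\ne\ell$; if $k=\ell$ the statement collapses into Lemma \ref{1/1-t} applied to $f_kf_\ell$). The idea is that each $M=(m_{ij})\in\mathcal{M}_{part}(r)$ is determined by the sequence $(m_{i1})_{i\ge 1}$ of non-negative integers with $\sum_i im_{i1}=r$, so the sum defining $a_r$ should be recognised as extracting the coefficient of $t^r$ in a product of formal power series, one series for each index $i$.

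The construction of this product goes as follows. For every $i\notin\{k,\ell\}$ I will use the series $e^{t^i/i}=\sum_{m\ge 0}\frac{t^{im}}{i^m m!}$, whose $t^{im_{i1}}$ term contributes exactly the factor $\frac{1}{i^{m_{i1}}m_{i1}!}$ required for that index. For $i=k$ I will instead use $\sum_{m\ge 0}\frac{f_k(m)t^{mk}}{k^m m!}$, whose $t^{km_{k1}}$ term contributes $\frac{f_k(m_{k1})}{k^{m_{k1}}m_{k1}!}$. Similarly for $i=\ell$. Multiplying all these series together and collecting the coefficient of $t^r$ produces exactly a sum over all choices $(m_{i1})_{i\ge 1}$ with $\sum iim_{i1}=r$ of
$$f_k(m_{k1})f_\ell(m_{\ell 1})\prod_i\frac{1}{i^{m_{i1}}m_{i1}!},$$
i.e.\ the given expression for $a_r$.

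To finish, I will invoke the identity $\prod_{i\ge 1}e^{t^i/i}=e^{-\log(1-t)}=\frac{1}{1-t}$ already used in the proof of Lemma \ref{1/1-t}. Writing
$$\prod_{i\notin\{k,\ell\}}e^{t^i/i}=e^{-t^k/k}\,e^{-t^\ell/\ell}\prod_{i\ge 1}e^{t^i/i}=\frac{e^{-t^k/k}\,e^{-t^\ell/\ell}}{1-t}$$
and reinstating the two distinguished factors gives precisely the claimed closed form.

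The argument is essentially bookkeeping; there is no substantial obstacle. The only point that needs a moment of care is the assumption $k\ne\ell$ so that the two modified factors are genuinely associated with different indices $i$ in the product; if $k=\ell$ then both modifications would apply at the same index and one must either treat the case separately (replacing the single factor by $\sum_{m\ge 0}\frac{f_k(m)f_\ell(m)t^{mk}}{k^m m!}\cdot e^{-t^k/k}/(1-t)$, which differs from the stated formula) or simply note that the hypothesis of the lemma is most naturally applied with $k\ne\ell$ in the intended use in Section \ref{sect4.2}.
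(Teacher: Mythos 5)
Your proof is correct and follows essentially the same route as the paper's: assign to each index $i\notin\{k,\ell\}$ the series $e^{t^i/i}$, replace the factors at $i=k$ and $i=\ell$ by the weighted series involving $f_k$ and $f_\ell$, and use $\prod_{i\ge 1}e^{t^i/i}=1/(1-t)$ to express the remaining product. Your remark that the stated identity tacitly assumes $k\ne\ell$ is a sensible caveat which the paper leaves implicit (in its application in Section~\ref{sect4.2}, indeed $k=1$ and $\ell=2$).
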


\begin{proof}
As with the proof of Lemma \ref{1/1-t}, for all $i \ne k$ and $i \ne
\ell$, consider the series

$$ e^{\frac{t^i}{i}} = 1 + \frac{t^i}{i^1 1!} + \frac{t^{2i}}{i^2 2!}
+ \ldots, $$

\noindent for $i=k$ consider the series

$$ \sum_{m \ge 0} \frac{f_k(m) t^{mk}}{k^m m!} = f_k(0) +
\frac{f_k(1)t^k}{k^1 1!} + \frac{f_k(2)t^{2k}}{k^2 2!} + \ldots $$

\noindent and for $i=\ell$ consider the series

$$ \sum_{m \ge 0} \frac{f_\ell(m)t^{m\ell}}{\ell^m m!} = f_\ell(0) +
\frac{f_\ell(1)t^\ell}{\ell^1 1!} + \frac{f_\ell(2)t^{2\ell}}{\ell^2
2!} + \ldots . $$

Each partition $(m_{ij}) \in \mathcal{M}_{part}(r)$, corresponds
to a product of a unique selection of terms from the series above, one
for each $i$, and the contribution to the generating function is
$f_k(m_{k1}) f_\ell(m_{\ell 1})t^r \prod_i \frac{1}{i^{m_{i1}}m_{i1}!}$. Hence

$$ 1 + \sum_{r=1}^\infty a_r t^r = \left( \sum_{m \ge 0} \frac{t^{mk}
f_k(m)}{k^m m!} \right)  \left( \sum_{m\ge 0} \frac{t^{m\ell}
f_\ell(m)}{\ell^m m!} \right) \prod_{i\ne k,\ell} e^{\frac{t^i}{i}} $$

\noindent and hence is the generating function we require. We see now
that

$$\prod_{i\ne k,\ell} e^{\frac{t^i}{i}}
= e^{\frac{-t^k}{k}} e^{\frac{-t^\ell}{\ell}} \prod_{i} e^{\frac{t^i}{i}}
= e^{\frac{-t^k}{k}} e^{\frac{-t^\ell}{\ell}} e^{\sum \frac{t^i}{i}}
= e^{\frac{-t^k}{k}} e^{\frac{-t^\ell}{\ell}} e^{-\log (1-t)} =
\frac{e^{\frac{-t^k}{k}} e^{\frac{-t^\ell}{\ell}} }{1-t}.$$

\noindent Hence the generating function we require is

$$\left( \sum_{m \ge 0} \frac{f_k(m) t^{mk}}{k^m m!} \right)
\left( \sum_{m \ge 0} \frac{f_\ell(m) t^{m\ell}}{\ell^m m!} \right)
\frac{ e^{\frac{-t^k}{k}} e^{\frac{-t^\ell}{\ell}} }{1-t}.$$
\end{proof}

We now have the techniques required to determine $\lim_{n\rightarrow\infty}c_{\GL,r}(n)$.
The following theorem does this.

\begin{theorem} \label{1-q^-2}
For $r \in \mathbb{Z}^+$, $\displaystyle
\lim_{n\rightarrow\infty}c_{\GL,r}(n) = 1 - q^{-2} + O(q^{-3})$.
\end{theorem}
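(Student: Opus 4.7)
The plan is to apply Theorem \ref{limcrn} and use $\frac{1-q^{-5}}{1+q^{-3}} = 1 - q^{-3} + O(q^{-5})$ to reduce the claim to showing
$$S(r) := \sum_{M\in\mathcal{M}(r)}\prod_{i=1}^r \phi^+_{i,M}(1) = 1 - q^{-2} + O(q^{-3}).$$
I would split $S(r) = S_{\mathrm{part}}(r) + S_{\mathrm{non}}(r)$ according to whether $M \in \mathcal{M}_{part}(r)$ or not. By Lemma \ref{phiigen}, the leading $q$-order of $\prod_i \phi^+_{i,M}(1)$ is $q^{-\sigma(M)}$, where $\sigma(M) := \sum_{i,j} i(j-1)m_{ij}$, and by Corollary \ref{partiff}, $\sigma(M)=0$ exactly when $M$ is partition-type. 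Hence only partition-type $M$ contribute at order $q^{0}$, and only a small, explicit set of non-partition arrays contribute at orders $q^{-1}$ and $q^{-2}$.

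For the partition sum, I would use Corollary \ref{phiipart} to write
$$\phi^+_{i,M}(1) = \frac{1}{i^{m_{i1}}m_{i1}!}\bigl(1 + a_i(m_{i1})q^{-1} + b_i(m_{i1})q^{-2} + O(q^{-3})\bigr),$$
where $a_i \equiv 0$ for $i \ge 3$ and $b_i \equiv 0$ for $i \ge 5$. Taking the product across $i$ yields $\bigl(\prod_i \tfrac{1}{i^{m_{i1}}m_{i1}!}\bigr)\bigl(1 + A(M)q^{-1} + B(M)q^{-2} + O(q^{-3})\bigr)$ with $A(M) = a_1(m_{11}) + a_2(m_{21})$ and $B(M) = \sum_{i \le 4} b_i(m_{i1}) + a_1(m_{11}) a_2(m_{21})$. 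Corollary \ref{sumparts=1} then gives the constant term $1$, Lemma \ref{1/1-t} (applied for $k=1,2,3,4$ with the appropriate $f_k$) evaluates the one-variable partition sums in $A(M)$ and in the $\sum b_i$ piece of $B(M)$, and Lemma \ref{kl1/1-t} with $k=1, \ell=2$ handles the two-variable cross term $a_1(m_{11}) a_2(m_{21})$.

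For $S_{\mathrm{non}}(r)$, I would enumerate the $M \notin \mathcal{M}_{part}(r)$ with $\sigma(M) \le 2$: namely (a) $m_{12}=1$ with the remaining $m_{i1}$'s a partition of $r-2$; (b) $m_{12}=2$ with a partition of $r-4$; (c) $m_{13}=1$ with a partition of $r-3$; and (d) $m_{22}=1$ with a partition of $r-4$. All other non-partition arrays have $\sigma(M) \ge 3$ and contribute only $O(q^{-3})$. For each family, Lemma \ref{phiigen} gives the relevant $\phi^+_{1,M}(1)$ or $\phi^+_{2,M}(1)$ to sufficient precision (order $q^{-1}$ past the leading term in family (a), just the leading term in (b)--(d)), and the inner sum over the partition-like tail is again a sum of the form handled by Corollary \ref{sumparts=1} and Lemma \ref{1/1-t}.

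The main obstacle is the algebraic bookkeeping required to verify two exact cancellations. First, the $q^{-1}$ coefficient of $S_{\mathrm{part}}(r)$ (which, using Lemma \ref{1/1-t}, turns out to be the coefficient of $t^r$ in an explicit meromorphic function) must be cancelled exactly by the leading $q^{-1}$ contribution of family (a), so that $S(r)$ has no $q^{-1}$ term. Second, the $q^{-2}$ contributions from $S_{\mathrm{part}}(r)$ and from all four non-partition families must combine to $-1$, independent of $r$. Since Lemmas \ref{1/1-t} and \ref{kl1/1-t} reduce each contribution to extracting a coefficient of $t^r$ from an explicit rational-times-exponential function, both cancellations are verifiable in closed form; the delicate part is carrying the polynomial data $a_i(m), b_i(m)$ faithfully through the product.
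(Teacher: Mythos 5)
Your proposal takes essentially the same approach as the paper: reduce via Theorem \ref{limcrn} to expanding $\sum_{M}\prod_i \phi^+_{i,M}(1)$ through order $q^{-2}$, split into partition-type and non-partition arrays (identifying exactly the same four low-$\sigma$ non-partition families $m_{12}=1$, $m_{12}=2$, $m_{13}=1$, $m_{22}=1$), and evaluate the resulting partition-indexed sums via Lemmas \ref{1/1-t}, \ref{kl1/1-t}, \ref{te} and Corollary \ref{sumparts=1}. The paper carries out the bookkeeping you defer through two explicit tables of generating functions of the form $p(t)/(1-t)$, whose $t^r$-coefficients exhibit the cancellation at $q^{-1}$ and the uniform value $-1$ at $q^{-2}$.
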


\begin{proof}
The proof will consist of three parts. The first part will calculate
the constant term in the expansion of $\lim_{n \to \infty}
c_{\GL,r}(n)$, the second part will calculate the coefficient of
$q^{-1}$ and the third part will calculate the coefficient of
$q^{-2}$.

Before starting the first part, we note that the expansion of
$\frac{1-q^{-5}}{1+q^{-3}}$ is

$$ 1 - q^{-3} - q^{-5} + O(q^{-6}) $$

\noindent and by Theorem \ref{limcrn}
we multiply this with $\sum_{M \in\mathcal{M}(r)}
\prod_i \phi^+_{i,M}(1)$ to produce lim$_{n \to\infty} c_{\GL,r}(n)$.
Thus the constant term, the $q^{-1}$ term and the $q^{-2}$ term in
$\lim_{n \to \infty} c_{\GL,r}(n)$ are the same as those in the expansion of
$\sum_{M \in\mathcal{M}(r)} \prod_i \phi^+_{i,M}(1)$.

\bigskip
\noindent \textbf{Constant Term:}

By Corollary \ref{partiff}, the leading term of $\phi^+_{i,M}(1)$ is a
constant if and only if $M \in \mathcal{M}_{part}(r)$. From Corollary
\ref{phiipart}, we can see that this leading term is
$\frac{1}{i^{m_{i1}} m_{i1}!}$ if $M \in \mathcal{M}_{part}(r)$ so the
constant term in $\lim_{n \to \infty} c_{\GL,r}(n)$ is

$$ \sum_{M \in \mathcal{M}_{part}(r)} \prod_i \frac{1}{i^{m_{i1}}
m_{i1}!}.$$

\noindent By Corollary \ref{sumparts=1} this equals $1$ for all
$r$. Hence $1$ is the constant term of $\lim_{n \to \infty} c_{\GL,r}(n)$
for all $r$.

\noindent \textbf{Coefficient of $q^{-1}$}

For the coefficient of $q^{-1}$, we will first sum over all
$M \in \mathcal{M}_{part}(r)$ and then look at those $M \notin
\mathcal{M}_{part}(r)$ that also contribute to the $q^{-1}$ term.

Let $M = (m_{ij}) \in \mathcal{M}_{part}(r)$. By Corollary
\ref{phiipart} there are exactly two ways to produce a
$q^{-1}$ term. The first is to multiply the $q^{-1}$ term in
$\phi^+_{1,M}(1)$ with the constant term from each of the remaining
$\phi^+_{i,M}(1)$. The second way is to multiply the $q^{-1}$ term in
$\phi^+_{2,M}(1)$ with the constant term in each of the remaining
$\phi^+_{i,M}(1)$.

The first way produces $\left( -\frac{m_{11}^2}{2} + \frac{m_{11}}{2}
\right) \prod_i \frac{1}{i^{m_{i1}}m_{i1}!} q^{-1}$. Summing this over
all $M \in \mathcal{M}_{part}(r)$, gives the generating function
 as

$$ -\frac{1}{2} \left( \sum_{m\ge 0} \frac{m^2 t^m}{m!} \right)
\frac{e^{-t}}{1-t}
+ \frac{1}{2} \left( \sum_{m\ge 0} \frac{m t^m}{m!} \right)
\frac{e^{-t}}{1-t} $$

\noindent by Lemma \ref{1/1-t}, using $f_1(m)=m^2$ in the first
case and $f_1(m)=m$ in the second case. By
Lemma \ref{te}, we get the generating function for the
coefficient of $q^{-1}$ arising in this way to be

$$ \left( -\frac{1}{2} (t^2+t) e^t + \frac{1}{2}te^t \right)
\frac{e^{-t}}{1-t} = \left(\frac{-t^2}{2}\right)\frac{1}{1-t}. $$

The second way an $M \in \mathcal{M}_{part}(r)$ can produce a $q^{-1}$
term, produces $-m_{21} \prod_i \frac{1}{i^{m_{i1}}m_{i1}!}
q^{-1}$. Summing this over all $M \in \mathcal{M}_{part}(r)$, gives
the generating function for the contribution to the
coefficient of $q^{-1}$ as

$$ \left( - \sum_{m\ge 0} \frac{m t^{2m}}{2^m m!} \right)
\frac{e^{\frac{-t^2}{2}}}{1-t} $$

\noindent by Lemma \ref{1/1-t}, using $f_2(m)=-m$. By
Lemma \ref{te} with $b=1$ and $k=2$, we get $\sum_{m \ge 0}
\frac{m t^{2m}}{2^m m!} = \frac{t^2}{2} e^{\frac{t^2}{2}}$, so the
generating function for the coefficient of $q^{-1}$ arising in
this way is

$$ \left( \frac{-t^2}{2}e^{\frac{t^2}{2}} \right)
\frac{e^{\frac{-t^2}{2}}}{1-t} = \left(\frac{-t^2}{2}\right)
\frac{1}{1-t}.$$

Summing together these two functions gives the generating
function for the contribution to the coefficient of $q^{-1}$ from $M \in
\mathcal{M}_{part}(r)$, namely $\frac{-t^2}{1-t}$.

We need to look at $M = (m_{ij}) \notin
\mathcal{M}_{part}(r)$ that also contribute to the $q^{-1}$
term. By Lemma \ref{kindapartiff} there exists an $i$ such that the
leading term of $\phi^+_{i,M}(1)$ is not a constant. Hence for $M$ to
contribute to the $q^{-1}$ term there is a unique $i$ such that the
leading term of $\phi^+_{i,M}(1)$ is not a constant and this leading
term is $q^{-1}$. Thus $i\sum_j m_{ij} - i\sum_j jm_{ij} = -1$.
Rewriting the equation we see that we need $\sum_j (1-j)m_{ij} =
\frac{-1}{i}$. Since the left hand side is an integer, this equality
can only hold for $i=1$. Thus we need
$\phi^+_{i,M}(1)$ for $i \ge 2$ to have constant leading term and
$\sum_j (1-j)m_{1j}=-1$. By Lemma \ref{kindapartiff}
The first condition implies that $m_{ij} = 0$
for $i\ge 2$ and $j\ge 2$ while the second condition implies that
$m_{12}=1$ and $m_{1j}=0$ for $j\ge 3$. Hence if $(m_{ij}) \notin
\mathcal{M}_{part}(r)$ and contributes to the $q^{-1}$ term then

\begin{equation} \label{m12=1a}
(m_{ij}) = \left\{
\begin{array}{l}
m_{i1} = \textrm{anything } (i \ge 1) \\
m_{12} = 1 \\
m_{1j} = 0 \textrm{ } (j \ge 3) \\
m_{ij} = 0 \textrm{ } (i \ge 2, j \ge 2). \\
\end{array}
\right.
\end{equation}

\noindent Note that there are no such $(m_{ij})$ when $r=1$.
If we take such an $(m_{ij}) \in \mathcal{M}(r)$ and form a new
$M'=(m_{ij}')$ with $m_{ij}' = m_{ij}$ for all $(i,j) \ne (1,2)$ and
$m_{12}' = 0$ then $M' \in \mathcal{M}_{part}(r-2)$.
Moreover, each element of $\mathcal{M}_{part}(r-2)$ occurs in this way
so we think
of an $(m_{ij})$ as in (\ref{m12=1a}) as a partition of $r-2$ with $m_{12}=1$
appended to it.

We can see from Lemma \ref{phiigen}, that for each $M=(m_{ij})$ as in
(\ref{m12=1a}), for $i \ge 2$, $\phi^+_{i,M}(1)$ has constant term
$\frac{1}{i^{m_{i1}} m_{i1}!}$ and for $i=1$ the coefficient of
$q^{-1}$ is $\frac{1}{m_{11}!}$. Hence each of these $(m_{ij})$
produces $\prod_i \frac{1}{i^{m_{i1}} m_{i1}!}$ as the coefficient of
$q^{-1}$. Summing this over all $M \notin \mathcal{M}_{part}(r)$ that
contribute to the $q^{-1}$ term is then the same as summing over all
$M' \in \mathcal{M}_{part}(r-2)$. By Lemma \ref{1/1-t} the generating
function for the sum of $\prod_i \frac{1}{i^{m_{i1}} m_{i1}!}$ over
all partitions of $r$ is $\frac{1}{1-t}$. The generating function for
the sum of $\prod_i \frac{1}{i^{m_{i1}} m_{i1}!}$ over all partitions
of $r-2$ has no terms of degree less than $2$ and so is
$\frac{t^2}{1-t}$. Hence the generating function for the contribution to the coefficient
of $q^{-1}$ from $M \notin \mathcal{M}_{part}(r)$ is
$\frac{t^2}{1-t}$.

Adding the generating functions for contributions due to all
$M \in \mathcal{M}_{part}(r)$ and $M \notin \mathcal{M}_{part}(r)$
to the coefficient of $q^{-1}$ gives us $0$.
Hence for all $r$ there is no $q^{-1}$ term in the expansion
of $\lim_{n\rightarrow\infty}c_{\GL,r}(n)$.

\bigskip
\noindent \textbf{Coefficient of $q^{-2}$}

We take the same approach in calculating the coefficient of
$q^{-2}$. We first sum over all $M \in
\mathcal{M}_{part}(r)$. Let $M \in \mathcal{M}_{part}(r)$. From the
expansion of the $\phi^+_{i,M}(1)$ in Corollary \ref{phiipart} we
see that there are several ways to form a $q^{-2}$ term. Each of these
ways corresponds to a line of Table \ref{q^-2parttable}.
For each line in Table \ref{q^-2parttable}, the contribution from $\phi^+_{i,M}$
for $i \ge 5$ is 1.
If a line contains
exactly one non-$1$ entry then that entry is a certain function of
$m_{k1}$ for some $k$, say $f_k(m_{k1})$ and this line corresponds to
a summand $a_r$ of the coefficient of $q^{-2}$, where $a_r$ is as given in Lemma
\ref{1/1-t}. The sum $1 + \sum_{r=1}^\infty a_r t^r$ corresponding to
these values of $a_r$ is evaluated using Lemma \ref{1/1-t} and Lemma \ref{te}
and recorded in the last entry for this line. There is one further
line that contains two non-$1$ entries which are certain functions
$f_k(m_{k1})$ with $k=1$ and $f_\ell(m_{\ell 1})$ with $\ell = 2$ and
this line corresponds to a summand $a_r$ of the coefficient of
$q^{-1}$, where $a_r$ is as given in Lemma \ref{kl1/1-t}. The sum $1 +
\sum_{r=1}^\infty a_r t^r$ corresponding to these values of $a_r$ is
evaluated using Lemmas \ref{kl1/1-t} and Lemma \ref{te} and recorded in the
last entry for this line.

\begin{table}
\begin{center}
\begin{tabular} {c c c c | c}
$\phi^+_{1,M}(1)$ & $\phi^+_{2,M}(1)$ & $\phi^+_{3,M}(1)$ & $\phi^+_{4,M}(1)$ & Generating Function \\
\hline
$1$ & $1$ & $1$ & $-m_{41}q^{-2}$ &
$\left(-\frac{t^4}{4}\right)\frac{1}{1-t}$ \\
$1$ & $1$ & $-m_{31}q^{-2}$ & $1$ &
$\left(-\frac{t^3}{3}\right)\frac{1}{1-t}$ \\
$1$ & $\frac{3m_{21}}{2}q^{-2}$ & $1$ & $1$ &
$\left(\frac{3t^2}{4}\right)\frac{1}{1-t}$ \\
$1$ & $-\frac{m_{21}^2}{2}q^{-2}$ & $1$ & $1$ &
$\left(-\frac{t^4}{8}-\frac{t^2}{4}\right)\frac{1}{1-t}$ \\
$-\frac{m_{11}(m_{11}-1)}{2}q^{-1}$ & $-m_{21}q^{-1}$ & $1$ &
$1$ &  $\left(\frac{t^4}{4}\right)\frac{1}{1-t}$ \\
$-\frac{7m_{11}}{12}q^{-2}$ & $1$ & $1$ & $1$ &
$\left(-\frac{7t}{12}\right)\frac{1}{1-t}$ \\
$-\frac{m_{11}^2}{8}q^{-2}$ & $1$ & $1$ & $1$ &
$\left(\frac{-t^2-t}{8}\right)\frac{1}{1-t}$ \\
$-\frac{5m_{11}^3}{12}q^{-2}$ & $1$ & $1$ & $1$ &
$\left(\frac{-5(t^3+3t^2+t)}{12}\right)\frac{1}{1-t}$ \\
$\frac{m_{11}^4}{8}q^{-2}$ & $1$ & $1$ & $1$ &
$\left(\frac{t^4+6t^3+7t^2+t}{8}\right)\frac{1}{1-t}$ \\
\hline
& & & Total & $\left(-t\right)\frac{1}{1-t}$ \\
\hline
\end{tabular}
\end{center}
\caption{Producing the generating function for the coefficient
of $q^{-2}$ in the expansion of $\lim_{n\to\infty} c_{\GL,r}(n)$
due to $M \in \mathcal{M}_{part}(r)$.}
\label{q^-2parttable}
\end{table}

\noindent
The sum of all the generating functions is the generating function
for the coefficient of $q^{-2}$ due to $M \in
\mathcal{M}_{part}(r)$. It is $\frac{-t}{1-t}$.

We now look for the contribution to the $q^{-2}$ term in $\prod_i
\phi^+_{i,M}(1)$ from $M \notin \mathcal{M}_{part}(r)$. For each $i$
we need to choose a term involving $q^{a_i}$ from $\phi^+_{i,M}(1)$ such
that $\sum_i a_i = -2$. In particular each $a_i$ satisfies $0 \ge a_i
\ge -2$. We will use Lemma \ref{phiigen}.

For $i \ge 3$ the exponent of $q$ either is $i \sum_j (1-j)m_{ij}$ or is at most
$i \sum_j (1-j) m_{ij} -3$. The only possible value out of $0, -1, -2$
is $0$ and hence for each such $M$ we must have $m_{ij}=0$ for all $i
\ge 3$, $j \ge 2$ and $a_i = 0$ for all $i \ge 3$.

Since we require $M \notin \mathcal{M}_{part}(r)$ there must exist an
$i_0$ equal to $1$ or $2$ and $j_0 \ge 2$ such that $m_{i_0 j_0} > 0$.
By Lemma \ref{phiigen},

\begin{equation} \label{a_i}
-2 \le a_{i_0} \le i_0 \sum_j (1-j)m_{i_0j} \le i_0 (1-j_0)
m_{i_0j_0}.
\end{equation}

\noindent
Hence either

\begin{itemize}
\item
$i_0=2$ and $(j_0, m_{2j_0}) = (2,1)$ and $m_{2j}=0$ for all $j\ge 3$; or

\item
$i_0=1$ and $(j_0, m_{1j_0}) = (2,1), (2,2)$ or $(3,1)$
with $m_{1j}=0$ for all $j \ne {1,j_0}$.
\end{itemize}

\noindent We consider each of these four cases in turn.

Let us consider the first scenario where $i_0=2$, $j_0=2$, $m_{22}=1$
and $m_{2j}=0$ for $j \ge 3$. It follows from (\ref{a_i}) that $a_2 =
-2$ and hence $a_i=0$ for $i \ne 2$. Thus the first type of
$(m_{ij}) \notin \mathcal{M}_{part}(r)$ that contributes to the
$q^{-2}$ term is as follows

\begin{equation}\label{nonpart1}
(m_{ij}) = \left\{
\begin{array}{l}
m_{i1} = \textrm{anything } (i \ge 1) \\
m_{22} = 1 \\
m_{2j} = 0 \textrm{ } (j \ge 3) \\
m_{ij} = 0 \textrm{ } (i \ne 2, j \ge 2). \\
\end{array}
\right.
\end{equation}

\noindent Note that there are no such $(m_{ij})$ for $r<4$.
If we take such an $(m_{ij}) \in \mathcal{M}(r)$ and form a new
$M'=(m_{ij}')$ with $m_{ij}' = m_{ij}$ for all $(i,j) \ne (2,2)$ and
$m_{22}' = 0$ then $M' \in \mathcal{M}_{part}(r-4)$.
So we think of an $(m_{ij})$ as in (\ref{nonpart1})
as a partition of $r-4$ with $m_{22}=1$ appended to it.

For $(m_{ij})$ as in (\ref{nonpart1}), we have $m_2 = m_{21}+1$
and $m_i = m_{i1}$ for $i\ne 2$. By Lemma \ref{phiigen} we obtain the
$\phi^+_{i,M}(1)$ as follows:

$$\phi^+_{1,M}(1) = \frac{1}{ m_{11}!} (1 + O(q^{-1}))$$

$$\phi^+_{2,M}(1) = \frac{q^{-2}}{2^{m_{21}} m_{21}!} (\frac{1}{2})
(1 + O(q^{-1}))$$

\noindent
and for $i\ge 3$

$$\phi^+_{i,M}(1) = \frac{1}{i^{m_{i1}} m_{i1}!} (1 + O(q^{-1})).$$

\noindent
The `$\frac{1}{2}$' inside the $\phi^+_{2,M}(1)$ occurs because $2^{m_2} =
2^{m_{21}}2^1$.

There is only one way to make up a $q^{-2}$ term and the first line in
Table \ref{q^-2nonparttable} corresponds to forming a $q^{-2}$
term in this way.

From now on we will let $i_0=1$ so $m_{2j}=0$ for all $i \ge 2, j\ge
2$. If $(j_0, m_{1j_0}) = (3,1)$ or $(2,2)$ then the leading term of
$\phi^+_{1,M}(1)$ involves $q^{-2}$ so $a_1 = -2$ and $a_i = 0$ for all
$i \ge 2$. Hence we get two more types of nonpartitions which
contribute to the $q^{-2}$ term, namely

\begin{equation}\label{nonpart3}
(m_{ij}) = \left\{
\begin{array}{l}
m_{i1} = \textrm{anything } (i \ge 1) \\
m_{13} = 1 \\
m_{1j} = 0 \textrm{ } (j \ne 1,3) \\
m_{ij} = 0 \textrm{ } (i \ge 2, j \ge 2) \\
\end{array}
\right.
\end{equation}

\noindent and

\begin{equation}\label{nonpart4}
(m_{ij}) = \left\{
\begin{array}{l}
m_{i1} = \textrm{anything } (i \ge 1) \\
m_{12} = 2 \\
m_{1j} = 0 \textrm{ } (j \ge 3) \\
m_{ij} = 0 \textrm{ } (i \ge 2, j \ge 2). \\
\end{array}
\right.
\end{equation}

\noindent Note that there are no such $(m_{ij})$ as in (\ref{nonpart3}) for $r<3$.
If we take such an $(m_{ij}) \in \mathcal{M}(r)$ and form a new
$M'=(m_{ij}')$ with $m_{ij}' = m_{ij}$ for all $(i,j) \ne (1,3)$ and
$m_{13}' = 0$ then $M' \in \mathcal{M}_{part}(r-3)$.
So we think
of an $(m_{ij})$ as in (\ref{nonpart3}) as a partition of $r-3$ with
$m_{13}=1$ appended to it.

Also note that there are no such $(m_{ij})$ as in (\ref{nonpart4}) for $r<4$.
If we take such an $(m_{ij}) \in \mathcal{M}(r)$ and form a new
$M'=(m_{ij}')$ with $m_{ij}' = m_{ij}$ for all $(i,j) \ne (1,2)$ and
$m_{12}' = 0$ then $M' \in \mathcal{M}_{part}(r-4)$.
So we think
of an $(m_{ij})$ as in (\ref{nonpart4}) as a partition of $r-4$ with
$m_{12}=2$ appended to it

For $(m_{ij})$ as in (\ref{nonpart3}), we have $m_1 = m_{11} + 1$
and $m_i = m_{i1}$ for $i \ne 1$, and by Lemma \ref{phiigen} we
produce $\phi^+_{i,M}(1)$ as

$$ \phi^+_{1,M}(1) = \frac{q^{-2}}{m_{11}!} (1 + O(q^{-1}))$$

\noindent
and for $i\ge 2$

$$\phi^+_{i,M}(1) = \frac{1}{i^{m_{i1}} m_{i1}!} (1 + O(q^{-1})).$$

\noindent
There is only one way to make up a $q^{-2}$ term and the second line
in Table \ref{q^-2nonparttable} corresponds to forming a
$q^{-2}$ term in this way.

For $(m_{ij})$ as in $($\ref{nonpart4}$)$, we have $m_1= m_{11} + 2$
and $m_i = m_{i1}$ for $i \ne 1$, and by Lemma \ref{phiigen} we
obtain the $\phi^+_{i,M}(1)$ as follows:

$$ \phi^+_{1,M}(1) = \frac{q^{-2}}{m_{11}! 2!} (1 + O(q^{-1}))$$

\noindent
and for $i\ge 2$

$$\phi^+_{i,M}(1) = \frac{1}{i^{m_{i1}} m_{i1}!} (1 + O(q^{-1})).$$

\noindent
There is again only one way to make a $q^{-2}$ term here and the third
line in Table \ref{q^-2nonparttable} corresponds to forming a
$q^{-2}$ term in this way.
Note that $\prod_j m_{1j}! = m_{1j}!2!$, so we can take the half out as a
factor.

Finally $i_0 = 1$, $j_0 = 2$ and $m_{12}=1$ with $m_{1j}=0$ for $j\ge
3$ provides us with our last case. Here we have

\begin{equation}\label{nonpart2}
(m_{ij}) = \left\{
\begin{array}{l}
m_{i1} = \textrm{anything } (i \ge 1) \\
m_{12} = 1 \\
m_{1j} = 0 \textrm{ } (j \ge 3) \\
m_{ij} = 0 \textrm{ } (i \ge 2, j \ge 2) \\
\end{array}
\right.
\end{equation}

\noindent Note that there are no such $(m_{ij})$ for $r<2$.
If we take such an $(m_{ij}) \in \mathcal{M}(r)$ and form a new
$M'=(m_{ij}')$ with $m_{ij}' = m_{ij}$ for all $(i,j) \ne (1,2)$ and
$m_{12}' = 0$ then $M' \in \mathcal{M}_{part}(r-2)$. So we think
of an $(m_{ij})$ as in \ref{nonpart2} as a partition of $r-2$ with
$m_{12}=1$ appended to it.

For $(m_{ij})$ as in $($\ref{nonpart2}$)$, we have $m_1 = m_{11} + 1$
and $m_i = m_{i1}$ for $i \ne 1$, and by Lemma \ref{phiigen} we
obtain the $\phi^+_{i,M}(1)$ as follows

$$\phi^+_{1,M}(1) = \frac{q^{-1}}{ m_{11}!} \left(1 -
\left(\frac{m_{11}^2}{2} + \frac{m_{11}}{2}\right) q^{-1} +
O(q^{-2}) \right)$$

$$\phi^+_{2,M}(1) = \frac{1}{2^{m_{21}} m_{21}!} \left(1 -m_{21}q^{-1} +
O(q^{-2}) \right)$$

\noindent
and for $i\ge 3$

$$\phi^+_{i,M}(1) = \frac{1}{i^{m_{i1}} m_{i1}!} \left(1 + O(q^{-1}) \right).$$

We have now that $\phi^+_{1,M}(1)$ has leading term involving $q^{-1}$
so we can choose $a_1 = -1$ and $a_2 = -1$, or $a_1 = -2$ and $a_2 =
0$. The former corresponds to line four of Table
\ref{q^-2nonparttable} while the latter corresponds to lines five and six
of that table.

The first column
of Table \ref{q^-2nonparttable}
indicates the type of array $M$ used to produce the
$\phi^+_{i,M}(1)$. If a line contains
exactly one nonconstant entry then that entry is a certain function of
$m_{k1}$ for some $k$, say $f_k(m_{k1})$ and this line corresponds to
a summand $a_r$ of the coefficient of $q^{-2}$, where $a_r$ is as given in Lemma
\ref{1/1-t}. The sum $1 + \sum_{r=1}^\infty a_r t^r$ corresponding to
these values of $a_r$ is evaluated using Lemmas \ref{1/1-t} and Lemma \ref{te}
and recorded in the last entry for this line. There are lines in which
all the entries are constant and we produce the generating functions by
Corollary \ref{sumparts=1} in these cases.
Each generating function has been multiplied by $t^b$ for the positive
integer $b$ such that the $(m_{ij})$'s in question correspond to
partitions of $r-b$.

\begin{table}
\begin{center}
\begin{tabular}{c | c c c | c}
$M$ as in: & $\phi^+_{1,M}(1)$ & $\phi^+_{2,M}(1)$ & $\phi^+_{M,3+}(1)$ & Generating Function \\
\hline
(\ref{nonpart1}) & $1$ & $\frac{1}{2}$ & $1$ &
$\left(\frac{t^4}{2}\right) \frac{1}{1-t}$ \\
(\ref{nonpart3}) & $1$ & $1$ & $1$ & $\left({t^3}\right) \frac{1}{1-t}$ \\
(\ref{nonpart4}) & $\frac{1}{2}$ & $1$ & $1$ &
$\left(\frac{t^4}{2}\right)\frac{1}{1-t}$ \\
(\ref{nonpart2}) & $1$ & $-m_{21}$ & $1$ &
$\left(\frac{-t^4}{2}\right)\frac{1}{1-t}$ \\
(\ref{nonpart2}) & $-\frac{m_{11}^2}{2}$ & $1$ & $1$ &
$\left(-\frac{t^3}{2} - \frac{t^4}{2}\right) \frac{1}{1-t}$  \\

(\ref{nonpart2}) & $-\frac{m_{11}}{2}$ & $1$ & $1$ &
$\left(-\frac{t^3}{2}\right) \frac{1}{1-t}$  \\

\hline
& & & Total & $0$ \\
\hline
\end{tabular}
\end{center}
\caption{Producing the generating function for the coefficient
of $q^{-2}$ in the expansion of $\lim_{n\to\infty} c_{\GL,r}(n)$
due to $M \in \mathcal{M}(r)$ corresponding to nonpartitions.}
\label{q^-2nonparttable}
\end{table}

Adding all the generating functions together for those
$M \notin \mathcal{M}_{part}(r)$ gives $0$. So the generating function
for the coefficient of $q^{-2}$ is just what is produced by the
$M \in \mathcal{M}_{part}(r)$ which is $\frac{-t}{1-t}$. In the
expansion of this, the coefficient of $t^r$ is $-1$ for all $r \ge
1$. Hence for all $r$, the coefficient of $q^{-2}$ is $-1$. This
completes the proof that for all $r \in \mathbb{Z}^+$, the
$\lim_{n\rightarrow\infty}c_{\GL,r}(n) = 1 - q^{-2} + O(q^{-3})$.
\end{proof}

Theorem \ref{1-q^-2ch1} for general linear groups now follows from
Theorem \ref{limcrn} and Theorem \ref{1-q^-2}.

It is a basic assumption of Section \ref{chap4} that $r$, the dimension
of the invariant subspace, is greater than or equal to $1$.
However, we could have relaxed that assumption, allowed $r$ to equal zero
and worked through the section in the same manner.
If we had allowed $r$ to equal zero then
Theorem \ref{C_r=Cblah} would state that $C_{\GL,r}(t) = C_{\GL}(t)$
when $r=0$, since the only matrix in $\mathcal{M}(0)$ is the matrix
containing only zeroes.
In the proof of Theorem \ref{1-q^-2}, the
generating function for the coefficient of
$q^{-2}$ in $\lim_{n\to\infty} c_{\GL,r}(n)$ is $\frac{-t}{1-t}$.
So for $r = 0$, the resulting answer for the
limit of $c_{\GL,0}(n)$ would be $1 - q^{-3} + O(q^{-4})$, since
the coefficient of $t^0$ in $\frac{-t}{1-t}$ (which gives the
coefficient of $q^{-2}$ when $r=0$) is $0$.

These are exactly the answers that should arise, because when $r=0$
the invariant subspace has dimension $0$ and hence our matrix group
$\GL(V)_U$ is equal to $\GL(V)$. By Theorem \ref{limpropGL(V)}
the limiting proportion of cyclic matrices in $\GL(V)$ is indeed
$1 - q^{-3} + O(q^{-4})$.

\subsection {Inside the Matrix Algebras} \label{M(V)_U}

We move to looking for cyclic matrices inside the matrix algebra
of all matrices which
fix a subspace $U$ of the vector space $V$.
We now consider the subalgebra $\M(V)_U$, that is, we
include matrices whose characteristic polynomial has
zero constant term, into our calculations.

Let $\Gamma_{\M,r}(n)$ be the set of all cyclic matrices in $\M(V)_U$ where $n$,
the dimension of the vector space $V$, will vary and $r$, the dimension
of the subspace $U$, will remain fixed. Then by
Lemma \ref{m,m=conjclasses} there is a one-to-one correspondence between the
set of orbits of $\GL(V)_U$ in its action on $\Gamma_{\M,r}(n)$ by conjugation and the set of
pairs of
monic polynomials $(f,h)$ over $\mathbb{F}_q$ where $f$ is of degree $r$,
$h$ is of degree $n$ and $f$ divides $h$.
We denote by $\Gamma_{f,h}$ the orbit of $\Gamma_\M(n)$
containing those matrices with minimal polynomial $f$ on $U$ and minimal
polynomial $h$ on $V$. Note that when $h$ has nonzero constant term, this $\Gamma_{f,h}$ is the same
as the set $\Gamma_{f,h}$ defined in Section \ref{GL(V)_U}. Thus, by Equation (\ref{gamma_fh}) we have

$$|\Gamma_{f,h}| = \frac{|\GL(V)_U|}{Cent(h)}.$$

\noindent
We digress briefly to define $\omega_r(n)$ and prove a result about it.

\begin{definition} \label{omegar}
{\rm Let $\omega_r(n) = \frac{|\GL(V)_U|}{|\M(V)_U|}$ where $V$ has dimension $n$
and $U$ has dimension $r$.}
\end{definition}

\begin{lemma} \label{omegarproof}
For some $(n-r)$-dimensional subspace $W$,

$$ \omega_r(n) = \frac{|\GL(V)_U|}{|\M(V)_U|} =
\frac{|\GL(V)_{U\oplus W}|}{|\M(V)_{U \oplus W}|}
= \displaystyle \prod_{i=1}^r(1-q^{-i}) \prod_{i=1}^{n-r}(1-q^{-i}). $$

\noindent Also
\begin{eqnarray*}
\lim_{n\to\infty}
\omega_r(n) & =
\left\{
\begin{array}{l @{+} l l}
1 - 2q^{-1}          & \hspace{0.2cm} q^{-3} \hspace{1.13cm}+ O(q^{-5}), & \textrm{if } r=1 \\
1 - 2q^{-1} - q^{-2} & 3q^{-3} \hspace{1.15cm} + O(q^{-5}), & \textrm{if } r= 2 \\
1 - 2q^{-1} - q^{-2} & 2q^{-3} + 2q^{-4}       + O(q^{-5}), & \textrm{if } r= 3 \\
1 - 2q^{-1} - q^{-2} & 2q^{-3} + \hspace{0.18cm} q^{-4}  + O(q^{-5}), & \textrm{if } r\ge 4 \\
\end{array}
\right. \\
\end{eqnarray*}
\end{lemma}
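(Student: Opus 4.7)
The plan is to separate the lemma into two parts: first establish the formula for $\omega_r(n)$ by a block-matrix counting argument, and then derive the four asymptotic expansions by expanding an Euler product.

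First, I would fix an ordered basis of $V$ whose first $r$ vectors span $U$. With respect to such a basis, every element of $\M(V)_U$ has block form
$\begin{pmatrix} A & 0 \\ B & D \end{pmatrix}$
with $A \in \M(r,q)$, $D \in \M(n-r,q)$, and $B$ an arbitrary $(n-r) \times r$ matrix over $\F_q$, because $U$-invariance forces the upper-right block to vanish. Hence $|\M(V)_U| = q^{r^2}\,q^{(n-r)^2}\,q^{r(n-r)}$, and invertibility is equivalent to $A \in \GL(r,q)$ and $D \in \GL(n-r,q)$, so $|\GL(V)_U| = |\GL(r,q)|\,|\GL(n-r,q)|\,q^{r(n-r)}$. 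Choosing any complement $W$ of $U$ and a basis extending bases of $U$ and $W$, the elements of $\M(V)_{U \oplus W}$ are exactly the block-diagonal matrices (since now $W$ must also be invariant), giving $|\M(V)_{U \oplus W}| = q^{r^2}\,q^{(n-r)^2}$ and $|\GL(V)_{U \oplus W}| = |\GL(r,q)|\,|\GL(n-r,q)|$. The factor $q^{r(n-r)}$ cancels in both ratios, so both equal $\omega(r)\omega(n-r)$, and Equation (\ref{omegaproof}) then rewrites this as $\prod_{i=1}^r(1-q^{-i})\prod_{i=1}^{n-r}(1-q^{-i})$.

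For the limits, since $\omega(n-r) \to \prod_{i=1}^\infty(1-q^{-i})$ as $n \to \infty$, we get
$$\lim_{n\to\infty}\omega_r(n) = \prod_{i=1}^r(1-q^{-i}) \cdot \prod_{i=1}^\infty(1-q^{-i}).$$
I would invoke Euler's pentagonal number theorem (or simply expand $\prod_{i=1}^6(1-q^{-i})$ directly and observe that all further factors contribute $1 + O(q^{-7})$) to obtain
$$\prod_{i=1}^\infty(1-q^{-i}) = 1 - q^{-1} - q^{-2} + q^{-5} + O(q^{-7}).$$
Then I would dispatch the four cases by multiplying out: for $r = 1, 2, 3$, compute $\prod_{i=1}^r(1-q^{-i})$ exactly and multiply term by term keeping everything to order $q^{-4}$ (with $O(q^{-5})$ error); for $r \ge 4$, note that $\prod_{i=5}^r(1-q^{-i}) = 1 + O(q^{-5})$, so the answer stabilises once $r = 4$, and it suffices to compute $\prod_{i=1}^4(1-q^{-i}) = 1 - q^{-1} - q^{-2} + 2q^{-5} + O(q^{-8})$ and multiply by the Euler product.

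The only real obstacle is bookkeeping: the four answers are distinguished entirely by how far the cancellations in $\prod_{i=1}^r(1-q^{-i})\prod_{i=1}^\infty(1-q^{-i})$ propagate through the first few powers of $q^{-1}$, so one must keep enough terms in each factor to be sure the coefficients of $q^{-3}$ and $q^{-4}$ are correct. Beyond that, the argument is pure polynomial arithmetic; the structural content is entirely in the block decomposition in Part 1.
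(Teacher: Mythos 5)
Your block-matrix computation of $|\M(V)_U|$ and $|\GL(V)_U|$ is correct (with the row-vector convention used in the paper, $U$-invariance does force the upper-right block to vanish, and the lower-left $r(n-r)$ block is free), so the first displayed equality chain is sound. The asymptotic expansions are also correct: I verified that $\prod_{i=1}^r(1-q^{-i}) \cdot (1 - q^{-1} - q^{-2} + O(q^{-5}))$ yields exactly the four stated polynomials to order $q^{-4}$, and your observation that the factors with $i \ge 5$ contribute only $1 + O(q^{-5})$ correctly explains why the expansion stabilises at $r = 4$. The paper itself does not include a proof of this lemma and instead cites Lemma 4.3.2 of the first author's thesis, so there is no in-text argument to compare against; your block-decomposition plus Euler-product approach is the natural one and is almost certainly the one used there.
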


\begin{proof}
This is proved, for example, in \cite[Lemma 4.3.2]{myPhD}.
\end{proof}

Recall that $\mathcal{P}$ is the set of all monic polynomials over
$\mathbb{F}_q$ and that $\mathcal{P}_i$ is the subset of $\mathcal{P}$
that contains the constant polynomial $1$ and those polynomials whose
irreducible factors all have degree $i$.
Also recall that $\alpha(h;r)$
denotes the number of distinct degree $r$ factors of the polynomial $h$.

Now denote by $c_{\M,r}(n)$ the proportion of cyclic matrices in $\M(V)_U$.
Then it follows that

\begin{equation} \label{cMrn}
\begin{array} {rcl}
c_{\M,r}(n) & = &
\displaystyle \sum_{
\begin{array}{c}
\vspace{-0.2cm}
\textrm{\scriptsize $h$$\in$$\mathcal{P}$} \\
\vspace{-0.2cm}
\textrm{\scriptsize \deg$(h)$$=$$n$}\\
\vspace{-0.2cm}
\textrm{\scriptsize \deg$(f)$$=$$r$}\\
\vspace{-0.2cm}
\textrm{\scriptsize $f|h$} \\
\end{array}
}
\frac{|\Gamma_{f,h}|}{|\M(V)_U|}
\\ \vspace{0.3cm}
& = &
\displaystyle \sum_{
\begin{array}{c}
\vspace{-0.2cm}
\textrm{\scriptsize $h$$\in$$\mathcal{P}$} \\
\vspace{-0.2cm}
\textrm{\scriptsize \deg$(h)$$=$$n$}\\
\end{array}
}
\frac{\alpha(h;r)|\Gamma_{f,h}||\GL(V)_U|}{|\M(V)_U||\GL(V)_U|}
\\ \vspace{0.3cm}
& = &
\displaystyle \sum_{
\begin{array}{c}
\vspace{-0.2cm}
\textrm{\scriptsize $h$$\in$$\mathcal{P}$} \\
\vspace{-0.2cm}
\textrm{\scriptsize \deg$(h)$$=$$n$}\\
\end{array}
}
\frac{\alpha(h;r) \omega_r(n)}{Cent(h)}.
\end{array}
\end{equation}

\noindent
We have that

$$ \frac{c_{\M,r}(n)}{\omega_r(n)} =
\sum_{
\begin{array}{c}
\vspace{-0.2cm}
\textrm{\scriptsize $h$$\in$$\mathcal{P}$} \\
\vspace{-0.2cm}
\textrm{\scriptsize \deg$(h)$$=$$n$}\\
\end{array}
}
\frac{\alpha(h;r)}{Cent(h)} $$

\noindent
which we will calculate by a similar method to that used for calculating $c_{\GL,r}(n)$ in
Section \ref{GL(V)_U}. Let

$$ C_{\M,r}(t) = \sum_{n=r}^\infty
\left(\frac{c_{\M,r}(n)}{\omega_r(n)}\right) t^n$$

\noindent
be the 'weighted' generating function for the proportion of cyclic matrices in $\M(V)_U$.
Note that the coefficients of $C_{\M,r}(t)$ are not the proportions we desire, as they are
'weighted' by $\omega_r(n)$.
Note also that the sum starts from $n=r$ because $r \le$ dim$(V)$.

Recall that Equation \ref{Phi_p} gave us a formal power series for each
irreducible polynomial and that we collected all irreducible polynomials of
degree $i$ to form

$$ \Phi^+_i = \left( 1 + \sum_{j=1}^\infty \frac{s_{ij}t^{ij}}{Cent(i,j)}
\right)^{N^+(i,q)}. $$

\noindent
We will slightly modify these $\Phi^+_i$ by making just one small change and
we will call the new power series $\Phi_i$.
Because we want to include the irreducible polynomial $t$ in our calculations
now, we make the exponent equal to $N(i,q)$ instead of $N^+(i,q)$.
Since $N(i,q) = N^+(i,q)$ for $i \ge 2$ it follows that
$\Phi_i = \Phi^+_i$ for $i\ge 2$. So we define

$$ \Phi_i = \left( 1 + \sum_{j=1}^\infty \frac{s_{ij}t^{ij}}{Cent(i,j)}
\right)^{N(i,q)}. $$

Now we give the first of several corollaries. The first follows immediately
from Lemma \ref{Phipower}, the only difference being that the summands now include
polynomials with zero constant term.

\begin{corollary}\label{Phihatpower}
As a power series, $\Phi_i$ satisfies

$(1).$ $\displaystyle \Phi_i = \sum_{h_i \in \mathcal{P}_i}
\frac{\left(\prod_j s_{ij}^{\tau(h_i;i,j)}\right) t^{\deg(h_i)}}
{Cent(h_i)},$

$(2).$ $\displaystyle \prod_{i=1}^\infty \Phi_i =
\sum_{h \in \mathcal{P}}
\frac{\left( \prod_j s_{ij}^{\tau(h;i,j)} \right) t^{\deg(h)}}
{Cent(h)}.$
\end{corollary}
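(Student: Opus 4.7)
The plan is to mirror the proof of Lemma \ref{Phipower} verbatim, observing that the only change between $\Phi^+_i$ and $\Phi_i$ is the replacement of the exponent $N^+(i,q)$ by $N(i,q)$. For $i \ge 2$ this is no change at all, since $N^+(i,q)=N(i,q)$, so $\Phi_i=\Phi^+_i$ and the corresponding assertion in part $(1)$ of the corollary reduces to part $(1)$ of Lemma \ref{Phipower}. For $i=1$ we have $N(1,q)=N^+(1,q)+1=q$, and the extra factor corresponds precisely to the monic irreducible polynomial $t$ that was omitted from $\mathcal{P}^+_1$ but now lies in $\mathcal{P}_1$.

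For part $(1)$, I would expand $\Phi_i$ as a product of $N(i,q)$ factors, one for each monic irreducible polynomial $p$ of degree $i$ (including $p=t$ when $i=1$). Each term in the expansion of this product is obtained by choosing, for each such $p$, an exponent $j(p) \ge 0$; the resulting polynomial $h_i=\prod_p p^{j(p)}$ runs over $\mathcal{P}_i$. The $t$-exponent of the selected term is $\sum_p i\,j(p)=\deg(h_i)$; the $s_{ij}$-exponent is the number of $p$ with $j(p)=j$, namely $\tau(h_i;i,j)$; and by Lemma \ref{Cent} the denominator $\prod_p Cent(i,j(p))$ equals $Cent(h_i)$. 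Summing over $h_i \in \mathcal{P}_i$ yields the stated identity.

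For part $(2)$, I would use the fact that every $h \in \mathcal{P}$ factors uniquely as $h=\prod_i h_i$ with $h_i \in \mathcal{P}_i$, where the factors are pairwise coprime. Multiplying the expressions in part $(1)$ across all $i$ then corresponds to ranging over all such factorisations, with degrees adding, the $\tau$-exponents concatenating (so $\prod_i \prod_j s_{ij}^{\tau(h_i;i,j)}=\prod_{i,j} s_{ij}^{\tau(h;i,j)}$), and the centralisers multiplying via Lemma \ref{Cent} to give $Cent(h)$.

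There is essentially no obstacle here; the argument is bookkeeping, and the only conceptual point is to note that enlarging the factor from $(q-1)$-fold to $q$-fold in the $i=1$ case is exactly what allows the irreducible $t$ to contribute and thereby replaces $\mathcal{P}^+$ by $\mathcal{P}$ in the summation range.
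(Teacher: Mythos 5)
Your proof is correct and takes exactly the paper's approach: the paper simply states that the corollary follows immediately from Lemma \ref{Phipower}, the only change being that the exponent $N^+(i,q)$ becomes $N(i,q)$ so that the irreducible $t$ (and hence all of $\mathcal{P}$) is now included. You have merely spelled out the same bookkeeping in detail, which is harmless and accurate.
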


Recall that in Section \ref{GL(V)} we defined $\alpha(h;r)$ by

$$ \alpha(h;r) = \sum_{M \in \mathcal{M}(r)}
\prod_{i=1}^r \alpha(h_i; r_i, M_i) $$

\noindent
where $h_i$ was the product of all the irreducible degree $i$
factors of $h$, we assigned $r_i = i \sum_j jm_{ij}$, $M_i = M_i(M)$ is as in (\ref{Mi})
and the overall term
$\alpha(h_i; r_i, M_i)$ referred to the number of degree $r_i$ factors of
$h_i$ that corresponded with the array $M$.

Similarly to Definition \ref{Phialphadef} we define
$\Phi_{i,M,\alpha}$, for each $i$, by

\begin{equation} \label{Phialphanorm}
\Phi_{i,M,\alpha} = \Phi_{i,M,\alpha}(t) :=
\sum_{h_i \in \mathcal{P}_i} \alpha(h_i; r_i, M_i)
\frac{t^{\deg(h_i)}}{Cent(h_i)}.
\end{equation}

The next corollary, which follows immediately from Lemma \ref{Crfinal}, tells us
how to obtain $C_{\M,r}(t)$ using the $\Phi_{i,M,\alpha}$.

\begin{corollary} \label{CMrfinal}
$$C_{\M,r}(t) = \sum_{M \in \mathcal{M}(r)} \prod_{i=1}^\infty
\Phi_{i,M,\alpha} (t)$$
\end{corollary}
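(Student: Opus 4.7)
The plan is to mimic the proof of Lemma \ref{Crfinal} essentially verbatim, with the single change that we replace $\mathcal{P}^+$ and $\mathcal{P}^+_i$ by $\mathcal{P}$ and $\mathcal{P}_i$ throughout. The extra polynomials that appear (namely those with zero constant term) are exactly what is needed because we now want a generating function indexed by \emph{all} monic $h$ of degree $n$, not only those with $h(0) \ne 0$.

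First I would expand $\Phi_{i,M,\alpha}$ according to (\ref{Phialphanorm}) and multiply over all $i$. Every $h \in \mathcal{P}$ factors uniquely as $h = \prod_i h_i$ with $h_i \in \mathcal{P}_i$, and $\deg(h) = \sum_i \deg(h_i)$. By Lemma \ref{Cent} (applied to pairwise coprime factors), $\prod_i Cent(h_i) = Cent(h)$. Therefore
$$\prod_{i=1}^\infty \Phi_{i,M,\alpha}(t) = \sum_{h \in \mathcal{P}} \frac{\left(\prod_{i=1}^\infty \alpha(h_i;r_i,M_i)\right) t^{\deg(h)}}{Cent(h)}.$$
Next I would invoke Equation (\ref{alphas}), which gives $\prod_i \alpha(h_i;r_i,M_i) = \alpha(h;r,M)$, and then sum over $M \in \mathcal{M}(r)$, interchanging the (finite) sum over $M$ with the sum over $h$:
$$\sum_{M \in \mathcal{M}(r)} \prod_{i=1}^\infty \Phi_{i,M,\alpha}(t) = \sum_{h \in \mathcal{P}} \frac{\left(\sum_{M \in \mathcal{M}(r)} \alpha(h;r,M)\right) t^{\deg(h)}}{Cent(h)} = \sum_{h \in \mathcal{P}} \frac{\alpha(h;r) t^{\deg(h)}}{Cent(h)}.$$

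To conclude I would match the right-hand side with $C_{\M,r}(t)$. Grouping the sum by $n = \deg(h)$ and using Equation (\ref{cMrn}), the inner sum $\sum_{\deg(h)=n} \alpha(h;r)/Cent(h)$ is exactly $c_{\M,r}(n)/\omega_r(n)$. Since $\alpha(h;r) = 0$ whenever $\deg(h) < r$, the terms for $n < r$ vanish and we recover
$$\sum_{M \in \mathcal{M}(r)} \prod_{i=1}^\infty \Phi_{i,M,\alpha}(t) = \sum_{n=r}^\infty \frac{c_{\M,r}(n)}{\omega_r(n)} t^n = C_{\M,r}(t).$$

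There is essentially no obstacle: every intermediate ingredient (the factorisation $h = \prod h_i$, Lemma \ref{Cent}, the factorisation of $\alpha$ in (\ref{alphas}), and the formula (\ref{cMrn})) has already been established, and none of them required $h(0) \ne 0$. The only point that needs any care is remembering to use the \emph{weighted} coefficients $c_{\M,r}(n)/\omega_r(n)$ in $C_{\M,r}(t)$; this cancellation of the $\omega_r(n)$ factor from (\ref{cMrn}) is what makes the identity come out cleanly and without an extra $\omega_r$-type prefactor on the right-hand side.
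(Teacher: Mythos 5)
Your proof is correct and is precisely the argument the paper intends: the paper states this corollary without proof, remarking only that it "follows immediately from Lemma \ref{Crfinal}," and your verbatim adaptation of that lemma's proof with $\mathcal{P}$ and $\mathcal{P}_i$ replacing $\mathcal{P}^+$ and $\mathcal{P}^+_i$ (together with the observation that Equation (\ref{cMrn}) supplies the weighted coefficients $c_{\M,r}(n)/\omega_r(n)$) is exactly the implicit argument.
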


When given $\Phi_i$ as input, the procedure {\sc PhiAlpha}
from Section \ref{GL(V)_U} produces $\Phi_{i,M,\alpha} (t)$. So we have
the following theorem which is similar to Theorem \ref{C_r=Cblah}.
The proof is the same as that of Theorem \ref{C_r=Cblah} with $N(i,q)$
substituted for $N^+(i,q)$.

\begin{theorem}\label{CMr=CMblah}
Let $\phi_{i,M}(t) = m_i! {N(i,q) \choose m_i} \prod_j
\frac{(tq^{-1})^{ijm_{ij}}}{m_{ij}!(1-q^{-i}+t^iq^{-2i})^{m_{ij}}}$. Then

$$ C_{\M,r}(t) = C_\M(t) \sum_{M \in \mathcal{M}(r)}
\prod_{i=1}^r \phi_{i,M}(t) $$

\noindent
Moreover, $\phi_{i,M}(t) = 1$ for all $M \in \mathcal{M}(r)$ when $i>r$.
\end{theorem}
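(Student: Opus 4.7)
The plan is to mirror the proof of Theorem~\ref{C_r=Cblah}, with the single change that every occurrence of $N^+(i,q)$ is replaced by $N(i,q)$. I would start from Corollary~\ref{CMrfinal}, namely
$$C_{\M,r}(t)=\sum_{M\in\mathcal{M}(r)}\prod_{i=1}^\infty \Phi_{i,M,\alpha}(t),$$
where $\Phi_{i,M,\alpha}$ is now the output of the procedure {\sc PhiAlpha}$(r,M,i)$ run on the input $\Phi_i$ rather than $\Phi_i^+$. The verification of this procedure in Lemma~\ref{phialpha} depends only on the chain rule and on which variables $s_{ij}$ appear in each factor, so it transfers to $\Phi_i$ without change.

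Next I would identify the correct denominator. Write $\widetilde F_i(t):=\Bigl(1+\sum_{j\ge 1}t^{ij}/Cent(i,j)\Bigr)^{N(i,q)}$, i.e.\ $\Phi_i$ with every $s_{ij}$ set to $1$. Section~\ref{M(V)} establishes $C_\M(t)=F^t(t)\prod_i F_i(t)$, and since $F^t(t)\cdot F_1(t)=\widetilde F_1(t)$ (because $N(1,q)=N^+(1,q)+1$) while $N(i,q)=N^+(i,q)$ for $i\ge 2$, this rewrites as $C_\M(t)=\prod_{i\ge 1}\widetilde F_i(t)$. For each $i>r$ the procedure performs no differentiation and simply sets all $s_{ij}:=1$, so $\Phi_{i,M,\alpha}(t)=\widetilde F_i(t)$, independently of $M$, and these factors can be pulled outside the sum over $M$.

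For $i\le r$ I would carry out the same differentiation calculation as in the proof of Theorem~\ref{C_r=Cblah}. After the substitution $s_{ij}:=s_{ik}$ for $j>k=\lfloor r/i\rfloor$, the inner sum is geometric and equals $t^{ik}/\bigl(q^{ik}(1-q^{-i})(1-t^iq^{-i})\bigr)$. Each of the $m_i=\sum_j m_{ij}$ differentiation steps pulls down a factor from the falling factorial of $N(i,q)$ together with a power of that geometric expression, yielding, after the complete while loop,
$$\Bigl(\prod_{0\le u<m_i}(N(i,q)-u)\Bigr)\prod_{j=1}^{\lfloor r/i\rfloor}\frac{1}{m_{ij}!}\Bigl(\frac{t^{ij}}{q^{ij}(1-q^{-i})(1-t^iq^{-i})}\Bigr)^{m_{ij}}\widehat\Psi_0^{\,N(i,q)-m_i},$$
where $\widehat\Psi_0=1+t^i/\bigl(q^i(1-q^{-i})(1-t^iq^{-i})\bigr)$. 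Dividing by $\widetilde F_i(t)=\widehat\Psi_0^{\,N(i,q)}$, then using the identity $(1-q^{-i})(1-t^iq^{-i})+t^iq^{-i}=1-q^{-i}+t^iq^{-2i}$ to cancel $\widehat\Psi_0^{\,m_i}$ and collecting powers of $tq^{-1}$, one obtains exactly $\phi_{i,M}(t)$ as in the statement. Combining the two contributions gives
$$C_{\M,r}(t)=\Bigl(\prod_{i=1}^\infty \widetilde F_i(t)\Bigr)\sum_{M\in\mathcal{M}(r)}\prod_{i=1}^r \phi_{i,M}(t)=C_\M(t)\sum_{M\in\mathcal{M}(r)}\prod_{i=1}^r\phi_{i,M}(t).$$

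No step presents a genuine obstacle, since the combinatorics is structurally identical to Theorem~\ref{C_r=Cblah}. The only point deserving explicit care is the identification $C_\M(t)=\prod_i\widetilde F_i(t)$: this is the place where the irreducible polynomial $p=t$ is absorbed into the degree-one factor, and is precisely what forces the binomial coefficient in $\phi_{i,M}(t)$ to be taken over $N(i,q)$ instead of $N^+(i,q)$.
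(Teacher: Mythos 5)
Your proof is correct and follows the same route the paper intends: the paper's own proof is literally the single sentence that it is ``the same as that of Theorem \ref{C_r=Cblah} with $N(i,q)$ substituted for $N^+(i,q)$.'' You have usefully spelled out the one point that genuinely needs checking, namely the identity $C_\M(t)=\prod_{i\ge 1}\widetilde F_i(t)$ obtained by absorbing $F^t(t)$ into the degree-one factor via $F^t\cdot F_1=\widetilde F_1$ (using $N(1,q)=N^+(1,q)+1$), which is exactly what makes the blanket substitution legitimate.
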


Theorem \ref{CMr=CMblah} gives us a direct formula for calculating the generating
function $C_{\M,r}(t)$ for any $r$. Now we want to know the limit of the
coefficients as $n$ tends to infinity. We first need to know the convergence
properties of $C_{\M,r}(t)$. The following corollary follows immediately from Lemma \ref{phiconv}.

\begin{corollary}
Let $M \in \mathcal{M}(r)$. Then for any $i \le r$ the power series
expansion of

$$ \phi_{i,M}(t) =  m_i! {N(i,q) \choose m_i} \prod_j
\frac{(tq^{-1})^{ijm_{ij}}}{m_{ij}!(1-q^{-i}+t^iq^{-2i})^{m_{ij}}} $$

\noindent
is convergent for $|t| < q(q^i-1)^{1/i}$.
\end{corollary}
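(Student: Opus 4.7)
The plan is to reduce this statement to Lemma \ref{phiconv} by observing that $\phi_{i,M}(t)$ differs from $\phi^+_{i,M}(t)$ only in the leading binomial factor: we have $m_i!\binom{N(i,q)}{m_i}$ in place of $m_i!\binom{N^+(i,q)}{m_i}$. Since $N^+(i,q)=N(i,q)$ for $i\ge 2$ and they differ by $1$ for $i=1$, in either case $m_i!\binom{N(i,q)}{m_i}$ is a (positive) integer constant independent of $t$, so it has no effect on the radius of convergence of the power series in $t$.

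Following the proof of Lemma \ref{phiconv} verbatim, I would first split $\phi_{i,M}(t)$ as
$$\phi_{i,M}(t) = m_i!\binom{N(i,q)}{m_i} \Bigl(\prod_{j=1}^{\lfloor r/i\rfloor}\tfrac{1}{m_{ij}!}\Bigr) t^{r_i} q^{-r_i}\,\bigl(1-q^{-i}+t^iq^{-2i}\bigr)^{-m_i},$$
where $r_i = i\sum_j jm_{ij}$. The prefactor before $(1-q^{-i}+t^iq^{-2i})^{-m_i}$ is a monomial in $t$ and so is entire; convergence is therefore controlled entirely by the factor $(1-q^{-i}+t^iq^{-2i})^{-m_i}$, and in particular by the rational function $(1-q^{-i}+t^iq^{-2i})^{-1}$.

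Next I would rewrite this rational function as
$$\frac{1}{1-q^{-i}+t^iq^{-2i}} = \frac{1}{1-q^{-i}}\cdot\frac{1}{1+\dfrac{t^iq^{-2i}}{1-q^{-i}}},$$
and expand the second factor as a geometric series, which converges precisely when $\bigl|t^iq^{-2i}/(1-q^{-i})\bigr|<1$, equivalently $|t|<q(q^i-1)^{1/i}$. Raising to the $m_i$-th power does not affect the radius of convergence, so the power series expansion of $\phi_{i,M}(t)$ is convergent on the same disc.

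There is no real obstacle here since the argument is identical to that of Lemma \ref{phiconv}; the only thing worth being careful about is verifying that the swap $N^+(i,q)\mapsto N(i,q)$ genuinely only affects a constant prefactor (which it does, because this quantity enters the formula only through the binomial coefficient, not through the variable $t$), so that the analytic conclusion is preserved.
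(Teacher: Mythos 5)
Your proposal is correct and follows exactly the same route as the paper: the paper likewise observes that $\phi_{i,M}$ and $\phi^+_{i,M}$ differ only in the constant binomial prefactor (with $N(1,q)=N^+(1,q)+1$ being the sole case where they actually differ), so the corollary follows immediately from Lemma \ref{phiconv}. Your recapitulation of the geometric-series argument from Lemma \ref{phiconv} is accurate but, as you note yourself, not logically necessary once the reduction is in hand.
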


This again follows since the only difference between $\phi_{i,M}$ and
$\phi^+_{i,M}$ is when $i=1$ where we have $N(1,q) = N^+(1,q)+1$.
Similarly we have the next corollary.

\begin{corollary} \label{q(q-1)b}
The power series expansion of $\sum_{M \in \mathcal{M}(r)} \prod_{i=1}^r
\phi_{i,M}(t)$ is convergent for $|t| < q(q-1)$.
\end{corollary}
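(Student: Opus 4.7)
The plan is to mimic precisely the argument used for Corollary \ref{q(q-1)}, since the only change is that $\phi_{i,M}$ replaces $\phi^+_{i,M}$ (the exponent $N^+(i,q)$ becoming $N(i,q)$), and the relevant convergence properties have already been recorded in the preceding corollary.

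First I would invoke the corollary immediately before this one, which states that for each $i \le r$ and each $M \in \mathcal{M}(r)$, the power series expansion of $\phi_{i,M}(t)$ is convergent in the disc $|t| < q(q^i-1)^{1/i}$. Next, for a fixed $M \in \mathcal{M}(r)$ the product $\prod_{i=1}^r \phi_{i,M}(t)$ is a finite product of power series, each analytic on an open disc centred at the origin, so it is convergent on the intersection of these discs, i.e.\ for $|t|$ less than
\[
\min_{1 \le i \le r} q(q^i-1)^{1/i}.
\]
The function $i \mapsto q(q^i-1)^{1/i}$ is increasing in $i$ for $i \ge 1$, so the minimum is attained at $i=1$ and equals $q(q-1)$. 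Hence each product $\prod_{i=1}^r \phi_{i,M}(t)$ converges for $|t| < q(q-1)$.

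Finally, since $\mathcal{M}(r)$ is finite (as observed after Definition \ref{M}, every $M \in \mathcal{M}(r)$ has $m_{ij} = 0$ whenever $ij > r$, so $\mathcal{M}(r)$ consists of finitely many bounded integer arrays), the sum $\sum_{M \in \mathcal{M}(r)} \prod_{i=1}^r \phi_{i,M}(t)$ is a finite sum of series each convergent on $|t| < q(q-1)$, and hence converges on the same disc. There is essentially no obstacle here: the argument is purely formal, and the only thing to check is that switching from $N^+(i,q)$ to $N(i,q)$ does not affect the convergence analysis done in Lemma \ref{phiconv}, which it does not since that analysis depended only on the inner factor $(1 - q^{-i} + t^i q^{-2i})^{-m_i}$ and not on the value of the outer exponent.
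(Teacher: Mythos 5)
Your proof is correct and follows essentially the same route as the paper: the paper simply notes that the corollary follows "similarly" from the per-$i$ convergence of $\phi_{i,M}$ (itself an immediate corollary of Lemma \ref{phiconv}), takes the finite product over $i=1,\dots,r$ with the minimum radius attained at $i=1$, and uses finiteness of $\mathcal{M}(r)$ — exactly the three steps you give.
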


Now that the convergence properties are established we produce the first main
result of the section - a theorem giving a formula for the limit of the
coefficients of $t^n$ in $C_{\M,r}(t)$ as $n$ tends to infinity.

\begin{theorem} \label{limcrMn}
For $r \in \mathbb{Z}^+$

$$ \lim_{n \to \infty} \left( \frac{c_{\M,r}(n)}{\omega_r(n)} \right) =
\frac{1-q^{-5}}{(1-q^{-1})(1-q^{-2})} \sum_{M \in \mathcal{M}(r)}
\prod_{i=1}^r \phi_{i,M}(1).$$

\noindent
Moreover, $|\frac{c_{\M,r}(n)}{\omega_r(n)} - \lim_{n\to\infty} \frac{c_{\M,r}(n)}{\omega_r(n)}| = O(d^{-n})$
for any $d$ such that $1 < d < q(q-1)$.
\end{theorem}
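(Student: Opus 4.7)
The plan is to imitate the proof of Theorem \ref{limcrn} in the algebra setting. By Theorem \ref{CMr=CMblah} we have
$$ C_{\M,r}(t) = C_\M(t) \sum_{M\in\mathcal{M}(r)} \prod_{i=1}^r \phi_{i,M}(t), $$
so the heart of the argument is to apply Lemma \ref{1-t} to $C_{\M,r}(t)$ and evaluate $(1-t)C_{\M,r}(t)$ at $t=1$.

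Setting up Lemma \ref{1-t} requires $(1-t)C_{\M,r}(t)$ to be analytic in a disc of radius strictly greater than $1$. By Corollary \ref{q(q-1)b} the finite sum $\sum_M \prod_{i=1}^r \phi_{i,M}(t)$ is convergent for $|t|<q(q-1)$. For $(1-t)C_\M(t)$ one uses Wall's analysis in Section \ref{M(V)}: there $C_\M(t) = C_{\GL}(t)\left(1+\frac{t}{(q-1)(1-tq^{-1})}\right)$, and $(1-t)C_{\GL}(t)$ is analytic in a disc of radius $q^2$ by Wall. Combining, $(1-t)C_{\M,r}(t)$ is analytic in a disc of radius at least $q(q-1)$ about the origin.

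Lemma \ref{1-t} then delivers both the existence of the limit and the error estimate $O(d^{-n})$ for $1<d<q(q-1)$. Using the factorisation above,
$$ \lim_{n\to\infty}\frac{c_{\M,r}(n)}{\omega_r(n)} = \left.(1-t)C_\M(t)\right|_{t=1} \cdot \sum_{M\in\mathcal{M}(r)}\prod_{i=1}^r\phi_{i,M}(1). $$
Wall's explicit evaluation, reproduced in Section \ref{M(V)}, gives $\left.(1-t)C_\M(t)\right|_{t=1} = \frac{1-q^{-5}}{(1-q^{-1})(1-q^{-2})}$, which is exactly the constant appearing in the statement of the theorem.

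I do not expect a serious obstacle beyond this convergence bookkeeping; the argument is essentially a direct transcription of the proof of Theorem \ref{limcrn}, substituting Theorem \ref{CMr=CMblah} for Theorem \ref{C_r=Cblah}, $N(i,q)$ for $N^+(i,q)$, and Wall's value of $\left.(1-t)C_\M(t)\right|_{t=1}$ for the corresponding value of $\left.(1-t)C_{\GL}(t)\right|_{t=1}$. The only point that merits care is checking that the radius contributed by $(1-t)C_\M(t)$ does not shrink the effective disc of analyticity below $q(q-1)$, after which the conclusion follows immediately.
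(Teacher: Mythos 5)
Your overall approach is the same as the paper's: apply Lemma \ref{1-t} to $(1-t)C_{\M,r}(t)$, using the factorisation from Theorem \ref{CMr=CMblah}, Corollary \ref{q(q-1)b} for the finite sum, and Wall's evaluation of $(1-t)C_\M(t)$ at $t=1$ via Theorem \ref{limpropM(V)}. The part that does not hold up is your convergence argument. You derive the analyticity of $(1-t)C_\M(t)$ from the factorisation $C_\M(t) = C_{\GL}(t)\bigl(1+\tfrac{t}{(q-1)(1-tq^{-1})}\bigr)$ together with Wall's radius $q^2$ for $(1-t)C_{\GL}(t)$, and then claim a combined radius ``at least $q(q-1)$.'' But the extra factor $F^t(t)=1+\tfrac{t}{(q-1)(1-tq^{-1})}$ has a pole at $t=q$, so all this factorisation buys you is a disc of radius $\min(q^2,\,q,\,q(q-1)) = q$, which is strictly less than $q(q-1)$ whenever $q>2$. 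So you have not established analyticity of $(1-t)C_{\M,r}(t)$ on the disc of radius $q(q-1)$, and hence have not justified the error estimate $O(d^{-n})$ for $1<d<q(q-1)$.

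The paper sidesteps this by invoking Wall's result directly: it cites \cite{wall} for the fact that $(1-t)C_\M(t)$ is itself convergent for $|t|<q^2$, and then intersects with the radius $q(q-1)$ from Corollary \ref{q(q-1)b} to get analyticity of $(1-t)C_{\M,r}(t)$ on $|t|<q(q-1)$. To close the gap in your argument, replace the factorisation-based derivation with a direct appeal to Wall's analyticity result for $(1-t)C_\M(t)$; the rest of your proof then goes through exactly as you wrote it.
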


\begin{proof}
By \cite{wall}, $(1-t)C_\M(t)$ is convergent for $|t|<q^2$, and by
Corollary \ref{q(q-1)b}, $\sum_{M \in \mathcal{M}(r)}
\prod_{i=1}^r \phi_{i,M}(t)$ is convergent for $|t| < q(q-1)$. Hence
$C_{\M,r}(t)$ is convergent for $|t|< q(q-1)$.

By Lemma \ref{1-t}, $(1-t) C_{\M,r}(t)$ evaluated at $t=1$ gives the limit
of the coefficients of $C_{\M,r}(t)$, that is
$\lim_{n \to\infty} \left( \frac{c_{\M,r}(n)}{\omega_r(n)} \right)$. But

$$C_{\M,r}(t) = C_\M(t) \sum_{M \in \mathcal{M}(r)}
\prod_{i=1}^r \phi_{i,M}(t)$$

\noindent
by Theorem \ref{CMr=CMblah} and we know
already that $(1-t) C_\M(t)$ evaluated at $t=1$ is
$\frac{1-q^{-5}}{(1-q^{-1})(1-q^{-2})}$ by Theorem \ref{limpropM(V)}.
Hence

$$ \lim_{n\to\infty} \left( \frac{c_{\M,r}(n)}{\omega_r(n)} \right) =
\frac{1-q^{-5}}{(1-q^{-1})(1-q^{-2})} \sum_{M \in \mathcal{M}(r)}
\prod_{i=1}^r \phi_{i,M}(1). $$

We showed above that $(1-t)C_{\M,r}(t)$ is convergent for $|t| < q(q-1)$,
so the final assertion about the rate of convergence follows by Lemma \ref{1-t}.
\end{proof}

We now look at the power series expansion of $\phi_{i,M}(1)$
for all $i$. This will help us obtain a power series expansion of
$\lim_{n\to\infty} c_{\M,r}(t)$.

\begin{lemma} \label{phiihatgen}
Let $M \in \mathcal{M}(r)$ and set $m_i = \sum_j m_{ij}$. Then with the
notation of Theorem \ref{CMr=CMblah}, we have
\begin{eqnarray*}
\phi_{1,M}(1) & = & \frac{q^{m_1-\sum jm_{1j}}}{\prod_j m_{1j}!}
\left( 1 + \left(\frac{-m_1^2}{2} + \frac{3m_1}{2} \right)q^{-1} \right. \\
& & +
\left. \left( \frac{m_1^4}{8} - \frac{11m_1^3}{12} + \frac{11m_1^2}{8}
- \frac{7m_1}{12} \right)q^{-2} + O(q^{-3}) \right), \\
\phi_{2,M}(1) & = & \frac{q^{2m_2-2\sum jm_{2j}}}{2^{m_2} \prod_j m_{2j}!}
\left(1 - m_2q^{-1} + \left(-\frac{m_2^2}{2}+\frac{3m_2}{2}\right)
q^{-2} + O(q^{-3})\right), \\
\phi_{3,M}(1) & = & \frac{q^{3m_3-3\sum jm_{3j}}}{3^{m_3} \prod_j m_{3j}!}
\left(1 - m_3q^{-2} + O(q^{-3})\right),\\
\phi_{4,M}(1) & = & \frac{q^{4m_4-4\sum jm_{4j}}}{4^{m_4} \prod_j m_{4j}!}
\left(1 - m_4q^{-2} + O(q^{-3})\right), \\
\phi_{i,M}(1) & = & \frac{q^{im_i-i\sum jm_{ij}}}{i^{m_i} \prod_j m_{ij}!}
\left(1 + O(q^{-3})\right) \quad \textrm{for } i \ge 5. \\
\end{eqnarray*}
\end{lemma}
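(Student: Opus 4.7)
The key observation is that the only difference between $\phi_{i,M}(t)$ in Theorem \ref{CMr=CMblah} and $\phi^+_{i,M}(t)$ in Theorem \ref{C_r=Cblah} is the binomial coefficient: $\phi_{i,M}$ uses $\binom{N(i,q)}{m_i}$ while $\phi^+_{i,M}$ uses $\binom{N^+(i,q)}{m_i}$. Since $N(i,q) = N^+(i,q)$ for all $i \ge 2$, we obtain $\phi_{i,M}(1) = \phi^+_{i,M}(1)$ for $i \ge 2$, so the asserted expansions for $i=2,3,4$ and $i \ge 5$ are immediate consequences of the corresponding cases of Lemma \ref{phiigen}. The whole proof therefore reduces to carefully handling the case $i = 1$.

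For $i = 1$, the three-factor decomposition used in the proof of Lemma \ref{phiigen} still applies: write
\[
\phi_{1,M}(1) = \Bigl(m_1!\tbinom{N(1,q)}{m_1}\Bigr)\cdot\Bigl(\prod_j \tfrac{q^{-jm_{1j}}}{m_{1j}!}\Bigr)\cdot\Bigl(\prod_j \tfrac{1}{(1-q^{-1}+q^{-2})^{m_{1j}}}\Bigr).
\]
The second and third factors are identical to those appearing in the $i=1$ case of Lemma \ref{phiigen}: the second factor is $\frac{q^{-\sum_j j m_{1j}}}{\prod_j m_{1j}!}$, and the third factor admits the Taylor expansion $1+m_1q^{-1}+\frac{m_1(m_1-1)}{2}q^{-2}+O(q^{-3})$ (as already computed there). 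Only the first factor must be recomputed, since $N(1,q) = q$ rather than $q-1$.

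The plan for the first factor is to expand $m_1!\binom{q}{m_1} = \prod_{k=0}^{m_1-1}(q-k)$ as a polynomial in $q$ via elementary symmetric functions in $\{0,1,\dots,m_1-1\}$. The linear symmetric function gives coefficient $e_1 = \frac{m_1(m_1-1)}{2}$, and the quadratic symmetric function can be computed from $e_1^2 - \sum_k k^2$ to yield $e_2 = \frac{m_1(m_1-1)(m_1-2)(3m_1-1)}{24}$. Thus
\[
m_1!\tbinom{N(1,q)}{m_1}
= q^{m_1}\Bigl(1+\bigl(-\tfrac{m_1^2}{2}+\tfrac{m_1}{2}\bigr)q^{-1}
+\bigl(\tfrac{m_1^4}{8}-\tfrac{5m_1^3}{12}+\tfrac{3m_1^2}{8}-\tfrac{m_1}{12}\bigr)q^{-2}+O(q^{-3})\Bigr),
\]
which differs from the $N^+(1,q)$ analogue (\ref{N+(1,q)}) only in the middle coefficients (as one would expect, since $\prod_{k=0}^{m_1-1}(q-k)-\prod_{k=1}^{m_1}(q-k) = m_1\prod_{k=1}^{m_1-1}(q-k)$).

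Multiplying the three factors and collecting coefficients of $q^{0},q^{-1},q^{-2}$ then yields the claimed expression: the constant term is $\frac{q^{m_1-\sum jm_{1j}}}{\prod_j m_{1j}!}$, the $q^{-1}$ coefficient inside the bracket becomes $-\frac{m_1^2}{2}+\frac{m_1}{2}+m_1 = -\frac{m_1^2}{2}+\frac{3m_1}{2}$, and the $q^{-2}$ coefficient inside the bracket, collecting $\frac{m_1^4}{8}-\frac{5m_1^3}{12}+\frac{3m_1^2}{8}-\frac{m_1}{12}$ from the first factor, $m_1\bigl(-\frac{m_1^2}{2}+\frac{m_1}{2}\bigr)$ from the cross term, and $\frac{m_1(m_1-1)}{2}$ from the third factor, simplifies to $\frac{m_1^4}{8}-\frac{11m_1^3}{12}+\frac{11m_1^2}{8}-\frac{7m_1}{12}$, matching the lemma. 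The only nontrivial step is the arithmetic in this final collection; everything else is either identical to Lemma \ref{phiigen} or a direct elementary-symmetric-function calculation.
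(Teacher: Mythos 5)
Your proof is correct. The reduction to $i=1$ via $N(i,q)=N^{+}(i,q)$ for $i\ge 2$ is exactly the paper's observation, and your final arithmetic for the $q^{-1}$ and $q^{-2}$ coefficients checks out. The one place where you take a slightly different route is in handling the first factor for $i=1$: you recompute $m_1!\binom{q}{m_1}=\prod_{k=0}^{m_1-1}(q-k)$ from scratch via elementary symmetric polynomials in $\{0,1,\dots,m_1-1\}$ (obtaining $e_1 = \tfrac{m_1(m_1-1)}{2}$ and $e_2 = \tfrac{m_1(m_1-1)(m_1-2)(3m_1-1)}{24}$) and then multiply three series. The paper instead notes the compact identity
\[
\phi_{1,M}(1)=\frac{q}{q-m_1}\,\phi^{+}_{1,M}(1),
\]
since $m_1!\binom{q}{m_1}\big/\bigl(m_1!\binom{q-1}{m_1}\bigr)=q/(q-m_1)$, and then simply multiplies the geometric series $1+m_1q^{-1}+m_1^2q^{-2}+O(q^{-3})$ by the already-established expansion of $\phi^{+}_{1,M}(1)$ from Lemma \ref{phiigen}. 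The two paths give the same coefficients, of course, but the paper's factorisation avoids re-deriving the symmetric-function expansion and makes the dependence on the earlier lemma more transparent; your version is more self-contained but repeats work. Either is fine as a proof.
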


\begin{proof}
Since $N(i,q) = N^+(i,q)$ for $i\ge 2$ it follows that
$\phi_{i,M}(1) = \phi^+_{i,M}(1)$ for $i\ge 2$ so the expansions are as in
Lemma \ref{phiigen}. Thus we only need to consider $i=1$ for this proof.

From the definition of $\phi_{i,M}(t)$ in Theorem \ref{CMr=CMblah},

$$\displaystyle \phi_{1,M}(1) = m_1! {N(1,q) \choose m_1} \prod_j
\frac{q^{-jm_{1j}}}
{m_{1j}!(1-q^{-1}+q^{-2})^{m_{1j}}}
= \frac{q}{q-m_1} \times \phi^+_{1,M}(1). $$

Firstly

$$ \frac{q}{q-m_1} = \frac{1}{1-m_1q^{-1}} = 1 + m_1 q^{-1} + m_1^2 q^{-2} + O(q^{-3}). $$

\noindent
Then from Lemma \ref{phiigen} we have that

\begin{displaymath}
\begin{array} {rcl}
\displaystyle\phi^+_{1,M}(1) & = & \displaystyle \frac{q^{m_1-\sum jm_{1j}}}{\prod_j m_{1j}!} \left( 1 +
\left( -\frac{m_{1}^2}{2} + \frac{m_{1}}{2} \right) q^{-1} \right. \\
& & \left. + \displaystyle \left(\frac{m_{1}^4}{8} -
\frac{5m_{1}^3}{12} - \frac{m_{1}^2}{8} -
\frac{7m_{1}}{12}\right)q^{-2} + O(q^{-3}) \right). \\
\end{array}
\end{displaymath}

\noindent Multiplying together these two parts and collecting
terms gives us $\phi_{1,M}(1)$ equal to

$$\frac{q^{m_1-\sum jm_{1j}}}{\prod_j m_{1j}!} \times$$
$$\left(1 + \left( - \frac{m_1^2}{2} + \frac{3m_{1}}{2} \right) q^{-1}
+ \left(\frac{m_{1}^4}{8} -
\frac{11m_{1}^3}{12} + \frac{11m_{1}^2}{8} -
\frac{7m_{1}}{12}\right)q^{-2} + O(q^{-3})\right). $$
\end{proof}

\begin{corollary} \label{phiihatpart}
For $M \in \mathcal{M}_{part}(r)$ we have
\begin{eqnarray*}
\phi_{1,M}(1) & = & \frac{1}{m_{11}!}
\left( 1 + \left(\frac{-m_{11}^2}{2} + \frac{3m_{11}}{2} \right)q^{-1} + \right.\\
& & \left. \left( \frac{m_{11}^4}{8} - \frac{11m_{11}^3}{12} + \frac{11m_{11}^2}{8}
- \frac{7m_{11}}{12} \right)q^{-2} + O(q^{-3}) \right), \\
\phi_{2,M}(1) & = & \frac{1}{2^{m_{21}} m_{21}!}
\left(1 - m_{21}q^{-1} + \left(-\frac{m_{21}^2}{2}+\frac{3m_{21}}{2}\right)
q^{-2} + O(q^{-3})\right), \\
\phi_{3,M}(1) & = & \frac{1}{3^{m_{31}} m_{31}!}
\left(1 - m_{31}q^{-2} + O(q^{-3})\right),\\
\phi_{4,M}(1) & = & \frac{1}{4^{m_{41}} m_{41}!}
\left(1 - m_{41}q^{-2} + O(q^{-3})\right), \\
\phi_{i,M}(1) & = & \frac{1}{i^{m_{i1}} m_{i1}!}
\left(1 + O(q^{-3})\right) \textrm{for } i \ge 5.\\
\end{eqnarray*}
\end{corollary}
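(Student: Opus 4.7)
The plan is to deduce Corollary \ref{phiihatpart} directly from Lemma \ref{phiihatgen} by substituting the defining constraint of $\mathcal{M}_{part}(r)$. I do not expect any real obstacle here: it is purely a matter of simplifying the general expressions in the partition case.

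First I would recall Definition \ref{M}: an array $M=(m_{ij})\in\mathcal{M}_{part}(r)$ is characterised by the condition that $m_{ij}=0$ whenever $j\ge 2$. Consequently, for each $i$,
\[
m_i \;=\; \sum_j m_{ij} \;=\; m_{i1}, \qquad
\sum_j j\,m_{ij} \;=\; m_{i1}, \qquad
\prod_j m_{ij}!\;=\; m_{i1}!.
\]
In particular $i m_i - i\sum_j j m_{ij} = 0$, so the $q$-power prefactor $q^{im_i - i\sum_j j m_{ij}}$ appearing in every line of Lemma \ref{phiihatgen} collapses to $1$.

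Next I would substitute these simplifications line by line into the five formulas of Lemma \ref{phiihatgen}. For $i=1$: the prefactor becomes $\frac{1}{m_{11}!}$, and replacing $m_1$ by $m_{11}$ inside the parenthesised expansion yields the claimed expression involving $\frac{-m_{11}^2}{2}+\frac{3m_{11}}{2}$ and the quartic polynomial in $m_{11}$ at order $q^{-2}$. For $i=2,3,4$: the prefactor becomes $\frac{1}{i^{m_{i1}} m_{i1}!}$, and replacing $m_i$ with $m_{i1}$ in the parenthesised factor produces exactly the displayed expansions. For $i\ge 5$ the same substitution gives $\frac{1}{i^{m_{i1}} m_{i1}!}\bigl(1+O(q^{-3})\bigr)$.

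Since each of these substitutions is an immediate specialisation of the corresponding line of Lemma \ref{phiihatgen}, the corollary follows. No further analysis is required, and the only bookkeeping concern is ensuring consistent use of $m_{i1}$ in place of $m_i$ throughout; this is a mechanical check rather than a genuine obstacle.
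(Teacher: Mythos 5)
Your proof is correct and matches the paper's own argument: both simply note that for $M\in\mathcal{M}_{part}(r)$ one has $m_{ij}=0$ for $j\ge 2$, so the $q$-power prefactor in Lemma \ref{phiihatgen} becomes $1$ and each $m_i$, $\prod_j m_{ij}!$ reduces to $m_{i1}$, $m_{i1}!$. Nothing further is needed.
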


\begin{proof}
For $M \in \mathcal{M}_{part}(r)$ we have $m_{ij} = 0$ for all $i$ and $j \ge 2$.
Hence $im_i - i\sum_j jm_{ij} = 0$. Thus the first term of $\phi_{i,M}(1)$
is a constant for all $i$ and the remaining terms are as in Lemma \ref{phiihatgen}
\end{proof}

The next corollary follows immediately from Corollary \ref{partiff}.

\begin{corollary} \label{phihatpartiff}
The leading term of $\phi_{i,M}(1)$ is a constant for all $i$
if and only if $(m_{ij}) \in \mathcal{M}_{part}(r)$.
\end{corollary}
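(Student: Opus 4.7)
The plan is to reduce this statement directly to the corresponding result for the general linear group case, namely Corollary \ref{partiff}. The key observation is that $\phi_{i,M}(1)$ and $\phi^+_{i,M}(1)$ differ only mildly: for $i \ge 2$ we have $N(i,q) = N^+(i,q)$ so the two power series coincide, and for $i=1$ they differ only by the factor $q/(q-m_1)$ (which expands as $1 + m_1 q^{-1} + O(q^{-2})$), which does not affect the exponent of the leading power of $q$.

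More concretely, I would first read off from Lemma \ref{phiihatgen} that, for every $i$, the leading term of $\phi_{i,M}(1)$ is $\frac{q^{i m_i - i\sum_j j m_{ij}}}{i^{m_i}\prod_j m_{ij}!}$, which is the same leading power of $q$ as for $\phi^+_{i,M}(1)$ in Lemma \ref{phiigen}. Hence the condition ``leading term of $\phi_{i,M}(1)$ is a constant'' is equivalent, for each $i$ separately, to ``leading term of $\phi^+_{i,M}(1)$ is a constant.''

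Taking the conjunction over all $i$, the statement ``the leading term of $\phi_{i,M}(1)$ is a constant for all $i$'' is therefore equivalent to ``the leading term of $\phi^+_{i,M}(1)$ is a constant for all $i$,'' which by Corollary \ref{partiff} holds if and only if $(m_{ij}) \in \mathcal{M}_{part}(r)$. This is the required equivalence. There is no genuine obstacle here: the work has already been done in Lemma \ref{kindapartiff} and Corollary \ref{partiff}; the present corollary is just the remark that replacing $N^+(i,q)$ by $N(i,q)$ in the definition of $\Phi_i$ does not alter the exponent of the leading $q$-term of any $\phi_{i,M}(1)$.
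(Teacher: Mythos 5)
Your proof is correct and takes the same route the paper intends: the paper dispatches this corollary with a single remark that it ``follows immediately from Corollary \ref{partiff},'' and your argument spells out exactly why — since $\phi_{i,M}(1)$ and $\phi^+_{i,M}(1)$ have the same leading power of $q$ for every $i$ (identical for $i\ge 2$, and differing only by the factor $q/(q-m_1)=1+O(q^{-1})$ for $i=1$), the condition transfers and Corollary \ref{partiff} applies verbatim.
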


Now we move onto the main theorem of the section which gives the
limiting proportion of cyclic matrices in $\M(V)_U$.
Theorem \ref{1-q^-2ch1} for matrix algebras follows from Theorem \ref{limcrMn} and this result Theorem \ref{1-q^-2b}.

\begin{theorem} \label{1-q^-2b}
For $r \in \mathbb{Z}^+$, $\displaystyle \lim_{n\to\infty} c_{\M,r}(n)
= 1 - q^{-2} + O(q^{-3}).$
\end{theorem}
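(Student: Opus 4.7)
The plan is to mirror closely the argument used for Theorem \ref{1-q^-2}. First, by Theorem \ref{limcrMn} we have
$$\lim_{n\to\infty}\frac{c_{\M,r}(n)}{\omega_r(n)} = \frac{1-q^{-5}}{(1-q^{-1})(1-q^{-2})} \sum_{M\in\mathcal{M}(r)} \prod_{i=1}^r \phi_{i,M}(1),$$
so since $\lim_{n\to\infty}\omega_r(n) = \prod_{i=1}^r(1-q^{-i})\prod_{i=1}^\infty(1-q^{-i})$ exists by Lemma \ref{omegarproof}, one may write
$$\lim_{n\to\infty}c_{\M,r}(n) = \prod_{i=1}^r(1-q^{-i}) \cdot c_{\M}(\infty) \cdot \sum_{M\in\mathcal{M}(r)} \prod_{i=1}^r \phi_{i,M}(1),$$
using Theorem \ref{limpropM(V)} for the collapse of the $\omega$-product with the prefactor. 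The problem is thus reduced to computing $\sum_{M\in\mathcal{M}(r)} \prod_{i=1}^r \phi_{i,M}(1)$ to precision $O(q^{-3})$ and then checking that multiplication by the explicit prefactor $\prod_{i=1}^r(1-q^{-i})\cdot c_{\M}(\infty) = 1 - q^{-1} + O(q^{-3})\cdot(\dots)$ yields $1 - q^{-2} + O(q^{-3})$.

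To handle the sum over $M$, I would replay the strategy of the proof of Theorem \ref{1-q^-2}: split into the partition case $M\in\mathcal{M}_{part}(r)$ and the non-partition case $M\notin\mathcal{M}_{part}(r)$, using Corollary \ref{phihatpartiff} to decide when every $\phi_{i,M}(1)$ has constant leading term. The expansions are supplied by Lemma \ref{phiihatgen} and Corollary \ref{phiihatpart}; note that for $i\ge 2$ they coincide with those in the $\GL$-case (since $N(i,q)=N^+(i,q)$), while the $i=1$ expansion differs in the coefficient of $m_1$ in the $q^{-1}$ term ($\frac{3}{2}$ rather than $\frac{1}{2}$) and correspondingly in the $q^{-2}$ bracket. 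For the partition case I would assemble a table of contributions to each of the constant, $q^{-1}$ and $q^{-2}$ terms, line by line as in Table \ref{q^-2parttable}, and evaluate each line using the generating-function identities of Lemma \ref{1/1-t}, Lemma \ref{kl1/1-t} and Lemma \ref{te}, with Corollary \ref{sumparts=1} handling the purely-constant lines. The non-partition contributions fall into exactly the same four families (\ref{m12=1a}), (\ref{nonpart1}), (\ref{nonpart3}), (\ref{nonpart4}), (\ref{nonpart2}) as before, because the constraints that cut the problem down to these families involved only the $m_{ij}$ with $j\ge 2$ and these are governed by the $i=1,2$ leading-order exponents of $q$, which are unchanged.

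The main obstacle will be the bookkeeping rather than any conceptual difficulty: once the analogues of Tables \ref{q^-2parttable} and \ref{q^-2nonparttable} are assembled with the corrected $\phi_{1,M}(1)$ coefficients, the row sums should again yield closed generating functions of the form $P(t)/(1-t)$, from which the coefficients of $t^r$ give $r$-independent constants (as in Theorem \ref{1-q^-2}). The final check is that, after multiplying by $\prod_{i=1}^r(1-q^{-i})\cdot c_{\M}(\infty)$, the constant term remains $1$, the $q^{-1}$ coefficient cancels, and the $q^{-2}$ coefficient equals $-1$ for every $r\ge 1$; one should obtain the same answer $1-q^{-2}+O(q^{-3})$ as in the $\GL$ case, with the extra $m_1$-contributions from $\phi_{1,M}(1)$ exactly compensated by the extra $-q^{-1}$ from the factor $(1-q^{-1})$ in $\prod_{i=1}^r(1-q^{-i})$ and the shift in $c_{\M}(\infty)$ relative to $c_{\GL}(\infty)$. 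Once this cancellation is verified, the rate-of-convergence assertion of Theorem \ref{1-q^-2ch1} for $c_{\M,r}(n)$ follows directly from the $O(d^{-n})$ estimate already proved in Theorem \ref{limcrMn}.
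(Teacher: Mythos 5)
Your plan is correct and mirrors the paper's own proof of Theorem \ref{1-q^-2b} essentially step for step: starting from Theorem \ref{limcrMn}, splitting $\sum_{M} \prod_i \phi_{i,M}(1)$ into partition and non-partition contributions via Corollary \ref{phihatpartiff}, using Lemma \ref{phiihatgen} and Corollary \ref{phiihatpart} for the expansions (noting that only $i=1$ differs from the $\GL$ case), evaluating with Lemmas \ref{1/1-t}, \ref{kl1/1-t}, \ref{te} and Corollary \ref{sumparts=1}, and finally multiplying the three $q^{-1}$-series. Your rewriting of the prefactor as $\prod_{i=1}^r(1-q^{-i}) \cdot c_\M(\infty)$ rather than the paper's $\lim \omega_r(n) \cdot \frac{1-q^{-5}}{(1-q^{-1})(1-q^{-2})}$ is an equivalent algebraic regrouping, and your final cancellation claim is arithmetically sound; the only thing left (as you say) is the bookkeeping, including the one extra line in the non-partition table arising from the $+1$ shift in the $q^{-1}$ coefficient of $\phi_{1,M}(1)$ when $m_{12}=1$.
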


\begin{proof}
By Theorem \ref{limcrMn},

\begin{equation} \label{eqcrMn}
\lim_{n\to\infty} c_{\M,r}(n) = \lim_{n\to\infty} \omega_r(n) \times
\frac{1-q^{-5}}{(1-q^{-1})(1-q^{-2})} \times \sum_{M \in \mathcal{M}(r)}
\prod_{i=1}^r \phi_{i,M}(1).
\end{equation}

We calculate the expansions of each of the three parts above. From
Lemma \ref{omegarproof}, we have that
$\lim_{n\to\infty} \omega_r(n) =
\prod_{i=1}^r (1-q^{-i}) \prod_{i=1}^\infty (1-q^{-i})$ and its
expansion is given by

\begin{displaymath}
\lim_{n\to\infty} \omega_r(n) = \left\{
\begin{array} {l l}
1 - 2q^{-1} + O(q^{-3}) & \textrm{if } r=1 \\
1 - 2q^{-1} - q^{-2} + O(q^{-3}) & \textrm{if } r\ge 2 \\
\end{array}
\right.
\end{displaymath}

For the second part, a simple expansion gives

$$ \frac{1-q^{-5}}{(1-q^{-1})(1-q^{-2})} = 1 + q^{-1} + 2q^{-2} + O(q^{-3}). $$

Calculating the expansion of the third part,
$\sum_{M \in \mathcal{M}(r)} \prod_{i=1}^r \phi_{i,M}(1)$,
requires more work. We will calculate the constant term in the expansion
followed by the coefficient of $q^{-1}$ and then the coefficient of $q^{-2}$.

\bigskip
\noindent \textbf{Constant Term:}

By Corollary \ref{phihatpartiff}, the leading term of $\phi_{i,M}(1)$
is a constant if and only if $M \in \mathcal{M}_{part}(r)$. From Corollary
\ref{phiihatpart}, we can see that this leading term is
$\frac{1}{i^{m_{i1}} m_{i1}!}$ if $M \in \mathcal{M}_{part}(r)$ so the
constant term in $\lim_{n \to \infty} c_{\GL,r}(n)$ is

$$ \sum_{M \in \mathcal{M}_{part}(r)} \prod_i \frac{1}{i^{m_{i1}}
m_{i1}!}.$$

\noindent By Corollary \ref{sumparts=1} this equals $1$ for all
$r$. Hence $1$ is the constant term for all $r$.

\bigskip
\noindent \textbf{Coefficient of $q^{-1}$}

For the coefficient of $q^{-1}$, we first sum over all
$M \in \mathcal{M}_{part}(r)$ and then look at those $M \notin
\mathcal{M}_{part}(r)$ that also contribute to the $q^{-1}$ term.

Let $M = (m_{ij}) \in \mathcal{M}_{part}(r)$. By Corollary
\ref{phiihatpart} there are exactly two ways to produce a
$q^{-1}$ term. The first is to multiply the $q^{-1}$ term in
$\phi_{1,M}(1)$ with the constant term from each of the remaining
$\phi_{i,M}(1)$. The second way is to multiply the $q^{-1}$ term in
$\phi_{2,M}(1)$ with the constant term in each of the remaining
$\phi_{i,M}(1)$.

The first way produces $\left( -\frac{m_{11}^2}{2} + \frac{3m_{11}}{2}
\right) \prod_i \frac{1}{i^{m_{i1}}m_{i1}!}$ as the coefficient of $q^{-1}$.
Summing this over all $M \in \mathcal{M}_{part}(r)$, gives the
generating function for the contribution to the coefficient
of $q^{-1}$ as

$$ -\frac{1}{2} \left( \sum_{m\ge 0} \frac{m^2 t^m}{m!} \right)
\frac{e^{-t}}{1-t}
+ \frac{3}{2} \left( \sum_{m\ge 0} \frac{m t^m}{m!} \right)
\frac{e^{-t}}{1-t} $$

\noindent by Lemma \ref{1/1-t}, using $f_1(m)=m^2$ in the first
case and $f_1(m)=m$ in the second case. From the result in
Lemma \ref{te} the generating function for the
coefficient of $q^{-1}$ arising in this way is

$$ \left( -\frac{1}{2} (t^2+t) e^t + \frac{3}{2}te^t \right)
\frac{e^{-t}}{1-t} = \left(\frac{-t^2}{2}+t \right)\frac{1}{1-t}. $$

The second way an $M \in \mathcal{M}_{part}(r)$ can produce a $q^{-1}$
term, produces $-m_{21} \prod_i \frac{1}{i^{m_{i1}}m_{i1}!}$ as the coefficient
of $q^{-1}$. Summing this over all $M \in \mathcal{M}_{part}(r)$, gives
the generating function for the contribution to the coefficient
of $q^{-1}$ as

$$ \left( - \sum_{m\ge 0} \frac{m t^{2m}}{2^m m!} \right)
\frac{e^{\frac{-t^2}{2}}}{1-t} $$

\noindent by Lemma \ref{1/1-t}, using $f_2(m)=-m$. By Lemma \ref{te}
with $b=1$ and $k=2$, it can be shown that $\sum_{m \ge 0}
\frac{m t^{2m}}{2^m m!} = \frac{t^2}{2} e^{\frac{t^2}{2}}$, so the
generating function for the coefficient of $q^{-1}$ arising in
this way is

$$ \left( \frac{-t^2}{2}e^{\frac{t^2}{2}} \right)
\frac{e^{\frac{-t^2}{2}}}{1-t} = \left(\frac{-t^2}{2}\right)
\frac{1}{1-t}.$$

Summing together these two functions gives us the generating
function for the coefficient of $q^{-1}$ due to $M \in
\mathcal{M}_{part}(r)$. This is $\frac{-t^2+t}{1-t}$.

Now we need to look at those $M = (m_{ij}) \notin
\mathcal{M}_{part}(r)$ that also contribute to the $q^{-1}$
term.
For $M$ to
contribute to the $q^{-1}$ term there must exist an $i$ such that the
leading term of $\phi_{i,M}(1)$ is not a constant and this leading
term is $q^{-1}$. Thus $i\sum_j m_{ij} - i\sum_j jm_{ij} = -1$.
Rewriting the equation we see that we need $\sum_j (1-j)m_{ij} =
\frac{-1}{i}$. Since the left hand side is an integer, this equality
can only hold for $i=1$. Thus we need
$\phi_{i,M}(1)$ for $i \ge 2$ to have constant leading term and
$\sum_j (1-j)m_{1j}=-1$. By Lemma
\ref{kindapartiff} (since $\phi_{i,M}(1) = \phi^+_{i,M}(1)$ for $i\ge 2$)
the first condition
implies that $m_{ij} = 0$
for $i\ge 2$ and $j\ge 2$. The second condition implies that
$m_{12}=1$ and $m_{1j}=0$ for $j\ge 3$. Hence if $(m_{ij}) \notin
\mathcal{M}_{part}(r)$ and contributes to the $q^{-1}$ term then

\begin{equation} \label{m12=1b}
(m_{ij}) = \left\{
\begin{array}{l}
m_{i1} = \textrm{anything } (i \ge 1) \\
m_{12} = 1 \\
m_{1j} = 0 \textrm{ } (j \ge 3) \\
m_{ij} = 0 \textrm{ } (i \ge 2, j \ge 2). \\
\end{array}
\right.
\end{equation}

\noindent Note that there are no such $(m_{ij})$ when $r=1$.
If we take such an $(m_{ij}) \in \mathcal{M}(r)$ and form a new
$M'=(m_{ij}')$ with $m_{ij}' = m_{ij}$ for all $(i,j) \ne (1,2)$ and
$m_{12}' = 0$ then $M' \in \mathcal{M}_{part}(r-2)$. So we will think
of each $(m_{ij})$ as in (\ref{m12=1b}) as a partition of $r-2$ with $m_{12}=1$
appended to it.

We can see from Lemma \ref{phiihatgen}, that for each $M=(m_{ij})$ as in
(\ref{m12=1b}), for $i \ge 2$, $\phi_{i,M}(1)$ has constant term
$\frac{1}{i^{m_{i1}} m_{i1}!}$ and for $i=1$ the coefficient of
$q^{-1}$ is $\frac{1}{m_{11}!}$. Hence each of these $(m_{ij})$
produces $\prod_i \frac{1}{i^{m_{i1}} m_{i1}!}$ as the coefficient of
$q^{-1}$. Summing this over all $M \notin \mathcal{M}_{part}(r)$ that
contribute to the $q^{-1}$ term is then the same as summing over all
$M' \in \mathcal{M}_{part}(r-2)$. By Lemma \ref{1/1-t}, the generating
function for the sum of $\prod_i \frac{1}{i^{m_{i1}} m_{i1}!}$ over
all partitions of $r$ is $\frac{1}{1-t}$. The generating function for
the sum of $\prod_i \frac{1}{i^{m_{i1}} m_{i1}!}$ over all partitions
of $r-2$ has no terms of degree less than $2$ and so is
$\frac{t^2}{1-t}$. Hence the generating function for the coefficient
of $q^{-1}$ due to $M \notin \mathcal{M}_{part}(r)$ is
$\frac{t^2}{1-t}$.

Adding the generating functions for the coefficient of $q^{-1}$ due to
$M \in \mathcal{M}_{part}(r)$ and $M \notin \mathcal{M}_{part}(r)$ gives
us $\frac{t}{1-t} = t + t^2 + t^3 + \ldots$.
Hence for all $r\ge 1$ the $q^{-1}$ term in the expansion is $1$.

\bigskip
\noindent \textbf{Coefficient of $q^{-2}$}

We take the same approach in calculating the coefficient of
$q^{-2}$. We first sum over all $M \in
\mathcal{M}_{part}(r)$. Let $M \in \mathcal{M}_{part}(r)$. From the
expansion of the $\phi_{i,M}(1)$ in Corollary \ref{phiihatpart} we can
see that there are several ways to form a $q^{-2}$ term. Each of these
ways corresponds to a line of Table \ref{q^-2parttableM}.
For each line in Table \ref{q^-2parttableM}, the contribution from $\phi_{i,M}$
from $i \ge 5$ is 1.
If a line contains
exactly one non-$1$ entry then that entry is a certain function of
$m_{k1}$ for some $k$, say $f_k(m_{k1})$ and this line corresponds to
a summand $a_r$ of the coefficient of $q^{-2}$, where $a_r$ is as given in Lemma
\ref{1/1-t}. The sum $1 + \sum_{r=1}^\infty a_r t^r$ corresponding to
these values of $a_r$ is evaluated using Lemmas \ref{1/1-t} and Lemma \ref{te}
and recorded in the last entry for this line. There are two further
lines that contain two non-$1$ entries. These are certain functions
$f_k(m_{k1})$ with $k=1$ and $f_\ell(m_{\ell 1})$ with $\ell = 2$ and
correspond to a summand $a_r$ of the coefficient of
$q^{-2}$, where $a_r$ is as given in Lemma \ref{kl1/1-t}. The sum $1 +
\sum_{r=1}^\infty a_r t^r$ corresponding to these values of $a_r$ is
evaluated using Lemmas \ref{kl1/1-t} and Lemma \ref{te} and recorded in the
last entry for this line.

\begin{table}
\begin{center}
\begin{tabular}{c c c c | c}
$\phi_{1,M}(1)$ & $\phi_{2,M}(1)$ & $\phi_{3,M}(1)$
& $\phi_{4,M}(1)$ & Generating Function \\
\hline
$1$ & $1$ & $1$ & $-m_{41}q^{-2}$ &
$\left(-\frac{t^4}{4}\right)\frac{1}{1-t}$ \\
$1$ & $1$ & $-m_{31}q^{-2}$ & $1$ &
$\left(-\frac{t^3}{3}\right)\frac{1}{1-t}$ \\
$1$ & $\frac{3m_{21}}{2}q^{-2}$ & $1$ & $1$ &
$\left(\frac{3t^2}{4}\right)\frac{1}{1-t}$ \\
$1$ & $-\frac{m_{21}^2}{2}q^{-2}$ & $1$ & $1$ &
$\left(-\frac{t^4}{8}-\frac{t^2}{4}\right)\frac{1}{1-t}$ \\

$-\frac{m_{11}^2}{2}q^{-1}$ & $-m_{21}q^{-1}$ & $1$ &
$1$ &  $\left(\frac{t^4}{4} + \frac{t^3}{4} \right) \frac{1}{1-t}$ \\

$\frac{3m_{11}}{2}q^{-1}$ & $-m_{21}q^{-1}$ & $1$ &
$1$ & $\left(- \frac{3t^3}{4} \right) \frac{1}{1-t}$ \\

$-\frac{7m_{11}}{12}q^{-2}$ & $1$ & $1$ & $1$ &
$\left(-\frac{7t}{12}\right)\frac{1}{1-t}$ \\
$\frac{11m_{11}^2}{8}q^{-2}$ & $1$ & $1$ & $1$ &
$\left(\frac{11(t^2+t)}{8}\right)\frac{1}{1-t}$ \\
$-\frac{11m_{11}^3}{12}q^{-2}$ & $1$ & $1$ & $1$ &
$\left(\frac{-11(t^3+3t^2+t)}{12}\right)\frac{1}{1-t}$ \\
$\frac{m_{11}^4}{8}q^{-2}$ & $1$ & $1$ & $1$ &
$\left(\frac{t^4+6t^3+7t^2+t}{8}\right)\frac{1}{1-t}$ \\
\hline
& & & Total & $\left( -t^3 \right)\frac{1}{1-t}$ \\
\hline
\end{tabular}
\end{center}
\caption{Producing the generating function for the coefficient
of $q^{-2}$ in the expansion of $\sum_M \prod_i \phi_{i,M}(1)$
due to $M \in \mathcal{M}_{part}(r)$.}
\label{q^-2parttableM}
\end{table}

\noindent
The sum of all the generating functions is the generating function
for the coefficient of $q^{-2}$ due to $M \in
\mathcal{M}_{part}(r)$. It is $\frac{-t^3}{1-t}$.

We now need to look at the $M \notin \mathcal{M}_{part}(r)$ that can
produce a $q^{-2}$ term. For any $M \notin \mathcal{M}_{part}(r)$ the
leading term of $\phi^+_{i,M}(1)$ is the same as $\varphi^+_{i,M}(1)$ for
all $i$, so by the same reasoning as in the proof of
Theorem \ref{1-q^-2}, there are only four types of $M \notin
\mathcal{M}_{part}(r)$ that can produce a $q^{-2}$ term. They are
given below.

\begin{equation}\label{nonpart12}
(m_{ij}) = \left\{
\begin{array}{l}
m_{i1} = \textrm{anything } (i \ge 0) \\
m_{22} = 1 \\
m_{2j} = 0 \textrm{ } (j \ge 3) \\
m_{ij} = 0 \textrm{ } (i \ne 2, j \ge 2) \\
\end{array}
\right.
\end{equation}

\begin{equation}\label{nonpart32}
(m_{ij}) = \left\{
\begin{array}{l}
m_{i1} = \textrm{anything } (i \ge 0) \\
m_{13} = 1 \\
m_{1j} = 0 \textrm{ } (j \ne 1,3) \\
m_{ij} = 0 \textrm{ } (i \ge 2, j \ge 2) \\
\end{array}
\right.
\end{equation}

\begin{equation}\label{nonpart42}
(m_{ij}) = \left\{
\begin{array}{l}
m_{i1} = \textrm{anything } (i \ge 0) \\
m_{12} = 2 \\
m_{1j} = 0 \textrm{ } (j \ge 3) \\
m_{ij} = 0 \textrm{ } (i \ge 2, j \ge 2) \\
\end{array}
\right.
\end{equation}

\noindent and

\begin{equation}\label{nonpart22}
(m_{ij}) = \left\{
\begin{array}{l}
m_{i1} = \textrm{anything } (i \ge 0) \\
m_{12} = 1 \\
m_{1j} = 0 \textrm{ } (j \ge 3) \\
m_{ij} = 0 \textrm{ } (i \ge 2, j \ge 2). \\
\end{array}
\right.
\end{equation}

\noindent Note that there are no such $(m_{ij})$ for $r<4$.
If we take such an $(m_{ij}) \in \mathcal{M}(r)$ and form a new
$M'=(m_{ij}')$ with $m_{ij}' = m_{ij}$ for all $(i,j) \ne (2,2)$ and
$m_{22}' = 0$ then $M' \in \mathcal{M}_{part}(r-4)$. Thus we think
of each $(m_{ij})$ as in (\ref{nonpart12}) as a partition of $r-4$ with
$m_{22}=1$ appended to it.

For $(m_{ij})$ as in (\ref{nonpart12}), we have $m_2 = m_{21}+1$
and $m_i = m_{i1}$ for $i\ne 2$. By Lemma \ref{phiigen} we produce
$\phi_{i,M}(1)$ as follows.

$$\phi_{1,M}(1) = \frac{1}{ m_{11}!} (1 + O(q^{-1}))$$

$$\phi_{2,M}(1) = \frac{q^{-2}}{2^{m_{21}} m_{21}!}
(\frac{1}{2}) (1 + O(q^{-1}))$$

\noindent
and for $i\ge 3$

$$\phi_{i,M}(1) = \frac{1}{i^{m_{i1}} m_{i1}!} (1 + O(q^{-1})).$$

\noindent
The `$\frac{1}{2}$' inside the $\phi_{2,M}(1)$ occurs because $2^{m_2} =
2^{m_{21}}2^1$.

There is only one way to make up a $q^{-2}$ term and the first line in
Table \ref{q^-2nonparttableM} corresponds to forming a $q^{-2}$
term in this way.

\noindent Note that there are no such $(m_{ij})$ as in
(\ref{nonpart32}) for $r<3$.
If we take such an $(m_{ij}) \in \mathcal{M}(r)$ and form a new
$M'=(m_{ij}')$ with $m_{ij}' = m_{ij}$ for all $(i,j) \ne (1,3)$ and
$m_{13}' = 0$ then $M' \in \mathcal{M}_{part}(r-3)$. So we   think
of each $(m_{ij})$ as in (\ref{nonpart12}) as a partition of $r-3$ with
$m_{13}=1$ appended to it.

Also note that there are no such $(m_{ij})$ as in (\ref{nonpart42}) for $r<4$.
If we take such an $(m_{ij}) \in \mathcal{M}(r)$ and form a new
$M'=(m_{ij}')$ with $m_{ij}' = m_{ij}$ for all $(i,j) \ne (1,2)$ and
$m_{12}' = 0$ then $M' \in \mathcal{M}_{part}(r-4)$. So we   think
of each $(m_{ij})$ as in (\ref{nonpart42}) as a partition of $r-4$ with
$m_{12}=2$ appended to it.

For $(m_{ij})$ as in (\ref{nonpart32}), we have $m_1 = m_{11} + 1$
and $m_i = m_{i1}$ for $i \ne 1$, and by Lemma \ref{phiigen} we
produce $\phi_{i,M}(1)$ as follows.

$$ \phi_{1,M}(1) = \frac{q^{-2}}{m_{11}!} (1 + O(q^{-1}))$$

\noindent
and for $i\ge 2$

$$\phi_{i,M}(1) = \frac{1}{i^{m_{i1}} m_{i1}!} (1 + O(q^{-1})).$$

\noindent
There is only one way to make up a $q^{-2}$ term and the second line
in Table \ref{q^-2nonparttableM} corresponds to forming a
$q^{-2}$ term in this way.

For $(m_{ij})$ as in (\ref{nonpart42}), we have $m_1= m_{11} + 2$
and $m_i = m_{i1}$ for $i \ne 1$, and by Lemma \ref{phiigen} we
produce $\phi_{i,M}(1)$ as follows.

$$ \phi_{1,M}(1) = \frac{q^{-2}}{m_{11}! 2!} (1 + O(q^{-1}))$$

\noindent
and for $i\ge 2$

$$\phi_{i,M}(1) = \frac{1}{i^{m_{i1}} m_{i1}!} (1 + O(q^{-1})).$$

\noindent
There is again only one way to make a $q^{-2}$ term here and the third
line in Table \ref{q^-2nonparttableM} corresponds to forming a
$q^{-2}$ term in this way.
Note that $\prod_j m_{1j}! = m_{1j}!2!$, so we can take the half out as a
factor.

\noindent Note that there are no such $(m_{ij})$ for $r<2$.
If we take such an $(m_{ij}) \in \mathcal{M}(r)$ and form a new
$M'=(m_{ij}')$ with $m_{ij}' = m_{ij}$ for all $(i,j) \ne (1,2)$ and
$m_{12}' = 0$ then $M' \in \mathcal{M}_{part}(r-2)$. So we   think
of each $(m_{ij})$ as in (\ref{nonpart22}) as a partition of $r-2$ with
$m_{12}=1$ appended to it.

For $(m_{ij})$ as in (\ref{nonpart22}), we have $m_1 = m_{11} + 1$
and $m_i = m_{i1}$ for $i \ne 1$, and by Lemma \ref{phiigen} we
produce $\phi_{i,M}(1)$ as follows.

$$\phi_{1,M}(1) = \frac{q^{-1}}{ m_{11}!}
\left(1 + \left(-\frac{m_{11}^2}{2} + \frac{m_{11}}{2} +1 \right) q^{-1} +
O(q^{-2}) \right)$$

$$\phi_{2,M}(1) = \frac{1}{2^{m_{21}} m_{21}!} \left(1 -m_{21}q^{-1} +
O(q^{-2}) \right)$$

\noindent
and for $i\ge 3$

$$\phi_{i,M}(1) = \frac{1}{i^{m_{i1}} m_{i1}!} \left(1 + O(q^{-1}) \right).$$

We have that $\phi_{1,M}(1)$ has leading term involving $q^{-1}$
so we can choose the $q^{-1}$ term from both $\phi_{1,M}(1)$ and $\phi_{2,M}(1)$
or the $q^{-2}$ term from $\phi_{1,M}(1)$ and the constant term from
$\phi_{2,M}(1)$. Clearly we must choose the constant term from each of
$\phi_{i,M}(1)$ for $i \ge 3$.
The former choice corresponds to line four of Table
\ref{q^-2nonparttableM} while the latter choice corresponds to lines five, six
and seven of the table.

The first column
of Table \ref{q^-2nonparttableM} indicates the type of array $M$ used to produce the
$\phi_{i,M}(1)$. If a line contains
exactly one non-$1$ entry then that entry is a certain function of
$m_{k1}$ for some $k$, say $f_k(m_{k1})$ and this line corresponds to
a summand $a_r$ of the coefficient of $q^{-2}$ as given in Lemma
\ref{1/1-t}. The sum $1 + \sum_{r=1}^\infty a_r t^r$ corresponding to
these values of $a_r$ is evaluated using Lemmas \ref{1/1-t} and Lemma \ref{te}
and recorded in the last entry for this line. There are lines in which
all the entries are constant and we produce the generating function by
Corollary \ref{sumparts=1} in these cases.
Each generating function has been multiplied by $t^b$ for the positive
integer $b$ such that the arrays $(m_{ij})$ in question correspond to
partitions of $r-b$.

\begin{table}
\begin{center}
\begin{tabular}{c | c c c | c}
$M$ as in: & $\phi_{1,M}(1)$ & $\phi_{2,M}(1)$ & $\phi_{3+,M}(1)$ & Generating Function \\
\hline
(\ref{nonpart12}) & $1$ & $\frac{1}{2}$ & $1$ &
$\left(\frac{t^4}{2}\right) \frac{1}{1-t}$ \\
(\ref{nonpart32}) & $1$ & $1$ & $1$ & $\left({t^3}\right) \frac{1}{1-t}$ \\
(\ref{nonpart42}) & $\frac{1}{2}$ & $1$ & $1$ &
$\left(\frac{t^4}{2}\right)\frac{1}{1-t}$ \\
(\ref{nonpart22}) & $1$ & $-m_{21}$ & $1$ &
$\left(\frac{-t^4}{2}\right)\frac{1}{1-t}$ \\
(\ref{nonpart22}) & $-\frac{m_{11}^2}{2}$ & $1$ & $1$ &
$\left(-\frac{t^3}{2} - \frac{t^4}{2}\right) \frac{1}{1-t}$  \\

(\ref{nonpart22}) & $\frac{m_{11}}{2}$ & $1$ & $1$ &
$\left( \frac{t^3}{2}\right) \frac{1}{1-t}$  \\

(\ref{nonpart22}) & $1$ & $1$ & $1$ &
$\left( t^2 \right) \frac{1}{1-t}$  \\
\hline
& & & Total & $\left(t^3 + t^2 \right) \frac{1}{1-t}$ \\
\hline
\end{tabular}
\end{center}
\caption{Producing the generating function for the coefficient
of $q^{-2}$ in the expansion of $\sum_M \prod_i \phi_{i,M}(1)$
due to $M \in \mathcal{M}(r)$ corresponding to nonpartitions.}
\label{q^-2nonparttableM}
\end{table}

Adding all the generating functions together for those types of
$M \notin \mathcal{M}_{part}(r)$ gives $\frac{t^3+t^2}{1-t}$.
So the generating function
for the coefficient of $q^{-2}$ is $\frac{t^2}{1-t} = t^2 + t^3 + \ldots$.
Hence

\begin{displaymath}
\sum_{M \in\mathcal{M}(r)} \prod_{i=1}^r \phi_{i,M}(1)
= \left\{
\begin{array} {l l}
1 + q^{-1} + O(q^{-3}) & \textrm{if } r=1 \\
1 + q^{-1} + q^{-2} + O(q^{-3}) & \textrm{if } r\ge 2 \\
\end{array}
\right.
\end{displaymath}

Now that we have the expansions for all three parts we can multiply them
together as in Equation \ref{eqcrMn}.

When $r=1$ we have

$$ \lim_{n\to\infty} \omega_r(n) = 1 - 2q^{-1} + O(q^{-3}), $$

$$ \frac{1-q^{-5}}{(1-q^{-1})(1-q^{-2})} = 1 + q^{-1} + 2q^{-2} + O(q^{-3}), $$

$$ \sum_{M \in\mathcal{M}(r)} \prod_{i=1}^r \phi_{i,M}(1)
= 1 + q^{-1} + O(q^{-3}) $$

\noindent
and hence

$$ \lim_{n\to\infty} c_{\M,r}(n) = 1 - q^{-2} + O(q^{-3}). $$

\noindent
When $r\ge 2$ we have

$$ \lim_{n\to\infty} \omega_r(n) = 1 - 2q^{-1} - q^{-2} + O(q^{-3}), $$

$$ \frac{1-q^{-5}}{(1-q^{-1})(1-q^{-2})} = 1 + q^{-1} + 2q^{-2} + O(q^{-3}), $$

$$ \sum_{M \in\mathcal{M}(r)} \prod_{i=1}^r \phi_{i,M}(1)
= 1 + q^{-1} + q^{-2} + O(q^{-3}) $$

\noindent
and hence

$$ \lim_{n\to\infty} c_{\M,r}(n) = 1 - q^{-2} + O(q^{-3}). $$

\noindent
So for all $r \ge 1$ we have the result.
\end{proof}

It was again a basic assumption of the section that $r$, the dimension
of the invariant subspace, was greater than or equal to $1$.
However, we could have relaxed that assumption, allowed $r$ to be zero
and worked through the section in the same manner.

If we had allowed $r$ to equal zero then
Theorem \ref{CMr=CMblah} would state that $C_{\M,r}(t) = C_{M}(t)$
when $r=0$, since the only matrix in $\mathcal{M}(0)$ is the matrix
containing only zeroes.
Also, in the proof of Theorem \ref{1-q^-2b}, the answer for the
limit of $c_{\M,0}(n)$ would be $1 - q^{-3} + O(q^{-4})$.

These are exactly the answers that should arise, because when $r=0$
the invariant subspace has dimension $0$ and hence the matrix group
$\GL(V)_U$ is equal to $\GL(V)$. By Theorem \ref{limpropM(V)}
the limiting proportion of cyclic matrices in $\GL(V)$ is indeed
$1 - q^{-3} + O(q^{-4})$.

\subsection{Explicit Generating Functions For Small $r$} \label{4.4}

By Theorem \ref{C_r=Cblah} and Theorem \ref{CMr=CMblah}
we can give the generating functions $C_{\GL,r}(t)$ and $C_{\M,r}(t)$, for any $r$,
in terms of Wall's generating functions $C_{\GL}(t)$ and $C_\M(t)$ respectively,
and functions produced by certain procedures.
Using Theorem \ref{limcrn} and Theorem \ref{limcrMn}
we obtain algorithmically
the exact limiting proportion of cyclic matrices in $\GL(V)_U$
and $\M(V)_U$ respectively, for any $r$.
For small $r$ it is possible to calculate
explicitly the generating functions
as well as their associated limiting proportions.

For $r=1$, we derive from Theorem \ref{C_r=Cblah} that

$$ \displaystyle C_{\GL,1}(t) = \displaystyle C_{\GL}(t) \times \frac{(1-q^{-1})t}{1 - q^{-1} + tq^{-2}}.$$

This result was obtained by Jason Fulman \cite[Theorem 14]{Ful2}
as was the result below determining
$\lim_{n\to\infty} c_{\GL,1}(n)$ \cite[Corollary 2]{Ful2}.
However Fulman
derived them in the context of proportions inside a maximal
parabolic subgroup of the larger general linear group $\GL(n+1,q)$.

For matrix algebras with $r=1$ we find

\begin{displaymath}
\begin{array}{rcl}
C_{\M,1}(t) & = & \displaystyle C_\M(t) \times q \times
\frac{tq^{-1}}{1-q^{-1}+tq^{-2}} \\

& = & \displaystyle C_\M(t) \times \frac{t}{1-q^{-1}+tq^{-2}}. \\
\end{array}
\end{displaymath}

By Lemma \ref{1-t}, multiplying the generating functions by $(1-t)$
and evaluating at $t=1$ will give us the limit of the coefficients.
By recalling that $(1-t)C_{\GL}(t)$ evaluated at $t=1$ is
$\frac{1-q^{-5}}{1+q^{-3}}$, we find that $\lim_{n\to\infty} c_{\GL,1}(n)$ equals

$$ \frac{1-q^{-5}}{1+q^{-3}}
\left( \frac{1-q^{-1}}{1 - q^{-1} + q^{-2}} \right) = 1 - q^{-2} -
2q^{-3} + q^{-5} + 3q^{-6} + O(q^{-7}). $$

Then, recalling that $(1-t) C_\M(t)$ evaluated at $t=1$ is
$\frac{1-q^{-5}}{(1-q^{-1})(1-q^{-2})}$ and that $\lim_{n\to\infty}
\omega_1(n) = (1-q^{-1}) \prod_{i=1}^\infty (1-q^{-i})$ we find that
$\lim_{n\to\infty} c_{\M,1}(n)$ equals

\begin{displaymath}
\begin{array}{cl}
\displaystyle
& \displaystyle
\frac{1-q^{-5}}{(1-q^{-1})(1-q^{-2})} \times
\frac{1}{1-q^{-1}+q^{-2}} \times
(1-q^{-1})
\prod_{i=1}^\infty (1-q^{-i}) \\

= & 1 - q^{-2} - 2q^{-3} - q^{-4} + 2q^{-6} + O(q^{-7}). \\

\end{array}
\end{displaymath}

When $r=2$, we find that $\mathcal{M}(2)$ has three elements. They are

\begin{displaymath}
\begin{array} {c c c c}

\left(
\begin{tabular}{cc}
2 & 0 \\
0 & 0 \\
\end{tabular}
\right),

&

\left(
\begin{tabular}{cc}
0 & 1 \\
0 & 0 \\
\end{tabular}
\right)

&

\textrm{and}

&

\left(
\begin{tabular}{cc}
0 & 0 \\
1 & 0 \\
\end{tabular}
\right)

\end{array}
\end{displaymath}

\noindent
which correspond to two distinct monic degree $1$ irreducibles of multiplicity $1$,
one monic degree $1$ irreducible of multiplicity $2$, and one monic degree $2$
irreducible of multiplicity $1$, respectively.

For each element $M$ in $\mathcal{M}(2)$ we calculate $\prod_i
\phi^+_{i,M}(t)$ and then sum over $M$ and multiply
by $C_{\GL}(t)$ to give $C_{\GL,2}(t)$.
Similarly for each element $M$ of $\mathcal{M}(2)$ we calculate $\prod_i
\phi_{i,M}(t)$ and then sum over $M$ and multiply
by $C_\M(t)$ to give $C_{\M,2}(t)$.
This yields that $C_{\GL,2}(t)$ is equal to

$$ C_{\GL}(t) \left(
\frac{t^2(1-q^{-1})(1-2q^{-1})}{2(1-q^{-1}+tq^{-2})^2} +
\frac{t^2(q^{-1}-q^{-2})}{1-q^{-1}+tq^{-2}} +
\frac{t^2(1-q^{-1})}{2(1-q^{-2}+t^2q^{-4})} \right) $$

and $C_{\M,2}(t)$ equal to

$$ C_\M(t) \left(
\frac{t^2(1-q^{-1})}{2(1-q^{-1}+tq^{-2})^2} +
\frac{t^2 q^{-1}}{1-q^{-1}+tq^{-2}} +
\frac{t^2(1-q^{-1})}{2(1-q^{-2}+t^2q^{-4})} \right). $$

Applying Lemma \ref{1-t}, we find that the limiting proportions satisfy
$c_{\GL,2}(\infty)$ equal to

\begin{displaymath}
\begin{array}{c l}

& \displaystyle
\frac{1-q^{-5}}{1+q^{-3}} \left(
\frac{(1-q^{-1})(1-2q^{-1})}{2(1-q^{-1}+q^{-2})^2}
+ \frac{(q^{-1}-q^{-2})}{1-q^{-1}+q^{-2}}
+ \frac{(1-q^{-1})}{2(1-q^{-2}+q^{-4})}
\right)\\

= & 1 - q^{-2} - 3q^{-3} + q^{-4} + 3q^{-5} + 4q^{-6} + O(q^{-7}).\\

\end{array}
\end{displaymath}

\noindent
and $\lim_{n\to\infty} \frac{c_{\M,2}(n)}{\omega_2(n)}$ is equal to

\begin{displaymath}
\begin{array}{l}

\displaystyle
\frac{1-q^{-5}}{(1-q^{-1})(1-q^{-2})}
\left(
\frac{(1-q^{-1})}{2(1-q^{-1}+q^{-2})^2}
+ \frac{q^{-1}}{1-q^{-1}+q^{-2}}
+ \frac{(1-q^{-1})}{2(1-q^{-2}+q^{-4})}
\right)\\

= 1+2q^{-1}+4q^{-2}+3q^{-3} + 3q^{-4} + q^{-5} + 2q^{-6} + O(q^{-7})\\
\end{array}
\end{displaymath}

\noindent
giving

$$ \displaystyle \lim_{n\to\infty} c_{\M,2}(n) =
1-q^{-2}-4q^{-3}-q^{-4}+4q^{-5}+5q^{-6}+O(q^{-7}).$$

\begin{remark} \label{remark4.4}
Equation (\ref{eg4.4}) and Equation (\ref{eg4.4b})
summarise the results of $\lim_{n\to\infty} c_{\GL,r}(n)$
and $\lim_{n\to\infty} c_{\M,r}(n)$ respectively for $r=1,2$.
They include similar calculations performed in
Mathematica \cite{mathematica} for other small values of $r$.
\end{remark}

$$ c_{\GL,r}(\infty) = \lim_{n\to\infty} c_{\GL,r}(n) $$
\begin{equation} \label{eg4.4}
= \left\{
\begin{array}{l l}
1 - q^{-2} - 2q^{-3} \hspace{0.97cm} + \hspace{0.18cm} q^{-5}
+ 3q^{-6} + \hspace{0.18cm} q^{-7} + O(q^{-8}) & \textrm{for } r=1, \\
1 - q^{-2} - 3q^{-3} + q^{-4} + 3q^{-5} + 4q^{-6}
-2q^{-7} + O(q^{-8}) & \textrm{for } r=2, \\
1 - q^{-2} - 3q^{-3} + q^{-4} + 4q^{-5} + 4q^{-6}
-5q^{-7} + O(q^{-8}) & \textrm{for } r=3, \\
1 - q^{-2} - 3q^{-3} + q^{-4} + 4q^{-5} + 4q^{-6}
-6q^{-7} + O(q^{-8}) & \textrm{for } r=4, \\
1 - q^{-2} - 3q^{-3} + q^{-4} + 4q^{-5} + 4q^{-6}
-6q^{-7} + O(q^{-8}) & \textrm{for } r=5, \\
1 - q^{-2} - 3q^{-3} + q^{-4} + 4q^{-5} + 4q^{-6}
-6q^{-7} + O(q^{-8}) & \textrm{for } r=6, \\
1 - q^{-2} - 3q^{-3} + q^{-4} + 4q^{-5} + 4q^{-6}
-6q^{-7} + O(q^{-8}) & \textrm{for } r=7. \\
\end{array}
\right.
\end{equation}

$$c_{\M,r}(\infty) = \lim_{n\to\infty} c_{\M,r}(n) $$
\begin{equation} \label{eg4.4b}
= \left\{
\begin{array}{l l}
1-q^{-2}-2q^{-3}-\hspace{0.17cm}q^{-4}\hspace{1.15cm}
+\hspace{0.18cm}2q^{-6}+\hspace{0.18cm}3q^{-7}+O(q^{-8})
& \textrm{for }r=1,\\
1-q^{-2}-4q^{-3}-\hspace{0.17cm}q^{-4}+4q^{-5}+\hspace{0.18cm}5q^{-6}+\hspace{0.18cm}4q^{-7}+O(q^{-8})
& \textrm{for }r=2,\\
1-q^{-2}-4q^{-3}-3q^{-4}+4q^{-5}+11q^{-6}+\hspace{0.18cm}8q^{-7}+O(q^{-8})
& \textrm{for }r=3,\\
1-q^{-2}-4q^{-3}-3q^{-4}+2q^{-5}+11q^{-6}+14q^{-7}+O(q^{-8})
& \textrm{for }r=4,\\
1-q^{-2}-4q^{-3}-3q^{-4}+2q^{-5}+\hspace{0.18cm}9q^{-6}+14q^{-7}+O(q^{-8})
& \textrm{for }r=5,\\
1-q^{-2}-4q^{-3}-3q^{-4}+2q^{-5}+\hspace{0.18cm}9q^{-6}+12q^{-7}+O(q^{-8})
& \textrm{for }r=6,\\
1-q^{-2}-4q^{-3}-3q^{-4}+2q^{-5}+\hspace{0.18cm}9q^{-6}+12q^{-7}+O(q^{-8})
& \textrm{for }r=7.\\
\end{array}
\right.
\end{equation}

Our results in Equation \ref{eg4.4} agree with Theorem \ref{1-q^-2} which states
that $\lim_{n\to\infty} c_{\GL,r}(n)$ is $1-q^{-2}+O(q^{-3})$ for all $r$.
Similarly the results in Equation \ref{eg4.4b} agree with
Theorem \ref{1-q^-2b} which states that the limit of $c_{\M,r}(n)$
is $1-q^{-2}+O(q^{-3})$ for all $r$.

Although Theorem \ref{1-q^-2} and Theorem \ref{1-q^-2b} give
the limiting proportion for all $r$
this does not imply that $\lim_{n\to\infty} c_{\GL,r}(n)$
or $\lim_{n\to\infty} c_{\M,r}(n)$
are independent of $r$.
In the expansion of $\lim_{n\to\infty} c_{\GL,r}(n)$, the $q^{-2}$ term is independent of $r$
but the later terms do depend on $r$ as Equation \ref{eg4.4} clearly shows.

We believe that for large enough $r$
the coefficient of $q^{-3}$ will eventually become constant. In fact we believe that this is the case for
all terms. Stated formally, we believe that for all
$d \ge 0$ there exists $r_d$ such that the coefficient of $q^{-d}$
in the expansion of $\lim_{n\to\infty} c_{\GL,r}(n)$ as a power series in $q^{-1}$
is the same for all $r \ge r_d$.
By Theorem \ref{1-q^-2} this statement is true for $d = 0,1,2$ but remains to be proved
for all terms in the expansion.

\subsection{A Family of Noncyclic Matrices} \label{4.5}

By Theorem \ref{1-q^-2ch1}, the proportion of cyclic matrices in $\GL(V)_U$, as the
dimension of $V$ tends to infinity, approaches $1-q^{-2} + O(q^{-3})$. Hence there
exists a set $S(V)_U$ of noncyclic matrices in $\GL(V)_U$ with proportion
$\frac{|S(V)_U|}{|\GL(V)_U|}= q^{-2} + O(q^{-3})$.
We construct such a set in this section.

We first choose a basis for $U$ and extend it to be a basis for $V$. Let
$u_1, \ldots, u_r$ be a basis for $U$ and let $u_1, \ldots, u_r, u_{r+1},
\ldots, u_n$ be a basis for $V$. Let $U_0 = \langle u_2, \ldots, u_r \rangle$
and let $V_0 = \langle u_1, \ldots, u_{n-1} \rangle$. For a fixed
$\lambda \in \mathbb{F}_q$ let

\begin{equation} \label{T1}
\mathcal{T}_\lambda = \left\{
\begin{array}{c | c}
T \in \GL(V)_U &

\begin{array}{l l l}
(t-\lambda)^3 \nmid c_T(t),\\
\exists w_1\in u_1 + U_0\textrm{ such that } w_1T =\lambda w_1 \textrm{ and }\\
\exists w_2\in u_n + V_0\textrm{ such that } w_2T =\lambda w_2\\
\end{array}\\

\end{array}
\right\}.
\end{equation}

Each of the matrices in $\mathcal{T}_\lambda$ is noncyclic since the $(t-\lambda)$-primary
component is the direct sum of two cyclic summands.
We prove some lemmas which will aid us in proving the upcoming
theorem which ultimately exhibits the set of noncyclic matrices in $\GL(V)_U$ with proportion
$q^{-2} + O(q^{-3})$ that we are looking for.

\begin{lemma} \label{sizeT1}
For $\lambda \in \mathbb{F}_q^*$,

$$| \mathcal{T}_\lambda | = q^{n^2 + r^2 - rn - 3}+O(q^{n^2 + r^2 - rn - 4}).$$
\end{lemma}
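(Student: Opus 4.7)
The plan is to count the set $\Omega$ of triples $(T, w_1, w_2)$ with $T \in \mathcal{T}_\lambda$, $w_1 \in u_1 + U_0$, $w_2 \in u_n + V_0$ and $w_i T = \lambda w_i$ for $i = 1, 2$ in two different ways.

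First I would show that for each $T \in \mathcal{T}_\lambda$ the fibre over $T$ has size exactly $q$. The vectors $w_1$ and $w_2$ are linearly independent since $w_1 \in U \subseteq V_0$ while $w_2 \notin V_0$; combined with $(t-\lambda)^3 \nmid c_T(t)$ this forces the $(t-\lambda)$-multiplicity in $c_T$ to be exactly $2$, so the $\lambda$-eigenspace $E(T)$ has dimension $2$ and coincides with the $\lambda$-primary component of $V$ under $T$. Since $E(T)$ contains $w_1$ but not the non-$U$ vector $w_2$, we get $E(T) \cap U = \langle w_1 \rangle$; and because $w_1$ has $u_1$-coordinate $1$, the intersection $\langle w_1 \rangle \cap (u_1 + U_0) = \{w_1\}$. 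Meanwhile, the $u_n$-coordinate functional restricted to $E(T)$ is surjective (as $w_2$ has $u_n$-coordinate $1$), so $E(T) \cap (u_n + V_0)$ is an affine line with $q$ points. Therefore $|\Omega| = q \cdot |\mathcal{T}_\lambda|$.

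Next I would compute $|\Omega|$ by first fixing $(w_1, w_2)$ and counting admissible $T$. Writing $T \in \M(V)_U$ in block form $T = \begin{pmatrix} A & 0 \\ B & C \end{pmatrix}$ relative to $V = U \oplus \langle u_{r+1}, \ldots, u_n \rangle$, the condition $w_1 T = \lambda w_1$ becomes $w_1' A = \lambda w_1'$ on the $U$-coordinate vector $w_1'$ of $w_1$; changing basis inside $U$ so that $w_1' = e_1$ turns this into ``the first row of $A$ equals $\lambda e_1$,'' giving $q^{r-1}|\GL(r-1,q)|$ invertible choices for $A$. Similarly $w_2 T = \lambda w_2$ pins down the last row of $C$ (which becomes $\lambda e_{n-r}$ after a basis change in $V/U$) and forces the last row of $B$ to equal $\lambda w_2' - w_2' A$, leaving $|\GL(n-r-1,q)|\,q^{n-r-1}$ choices for $C$ and $q^{(n-r-1)r}$ choices for the remaining rows of $B$. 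These local counts are independent of the specific pair $(w_1, w_2)$, so summing over the $q^{n+r-2}$ pairs gives, for the count of triples satisfying only the two eigenvector conditions, the value
$$
q^{n+r-2} \cdot q^{r-1}|\GL(r-1,q)| \cdot q^{(n-r-1)r} \cdot |\GL(n-r-1,q)|\,q^{n-r-1} = q^{n^2 - nr + r^2 - 2}(1 + O(q^{-1})).
$$

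Finally I would subtract the contribution from triples for which $T$ additionally satisfies $(t-\lambda)^3 \mid c_T(t)$. Because $T$ is block lower triangular, $c_T(t) = c_A(t)\,c_C(t)$, and each factor is already divisible by $(t-\lambda)$; so the unwanted condition forces $\lambda$ to be a repeated eigenvalue of $A$ or of $C$, equivalently an eigenvalue of the $(r-1)\times(r-1)$ block in the lower-right corner of $A$ (in the basis above) or of the $(n-r-1)\times(n-r-1)$ block in the upper-left of $C$. The main obstacle is to bound this contribution, but it follows from the standard estimate that the number of $D \in \GL(m,q)$ with $\lambda$ as an eigenvalue is $O(q^{m^2 - 1})$, seen by noting that $\det(D - \lambda I) = 0$ cuts out a hypersurface in $\M(m,q)$. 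Consequently the subtracted contribution is $O(q^{n^2 - nr + r^2 - 3})$, and dividing the corrected count by $q$ yields
$$|\mathcal{T}_\lambda| = q^{n^2 + r^2 - rn - 3} + O(q^{n^2 + r^2 - rn - 4}),$$
as claimed.
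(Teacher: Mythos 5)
Your argument follows the same skeleton as the paper's: count triples $(w_1,w_2,T)$, show the fibre over each $T\in\mathcal{T}_\lambda$ has size exactly $q$ (unique $w_1$, $q$ choices of $w_2$), then fix $(w_1,w_2)$, pass to a basis adapted to them, and count the block entries of $T$. The block counts agree with the paper's ($q^{r-1}|\GL(r-1,q)|$ for the top-left, $q^{n-r-1}|\GL(n-r-1,q)|$ for the bottom-right, $q^{(n-r-1)r}$ free entries of the coupling block, $q^{n+r-2}$ choices of the pair), and your exponent arithmetic giving $n^2-nr+r^2-2$ before dividing by $q$ is correct.

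The one place you diverge is in how the constraint $(t-\lambda)^3\nmid c_T$ is handled. The paper bakes it in by counting only invertible inner blocks whose characteristic polynomials avoid both $t$ and $t-\lambda$ (deferring that count to a cited thesis lemma); you instead drop the constraint, get a clean main term, and subtract the contribution from triples with $(t-\lambda)^3\mid c_T$ by observing $c_T=c_A c_C$ is already divisible by $(t-\lambda)^2$, so the offending $T$ force $\lambda$ to be an eigenvalue of one of the inner blocks, and the number of $m\times m$ matrices with a prescribed eigenvalue is $O(q^{m^2-1})$ (singular-hypersurface estimate). This inclusion–exclusion step is self-contained and arguably cleaner than the paper's appeal to an external count, and it gives the same error order $O(q^{n^2+r^2-rn-4})$. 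Your fibre argument is also a slight refinement: you use the $u_n$-coordinate functional on the two-dimensional eigenspace to exhibit the affine line of $q$ admissible $w_2$, where the paper argues via $w_2-w_2'=aw_1$. Both are correct; no gaps.
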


\begin{proof}
We count the number of
triples $(w_1, w_2, T)$, where $T$ is a matrix in
$\mathcal{T}_\lambda$, and $w_1$ and $w_2$ are $\lambda$-eigenvectors for $T$ satisfying
the criteria for $\mathcal{T}_\lambda$ in Equation (\ref{T1}).
Since $w_1 \in u_1 + U_0 \subseteq U$
and $w_2 \notin U$, the $\lambda$-eigenspace of $T$ is $W=\langle w_1,w_2 \rangle$,
$W \cap U = \langle w_1 \rangle$ and $W \cap (u_1 + U_0) = \{ w_1 \}$.

Suppose $T \in \mathcal{T}_\lambda$ and let
$W$ be the $\lambda$-eigenspace of $T$. Then $W = \langle w_1,
w_2 \rangle$ for some $\lambda$-eigenvectors $w_1 \in u_1 + U_0$
and $w_2 \in u_n + V_0$.
As explained in the previous paragraph,
$w_1$ is determined uniquely by $T$.

Now let $w_2'$ be an alternative choice for the second basis vector,
that is, suppose that $w_2'T = \lambda w_2'$ with $w_2' \in u_n + V_0$ and $w_2' \ne w_2$.
Then $0 \ne w_2 - w_2' \in W \cap V_0$, and as $w_2 \in W \setminus V_0$ and
$\dim (W) = 2$, it follows that $W \cap V_0 = \langle w_2 - w_2' \rangle.$
However, $w_1 \in W \cap U \subseteq W \cap V_0$.
Hence $w_2 - w_2' = aw_1$ for some $a \in \F_q$.
Moreover for each $a \in \F_q$, $w_2'=aw_1+w_2$ lies both in $W$ and in $u_n + V_0$.
Hence there are $q$ choices for the second basis vector for $W$ corresponding to
the $q$ choices for $a$.

We will continue the count of triples $(w_1, w_2, T)$ but take this little
break to note that $|\mathcal{T}_\lambda |$ is equal to the number of triples
$(w_1, w_2, T)$ divided by $q$ because each $T$ has $q$ possibilities for the pair
$(w_1,w_2)$.

First we choose vectors $w_1 \in u_1 + V_0$ and $w_2 \in u_n + V_0$. Note that
$w_1, u_2, \ldots, u_{n-1}, w_2$ is also a basis for $V$. For each $T$ occurring in a
triple $(w_1, w_2, T)$ with these chosen vectors, the matrix representing $T$ with
respect to the basis $w_1, u_2, \ldots, u_{n-1}, w_2$ must be of the form

$$
\left(
\begin{array}{c c c c | c c c c}
\lambda & 0 & \ldots & 0 & 0 & \ldots & \ldots & 0 \\
* & & &  &            0 & \ldots & \ldots & 0 \\
\vdots & & A & &           0 & \ldots & \ldots & 0 \\
* & & & &             0 & \ldots & \ldots & 0 \\
\hline
* & \ldots & \ldots & * & & & & * \\
* & \ldots & \ldots & * & & B & & \vdots \\
* & \ldots & \ldots & * & & & & * \\
0 & \ldots & \ldots & 0 & 0 & \ldots & 0 & \lambda \\
\end{array}
\right)
$$

\noindent
where $*$ represents any value from the field and $A$ and $B$ are
$(r-1)\times(r-1)$ and $(n-r-1)\times(n-r-1)$ invertible matrices respectively,
neither of which has $t$ nor $t-\lambda$ as a factor of their characteristic
polynomial.

It is not difficult to show that there are $N(r) \ge q^{(r-1)^2} - O(q^{(r-1)^2-1})$ choices for the
matrix $A$ and $N(n-r) \ge q^{(n-r-1)^2} - O(q^{(n-r-1)^2-1})$ choices for the
matrix $B$ (a detailed proof is given in \cite[Lemma 2.1.5]{myPhD}).
Then there are $n-2+r(n-r-1)$ entries $*$ and there are $q^{n-2+r(n-r-1)}$
choices for these $*$ entries. Hence there are $N(r)N(n-r) q^{n-2+r(n-r-1)} \ge
q^{n^2+r^2-nr-n-r} + O(q^{n^2+r^2-nr-n-r-1})$ such matrices $T$ for a
given $w_1$ and $w_2$.

Then to finish our count of the number of triples $(w_1, w_2, T)$ we need to
multiply by the number of such $w_1$ and $w_2$ which is $q^{r-1}$ and $q^{n-1}$
respectively. Hence the number of triples $(w_1, w_2, T)$ is
$q^{n^2 + r^2 - rn - 2} + O(q^{n^2 + r^2 - rn - 3})$ and dividing
by $q$ gives
$|\mathcal{T}_\lambda | = q^{n^2 + r^2 - rn - 3} + O(q^{n^2 + r^2 - rn - 4})$.
\end{proof}

\begin{lemma} \label{sizeT1cap}
For $\lambda, \gamma \in \mathbb{F}_q^*$ with $\lambda \ne \gamma$,

$$ | \mathcal{T}_\lambda \cap \mathcal{T}_\gamma | \le q^{n^2 + r^2 - nr - 6}.$$
\end{lemma}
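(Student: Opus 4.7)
The plan is to emulate the counting argument of Lemma \ref{sizeT1}, now counting quintuples $(w_1^\lambda, w_2^\lambda, w_1^\gamma, w_2^\gamma, T)$ where $T \in \mathcal{T}_\lambda \cap \mathcal{T}_\gamma$ and each pair $(w_1^\mu, w_2^\mu)$ consists of eigenvectors of $T$ with eigenvalue $\mu$ of the shape prescribed in (\ref{T1}). Before anything else, I would dispose of the trivial case $r = 1$: then $U = \langle u_1 \rangle$ and $u_1 + U_0 = \{u_1\}$, forcing $u_1$ to be simultaneously a $\lambda$- and a $\gamma$-eigenvector of $T$, which is impossible when $\lambda \ne \gamma$; hence $\mathcal{T}_\lambda \cap \mathcal{T}_\gamma = \emptyset$ and the bound is trivial. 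Henceforth assume $r \ge 2$.

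Next I would compute the multiplicity with which each $T$ appears among the quintuples. Applying the argument from the proof of Lemma \ref{sizeT1} separately for each of $\lambda$ and $\gamma$, the $\lambda$-eigenspace $W^\lambda$ of $T$ meets $u_1 + U_0$ in the single point $w_1^\lambda$, while $W^\lambda \cap (u_n + V_0)$ is a coset of $\langle w_1^\lambda \rangle$ of size $q$; the same statement holds for $\gamma$. Hence every $T$ in the intersection contributes exactly $q \cdot q = q^2$ quintuples, so the desired bound will follow by dividing the total quintuple count by $q^2$.

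To bound the number of quintuples I would fix the four vectors first. Since $\lambda$ and $\gamma$ are distinct nonzero scalars, the eigenvectors $w_1^\lambda, w_1^\gamma \in U$ and $w_2^\lambda, w_2^\gamma \in V \setminus U$ lie in different eigenspaces of $T$ and are therefore automatically linearly independent; in particular $w_1^\lambda \ne w_1^\gamma$. So there are $q^{r-1}$ choices for each of $w_1^\lambda, w_1^\gamma$ and $q^{n-1}$ for each of $w_2^\lambda, w_2^\gamma$, giving at most $q^{2r + 2n - 4}$ ordered $4$-tuples. For each such tuple I would extend to a basis $\mathcal{B} = (w_1^\lambda, w_1^\gamma, v_3, \ldots, v_r, v_{r+1}, \ldots, v_{n-2}, w_2^\lambda, w_2^\gamma)$ of $V$ with $w_1^\lambda, w_1^\gamma, v_3, \ldots, v_r$ spanning $U$, and bound the number of compatible matrices $T$ by inspecting their shape in $\mathcal{B}$: rows $1, 2, n-1, n$ are prescribed by the four eigenvalue equations; rows $3, \ldots, r$ are forced by $U$-invariance to have zero entries in the last $n - r$ columns, leaving at most $r$ free entries each; and rows $r+1, \ldots, n-2$ remain unconstrained, with $n$ free entries each. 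This yields at most $q^{r(r-2) + n(n-r-2)} = q^{n^2 + r^2 - rn - 2r - 2n}$ matrices per $4$-tuple.

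Combining, the total number of quintuples is at most $q^{2r + 2n - 4} \cdot q^{n^2 + r^2 - rn - 2r - 2n} = q^{n^2 + r^2 - rn - 4}$, and dividing by $q^2$ yields $|\mathcal{T}_\lambda \cap \mathcal{T}_\gamma| \le q^{n^2 + r^2 - rn - 6}$, as required. The computation itself is routine; the main subtlety is setting up the basis and eigenvalue bookkeeping cleanly so that the four rigid rows and the $U$-invariance constraint can be read off together, and recognising that the eigenvalue separation $\lambda \ne \gamma$ (together with $r \ge 2$) is precisely what allows the two pairs of eigenvectors to be chosen independently.
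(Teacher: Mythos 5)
Your proof is correct and takes essentially the same route as the paper: count tuples of eigenvector data together with $T$, observe that each $T \in \mathcal{T}_\lambda \cap \mathcal{T}_\gamma$ accounts for exactly $q^2$ such tuples, then bound the tuple count by fixing the four eigenvectors, extending to an adapted basis of $V$ with the first $r$ entries spanning $U$, and counting the free matrix entries compatible with the four rigid eigenvalue rows and $U$-invariance. The one cosmetic difference is the basis: the paper replaces $x_1, x_2$ by the differences $u_0 = x_1 - w_1 \in U_0$ and $v_0 = x_2 - w_2 \in V_0$, so its matrix has off-diagonal entries $\gamma - \lambda$ in rows $2$ and $n-1$, while your basis $(w_1^\lambda, w_1^\gamma, \dots, w_2^\lambda, w_2^\gamma)$ makes all four rigid rows purely diagonal; the count of free entries, $r(r-2) + n(n-r-2)$ plus $2(r-1) + 2(n-1)$ for the vectors, is identical either way, giving $q^{n^2+r^2-nr-4}/q^2$. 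Your explicit disposal of $r=1$ (where $u_1 + U_0$ is a singleton and the intersection is forced empty) is a small improvement in rigour over the paper, which leaves the degenerate cases implicit.
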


\begin{proof}
Let $T \in \mathcal{T}_\lambda \cap \mathcal{T}_\gamma$ for
$\lambda \ne \gamma$. Since $T \in \mathcal{T}_\lambda$
there exists $w_1 \in u_1 + U_0$ such that $w_1 T = \lambda w_1$ and
there exists $w_2 \in u_n + V_0$ such that $w_2 T = \lambda w_2$.
Similarly since $T \in \mathcal{T}_\gamma$
there exists $x_1 \in u_1 + U_0$ such that $x_1 T = \gamma x_1$ and
there exists $x_2 \in u_n + V_0$ such that $x_2 T = \gamma x_2$.
Since $\langle w_1, w_2 \rangle, \langle x_1, x_2 \rangle$ lie in distinct
eigenspaces for $T$,
$\langle w_1, w_2 \rangle \cap \langle x_1, x_2 \rangle = 0$ so
$\{ w_1, w_2, x_1, x_2 \}$ is linearly independent.

Taking a similar approach to that of Lemma \ref{sizeT1}, we count the number of
tuples of the form $(w_1, w_2, x_1, x_2, T)$ and relate that back to the size of
$| \mathcal{T}_\lambda \cap \mathcal{T}_\gamma |$.

From the proof of Lemma \ref{sizeT1}, we know that for a given
$T \in \mathcal{T}_\lambda \cap \mathcal{T}_\gamma$,
$w_1$ is uniquely determined and there are $q$ choices for $w_2$.
Similarly, $x_1$ is uniquely determined and there are $q$ choices for $x_2$. So
$| \mathcal{T}_\lambda \cap \mathcal{T}_\gamma |$ equals the number of
tuples $(w_1, w_2, x_1, x_2, T)$ divided by $q^2$.

There are $q^{r-1}$ choices for $w_1 \in u_1 + U_0$. For a given $w_1$, since
$x_1 \in u_1 + U_0 = w_1 + U_0$, there are $q^{r-1}-1$ choices for $x_1$ and the action
of $T$ on $\langle w_1, x_1 \rangle$ is determined uniquely. In particular,
$u_0 = x_1 - w_1 \in U$ and $u_0 T = \gamma x_1 - \lambda w_1 = \gamma u_0 + (\gamma - \lambda) w_1.$
Similarly there are $q^{n-1}$ choices for $w_2 \in u_n + V_0$, and for a given
$w_2$, there are $q^{n-1}-1$ choices for $x_2$ and the action of $T$ on
$\langle w_1, w_2, x_1, x_2 \rangle$ is uniquely determined.
In particular, $v_0 = x_2 - w_2 \in V_0$ and
$v_0T = \gamma x_2 - \lambda w_2 = \gamma v_0 + (\gamma - \lambda) w_2.$

Suppose $(w_1, x_1, w_2, x_2)$ are as above and $T$ occurs with them in a tuple.
Since $w_1, u_0, v_0, v_2$ are linearly independent we can extend them to a
basis $w_1, u_0, y_3, \ldots, y_{n-2}, v_0, w_2$ for $V$ such that
$w_1, u_0, y_3, \ldots, y_r$ is a basis for $U$. With respect to this basis
$T$ must be of the form

$$
\left(
\begin{array}{c c c c c | c c c c c}
\lambda & 0 & \ldots & \ldots & 0 &      0 & & \ldots & & 0 \\
\gamma - \lambda & \gamma & 0 & \ldots & 0 & 0 & & \ldots & & 0 \\
* & * & & & &                             0 &  & \ldots & & 0 \\
\vdots & \vdots & & A & &                   0 &  & \ldots & & 0 \\
* & * & & & &                             0 &  & \ldots & & 0 \\
\hline
* & & \ldots & & * &     & & & * & * \\
* & &  \ldots & & * &   & B & & \vdots & \vdots \\
* & & \ldots  & & * &   & & & * & * \\
0 & & \ldots & & 0 &     0 & \ldots & 0 & \gamma & \gamma-\lambda \\
0 & & \ldots & & 0 &     0 & \ldots & 0 & 0 & \lambda \\
\end{array}
\right)
$$

\noindent
where $*$ represents any value from the field and $A$ and $B$ are
$(r-2)\times(r-2)$ and $(n-r-2)\times(n-r-2)$ invertible matrices respectively.
We this time allow $A$ and $B$ to have $t-\lambda$ and $t-\gamma$ as factors
for simplicity despite it being an over count.

There are less than $q^{(r-2)^2}$ such matrices $A$,
there are less than $q^{(n-r-2)^2}$ such matrices $B$ and there
are $q^{2n-8+r(n-r-2)}$ possible values for the $*$ entries. When we multiply
these together along with the $q^{r-1} (q^{r-1} - 1) q^{n-1} (q^{n-1} - 1)$
choices for $(w_1, w_2, x_1, x_2)$,
we get the number of tuples $(w_1, w_2, x_1, x_2, T)$ to be less than
$q^{n^2+r^2-nr-4}$. Hence, after dividing by $q^2$, we get

$$ | \mathcal{T}_\lambda \cap \mathcal{T}_\gamma | \le q^{n^2 + r^2 - nr - 6}.$$
\end{proof}

We now state and prove Theorem \ref{noncyc} which exhibits a
family of noncyclic matrices in $\GL(V)_U$ with
proportion $q^{-2} + O(q^{-3})$.

\begin{theorem} \label{noncyc}
The union of $\mathcal{T}_\lambda$ for $\lambda \in \F_q^*$ forms a set of
noncyclic matrices in $\GL(V)_{U}$ with limiting proportion $q^{-2} + O(q^{-3})$
as $n$ tends to infinity.
\end{theorem}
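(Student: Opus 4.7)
The plan has two parts: first verify that every element of $\bigcup_{\lambda \in \F_q^*} \mathcal{T}_\lambda$ is noncyclic, and then estimate the size of this union via inclusion-exclusion using Lemmas~\ref{sizeT1} and~\ref{sizeT1cap}, dividing by $|\GL(V)_U|$ to obtain the asymptotic proportion.

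For the first part, fix $\lambda \in \F_q^*$ and $T \in \mathcal{T}_\lambda$, with associated eigenvectors $w_1 \in u_1 + U_0$ and $w_2 \in u_n + V_0$. Since $U \subseteq V_0$ and $w_2 \notin V_0$, we have $w_2 \notin U$ while $w_1 \in U$, so $\{w_1, w_2\}$ is linearly independent. Thus the $\lambda$-eigenspace of $T$ has geometric multiplicity at least $2$. On the other hand the condition $(t-\lambda)^3 \nmid c_T(t)$ forces the algebraic multiplicity of $\lambda$ to be at most $2$. Since geometric $\le$ algebraic multiplicity, both equal $2$, and the $(t-\lambda)$-primary component is $2$-dimensional with $T$ acting as $\lambda I$. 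Hence $(t-\lambda)$ appears in $m_T(t)$ with multiplicity $1$ but in $c_T(t)$ with multiplicity $2$, so $m_T \ne c_T$ and $T$ is noncyclic.

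For the second part, the order of $\GL(V)_U$ is standard: relative to a basis adapted to $U$, elements are block upper triangular with an $\GL(r,q)$ block, a $\GL(n-r,q)$ block, and an arbitrary $r \times (n-r)$ off-diagonal block, giving
$$|\GL(V)_U| = |\GL(r,q)| \cdot |\GL(n-r,q)| \cdot q^{r(n-r)} = q^{n^2 - nr + r^2}\bigl(1 + O(q^{-1})\bigr).$$
Applying inclusion-exclusion to the union over the $q-1$ values of $\lambda \in \F_q^*$, Lemma~\ref{sizeT1} gives
$$\sum_{\lambda \in \F_q^*} |\mathcal{T}_\lambda| = (q-1)\bigl(q^{n^2+r^2-rn-3} + O(q^{n^2+r^2-rn-4})\bigr) = q^{n^2+r^2-rn-2} + O(q^{n^2+r^2-rn-3}),$$
while Lemma~\ref{sizeT1cap} together with the bound $\binom{q-1}{2} \le q^2$ gives
$$\sum_{\lambda \ne \gamma} |\mathcal{T}_\lambda \cap \mathcal{T}_\gamma| \le q^2 \cdot q^{n^2 + r^2 - nr - 6} = q^{n^2 + r^2 - nr - 4},$$
which is absorbed into the error term; higher-order intersections are even smaller and need not be analysed separately. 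Therefore
$$\bigl|\bigcup_{\lambda \in \F_q^*} \mathcal{T}_\lambda\bigr| = q^{n^2+r^2-rn-2} + O(q^{n^2+r^2-rn-3}),$$
and dividing by $|\GL(V)_U|$ yields proportion $q^{-2} + O(q^{-3})$ as $n \to \infty$.

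The only mildly delicate point is the noncyclicity argument, since it is the condition $(t-\lambda)^3 \nmid c_T$ in the definition of $\mathcal{T}_\lambda$ that pins the algebraic multiplicity of $\lambda$ to exactly $2$ and guarantees the primary block splits rather than forming a single Jordan block. Everything else is routine bookkeeping: the dominant term of the union is carried entirely by the first-order sum, because Lemma~\ref{sizeT1cap} saves three powers of $q$ relative to Lemma~\ref{sizeT1} (more than enough to beat the $O(q^2)$ loss from the binomial coefficient), so no higher-order inclusion-exclusion terms enter the leading asymptotics.
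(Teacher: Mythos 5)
Your proof is correct and follows essentially the same route as the paper: Bonferroni/inclusion--exclusion using Lemmas~\ref{sizeT1} and~\ref{sizeT1cap}, then dividing by $|\GL(V)_U|$. You add a useful explicit noncyclicity argument (the paper disposes of this in one sentence before the theorem statement); your reasoning that geometric and algebraic multiplicities of $\lambda$ are both forced to equal $2$, so $(t-\lambda)$ appears with different multiplicities in $m_T$ and $c_T$, is exactly right. One small imprecision: you say the higher-order intersection terms ``are even smaller and need not be analysed separately,'' but no estimate on triple (or deeper) intersections was established, so that phrase isn't justified as stated. It is also unnecessary: the trivial upper bound $|\bigcup \mathcal{T}_\lambda| \le \sum_\lambda |\mathcal{T}_\lambda|$ and the two-term Bonferroni lower bound together pin down the leading term, so no information about higher-order intersections is required at all. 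The paper only writes the lower bound explicitly for the same reason.
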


\begin{proof}
We cannot simply take the sum of $| \mathcal{T}_\lambda |$ over all
$\lambda$ to obtain $| \cup \mathcal{T}_\lambda |$ since some matrices belong
to more than one $\mathcal{T}_\lambda$ but we
can subtract off the number of those matrices which belong to two or more
$\mathcal{T}_\lambda$ to give

$$ | \cup_{\lambda \in \mathbb{F}_q^*} \mathcal{T}_\lambda | \ge
(q-1) | \mathcal{T}_\lambda | - {q-1 \choose 2}
| \mathcal{T}_\lambda \cap \mathcal{T}_\gamma | $$

\noindent
by the Principle of Inclusion and Exclusion. By Lemma \ref{sizeT1} and Lemma \ref{sizeT1cap},

$$ | \cup_{\lambda \in \mathbb{F}_q^*} \mathcal{T}_\lambda | \ge
q^{n^2+r^2-nr-2} + O(q^{n^2+r^2-nr-3}). $$

We then simply divide this by $|\GL(V)_U| = q^{n^2 + r^2 - nr} + O(q^{n^2+r^2-nr-1})$ to give

$$\frac{| \cup_{\lambda \in \mathbb{F}_q^*} \mathcal{T}_\lambda |}
{|\GL(V)_U|} = q^{-2} + O(q^{-3}).$$
\end{proof}

\appendix

\end{document}